\newcommand{\mc}[1]{{\mathcal{#1}}}
\newcommand{\bb}[1]{{\mathbb{#1}}}
\newcommand\appendixsection{\@startsection {section}{1}{\z@}
	{-3.5ex \@plus -1ex \@minus -.2ex}{2.3ex \@plus.2ex}
	{\normalfont\Large\bfseries\hspace*{-15pt}Appendix\ }}
\numberwithin{equation}{section}
\theoremstyle{plain}
	\newtheorem{lemma}{Lemma}[section]
	\newtheorem{proposition}[lemma]{Proposition}
	\newtheorem{theorem}[lemma]{Theorem}
	\newtheorem{corollary}[lemma]{Corollary}
\theoremstyle{definition}
	\newtheorem{definition}[lemma]{Definition}
\theoremstyle{remark}
	\newtheorem{remark}[lemma]{Remark}
	\newtheorem{example}[lemma]{Example}
\newcommand\cyr{%
\renewcommand\rmdefault{wncyr}%
\renewcommand\sfdefault{wncyss}%
\renewcommand\encodingdefault{OT2}%
\normalfont
\selectfont}
\DeclareTextFontCommand{\textcyr}{\cyr}
\newcommand{\nontanglim}{\stackrel{{\tiny\varangle}}{\longrightarrow}}
\newcommand{\zlim}{z\nontanglim x}
\DeclareMathOperator{\dom}{dom}
\DeclareMathOperator{\loc}{loc}
\newcommand{\mr}{\mathring}
\DeclareMathOperator{\mul}{mul}
\DeclareMathOperator{\ran}{ran}
\DeclareMathOperator{\mult}{mult}
\DeclareMathOperator{\rank}{rank}
\DeclareMathOperator{\reg}{reg}
\DeclareMathOperator{\spn}{span}
\DeclareMathOperator{\supp}{supp}
\DeclareMathOperator{\tr}{tr}
\DeclareMathOperator{\RE}{Re}
\DeclareMathOperator{\IM}{Im}
\renewcommand{\Re}{\RE}
\renewcommand{\Im}{\IM}
\newlength{\maxlabwidth}
\newenvironment{axioms}[1]{
	\setlength{\maxlabwidth}{#1}
	\begin{list}{}{
	\setlength{\rightmargin}{2mm}
	\setlength{\leftmargin}{\maxlabwidth}\addtolength{\leftmargin}{2mm}
	\setlength{\labelsep}{0mm}
	\setlength{\labelwidth}{\maxlabwidth}
	\setlength{\itemindent}{0mm}
	
	}
	}{
	\end{list}
	}
\begin{document}

{\Large\bf
\begin{flushleft}
	Spectral multiplicity of selfadjoint 
\\Schr\"odinger operators on star-graphs with
\\standard interface conditions
\end{flushleft}
}
\vspace*{3mm}
\begin{center}
	{\sc Sergey Simonov, Harald Woracek}
\end{center}

\begin{abstract}
	\noindent
	We analyze the singular spectrum of selfadjoint operators which arise from pasting a finite number of boundary relations with a standard interface condition. A model example for this situation is a Schr\"odinger operator on a star-shaped graph with continuity and Kirchhoff conditions at the interior vertex.	We compute the multiplicity of the singular spectrum in terms of the	 spectral measures of the Weyl functions associated with the single (independently considered) boundary relations. This result is a generalization and refinement of a Theorem of I.S.Kac.
\end{abstract}
\begin{flushleft}
	{\small
	{\bf AMS MSC 2010:} 34\,B\,45, 34\,B\,20, 34\,L\,40, 47\,E\,05, 47\,J\,10 \\
	{\bf Keywords:} Schr\"odinger operators, quantum graphs, singular spectrum, spectral multiplicity, Weyl theory,	boundary relations, Herglotz functions
	}
\end{flushleft}



%
%
%
\section{Introduction}

In the present paper we undertake an analysis of the singular spectrum of selfadjoint operators
which are constructed by pasting a finite number of boundary triples (relations) by means of
a standard interface condition.

For the purpose of explaining our results without having to introduce too much terminology,
we consider a model example: A Schr\"odinger operator on a star-graph.
Consider a star-shaped graph having finitely many edges, say, $E_1,\dots,E_n$. We think of the
edges as (finite or infinite) intervals $E_l=[0,e_l)$, where the endpoint $0$
corresponds to the interior vertex. A selfadjoint operator can be constructed from the
following data:
\begin{enumerate}[$(1)$]
	\item On each edge $E_l$, a real-valued potential $q_l\in L^1_{\loc}([0,e_l))$.
	\item Boundary conditions at outer vertices $e_l$, if Weyl's limit circle case prevails for $q_l$ at $e_l$.
	\item An interface condition at the interior vertex.
\end{enumerate}
\begin{center}
	\setlength{\unitlength}{0.00035in}
	\begingroup\makeatletter\ifx\SetFigFont\undefined%
	\gdef\SetFigFont#1#2#3#4#5{%
	  \reset@font\fontsize{#1}{#2pt}%
	  \fontfamily{#3}\fontseries{#4}\fontshape{#5}%
	  \selectfont}%
	\fi\endgroup%
	{\renewcommand{\dashlinestretch}{30}
	\begin{picture}(10554,4100)(0,-700)
	\put(4650.041,1147.365){\arc{1871.777}{0.4267}{2.1319}}
	\put(4579,1255){\ellipse{1034}{540}}
	\path(4422,1525)(3927,2605)
	\path(5052,1345)(8877,2245)
	 \whiten\path(8715.527,2145.368)(8877.000,2245.000)(8688.043,2262.178)(8754.349,2216.141)(8715.527,2145.368)
	\path(4242,1030)(3522,40)
	\path(4062,1255)(12,1885)
	 \whiten\path(199.083,1916.620)(12.000,1885.000)(180.639,1798.046)(136.503,1865.633)(199.083,1916.620)
	\path(5097,1165)(10542,895)
	 \whiten\path(10359.249,843.988)(10542.000,895.000)(10365.192,963.841)(10416.155,901.240)(10359.249,843.988)
	 \put(4600,1170){\makebox(0,0)[b]{{\SetFigFont{7}{12.0}{\rmdefault}{\mddefault}{\updefault}\text{i.c.}}}}
	 \put(3900,2570){\makebox(0,0)[b]{{\SetFigFont{10}{12.0}{\rmdefault}{\mddefault}{\updefault}${\bm\circ}$}}}
	 \put(3800,2900){\makebox(0,0)[b]{{\SetFigFont{7}{12.0}{\rmdefault}{\mddefault}{\updefault}\text{b.c.}}}}
	 \put(3480,-130){\makebox(0,0)[b]{{\SetFigFont{10}{12.0}{\rmdefault}{\mddefault}{\updefault}$\bm\circ$}}}
	 \put(3400,-400){\makebox(0,0)[b]{{\SetFigFont{7}{12.0}{\rmdefault}{\mddefault}{\updefault}\text{b.c.}}}}
	 \put(8000,-200){\makebox(0,0)[b]{{\SetFigFont{8}{12.0}{\rmdefault}{\mddefault}{\updefault}$-\frac{d^2}{dx^2}+q_l,\quad l=1,\ldots,n$}}}
	\end{picture}
	}
\end{center}
The operator $A$ one can associate with this data acts in the space $H:=\prod\limits_{l=1}^n L_2(0,e_l)$ as
\begin{equation}\label{K10}
	A\begin{pmatrix}
	u_1
	\\
	\vdots
	\\
	u_n
	\end{pmatrix}
	:=
	\begin{pmatrix}
	-u_1''
	\\
	\vdots
	\\
	-u_n''
	\end{pmatrix}
	+
	\begin{pmatrix}
	q_1u_1
	\\
	\vdots
	\\
	q_nu_n
	\end{pmatrix}
	\,,
\end{equation}
on the domain
\begin{equation}\label{K11}
\begin{aligned}
	\dom A := \Bigg\{&(u_1, \dots,u_n) \in\prod_{l=1}^n L_2(0,e_l):
		\\
	& u_l,u_l'\text{ are absolutely continuous}, -u_l''+q_lu_l\in L_2(0,e_l),
		\\[2mm]
	& u_l\text{ satisfies the boundary condition at outer vertex (if present),}
		\\[-1mm]
	& u_1,\dots,u_n\text{ satisfy the interface condition at the inner vertex}\Bigg\}
		\ .
\end{aligned}
\end{equation}
A frequently used interface condition, sometimes called the ``standard condition'', is
\begin{equation}\label{K1}
	u_1(0)=\ldots=u_n(0) \quad\text{and}\quad \sum_{l=1}^n u_l'(0)=0
	\,.
\end{equation}
In the case ``$n=2$'' the condition \eqref{K1} arises when investigating a whole-line Schr\"odinger operator with the classical method of Titchmarsh and Kodaira.

The task now is to describe the projection-valued spectral measure $E$ of $A$ in terms of the scalar spectral measures $\mu_l$
of the non-interacting operators $A_l$, $l=1,\ldots,n$,
which are defined by the potentials $q_l$ on the edges $E_l$ independently (imposing Dirichlet boundary conditions at the inner
vertex $0$ for each of them).

A precise description of the absolutely continuous part\footnote{Notice that the notions of
absolute continuity and singularity of measures make sense also if the involved measures have different ranges. Moreover, Lebesgue
decompositions of a projection-valued measure with respect to a scalar measure (in this case, the Lebesgue measure) exist.}
$E_{ac}$ of $E$, including computation of its spectral multiplicity, is readily available.
It states that $E_{ac}$ is equivalent (in the sense of mutual absolute continuity) to the sum $\mu_{1,ac}+\dots+\mu_{n,ac}$ of the
absolutely continuous parts of the measures $\mu_l$. Moreover, informally speaking, the local spectral multiplicity corresponding to
$E_{ac}$ is equal to the
number of overlaps of $\mu_{1,ac},\dots,\mu_{n,ac}$ (for a precise formulation see Theorem~\ref{K2}). These facts follow from
\cite[Theorem 6.6]{Gesztesy-Tsekanovskii-2000}, a result which can be viewed as a higher-dimensional analogue (and refinement) of one half
of Aronszajn-Donoghue theory for rank one perturbations. Namely, of the part which asserts stability of absolutely continuous spectrum, cf.\
\cite[Theorem 1]{Aronszajn-1957}, \cite[Theorems 2 and 6]{Donoghue-1965}\footnote{See also \cite[Theorem 3.2, $(i)$--$(iii)$]{Gesztesy-Tsekanovskii-2000} for a summary.}. Another approach proceeds via scattering theory and uses a modification of the Kato-Rosenblum
theorem \cite{Birman-Krein-1962,Kato-1965}, see also \cite[Theorem 1.9]{Yafaev-2000}. Since the operator $A$ is a finite dimensional perturbation (in the resolvent sense) of the operator $\bigoplus_{l=1}^nA_l$, wave operators exist and are complete, which in turn means that the absolutely continuous parts of these operators are unitarily equivalent.

In the present paper, we describe the singular part $E_s$ of $E$, including a formula for spectral multiplicity.
Our main result is Theorem~\ref{K12} below (where we provide the formulation for the Schr\"odinger case;
for the general situation see Theorem~\ref{K38}). Again speaking informally, it says that:
\begin{enumerate}[{\rm(I)}]
	\item One part of $E_s$ appears where at least two of the singular parts $\mu_{l,s}$ of the measures $\mu_l$ overlap.
		Where only one singular part $\mu_{l,s}$ is present, the spectrum disappears.
	\item For the part of $E_s$ described in {\rm(I)}, the local spectral multiplicity is equal to the number of overlaps of
		$\mu_{1,s},\dots,\mu_{n,s}$ minus $1$. In particular, the multiplicity cannot exceed $n-1$.
	\item The remaining part of $E_s$ is mutually singular to each of the spectral measures $\mu_l$ and has multiplicity $1$.
\end{enumerate}
This theorem is a generalization and refinement of a theorem given by I.S.Kac in
\cite{Kac-1962}\footnote{Full proofs are provided in
\cite{Kac-1963} (in Russian). An English translation of this paper is not available, however, the proof was reproduced by D.Gilbert in
\cite{Gilbert-1998}: The operator-theoretic half of Kac' theorem is \cite[Theorem 5.1]{Gilbert-1998}, the measure-theoretic half is
\cite[Theorem 5.5, $(i)$]{Gilbert-1998}. An interesting approach to Kac' theorem was given recently by B.Simon in \cite{Simon-2005} who
proceeds via rank-one perturbations and uses Aronszajn-Donoghue theory.}. He considered the case of two edges and showed that the
spectral multiplicity of the singular part $E_s$ is always $1$.
Kac' Theorem corresponds to the upper bound for multiplicity in {\rm(II)} and simplicity of
spectrum in {\rm(III)}. Realizing a change of boundary condition of a half-line operator as an interface condition
with an ``artificial second edge'', we can also reobtain the half of Aronszajn-Donoghue theory which asserts disjointness of singular spectra
for different boundary conditions, see again \cite{Aronszajn-1957}, \cite{Donoghue-1965}, or \cite[Theorem 3.2, $(iv)$]{Gesztesy-Tsekanovskii-2000}.
This corresponds to the fact in {\rm(I)} that, if only one spectrum is present, it disappears.

By using the abstract framework of boundary relations, instead of just discussing a Schr\"odinger operator on a star-graph, we
achieve a slight generalization and a significant increase of flexibility in applications (various kinds of
operators, not necessarily being differential operators, can be treated). This bonus comes without additional effort,
since our proofs proceed via an analysis of Weyl functions and associated measures, and do not rely on the concrete form of the operators on edges.

The description of the absolutely continuous part $E_{ac}$ is not specific for the geometry of a star-graph and/or
the use of standard interface conditions: the mentioned result \cite[Theorem 6.6]{Gesztesy-Tsekanovskii-2000} holds for
arbitrary finite-rank perturbations. Contrasting this, the description of the singular part $E_s$ given in {\rm(I)}--{\rm(III)} is specific for
the particular situation. This is seen, for example, from some known formulas for the maximal multiplicity of an eigenvalue of a
Schr\"odinger operator on a graph (not necessarily a star-graph). It turns out that this number
depends on the geometry of the graph (rather than rank of the perturbation),
see \cite{Kac-Pivovarchik-2011} and the references therein.
Another good example is a theorem due to J.Howland, cf.\ \cite[2.Theorem]{Howland-1986}. There for a certain type of finite-rank perturbation
a behavior is witnessed which is fully in opposite to {\rm(I)}--{\rm(III)}. Also we should mention that, although the considered
operator $A$ is ``only'' a rank-one perturbation of the direct sum of the non-interacting operators $A_l$, classical perturbation
theory does not give much information. For example, the Kato-Rosenblum theorem deals with absolutely continuous spectrum, or
the theorem \cite[Satz~10.18]{Weidmann-2000} on the ranges of spectral projections yields information only for isolated eigenvalues.
The singular (continuous and possibly embedded) spectrum is much more instable, and its behavior is much harder to control.

Let us give a brief outline of the organization of the paper.
In the second part of this introductory section, we explain the structure of the spectrum of $A$ in some more detail (old and new results).
Section 2 is of preparatory nature. We set up notation and collect some results from the literature concerning:
spectral multiplicity, Borel measures, and Cauchy integrals.

In Section 3, we recall some facts about boundary relations and the
Titchmarsh-Kodaira formula. We define the main object of our studies, the pasting of boundary relations with standard interface conditions, cf. Definition \ref{K42},
and compute its matrix valued Weyl function in terms of the Weyl functions of the single boundary relations.
Moreover, we carry out the calculations required to determine the point spectrum.
Though this is of course included in our main result, we find it worth to be formulated and proved independently;
it serves as an elementary accessible, yet precise, model for the behavior of singular continuous spectrum.

Section 4 forms the core of the paper. In this section we formulate and prove our main result Theorem~\ref{K38}; the major task
is to get control of the singular continuous (possibly embedded) part of the singular spectrum.
The proof can be outlined as follows: We further divide the singular part $E_s$ into two summands.
Namely, setting $\mu=\sum_{l=1}^n\mu_l$, we decompose $E_s$ into the sum of a measure which is
absolutely continuous with respect to $\mu$ and one which is singular with respect to $\mu$.
First, we show that on null sets of the measure $\mu$ only simple spectrum of $A$ may appear, and this shows item {\rm(III)}.
Second, we consider points having certain ``good'' properties regarding existence of derivatives of involved measures and pointwise
asymptotics of their Poisson and Cauchy integrals. For such points the multiplicity of the spectrum can be calculated, and this shows items
{\rm(I)} and {\rm(II)} on the set of ``good'' points. Finally, we show that this set of ``good'' points in fact supports the full singular part of
$\mu$, and thereby complete the proof of items {\rm(I)} and {\rm(II)}.

The paper closes with two appendices. In the first appendix we provide some examples which show that all possibilities permitted by
{\rm(I)}--{\rm(III)} indeed may occur. These are not difficult to obtain and are based on classical theory and some more recent results
on concrete potentials on the half-line. This section will not hold many surprises for the specialist in the field; we
include it to give a fuller picture. In the second appendix we show how to reobtain from our present results
the classical theorem of Aronszajn and Donoghue on singular spectra associated with different boundary conditions. Moreover, we include
a short discussion of some (a few) interface conditions different from the standard condition.

There occurs an obvious open problem: Is it true that also for other finite-rank perturbations
the singular continuous spectrum behaves in the same way as the point spectrum (concerning its multiplicity)?
In a very general setting, one may think of investigating arbitrary finite rank perturbations; optimally obtaining a full
higher-dimensional analogue of Aronszajn-Donoghue theory for singular spectra. However, this is probably wishful thinking:
Keeping in mind the difficulties which arise when considering eigenvalues in the case of standard (Kirchhoff)
interface conditions on a graph with a somewhat more complicated geometric structure, already a thorough investigation of this
situation seems a challenging task.

At present, the answer to whatever version of the above posed question is not at all clear.
The computations we use in this paper are specific for the case ``star-graph+standard interface
conditions''. We plan to address this problem in future work.

\subsection*{Detailed description of the structure of $\bm{\sigma(A)}$}

Again, for the purpose of explaining, we consider a Schr\"odinger operator $A$ on a star-graph which is given
by the data $(1)$--$(3)$.

A first, rough, insight into the structure of the spectrum is provided by the
classical Titchmarsh-Kodaira formula. We may consider the operator $A$ as a
selfadjoint extension of the symmetry $S$ whose domain is defined by requiring
that $u_l(0)=u_l'(0)=0$, $l=1,\dots,n$. This symmetry is completely
non-selfadjoint, and has defect index $(n,n)$. The spectral multiplicity of
$A$ cannot exceed $n$: There exists an $n\!\times\!n$-matrix valued measure $\Omega$ such that the operator $A$ is unitarily equivalent to the operator of multiplication by the independent variable in the space $L_2(\mathbb R,\Omega)$. A measure $\Omega$ with this property can be constructed using Weyl theory. Since $A$ is an extension of $S$, there exists a matrix-valued Weyl function $M(z)$ corresponding to $A$. The measure $\Omega$ in the Herglotz-integral representation of $M$ has the required properties.

Since the spectral projection of the multiplication operator in $L_2(\mathbb R,\Omega)$ onto a Borel set $\Delta$ is the multiplication operator with the indicator function of $\Delta$, it follows that $\Omega$ and $E$ are mutually
absolutely continuous. If we set $\rho:=\tr\Omega$, then it is easy to see that $\Omega$ and $\rho$ are mutually absolutely continuous. We call the measure $\rho$ from this construction a \emph{scalar spectral measure} corresponding
to the operator $A$ (this measure is of course not unique).

The same procedure can be carried out for each of the operators $A_l$. For the operator $A_l$ the defect index of the minimal
operator is $(1,1)$, and one gets a unitary equivalence to the multiplication operator the space $L_2(\mathbb R,\mu_l)$,
where $\mu_l$ is the (now scalar) measure taken from the Herglotz-integral representation of the Weyl function associated with $A_l$.

Let $N_A(x)$ be the \emph{spectral multiplicity function} of $A$ which is defined $\rho$-a.e. The detailed definition of $N_A$ requires
some background; we recall it in \S2, see \eqref{K19}. Moreover, set $\mu:=\sum_{i=1}^n\mu_l$, and
\begin{equation}\label{r(x)}
	r(x):=\#\big\{l\in\{1,\dots,n\}:\,D_{\mu}{\mu_l}(x)>0\big\}
	\,.
\end{equation}
Here $D_{\mu}{\mu_l}(x)$ denotes the Radon-Nikodym derivative of $\mu_l$ with respect to $\mu$, and the function $r(x)$ is
defined $\mu$-a.e. Note that $\sum_{l=1}^nD_\mu\mu_l=1$ and hence $r(x)\geq 1$ for $\mu$-a.a.\ points $x\in\bb R$.

A complete description of the absolutely continuous part of the spectral measure $E$ of $A$ follows from
\cite[Theorem 6.6]{Gesztesy-Tsekanovskii-2000}.
Notation: We use $\sim$ to denote mutual absolute continuity of two measures.

\begin{theorem}[\cite{Gesztesy-Tsekanovskii-2000}]\label{K2}
	Let $A$ be a Schr\"odinger operator on a star-graph given by the data $(1)$--$(3)$ using the standard
	interface condition \eqref{K1}.
	Denote by $E$ the projection valued spectral measure of $A$, let $\mu$ be the sum of the scalar spectral measures of the
	non-interacting operators $A_l$, and let $E_{ac}$ and $\mu_{ac}$ be their absolutely continuous parts with respect to the Lebesgue measure.
	Moreover, let $N_A$ be the spectral multiplicity function of $A$, and let $r(x)$ be as in \eqref{r(x)}.	
	Then
	\begin{enumerate}[{\rm(I)}]
		\item $E_{ac}\sim\mu_{ac}$.
		\item $N_A(x)=r(x)$ for $E_{ac}$-a.a.\ points $x\in\bb R$.
	\end{enumerate}
\end{theorem}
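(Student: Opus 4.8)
The plan is to reduce both statements to the cited result \cite[Theorem 6.6]{Gesztesy-Tsekanovskii-2000} by exhibiting $A$ explicitly as a finite-rank (in fact rank-one) perturbation, in resolvent sense, of the direct sum $\bigoplus_{l=1}^n A_l$. First I would recall that each $A_l$ arises from the half-line minimal operator with a Dirichlet condition at the inner vertex $0$, so that $\bigoplus_{l=1}^n A_l$ is a selfadjoint extension of the same symmetry $S$ (defect index $(n,n)$) of which $A$ is another selfadjoint extension. Hence $A$ and $\bigoplus A_l$ have the same essential spectrum and their resolvents differ by an operator of rank at most $n$; by exploiting the special block structure of the standard interface condition \eqref{K1} — continuity of all components at $0$ together with a single Kirchhoff relation on the derivatives — one checks that the difference of resolvents is actually rank one. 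This puts us squarely in the setting where the Aronszajn--Donoghue/Kato--Rosenblum--type stability of absolutely continuous spectrum applies.

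Next I would make the measure-theoretic translation. On the side of $\bigoplus A_l$ the natural scalar spectral measure is $\mu=\sum_{l=1}^n\mu_l$, where $\mu_l$ comes from the Herglotz representation of the Weyl function of $A_l$, and its absolutely continuous part is $\mu_{ac}$ with local multiplicity function precisely $r(x)=\#\{l:\,D_\mu\mu_l(x)>0\}$, i.e.\ the number of the $\mu_{l,ac}$ that are ``locally present'' at $x$; this is just the standard description of the multiplicity function of an orthogonal direct sum of multiplication operators. On the side of $A$ the scalar spectral measure $\rho=\tr\Omega$ is built from the $n\times n$ matrix Weyl function $M(z)$ of $A$ as explained in the introduction, with $E\sim\Omega\sim\rho$ and multiplicity function $N_A$. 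With this dictionary, item (I), $E_{ac}\sim\mu_{ac}$, and item (II), $N_A=r$ a.e.\ on the a.c.\ part, are exactly the assertions of \cite[Theorem 6.6]{Gesztesy-Tsekanovskii-2000} applied to the pair $(A,\bigoplus A_l)$: the unitary equivalence of absolutely continuous parts furnished by existence and completeness of the wave operators gives (I), and the refinement of that theorem computing the multiplicity gives (II).

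The one genuinely non-routine point — and the step I expect to be the main obstacle — is to verify that the abstract formulation in \cite{Gesztesy-Tsekanovskii-2000} is literally applicable here, i.e.\ that their hypotheses on the finite-rank coupling and on the involved Weyl/$Q$-functions match the boundary-relation data attached to $A$ and to the $A_l$, and in particular that the comparison operator there is precisely $\bigoplus_{l=1}^n A_l$ with the Dirichlet condition at $0$ and not some other extension. Once the matching of data is in place, identifying $r(x)$ with the multiplicity function appearing on the comparison side reduces to the elementary fact that for a direct sum of rank-one (scalar) spectral problems the a.c.\ multiplicity at $x$ counts how many summands have positive Radon--Nikodym derivative there, which is exactly \eqref{r(x)}. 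The remaining bookkeeping — equivalences $E\sim\Omega\sim\rho$, $\mu\sim\sum\mu_l$, and that absolute continuity and Lebesgue decomposition make sense for projection-valued against scalar measures — is standard and was already flagged in the introduction.
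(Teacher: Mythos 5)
Your plan is essentially the same as what the paper does: Theorem~\ref{K2} is not proved in the paper but simply attributed to \cite[Theorem 6.6]{Gesztesy-Tsekanovskii-2000}, with the same observation that $A$ and $\bigoplus_{l=1}^n A_l$ are selfadjoint extensions of the common symmetry $S$ and differ by a finite-rank (indeed rank-one) resolvent perturbation, so that the absolutely continuous parts and their local multiplicities are governed by that result. Your identification of $r(x)$ with the multiplicity function of the direct sum is likewise the intended reading, so the proposal matches the paper's route.
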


\noindent
The following complete description of the singular part of the spectral measure $E$ of $A$ is the main result of this paper
(formulated for the Schr\"odinger case; the general statement is Theorem~\ref{K38}).
Notation: If $X$ is a Borel set, we write $\mathds{1}_X\cdot\nu$ for the measure acting as $(\mathds{1}_X\cdot\nu)(\Delta)=\nu(X\cap\Delta)$.

\begin{theorem}\label{K12}
	Let $A$ be a Schr\"odinger operator on a star-graph given by the data $(1)$--$(3)$ using the standard
	interface condition \eqref{K1}.
	Denote by $E$ the projection-valued spectral measure of $A$, let $\mu$ be the sum of the scalar spectral measures of the
	non-interacting operators $A_l$, and let $E_s$ and $\mu_s$ be their singular parts with respect to the Lebesgue measure.
	Let $E_{s,ac}$ and $E_{s,s}$ be the absolutely continuous and singular parts of $E_s$ with respect to $\mu$.
	Moreover, let $N_A$ be the spectral multiplicity function of $A$ and (as in \eqref{r(x)})
	\[
		r(x):=\#\big\{l\in\{1,\dots,n\}:\,D_{\mu}{\mu_l}(x)>0\big\}
		\,.
	\]
	Then
	\begin{enumerate}[{\rm(I)}]
		\item $E_{s,ac}\sim\mathds{1}_{X_{>1}}\cdot\mu_s$ where $X_{>1}:=r^{-1}(\{2,\dots,n\})$ .
		\item $N_A(x)=r(x)-1$ for $E_{s,ac}$-a.a.\ points $x\in\bb R$.
		\item $N_A(x)=1$ for $E_{s,s}$-a.a.\ points $x\in\bb R$.
	\end{enumerate}
\end{theorem}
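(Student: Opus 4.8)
The plan is to pass from the star-graph operator $A$ to its matrix-valued Weyl function $M(z)$, built as an extension of the symmetry $S$ with defect index $(n,n)$, and to read off all spectral information from the Herglotz representation of $M$. Concretely, if $m_l(z)$ is the scalar Weyl function attached to $A_l$ (so $\mu_l$ is the measure in its Herglotz representation), then the standard interface condition \eqref{K1} forces $M(z)$ to have the explicit form obtained by pasting: continuity glues the "values" of all edges and Kirchhoff sums the "derivatives", which algebraically produces an $n\times n$ matrix whose entries are rational in the $m_l$ (this is carried out in Section~3 of the paper). The spectral multiplicity function $N_A(x)$ is then, $\rho$-a.e., the rank of the "jump" of $M$ across the real axis, i.e. the rank of the positive semidefinite matrix obtained as the $\varangle$-limit $\pi^{-1}\Im M(x+i0)$ suitably normalised; equivalently, it is the local rank of the matrix measure $\Omega$. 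So the whole theorem reduces to a pointwise linear-algebra computation of this rank at $\mu_s$-a.a.\ and at $\mu$-null points, using the known boundary behaviour of Cauchy and Poisson integrals recalled in Section~2.

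First I would dispose of item {\rm(III)}: on a set $X$ with $\mu(X)=0$, every $m_l$ has, at $E_{s,s}$-a.a.\ points of $X$, vanishing imaginary part in the $\varangle$-limit sense and (generically) a finite real limit, while the pasted matrix $M$ still acquires a nontrivial jump coming purely from the interface — the Kirchhoff row. One shows the jump matrix has rank exactly $1$ there: it is a rank-one matrix of the form $c\,vv^*$ determined by the single "collective" degree of freedom surviving the interface condition when no individual edge contributes. This is where the decomposition $E_s=E_{s,ac}+E_{s,s}$ relative to $\mu$ is used: on the $\mu$-singular part the edge measures are invisible, so only this rank-one interface contribution remains, giving $N_A=1$.

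For items {\rm(I)} and {\rm(II)} I would work on the set of "good" points — those $x$ at which all Radon-Nikodym derivatives $D_\mu\mu_l(x)$ exist, the corresponding Poisson integrals have the expected finite nonzero or zero $\varangle$-limits according to whether $D_\mu\mu_l(x)>0$ or $=0$, and the Cauchy (principal-value) integrals converge; by the differentiation theory and the boundary-behaviour results of Section~2 this set carries all of $\mu_s$. At such a point, exactly $r(x)$ of the functions $m_l$ blow up (their imaginary parts tending to $+\infty$ along the nontangential approach, proportionally to $1/\,\text{something}\to 0$), and the remaining $n-r(x)$ stay bounded with real limits. Substituting the asymptotics into the explicit formula for $M(z)$ and computing the leading behaviour of $\Im M(x+i0)$, one finds that the $r(x)$ "large" directions partially cancel against the one interface constraint, so the rank of the jump matrix is $r(x)-1$ when $r(x)\ge 2$, and $0$ when $r(x)=1$ (the spectrum disappears, matching the informal statement {\rm(I)}). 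Translating "rank of the jump $=r(x)-1$ on $X_{>1}$, $=0$ off $X_{>1}$" back to measures yields $E_{s,ac}\sim\mathds 1_{X_{>1}}\cdot\mu_s$ and $N_A=r-1$ there.

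The main obstacle I expect is the asymptotic analysis at the "good" points: one must control the $\varangle$-limits of all entries of $M(z)$ simultaneously, keeping track of which combinations of the diverging $m_l$'s cancel, and ensure that the leading-order jump matrix is computed correctly (in particular that no spurious extra rank appears from cross terms and that the $-1$ is exactly the one-dimensional interface constraint, not more). Making the set of good points precise — so that it is both large enough to support $\mu_s$ and regular enough for every needed limit to exist — is the delicate measure-theoretic half, and it is here that the Cauchy-integral boundary-value machinery from Section~2 does the real work.
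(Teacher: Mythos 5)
Your proposal follows essentially the same route as the paper: reduce via the Titchmarsh--Kodaira functional model to computing the $\rho$-a.e.\ rank of the symmetric derivative $\omega=\frac{d\Omega}{d\rho}$ of the matrix measure, prove rank $1$ on $\mu$-zero sets for item (III), and on a set of ``good'' points carrying all of $\mu_s$ compute rank $r(x)-1$ for items (I)--(II), with the Poltoratski-type boundary-value results supplying the measure-theoretic underpinning. The one subtlety your sketch glosses over is that in the $\mu$-singular stage the individual $m_l$ need not have finite real limits (only suitable ratios do), which is why the paper splits that stage into the two cases $\omega_{nn}(x)>0$ and $\omega_{nn}(x)=0$; but this is a refinement of, not a departure from, the argument you outline.
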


\noindent
Notice that the Radon-Nikodym derivatives $D_{\mu}\mu_l$ and the number $r$ are defined $\mu$-a.e. The functions $N_A$ and $r$ should be considered as representatives of the equivalence classes under different equivalence relations. However the equality in item (II) makes sense and holds true $E_{s,ac}$-a.e. for any choice of such representatives, because the measure $E_{s,ac}$ is absolutely continuous with respect to both $E$ and $\mu$. In turn the set $X_{>1}$ is defined up to a $\mu$-zero set, but the measure $\mathds{1}_{X_{>1}}\cdot\mu_s$ is defined uniquely.

Finally, let us make explicit the behavior of the point spectrum.

\begin{theorem}\label{K3}
	Let $A$ be a Schr\"odinger operator on a star-graph given by the data $(1)$--$(3)$ using the standard
	interface condition \eqref{K1}. Denote by $m_l$ the Weyl functions of the non-interacting operators $A_l$, and $r(x)$ be as in \eqref{r(x)}.
	Then a point $x\in\bb R$ belongs to $\sigma_p(A)$, if and only if one of the following alternatives takes place.
	\begin{itemize}
		\item[{\rm(I/II)}] The point $x$ belongs to at least two of the point spectra $\sigma_p(A_l)$.
			In this case the multiplicity of the eigenvalue $x$ is equal to
			\[
				\#\big\{l=1,\dots,n:\,x\in\sigma_p(A_l)\big\}-1
				\,.
			\]
		\item[{\rm(III)}] The limits $m_l(x):=\lim_{\varepsilon\downarrow 0}m_l(x+i\varepsilon)$ all exist, are real,
			we have $\lim_{\varepsilon\downarrow 0}\frac 1{i\varepsilon}\big(m_l(x+i\varepsilon)-m_l(x)\big)\in[0,\infty)$, and
			$\sum_{j=1}^n m_l(x)=0$. In this case $x$ is a simple eigenvalue.
	\end{itemize}
\end{theorem}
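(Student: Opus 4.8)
The plan is to work entirely with the matrix-valued Weyl function of $A$, which by the discussion in the introduction can be written (after the computation announced for Section~3) in terms of the scalar Weyl functions $m_l$ of the non-interacting operators. Concretely, with the standard interface condition \eqref{K1} the relevant $n\times n$ Weyl function $M(z)$ has entries built from the $m_l(z)$ and the quantity $\big(\sum_{l=1}^n m_l(z)\big)^{-1}$; the point masses of the Herglotz measure $\Omega$ of $M$ sit exactly at the points $x$ where $M$ has a pole, i.e.\ where $-\lim_{\varepsilon\downarrow 0} i\varepsilon\, M(x+i\varepsilon)$ is a nonzero positive semidefinite matrix, and the multiplicity of $x$ as an eigenvalue of $A$ is the rank of that residue matrix. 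So the whole proof reduces to: (a) identify when $M$ has a pole at a real point $x$, and (b) compute the rank of the residue. First I would recall the standard fact (Titchmarsh–Kodaira / Herglotz function theory, collected in Section~2) that for a scalar Herglotz function $m_l$, $x\in\sigma_p(A_l)$ iff $\lim_{\varepsilon\downarrow 0} m_l(x+i\varepsilon) = \infty$, and that if instead the limit $m_l(x)$ exists finitely then automatically $\lim_{\varepsilon\downarrow0}\frac{1}{i\varepsilon}(m_l(x+i\varepsilon)-m_l(x))\in[0,\infty]$, the value being finite exactly when $m_l$ has a finite "derivative-type" boundary behaviour.

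Next I would split into cases according to how many of the $m_l$ blow up at $x$. If at least two of them, say $m_{l}$ for $l$ in a set $L$ with $|L|\ge 2$, tend to $\infty$, then $\sum_l m_l\to\infty$ as well, so $\big(\sum m_l\big)^{-1}\to 0$; substituting the asymptotics $m_l(x+i\varepsilon)\sim a_l/(i\varepsilon)$ with $a_l>0$ for $l\in L$ into the explicit matrix $M$, one reads off that $-\lim i\varepsilon M(x+i\varepsilon)$ is a rank-$(|L|-1)$ positive semidefinite matrix — the "$-1$" coming precisely from the one-dimensional cancellation produced by the Kirchhoff sum-condition, which is the same mechanism already visible in item (I)/(II) of the main theorems. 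This gives the multiplicity formula in case (I/II). If exactly one $m_l$ blows up, the same substitution shows that the $1/(i\varepsilon)$ singularity is entirely absorbed by the $\big(\sum m_l\big)^{-1}$ factor and $-\lim i\varepsilon M(x+i\varepsilon)=0$, so $x$ is not an eigenvalue — this is the "spectrum disappears" phenomenon of item (I). If none of the $m_l$ blows up, all boundary limits $m_l(x)$ exist, and $M$ has a pole at $x$ iff the scalar factor $\big(\sum_l m_l(z)\big)^{-1}$ does, i.e.\ iff $\sum_l m_l(x)=0$ (and the relevant derivative condition $\lim\frac1{i\varepsilon}(m_l(x+i\varepsilon)-m_l(x))\in[0,\infty)$ holds, ensuring the pole is genuine and first-order); a short computation shows the residue matrix then has rank exactly $1$, giving the simple eigenvalue of case (III). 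One should also check the converse — that these are the only ways $M$ can have a real pole — which follows because the only singular mechanisms available in the explicit formula for $M$ are blow-up of some $m_l$ or vanishing of $\sum m_l$.

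The main obstacle I anticipate is the bookkeeping in the mixed regime where two or more $m_l$ blow up: one must verify that the leading $1/(i\varepsilon)$ terms combine to give a residue of rank exactly $|L|-1$ and not less, which requires checking that the relevant $(|L|-1)$-dimensional minor is nondegenerate. This is a concrete but slightly delicate linear-algebra computation with the explicit $M$; the cleanest route is probably to change basis so that the interface "continuity" directions and the "Kirchhoff" direction are separated, after which the residue becomes block-diagonal with an obviously rank-$(|L|-1)$ block and a zero block. A secondary technical point is justifying the interchange of the limit $\varepsilon\downarrow0$ with the algebraic operations in $M(x+i\varepsilon)$ at points where some $m_l$ stay finite — this is handled by the scalar Herglotz boundary-behaviour results recalled in Section~2, together with the hypothesis $\sum_l m_l(x)=0$ which guarantees the denominator genuinely vanishes rather than merely being small.
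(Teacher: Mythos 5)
Your route is genuinely different from the paper's. The paper proves the general version (Theorem~\ref{K35}) by a direct operator-theoretic analysis of the boundary relation $\Gamma$: it classifies all eigenvectors of $A$ at $x$ according to whether the common boundary value $\alpha$ vanishes ($\alpha=0$ gives the space $\mc L_x$ built from eigenvectors of the $A_l$, of dimension $\sum\dim\ker(A_l-x)-1$; $\alpha\neq0$ forces $\ker(A_l-x)=\{0\}$ for all $l$ and yields a one-dimensional eigenspace), and only at the end translates these alternatives into boundary asymptotics of the $m_l$. You instead read the multiplicity off as the rank of the residue $\lim_{\varepsilon\downarrow0}\varepsilon\,\Im M(x+i\varepsilon)$ of the explicit matrix Weyl function \eqref{M}. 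This is a legitimate and in some ways more uniform strategy --- it is exactly the mechanism the paper uses for the much harder singular continuous part (Lemmas~\ref{lemma d-nn>0}, \ref{lemma s-reg} and the rank computation in Lemma~\ref{lemma rank}), so your Theorem would become the ``point-mass special case'' of that machinery. The rank count you anticipate does come out right, including the slightly asymmetric role of the $n$-th coordinate: when $|L|\geq2$ indices blow up the residue is supported on $L\cap\{1,\dots,n-1\}$ and Lemma~\ref{lemma rank} gives rank $|L|-1$ in both subcases $n\in L$ and $n\notin L$; when exactly one blows up the residue vanishes; under the hypotheses of (III) the residue is the rank-one matrix $(m_l(x)m_k(x))_{l,k}$ (with $m_n$ replaced by $1$) appearing in Lemma~\ref{lemma d-nn>0}. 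What the paper's approach buys is that the converse comes for free, since every eigenvector is classified; what yours buys is a single computation that also covers multiplicity.

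The one genuine gap is the converse, which you dispatch in one sentence (``the only singular mechanisms available \dots are blow-up of some $m_l$ or vanishing of $\sum m_l$''). This needs an actual argument: you must show that if no $m_l$ has a point mass at $x$ and $\lim_{\varepsilon\downarrow0}\varepsilon\,\Im M_{ll}(x+i\varepsilon)>0$ for some $l$, then the full package of conditions in (III) holds --- in particular that each $m_l(x+i\varepsilon)$ converges to a \emph{real} limit with $\lim\frac1{i\varepsilon}(m_l(x+i\varepsilon)-m_l(x))\in[0,\infty)$, not merely that $\sum m_l\to0$. The clean way is via the classical fact that a Herglotz function $h$ with measure $\mu_h$ satisfies $\lim_{\varepsilon\downarrow0}\frac{\Im h(x+i\varepsilon)}{\varepsilon}=b_h+\int\frac{(1+t^2)\,d\mu_h(t)}{(t-x)^2}\in(0,\infty]$, and $-1/h$ has a point mass at $x$ exactly when this quantity is finite and $h(x+i\varepsilon)\to0$; applied to $h=m=\sum m_l$ this forces $\int\frac{(1+t^2)\,d\mu_l(t)}{(t-x)^2}<\infty$ for every $l$ and hence all the conditions of (III). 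Without some such argument for the entries $M_{ll}=m_l-m_l^2/m$ (which are Herglotz, but whose second summand is not), the ``only if'' direction is not established. Secondarily, the identification ``multiplicity of the eigenvalue $=\operatorname{rank}$ of the residue of $M$'' should be cited explicitly; it is exactly Proposition~\ref{K65} together with Proposition~\ref{K68}, not just the Titchmarsh--Kodaira formula in the abstract.
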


\noindent
The connection of Theorem~\ref{K3} with the general result Theorem~\ref{K12} is made as follows:
For a point belonging to the point spectrum of at least one of the
operators $A_l$, we have $D_{\mu}\mu_k(x)>0$ if and only if $x\in\sigma_p(A_k)$ ($k\in\{1,\ldots,n\}$). Hence, for such points,
\[
	r(x)=\#\big\{l=1,\dots,n:\,x\in\sigma_p(A_l)\big\}
	\,.
\]
Moreover: If $x$ is an eigenvalue of only one operator $A_l$ it disappears. And the set of all points $x$ which satisfy the conditions stated
in {\rm(III)} is $\mu$-zero.

\section{Preliminaries}
\subsection{Some terminology concerning measures}

First of all, let us fix some measure theoretic language. We denote by $\mc B$ the $\sigma$-algebra of
all Borel sets on $\bb R$. All measures $\nu$ are understood to be Borel measures, and this includes the requirement that
compact sets have finite measure. Whenever writing ``$\nu(X)$'', this implicitly includes that $X\in\mc B$. If we speak of
a \emph{positive measure} $\nu$, this measure needs not to be finite. For a \emph{complex measure} $\nu$, we denote by $|\nu|$ its
\emph{total variation}, and this is always a finite positive measure. If a complex measure takes only real values,
we also speak of a \emph{real measure}.

In some places we have to deal with sets which are not necessarily Borel sets, and with functions which are not necessarily Borel measurable.
We say that $X$ is a \emph{$\nu$-zero set}, if $X\subseteq\bb R$ and there exists a Borel set $X'\supseteq X$ such that $\nu(X')=0$.
We say that a set $X\subseteq\mathbb R$ is \emph{$\nu$-full}, if its complement is $\nu$-zero.
A property is said to hold \emph{$\nu$-a.e.} or \emph{for $\nu$-a.a.\ points $x$}, if the set of all points where it holds is $\nu$-full.
Moreover, we say that a partially defined function $f$ is \emph{$\nu$-measurable},
if its domain is $\nu$-full and there exists a Borel measurable function
which coincides $\nu$-a.e.\ with $f$. Integrals $\int_{\bb R}f\,d\nu$ of $\nu$-measurable functions $f$ are defined accordingly.

Of course, such terminology could be avoided by considering $\nu$ as a measure on the completion
of the $\sigma$-algebra $\mc B$ with respect to $\nu$, and understanding measurability with respect to this larger $\sigma$-algebra.
However, then one has to work with different $\sigma$-algebras for different measures, and this would make things technically laborious.

When $\nu$ is a (positive or complex) measure, and $\sigma$ is a positive measure, we say that $\nu$ is \emph{absolutely continuous}
with respect to $\sigma$ (and write $\nu\ll\sigma$), if each $\sigma$-zero set is also $\nu$-zero. We say that $\nu$ and $\sigma$ are
\emph{mutually singular} (and write $\nu\perp\sigma$), if there exists a Borel set $\Delta$ which is $\nu$-full and $\sigma$-zero. Moreover,
we say that $\nu$ and $\sigma$ are \emph{mutually absolutely continuous} (and write $\nu\sim\sigma$), if $\nu\ll\sigma$ and $\sigma\ll\nu$.

Each measure $\nu$ has a (essentially unique) decomposition into a sum $\nu=\nu_{ac}+\nu_s$ of a measure $\nu_{ac}$ with $\nu_{ac}\ll\sigma$ and
a measure $\nu_s$ with $\nu_s\perp\sigma$; this is called the \emph{Lebesgue decomposition} of $\nu$ with respect to $\sigma$.
If $\nu\ll\sigma$, then there exists a (essentially unique) Borel measurable function $D_\sigma\nu$ with
\[
	\nu(\Delta)=\int_{\Delta}D_\sigma\nu\,d\sigma,\quad \Delta\in\mc B
	\,.
\]
This function is called the \emph{Radon-Nikodym derivative} of $\nu$ with respect to $\sigma$. It belongs to $L^1(\sigma)$, if
$\nu$ is a complex measure, and to $L^1_{\loc}(\sigma)$, if $\nu$ is a positive measure.

Let $\nu$ be a complex measure, and let $f\in L^1(\nu)$. Then we denote by $f\cdot\mu$ the measure which is absolutely continuous with respect to
$\nu$ and has Radon-Nikodym derivative $f$, i.e.,
\[
	(f\cdot\nu)(\Delta):=\int_{\Delta}f\,d\nu,\quad \Delta\in\mc B
	\,.
\]
In particular, if $X$ is a Borel set, we have $(\mathds{1}_X\cdot\nu)(\Delta)=\nu(X\cap\Delta)$, $\Delta\in\mc B$, where
$\mathds{1}_X$ denotes the indicator function of the set $X$. If $\nu$ is a positive measure, the same notation will be
applied when $f\in L^1_{\loc}(\nu)$, $f\geq 0$, and the product $f\cdot\nu$ will again be a positive measure.

The \emph{support} of a measure $\nu$ is the set
\[
    \supp\nu:=\big\{x\in\bb R:\,\nu([x-\varepsilon,x+\varepsilon])>0,\varepsilon>0\big\}
    =\bigcap_{\substack{A\text{ closed,}\\ \nu\text{-full}}}A
    \,.
\]
This notion must be distinguished from the notion of a \emph{minimal support} of the measure $\nu$. By this one means a
any Borel set $S$ with $\nu(\bb R\setminus S)=0$ and such that any set $S_0\subseteq S$ with $\nu(S_0)=0$ is also
Lebesgue zero.

All these notions also make sense when $\nu$ is a projection valued measure (like the spectral measure of a selfadjoint operator)
or a matrix valued measure (like the measure in the Herglotz integral representation of a matrix valued Herglotz function).

\subsection{The spectral multiplicity function}

In order to define the spectral multiplicity function, which measures the local multiplicity of the spectrum,
we have to provide some background material. These topics are of course classical, see, e.g.,
\cite{Akhiezer-Glazman-1993}, \cite{Birman-Solomyak-1987}, \cite{Reed-Simon-1980}.
Let $A$ be a (possibly unbounded) selfadjoint operator acting in some Hilbert space $H$, and denote by $E$ its projection-valued spectral measure.
A linear subspace $G$ of $H$ is called \emph{generating for $A$}, if (``$\bigvee$'' denotes the closed linear span)
\[
	\bigvee\big\{E(\Delta)G:\,\Delta\in\mathcal B\big\}=H
\]
The \emph{spectral multiplicity} of the operator $A$ is defined as the minimal dimension of a generating subspace, and denoted by $\mult A$.
If $\mult A=1$, one also says that $A$ has \emph{simple spectrum}.
For the sake of simplicity (and because this is all we need), we assume throughout the following that $\mult A<\infty$.

There exist (see, e.g., \cite[Theorem 7.3.7]{Birman-Solomyak-1987}) elements $g_l$, $l=1,\ldots,\mult A$, such that
the subspaces
$H_l:=\bigvee\{E(\Delta)g_l:\,\Delta\in\mathcal B\}$
are mutually orthogonal, the subspace $G:=\spn\{g_1,\ldots,g_{\mult A}\}$
is generating, and the scalar measures $\nu_l$ defined as $\nu_l(\Delta):=(E(\Delta)g_l,g_l)$, $\Delta\in\mc B$, satisfy
\begin{equation}\label{measures}
	\nu_{\mult A}\ll\dots\ll\nu_2\ll\nu_1\sim E
	\,.
\end{equation}
Each set $\{g_1,\ldots,g_{\mult A}\}$ with these properties is called a \emph{generating basis}.

If $\{g_1,\dots,g_{\mult A}\}$ is a generating basis,
the subspaces $H_l$ are mutually orthogonal and each of them reduces the operator $A$. The operator $A|_{H_l}$ has simple spectrum and is unitarily
equivalent to the operator of multiplication by the independent variable in the space $L_2(\mathbb R,\nu_{g_l})$. Thus $A$ is unitarily equivalent
to the multiplication operator in the space $\prod_{l=1}^{\mult A}L_2(\mathbb R,\nu_l)$. Consider the sets (which are defined up to
$E$-zero sets)
\[
    Y_l:=\big\{x\in\bb R:\,(D_{\nu_{g_1}}\nu_{g_l})(x)>0\big\}
    \,.
\]
Then
\[
	\mathbb R\supseteq Y_1\supseteq Y_2\supseteq\ldots\supseteq Y_{\mult A}
	\,,
\]
and these sets may be considered as layers of the spectrum. Hence, it is natural to define the \emph{spectral multiplicity function} of $A$ as
\begin{equation}\label{K19}
    N_A(x):=\#\big\{l:x\in Y_l\big\}
    \,.
\end{equation}
This function is defined almost everywhere with respect to the projection valued spectral measure of $A$ and does not depend on the choice of a generating basis $\{g_1,\ldots,g_{\mult A}\}$, cf.\
\cite[Theorem 7.4.2]{Birman-Solomyak-1987}. Spectral multiplicity function is a unitary invariant.

If $A$ is a selfadjoint linear relation, it can be orthogonally decomposed into a sum of a selfadjoint linear operator $A_{op}$ and a pure multivalued selfadjoint linear relation (the pure relational part of $A$). In this case, define $N_A:=N_{A_{op}}$. Obviously, this definition is also unitarily invariant.

Of course, the spectral multiplicity function of an eigenvalue is equal to the dimension of the corresponding eigenspace.

\subsection{Symmetric derivatives of measures}

In this subsection we recall the notion of the symmetric derivative of measures, and the formula of de la Vall\'ee-Poussin which describes the
Lebesgue decomposition of one positive Borel measure with respect to another. These topics are again classical, see, e.g., \cite{saks-1964}.
A presentation in an up-to-date language can be found, e.g., in \cite{diBenedetto-2002}.

\begin{theorem}[\cite{diBenedetto-2002}]\label{K5}
	Let $\nu$ and $\sigma$ be positive measures. Then there exists a Borel set $\mc E_{\nu,\sigma}\subseteq\supp\nu\cap\supp\sigma$ with
	\[
		\nu(\mc E_{\nu,\sigma})=\sigma(\mc E_{\nu,\sigma})=0
		\,,
	\]
	such that for each $x\in\supp\sigma\setminus\mc E_{\nu,\sigma}$ the limit
	\[
	    \lim_{\varepsilon\downarrow 0}\frac{\nu\big([x-\varepsilon,x+\varepsilon]\big)}{\sigma\big([x-\varepsilon,x+\varepsilon]\big)}
	\]
	exists in $[0,\infty]$ and defines a Borel measurable function.
\end{theorem}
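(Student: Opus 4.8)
The plan is to reduce the statement to the Besicovitch differentiation theorem for Radon measures (in dimension one one may equally invoke the Vitali covering theorem), applied to the auxiliary measure $\lambda:=\nu+\sigma$. Since $\nu\leq\lambda$ and $\sigma\leq\lambda$, both $\nu$ and $\sigma$ are absolutely continuous with respect to $\lambda$; fix Borel representatives $f:=D_\lambda\nu$ and $g:=D_\lambda\sigma$ of their Radon--Nikodym derivatives, so that $f,g\geq 0$ and $f+g=1$ holds $\lambda$-a.e. The differentiation theorem, applied with the symmetric closed intervals $[x-\varepsilon,x+\varepsilon]$ as the shrinking family (admissible because compact intervals have finite $\lambda$-measure), yields a Borel set $B$ with $\lambda(B)=0$ such that for every $x\in\supp\lambda\setminus B$ both limits
\[
	\lim_{\varepsilon\downarrow 0}\frac{\nu([x-\varepsilon,x+\varepsilon])}{\lambda([x-\varepsilon,x+\varepsilon])}=f(x),\qquad
	\lim_{\varepsilon\downarrow 0}\frac{\sigma([x-\varepsilon,x+\varepsilon])}{\lambda([x-\varepsilon,x+\varepsilon])}=g(x)
\]
exist.

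Next I would set $\mc E_{\nu,\sigma}:=B\cap\supp\nu\cap\supp\sigma$. This is a Borel subset of $\supp\nu\cap\supp\sigma$, and $\nu(\mc E_{\nu,\sigma})\leq\lambda(B)=0$, $\sigma(\mc E_{\nu,\sigma})\leq\lambda(B)=0$. Now fix $x\in\supp\sigma\setminus\mc E_{\nu,\sigma}$ and check existence of the limit of the quotient $\nu([x-\varepsilon,x+\varepsilon])/\sigma([x-\varepsilon,x+\varepsilon])$, which is well defined since $x\in\supp\sigma$ forces $\sigma([x-\varepsilon,x+\varepsilon])>0$ for all $\varepsilon>0$. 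If $x\notin\supp\nu$, then $\nu([x-\varepsilon,x+\varepsilon])=0$ for all sufficiently small $\varepsilon$, so the quotient is eventually $0$ and the limit is $0$. Otherwise $x\in\supp\nu\cap\supp\sigma$ and $x\notin B$; since $\supp\sigma\subseteq\supp\lambda$ we have $\lambda([x-\varepsilon,x+\varepsilon])>0$, and dividing numerator and denominator by $\lambda([x-\varepsilon,x+\varepsilon])$ turns the quotient into a ratio that tends to $f(x)/g(x)$, read as $+\infty$ when $g(x)=0$ (in which case $f(x)=1$). In all cases the limit lies in $[0,\infty]$.

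For Borel measurability of the limit function on $\supp\sigma\setminus\mc E_{\nu,\sigma}$: for each fixed $\varepsilon>0$ the map $x\mapsto\nu([x-\varepsilon,x+\varepsilon])$ is Borel (a difference of compositions of a monotone right-continuous and a monotone left-continuous ``distribution function'' of $\nu$ with the continuous shifts $x\mapsto x\pm\varepsilon$), and likewise for $\sigma$; on $\supp\sigma$ the denominator is strictly positive, so the quotient is an $[0,\infty]$-valued Borel function there, and the pointwise limit along a sequence $\varepsilon_n\downarrow 0$ remains Borel. One minor point to clear up is that the differentiation theorem is usually quoted for open balls: passing from $[x-\varepsilon,x+\varepsilon]$ to $(x-\varepsilon,x+\varepsilon)$ alters the measures only by the mass of the two endpoints, and for fixed $x$ only countably many $\varepsilon$ carry such mass, so the closed and open symmetric derivatives agree $\lambda$-a.e.; convergence along \emph{all} $\varepsilon\downarrow 0$ (not merely along a sequence) then follows from the usual monotone sandwiching between radii $\varepsilon_n\leq\varepsilon\leq\varepsilon_{n-1}$.

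The only real subtlety — and it is bookkeeping rather than a genuine obstacle — is arranging the exceptional set to sit inside $\supp\nu\cap\supp\sigma$ instead of merely being $\lambda$-null, which is precisely why the points of $\supp\sigma$ lying outside $\supp\nu$ must be handled separately (trivially, as above); all the analytic substance is supplied by the cited differentiation theorem.
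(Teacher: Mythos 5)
The paper does not prove this statement; it is quoted from the literature (\cite{diBenedetto-2002}, the de la Vall\'ee-Poussin circle of results), so there is no in-paper argument to compare against. Your derivation from the Besicovitch differentiation theorem applied to $\lambda:=\nu+\sigma$ is the standard route and is essentially correct: the reduction to the two derivatives $f=D_\lambda\nu$, $g=D_\lambda\sigma$; the fact that $f(x)+g(x)=1$ automatically at every point where both symmetric limits exist (the two ratios sum to $1$ for every interval), which rules out the $0/0$ case; the separate, trivial treatment of $x\in\supp\sigma\setminus\supp\nu$ so that the exceptional set can be squeezed into $\supp\nu\cap\supp\sigma$; and the measurability argument via upper semicontinuity of $x\mapsto\nu([x-\varepsilon,x+\varepsilon])$ and sequential limits. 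The one step that would not survive scrutiny as written is the open-ball/closed-ball repair: the ``monotone sandwiching between radii $\varepsilon_n\leq\varepsilon\leq\varepsilon_{n-1}$'' closes for Lebesgue measure in the denominator because $\lambda_{\mathrm{Leb}}\big((x-\varepsilon_{n-1},x+\varepsilon_{n-1})\big)/\lambda_{\mathrm{Leb}}\big((x-\varepsilon_n,x+\varepsilon_n)\big)\to1$, but for a general $\sigma$ the denominator can jump between consecutive radii, so that sandwich does not yield convergence along all $\varepsilon\downarrow0$. The issue is vacuous, however: the Besicovitch differentiation theorem in its standard formulations (Evans--Gariepy, Mattila) is stated for \emph{closed} balls $\{y:\,|y-x|\leq r\}$, which in $\bb R$ are exactly the intervals $[x-\varepsilon,x+\varepsilon]$, with convergence over all $r\downarrow0$; quote that version directly and delete the detour, and the proof is complete.
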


\noindent
Due to this proposition, we can naturally define a function which is partially defined, $\nu$-measurable, and $\sigma$-measurable (but may be not Borel measurable).

\begin{definition}\label{K20}
	Let $\nu$ and $\sigma$ be positive measures. Then the \emph{symmetric derivative $\frac{d\nu}{d\sigma}$} of
	$\nu$ with respect to $\sigma$ is the partially defined function
	\begin{equation}\label{K9}
		\frac{d\nu}{d\sigma}(x):=
		\begin{cases}
			\lim\limits_{\varepsilon\downarrow 0}\frac{\nu([x-\varepsilon,x+\varepsilon])}{\sigma([x-\varepsilon,x+\varepsilon])}
				&\hspace*{-3mm},\quad x\in\supp\sigma\text{ and the limit exists in $[0,\infty]$},\\
			\infty &\hspace*{-3mm},\quad x\in\supp\nu\setminus\supp\sigma.\\
		\end{cases}
	\end{equation}

\end{definition}

\noindent
Note that this definition is symmetric in $\nu$ and $\sigma$ in the following sense:
\textit{If a point $x$ belongs to the domain of $\frac{d\nu}{d\sigma}$, then it also belongs to the domain of $\frac{d\sigma}{d\nu}$ and
$\frac{d\sigma}{d\nu}(x)=\big(\frac{d\nu}{d\sigma}(x)\big)^{-1}$.}

The symmetric derivative $\frac{d\nu}{d\sigma}$ can be used to explicitly construct the Lebesgue decomposition of $\nu$ with respect to
$\sigma$. To formulate this fact, denote
\[
    X_\infty(\nu,\sigma):=\Big\{x\in(\supp\nu\cup\supp\sigma)\setminus\mc E_{\nu,\sigma}:\,\frac{d\nu}{d\sigma}(x)=\infty\Big\}
    \,.
\]

\begin{theorem}[de la Vall\'ee-Poussin]\label{K7}
	Let $\nu$ and $\sigma$ be positive measures. Then
	\begin{enumerate}[$(i)$]
		\item The function $\frac{d\nu}{d\sigma}$ belongs to $L^1_{\loc}(\sigma)$. In particular, $\sigma(X_\infty(\nu,\sigma))=0$.
		\item For each Borel set $X\subseteq\bb R$, we have
			\[
				\nu(X)=\nu\big(X\cap X_\infty(\nu,\sigma)\big)+\int\limits_X\frac{d\nu}{d\sigma}(x)\,d\sigma(x)
				\,.
			\]
	\end{enumerate}
\end{theorem}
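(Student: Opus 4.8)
The plan is to reduce the statement to the classical differentiation theorem for Radon measures on $\bb R$ and then propagate everything through the Lebesgue decomposition. Write $\nu=f\cdot\sigma+\nu_s$ for the Lebesgue decomposition of $\nu$ with respect to $\sigma$, so $f:=D_\sigma\nu_{ac}\in L^1_{\loc}(\sigma)$, $f\ge0$, and $\nu_s\perp\sigma$. The one nontrivial ingredient is the differentiation statement: for positive measures $\alpha,\beta$ with Lebesgue decomposition $\alpha=g\cdot\beta+\alpha_s$ (so $\alpha_s\perp\beta$), one has $\lim_{\varepsilon\downarrow0}\frac{\alpha([x-\varepsilon,x+\varepsilon])}{\beta([x-\varepsilon,x+\varepsilon])}=g(x)$ for $\beta$-a.a.\ $x$; in particular this limit is $0$ $\beta$-a.e.\ when $\alpha\perp\beta$. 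On $\bb R$ this follows by a Vitali covering argument, the symmetric intervals forming a fine cover for every Radon measure. If Theorem~\ref{K5} is to be the only differentiation input one is allowed to use, then this is exactly the upgrade of its bare existence assertion to an identification of the limiting value, proved by the usual covering estimates (bound the $\beta$-measure of $\{x:\limsup_{\varepsilon\downarrow0}\alpha_s([x-\varepsilon,x+\varepsilon])/\beta([x-\varepsilon,x+\varepsilon])>t\}$ using that $\alpha_s$ sits on a $\beta$-null set, and sandwich $g$ between simple functions). This is the part where the real work lies; the rest is bookkeeping.

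Granting the differentiation statement, apply it to the pair $(\nu,\sigma)$. From $\nu([x-\varepsilon,x+\varepsilon])=(f\cdot\sigma)([x-\varepsilon,x+\varepsilon])+\nu_s([x-\varepsilon,x+\varepsilon])$, dividing by $\sigma([x-\varepsilon,x+\varepsilon])$ and letting $\varepsilon\downarrow0$ shows that for $\sigma$-a.a.\ $x$ the limit defining $\frac{d\nu}{d\sigma}(x)$ exists (note $x\in\supp\sigma$ for $\sigma$-a.a.\ $x$) and equals $f(x)<\infty$. Hence $\frac{d\nu}{d\sigma}=f$ $\sigma$-a.e., so $\frac{d\nu}{d\sigma}\in L^1_{\loc}(\sigma)$ and $\sigma(X_\infty(\nu,\sigma))=0$; this proves $(i)$.

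For $(ii)$ it remains to show that $\nu_s$ is concentrated on $X_\infty(\nu,\sigma)$. Apply the differentiation statement to the pair $(\sigma,\nu_s)$, which is mutually singular: for $\nu_s$-a.a.\ $x$ one has $\frac{\sigma([x-\varepsilon,x+\varepsilon])}{\nu_s([x-\varepsilon,x+\varepsilon])}\to0$, hence $\frac{\nu_s([x-\varepsilon,x+\varepsilon])}{\sigma([x-\varepsilon,x+\varepsilon])}\to\infty$ (reading the ratio as $\infty$ once $\sigma([x-\varepsilon,x+\varepsilon])=0$), and a fortiori $\frac{\nu([x-\varepsilon,x+\varepsilon])}{\sigma([x-\varepsilon,x+\varepsilon])}\to\infty$ because $\nu\ge\nu_s$. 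For $\nu_s$-a.a.\ $x$ we additionally have $x\in\supp\nu_s\subseteq\supp\nu$ and $x\notin\mc E_{\nu,\sigma}$ (since $\nu_s(\mc E_{\nu,\sigma})\le\nu(\mc E_{\nu,\sigma})=0$), so by Definition~\ref{K20} together with Theorem~\ref{K5} such $x$ lies in the domain of $\frac{d\nu}{d\sigma}$ and $\frac{d\nu}{d\sigma}(x)=\infty$, i.e.\ $x\in X_\infty(\nu,\sigma)$. This proves the claim.

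Finally, combine the pieces. For any Borel set $X$, the concentration property and $\sigma(X_\infty(\nu,\sigma))=0$ give
\[
	\nu\big(X\cap X_\infty(\nu,\sigma)\big)=\nu_s\big(X\cap X_\infty(\nu,\sigma)\big)=\nu_s(X)
	\,,
\]
while $\frac{d\nu}{d\sigma}=f$ $\sigma$-a.e.\ gives $\int_X\frac{d\nu}{d\sigma}\,d\sigma=\int_Xf\,d\sigma=(f\cdot\sigma)(X)$. Adding these and invoking $\nu=f\cdot\sigma+\nu_s$ yields the identity in $(ii)$. As already indicated, the main obstacle is establishing the differentiation statement itself; once that is in hand, the remaining steps are routine manipulations with Lebesgue decompositions and with the definition of the symmetric derivative.
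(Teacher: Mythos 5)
The paper does not prove Theorem~\ref{K7} at all; it is quoted as a classical result with references to Saks and DiBenedetto, so there is no in-paper argument to compare against. Your proposal is the standard route and I find it correct: decompose $\nu=f\cdot\sigma+\nu_s$, invoke the differentiation theorem for Radon measures on $\bb R$ (centered closed intervals) to identify $\frac{d\nu}{d\sigma}$ with $f$ $\sigma$-a.e.\ (giving $(i)$ and $\sigma(X_\infty(\nu,\sigma))=0$), apply the same theorem to the mutually singular pair $(\sigma,\nu_s)$ to conclude that $\nu_s$ is carried by $X_\infty(\nu,\sigma)$ (correctly handling the points of $\supp\nu_s\setminus\supp\sigma$ via the second clause of Definition~\ref{K20} and discarding $\mc E_{\nu,\sigma}$, which is $\nu_s$-null), and then assemble $(ii)$ by additivity. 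The one place where you lean on an unproved ingredient is the differentiation theorem itself; you flag this honestly, and on $\bb R$ it is genuinely available --- though strictly it is the Besicovitch--Vitali covering property for arbitrary Radon measures with \emph{centered} intervals rather than the plain Vitali lemma for Lebesgue measure, which in dimension one is elementary because any cover of a bounded set by intervals admits a subfamily of overlap multiplicity at most $2$. Since that ingredient is exactly the content of Theorem~\ref{K5} upgraded to identify the limit, and the paper treats the whole circle of results as classical, I see no gap of substance; only the covering estimate is left as a sketch.
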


\noindent
Let $\nu=\nu_{ac}+\nu_s$ be the Lebesgue decomposition of $\nu$ with respect to $\sigma$. Then, indeed, the formula of de la Vall\'ee-Poussin says
that
\begin{equation}\label{K21}
	\nu_{ac}=\frac{d\nu}{d\sigma}(x)\cdot\sigma,\quad \nu_s=\mathds{1}_{X_\infty(\nu,\sigma)}\cdot\nu
	\,.
\end{equation}
In particular, if $\nu\ll\sigma$, then $\frac{d\nu}{d\sigma}$ is a Radon-Nikodym derivative of $\nu$ with respect to $\sigma$.

In the sequel we extensively use the following immediate consequences of the de la Vall\'ee-Poussin theorem.

\begin{corollary}\label{cor Vallee-Poussin sets}
	Let $\nu$ and $\sigma$ be positive measures, and let $X\subseteq\bb R$.
	\begin{enumerate}[$(i)$]
		\item If $\frac{d\nu}{d\sigma}(x)=0$ for all $x\in X$, then $X$ is $\nu$-zero.
		\item If the set $X$ is $\nu$-zero, then $\frac{d\nu}{d\sigma}(x)=0$ for $\sigma$-a.a. $x\in X$.
		\item If $X$ is a Borel set and $\frac{d\nu}{d\sigma}(x)\in[0,\infty)$ for all $x\in X$, then $\mathds{1}_X\cdot\nu\ll\sigma$.
	\end{enumerate}
\end{corollary}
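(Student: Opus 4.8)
The plan is to deduce all three items from the de la Vall\'ee--Poussin formula, Theorem~\ref{K7}, and in particular from its reformulation \eqref{K21}, where $\nu=\nu_{ac}+\nu_s$ is the Lebesgue decomposition of $\nu$ with respect to $\sigma$, so that $\nu_{ac}=\frac{d\nu}{d\sigma}\cdot\sigma$ and $\nu_s=\mathds 1_{X_\infty(\nu,\sigma)}\cdot\nu$. The single preparatory step is to fix a Borel measurable representative of the symmetric derivative. The domain of $\frac{d\nu}{d\sigma}$ contains the Borel set $B_0:=(\supp\nu\cup\supp\sigma)\setminus\mc E_{\nu,\sigma}$ (by Theorem~\ref{K5} together with the second case in the definition \eqref{K9}), this set is both $\nu$-full and $\sigma$-full (its complement lies in $(\bb R\setminus\supp\nu)\cup\mc E_{\nu,\sigma}$, which is $\nu$-null, and in $(\bb R\setminus\supp\sigma)\cup\mc E_{\nu,\sigma}$, which is $\sigma$-null), and on $B_0$ the function $\frac{d\nu}{d\sigma}$ is Borel measurable --- on $\supp\sigma\setminus\mc E_{\nu,\sigma}$ this is part of Theorem~\ref{K5}, and on $\supp\nu\setminus\supp\sigma$ it is constantly $\infty$. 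Extending this restriction by $0$ off $B_0$, I would obtain a Borel function $f\colon\bb R\to[0,\infty]$ that coincides with $\frac{d\nu}{d\sigma}$ at every point of $B_0$.

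For item $(iii)$ I would argue as follows. Every $x\in X$ lies in $\dom\frac{d\nu}{d\sigma}$ with $\frac{d\nu}{d\sigma}(x)<\infty$, so $X\cap X_\infty(\nu,\sigma)=\emptyset$, and hence $\mathds 1_X\cdot\nu_s=\mathds 1_{X\cap X_\infty(\nu,\sigma)}\cdot\nu=0$. Therefore $\mathds 1_X\cdot\nu=\mathds 1_X\cdot\nu_{ac}$, and since $X$ is Borel and $\nu_{ac}\ll\sigma$, the restricted measure $\mathds 1_X\cdot\nu_{ac}$ is again absolutely continuous with respect to $\sigma$. For item $(ii)$ I would pick a Borel set $X'\supseteq X$ with $\nu(X')=0$; Theorem~\ref{K7}$(ii)$ then gives $0=\nu(X')=\nu(X'\cap X_\infty(\nu,\sigma))+\int_{X'}\frac{d\nu}{d\sigma}\,d\sigma$, and since both summands are nonnegative, $\int_{X'}\frac{d\nu}{d\sigma}\,d\sigma=0$; as $\frac{d\nu}{d\sigma}\geq 0$ this forces $\frac{d\nu}{d\sigma}=0$ for $\sigma$-a.a.\ $x\in X'$, hence for $\sigma$-a.a.\ $x\in X$.

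For item $(i)$ I would use the Borel representative $f$: the hypothesis means $X\subseteq\{x\in\dom\frac{d\nu}{d\sigma}:\frac{d\nu}{d\sigma}(x)=0\}$, whence $X\subseteq\{f=0\}\cup(\bb R\setminus B_0)=:X'$, a Borel set with $\nu(X')\leq\nu(\{f=0\})$ since $\bb R\setminus B_0$ is $\nu$-null. As $\{f=0\}$ is disjoint from $X_\infty(\nu,\sigma)$ (where $f=\infty$), Theorem~\ref{K7}$(ii)$ yields $\nu(\{f=0\})=\int_{\{f=0\}}\frac{d\nu}{d\sigma}\,d\sigma$, and the integrand vanishes for $\sigma$-a.a.\ points of $\{f=0\}$ --- it equals $f=0$ on $\{f=0\}\cap B_0$, and $\{f=0\}\setminus B_0$ is $\sigma$-null. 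Consequently $\nu(X')=0$, so $X$ is $\nu$-zero. I expect this last item to be the main obstacle, not because it is deep but because of the measure-theoretic bookkeeping: $X$ is an arbitrary subset of $\bb R$ and $\frac{d\nu}{d\sigma}$ is only partially defined, so one has to interpose the Borel set $\{f=0\}$ and absorb the discrepancy into the $\nu$-null set $\bb R\setminus B_0$ before Theorem~\ref{K7} can be applied. Once the representative $f$ is in place, items $(i)$--$(iii)$ are all immediate consequences of Theorem~\ref{K7}.
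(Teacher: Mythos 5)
Your proof is correct and follows essentially the same route as the paper: all three items are read off from the de la Vall\'ee--Poussin formula (Theorem~\ref{K7} and \eqref{K21}). The paper works directly with the Borel set $X_0(\nu,\sigma)$ rather than a globally extended Borel representative $f$, and for item $(iii)$ evaluates $\nu(X'\cap X)$ via Theorem~\ref{K7}, $(ii)$, instead of passing through the decomposition \eqref{K21}, but these are only cosmetic differences.
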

\begin{proof}
	\hspace*{0pt}\\[0mm]\textit{Item $(i)$:}
	The set
	\[
		X_0(\nu,\sigma):=\Big\{x\in(\supp\nu\cup\supp\sigma)\setminus\mc E_{\nu,\sigma}:\,\frac{d\nu}{d\sigma}(x)=0\Big\}
	\]
	is a Borel set, and by Theorem~\ref{K7} we have $\nu(X_0(\nu,\sigma))=0$. Since $X\subseteq\mc E_{\nu,\sigma}\cup X_0(\nu,\sigma)$,
	the set $X$ is $\nu$-zero.

	\hspace*{0pt}\\[-2mm]\textit{Item $(ii)$:}
	There exists a Borel set $X'\supseteq X$ such that $\nu(X')=0$. Theorem~\ref{K7} gives $\int_{X'}\frac{d\nu}{d\sigma}(x)\,d\sigma(x)=0$.
	This shows that $\frac{d\nu}{d\sigma}(x)=0$ for $\sigma$-a.a.\ $x\in X'$, and hence for $\sigma$-a.a.\ $x\in X$.

	\hspace*{0pt}\\[-2mm]\textit{Item $(iii)$:}
	Let $X'\in\mc B$ with $\sigma(X')=0$ be given. Then also $\sigma(X'\cap X)=0$, and hence
	\[
		\nu(X'\cap X)=\int_{X'\cap X}\frac{d\nu}{d\sigma}(x)\,d\sigma(x)=0
		\,.
	\]
\end{proof}

\begin{corollary}\label{cor Vallee-Poussin measures}
	Let $\nu$ and $\sigma$ be positive measures on $\bb R$.	Let $\nu=\nu_{ac}+\nu_s$ and $\sigma=\sigma_{ac}+\sigma_s$
	be the Lebesgue decompositions of $\nu$ with respect to $\sigma$ and of $\sigma$ with respect to $\nu$, respectively.
	Then, the following hold:
	\begin{enumerate}[$(i)$]
	\item $\frac{d\nu}{d\sigma}(x)\in[0,\infty)$, $\sigma$-a.e.
	\item $\frac{d\nu}{d\sigma}(x)\in(0,\infty]$, $\nu$-a.e.
	\item $\frac{d\nu}{d\sigma}(x)\in(0,\infty)$, $\nu_{ac}$-a.e. and $\sigma_{ac}$-a.e.
	\item $\frac{d\nu}{d\sigma}(x)=\infty$, $\nu_s$-a.e.
	\item $\frac{d\nu}{d\sigma}(x)=0$, $\sigma_s$-a.e.
	\end{enumerate}
\end{corollary}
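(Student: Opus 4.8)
The plan is to deduce all five statements from the de la Vall\'ee-Poussin theorem (Theorem~\ref{K7}), the explicit formulas \eqref{K21} for the two Lebesgue decompositions, and the reciprocity $\frac{d\sigma}{d\nu}(x)=\big(\frac{d\nu}{d\sigma}(x)\big)^{-1}$ recorded after Definition~\ref{K20}. I would first dispose, once and for all, of the domain bookkeeping that underlies every item: by Theorem~\ref{K5} the domain of $\frac{d\nu}{d\sigma}$ contains $\big(\supp\sigma\setminus\mc E_{\nu,\sigma}\big)\cup\big(\supp\nu\setminus\supp\sigma\big)$, and the complement of this set is contained in $(\bb R\setminus\supp\sigma)\cup\mc E_{\nu,\sigma}$, which is $\sigma$-zero, and in $(\bb R\setminus\supp\nu)\cup\mc E_{\nu,\sigma}$, which is $\nu$-zero; hence the domain of $\frac{d\nu}{d\sigma}$ is both $\nu$-full and $\sigma$-full. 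By reciprocity the same is true for $\frac{d\sigma}{d\nu}$, the two functions share the same domain, and on it they are genuine reciprocals with the conventions $0^{-1}=\infty$, $\infty^{-1}=0$. Every item below is then a statement on this common full domain, and it suffices to exhibit a suitable full set on which the asserted inclusion holds.

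Item $(i)$ is immediate: Theorem~\ref{K7}$(i)$ says $\sigma\big(X_\infty(\nu,\sigma)\big)=0$, i.e.\ $\frac{d\nu}{d\sigma}(x)<\infty$ for $\sigma$-a.a.\ $x$, while $\frac{d\nu}{d\sigma}\ge 0$ everywhere on its domain. Item $(ii)$ is item $(i)$ applied with the roles of $\nu$ and $\sigma$ interchanged: $\frac{d\sigma}{d\nu}(x)\in[0,\infty)$ for $\nu$-a.a.\ $x$, hence $\frac{d\nu}{d\sigma}(x)=\big(\frac{d\sigma}{d\nu}(x)\big)^{-1}\in(0,\infty]$ for $\nu$-a.a.\ $x$.

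Item $(iv)$: by \eqref{K21} we have $\nu_s=\mathds{1}_{X_\infty(\nu,\sigma)}\cdot\nu$, so $X_\infty(\nu,\sigma)$ is $\nu_s$-full, and on $X_\infty(\nu,\sigma)$ the function $\frac{d\nu}{d\sigma}$ equals $\infty$ by the very definition of that set. Item $(v)$ is item $(iv)$ with $\nu$ and $\sigma$ interchanged, followed by reciprocity; equivalently, $\sigma_s\perp\nu$ furnishes a $\nu$-zero, $\sigma_s$-full Borel set on which $\frac{d\nu}{d\sigma}=0$ holds $\sigma$-a.e.\ by Corollary~\ref{cor Vallee-Poussin sets}$(ii)$.

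For item $(iii)$: since $\nu_{ac}\ll\nu$, item $(ii)$ already gives $\frac{d\nu}{d\sigma}\in(0,\infty]$ $\nu_{ac}$-a.e., and it remains only to exclude the value $\infty$, i.e.\ to check $\nu_{ac}\big(X_\infty(\nu,\sigma)\big)=0$. But $\nu_{ac}=\frac{d\nu}{d\sigma}\cdot\sigma$ by \eqref{K21}, so $\nu_{ac}\big(X_\infty(\nu,\sigma)\big)=\int_{X_\infty(\nu,\sigma)}\frac{d\nu}{d\sigma}\,d\sigma=0$ because $\sigma\big(X_\infty(\nu,\sigma)\big)=0$ by Theorem~\ref{K7}$(i)$; this settles the $\nu_{ac}$-part, and interchanging $\nu$ and $\sigma$ together with reciprocity gives the $\sigma_{ac}$-part. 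I do not expect a genuine obstacle anywhere in this argument; the only point demanding care is the uniform treatment of the partially defined domains and the consistent use of the reciprocity convention, which is precisely why I would isolate it at the outset.
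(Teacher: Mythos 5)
Your proof is correct and follows essentially the same route as the paper: item $(i)$ from Theorem~\ref{K7}$(i)$, items $(ii)$ and $(v)$ by exchanging the roles of $\nu$ and $\sigma$ together with the reciprocity of the symmetric derivative, item $(iv)$ from the formula $\nu_s=\mathds{1}_{X_\infty(\nu,\sigma)}\cdot\nu$ in \eqref{K21}, and item $(iii)$ by excluding the exceptional sets $\mc E_{\nu,\sigma}$, $X_0(\nu,\sigma)$, $X_\infty(\nu,\sigma)$. Your explicit domain bookkeeping at the outset and the integral computation $\nu_{ac}(X_\infty(\nu,\sigma))=\int_{X_\infty(\nu,\sigma)}\frac{d\nu}{d\sigma}\,d\sigma=0$ are minor presentational variants of the same argument.
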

\begin{proof}
	Item $(i)$ is immediate from Theorem~\ref{K7}, $(i)$, and item $(ii)$ follows by exchanging the roles of $\nu$ and $\sigma$ and
	remembering that the symmetric derivative is symmetric in $\nu$ and $\sigma$.
	For $(iii)$, note that
	\[
		X:=\Big\{x\in\bb R:\,\frac{d\nu}{d\sigma}(x)\in(0,\infty)\Big\}^c\subseteq
		\mc E_{\nu,\sigma}\cup X_0(\nu,\sigma)\cup X_\infty(\nu,\sigma)
		\,.
	\]
	We have $\nu(\mc E_{\nu,\sigma})=\nu(X_0(\nu,\sigma))=0$, and $\sigma(X_\infty(\nu,\sigma))=0$. Thus the union of these sets
	is $\nu_{ac}$-zero. Exchanging the roles of $\nu$ and $\sigma$ yields that $X$ is also $\sigma_{ac}$-zero.

	From \eqref{K21}, immediately, $\nu_s(X_\infty(\nu,\sigma)^c)=0$. Hence, $(iv)$ holds.
	Item $(v)$ follows from $(iv)$ again by exchanging the roles of $\nu$ and $\sigma$.
\end{proof}

\begin{remark}\label{K22}
	One can also define a symmetric derivative of a complex measure $\nu$ with respect to a positive measure $\sigma$ by using the
	same limit
	\begin{equation}\label{K39}
		\frac{d\nu}{d\sigma}(x):=\lim\limits_{\varepsilon\downarrow 0}
		 \frac{\nu([x-\varepsilon,x+\varepsilon])}{\sigma([x-\varepsilon,x+\varepsilon])}
	\end{equation}
	whenever it exists in $\bb C$. However, satisfactory knowledge can only be obtained
	when $\nu\ll\sigma$. In fact, the following holds:
	\textit{If $\nu\ll\sigma$, then the limit \eqref{K39} exists $\sigma$-a.e., and is a Radon-Nikodym derivative
	of $\nu$ with respect to $\sigma$.} This follows since we can decompose $\nu$ as $\nu=(\nu_{r,+}-\nu_{r,-})+i(\nu_{i,+}-\nu_{i,-})$ with
	four positive and finite measures which are all absolutely continuous with respect to $\sigma$.
\end{remark}

\begin{remark}\label{K23}
	Sometimes the following facts are useful.
	\begin{enumerate}[$(i)$]
		\item Existence and value of the symmetric derivative are local properties in the sense that
			\[
				 \frac{d\nu}{d\sigma}(x)=\frac{d(\mathds{1}_X\cdot\nu)}{d(\mathds{1}_X\cdot\sigma)}(x)
			\]
			whenever $X$ is a Borel set which contains $x$ in its interior.
		\item If $f$ is a continuous and nonnegative function on $\bb R$, then
			\[
				\frac{d(f\cdot\nu)}{d\nu}(x)=f(x),\quad x\in\bb R
				\,.
			\]
	\end{enumerate}
\end{remark}

\subsection{Boundary behavior of Cauchy integrals}

Let us recall the notion of \emph{Herglotz functions}\footnote{Often also called \emph{Nevanlinna functions}.}.
In the present section our main focus lies on scalar valued functions. However, in view of our needs in Section 3, let us provide
the definition and the integral representation for matrix valued functions.

\begin{definition}\label{K53}
	A function $M:\bb C\setminus\bb R\to\bb C^{n\times n}$ is called a (\emph{$n\!\times\!n$-matrix valued}) \emph{Herglotz function}, if
	\begin{enumerate}[$(i)$]
		\item $M$ is analytic and satisfies $M(\overline z)=M(z)^*$, $z\in\bb C\setminus\bb R$.
		\item For each $z\in\bb C^+$, the matrix $\Im M(z):=\frac 1{2i}(M(z)-M(z)^*)$ is positive semidefinite.
	\end{enumerate}
\end{definition}

\noindent
The following statement is known as the \emph{Herglotz-integral representation}. For the scalar case, it goes back as far as to
\cite{Herglotz-1911}. For the matrix valued case see \cite[Theorem 5.4]{Gesztesy-Tsekanovskii-2000}, where also an extensive list of references
is provided.

\begin{theorem}\label{K52}
	Let $M$ be a $n\!\times\!n$-matrix valued Herglotz function. Then there exists a finite positive $n\!\times\!n$-matrix valued
	measure\footnote{By a \emph{positive matrix valued measure} we understand a measure which takes positive semidefinite matrices as values.}
	$\Omega$, a selfadjoint matrix $a$, and a positive semidefinite matrix $b$, such that
	\begin{equation}\label{K51}
		M(z)=a+bz+\int_{\bb R}\frac{1+xz}{x-z}\,d\Omega(x),\quad z\in\bb C\setminus\bb R
		\,.
	\end{equation}
	Conversely, each function of this form is a Herglotz function.
	
	The data $a,b,\Omega$ in the representation \eqref{K51} is uniquely determined by $M$. In fact, $\Omega$ can be recovered by the Stieltjes inversion formula, $b$ from the non-tangential asymptotics of $M(z)$ towards $+i\infty$, and $a$ from the real part of $M(i)$.
\end{theorem}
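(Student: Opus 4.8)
The plan is to reduce the matrix valued statement to the classical scalar Herglotz-integral representation \cite{Herglotz-1911}. First I would fix a vector $v\in\bb C^n$ and consider the scalar function $f_v(z):=v^*M(z)v$. From Definition~\ref{K53} it is analytic on $\bb C\setminus\bb R$, satisfies $f_v(\overline z)=\overline{f_v(z)}$, and has $\Im f_v(z)=v^*(\Im M(z))v\ge 0$ for $z\in\bb C^+$, hence is a scalar Herglotz function; the scalar representation then yields real numbers $a_v,b_v$ with $b_v\ge 0$ and a finite positive Borel measure $\omega_v$ with $f_v(z)=a_v+b_vz+\int_{\bb R}\frac{1+xz}{x-z}\,d\omega_v(x)$. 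Applying this to the standard basis vectors $e_j$ and to the combinations $e_j\pm e_k$, $e_j\pm ie_k$, and polarizing, one obtains for every pair $(j,k)$ a complex Borel measure $\Omega_{jk}$ (countably additive because the scalar measures are) and complex numbers $a_{jk},b_{jk}$ with $e_j^*M(z)e_k=a_{jk}+b_{jk}z+\int_{\bb R}\frac{1+xz}{x-z}\,d\Omega_{jk}(x)$. I would then set $a:=(a_{jk})$, $b:=(b_{jk})$, $\Omega:=(\Omega_{jk})$: the symmetry $M(\overline z)=M(z)^*$ forces $a=a^*$, $b=b^*$ and $\Omega(\Delta)^*=\Omega(\Delta)$ for all Borel $\Delta$; for every $v$ one has $v^*\Omega(\Delta)v=\omega_v(\Delta)\ge 0$ and $v^*bv=b_v\ge 0$, so $\Omega$ is a positive matrix valued measure and $b$ is positive semidefinite; and finiteness of $\Omega$ is finiteness of the diagonal measures $\omega_{e_j}$. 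Since by construction the right-hand side $R(z)$ of \eqref{K51} satisfies $v^*R(z)v=f_v(z)$ for all $v$ and all $z\notin\bb R$, and a matrix is determined by its quadratic form, \eqref{K51} follows.

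For the converse, I would argue by direct computation. For $z\in\bb C^+$ and $x\in\bb R$ one has $\Im\frac{1+xz}{x-z}=\frac{(1+x^2)\,\Im z}{|x-z|^2}>0$, and $z\mapsto\frac{1+xz}{x-z}$ is analytic on $\bb C\setminus\bb R$ with $\big|\frac{1+xz}{x-z}\big|$ bounded, locally uniformly in $z$, by a constant times $1+x^2$; since $\Omega$ is finite this bound is $\Omega$-integrable, so the integral in \eqref{K51} defines an analytic matrix function. Then
\[
	\Im M(z)=(\Im z)\Big(b+\int_{\bb R}\frac{1+x^2}{|x-z|^2}\,d\Omega(x)\Big)
\]
is positive semidefinite for $z\in\bb C^+$ because $b$ and $\Omega$ are positive semidefinite, and $M(\overline z)=M(z)^*$ is immediate from $a=a^*$, $b=b^*$, $\Omega=\Omega^*$ together with $\overline{\tfrac{1+xz}{x-z}}=\tfrac{1+x\overline z}{x-\overline z}$. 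Hence every function of the form \eqref{K51} is Herglotz.

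For uniqueness, I would extract the three pieces of data from $M$ exactly as announced in the statement. Using $\frac{1+ix}{x-i}=i$ for all real $x$, evaluation at $z=i$ gives $M(i)=a+i\big(b+\Omega(\bb R)\big)$, whence $a=\tfrac12\big(M(i)+M(i)^*\big)=\Re M(i)$. For $b$: from $\tfrac1{iy}M(iy)=\tfrac a{iy}+b+\tfrac1{iy}\int_{\bb R}\frac{1+ixy}{x-iy}\,d\Omega(x)$ the integrand tends to $0$ pointwise in $x$ and is bounded uniformly in $y\ge 1$ by a constant, so dominated convergence yields $b=\lim_{y\to\infty}\tfrac1{iy}M(iy)$, and the same estimate with a sector-dependent constant gives the non-tangential limit towards $+i\infty$. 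With $a$ and $b$ fixed, $M(z)-a-bz=\int_{\bb R}\frac{1+xz}{x-z}\,d\Omega(x)$ is determined, and the Stieltjes inversion formula (applied in the scalar representation of $v^*M(z)v$ for each $v$, or entrywise) recovers $\Omega$. The one point requiring real care is the second half of the first paragraph — checking that the polarized set functions $\Omega_{jk}$ are genuine countably additive measures and that the assembled $\Omega$ is positive-semidefinite-valued (equivalently, that the off-diagonal ``cross'' measures are dominated by the diagonal ones); the analyticity and dominated-convergence estimates, and the scalar inputs, are routine or classical.
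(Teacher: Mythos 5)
This theorem is not proved in the paper at all: it is quoted from the literature (the scalar case is attributed to \cite{Herglotz-1911}, the matrix case to \cite[Theorem 5.4]{Gesztesy-Tsekanovskii-2000}), so there is no in-paper argument to compare against. Your proposal is the standard reduction of the matrix statement to the scalar Herglotz representation via the quadratic forms $v^*M(z)v$ and polarization, and as far as I can check every concrete computation in it is correct: the imaginary-part identity $\Im\frac{1+xz}{x-z}=\frac{(1+x^2)\Im z}{|x-z|^2}$, the identity $\frac{1+ix}{x-i}=i$ giving $a=\Re M(i)$, the dominated-convergence extraction of $b$ (with the sector-dependent bound for the non-tangential version), and the converse direction. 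The one point you rightly flag as needing care — that the polarized entries assemble into a positive-semidefinite-valued measure, i.e.\ that $v^*\Omega(\Delta)v=\omega_v(\Delta)$ for \emph{all} $v$ and not just the finitely many polarization vectors — is genuinely the only nontrivial step, and it can be closed in either of two clean ways: (a) observe that by construction $v^*M(z)v$ admits a Cauchy-type representation with the complex measure $\sum_{j,k}\overline{v_j}v_k\Omega_{jk}$ as well as one with the positive measure $\omega_v$, and invoke uniqueness of such representations for finite complex measures (which follows from Stieltjes--Perron inversion applied to complex measures); or (b) note that $v\mapsto\omega_v(\Delta)$ is, for half-open intervals $\Delta$, a pointwise limit of the quadratic forms $v\mapsto\frac1\pi\int_\Delta v^*\Im M(x+i\varepsilon)v\,dx$ and hence itself a quadratic form, which then agrees with $v^*\Omega(\Delta)v$ by polarization and extends to all Borel sets by a monotone class argument. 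With either of these inserted, the proof is complete and is essentially the proof one finds in Gesztesy--Tsekanovskii.
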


\noindent
In the literature this integral representation is often written in the form
\begin{equation}\label{K26}
	M(z)=a+bz+\int_{\bb R}\Big(\frac 1{x-z}-\frac x{1+x^2}\Big)\,d\tilde\Omega(x),\quad z\in\bb C\setminus\bb R
	\,,
\end{equation}
where $\tilde\Omega$ is a positive measure with $\int_{\bb R}\frac{d\tilde\Omega(x)}{1+x^2}<\infty$. The measures in the representations
\eqref{K51} and \eqref{K26} are related as $\tilde\Omega=(1+x^2)\cdot\Omega$.

In the present work we also consider the Cauchy-type integral in \eqref{K51} for complex (scalar valued) measures.

\begin{definition}\label{K57}
	Let $\nu$ be a complex (scalar valued) measure. Then we denote by $m_\nu$ the Cauchy-type integral
	\begin{equation}\label{K25}
		m_\nu(z):=\int_{\bb R}\frac{1+xz}{x-z}\,d\nu(x),\quad z\in\bb C\setminus\bb R
		\,.
	\end{equation}
\end{definition}

\noindent
Clearly, the function $m_\nu$ is analytic on $\bb C\setminus\bb R$. Moreover, note that for a real measure $\nu$
it can be written as the difference of two Herglotz functions.

\begin{remark}\label{K24}
	Here is the reason why we decided to write the Herglotz integral representation in the form \eqref{K51} rather than \eqref{K26}:
	For a positive measure $\Omega$ the multiplication $(1+x^2)\cdot\Omega$ is always defined and is again a positive measure (for scalar
	measures this is immediate, for matrix-valued measures use that $\Omega$ is mutually absolutely continuous with its trace
	measure $\rho:=\tr\Omega$).
	Contrasting this, for a complex measure $\nu$, the multiplication $(1+x^2)\cdot\nu$ cannot anymore be interpreted as a measure, but only
	as a distribution of order $0$. Since we want to avoid using the machinery of distributions, we decided for the representation \eqref{K51}.
\end{remark}

\noindent
For a finite positive measure $\nu$, the imaginary part of $m_\nu(z)$ is (as a Poisson integral) well-behaved and several explicit
relations between $\nu$ and the boundary behavior of $\Im m_\nu(z)$ at the real line are known. In the present context, the following
two pointwise relations play a role. The first one is standard, see, e.g., \cite[\S2.3]{Pearson-1988}. Matching the literature is done using Remark~\ref{K23}, $(ii)$.

\begin{theorem}[\cite{Pearson-1988}]\label{prop Herglotz}
	Let $\nu$ be a finite positive measure, and denote by $\lambda$ the Lebesgue measure.
	\begin{enumerate}[$(i)$]
		\item Assume that $\frac{d\nu}{d\lambda}(x)$ exists in $[0,\infty]$. Then $\Im m_\nu(z)$ has a normal boundary value at $x$,
			in fact,
			\[
				\lim_{\varepsilon\downarrow 0}\Im m_\nu(x+i\varepsilon)=\pi(1+x^2)\frac{d\nu}{d\lambda}(x)
				\,.
			\]
		\item Conversely, assume that $\Im m_\nu(z)$ has a finite normal boundary value at $x$. Then
			$\frac{d\nu}{d\lambda}(x)$ exists.
	\end{enumerate}
\end{theorem}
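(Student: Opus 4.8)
The plan is to peel the "polynomial in $z$" part off the kernel $\frac{1+tz}{t-z}$, thereby reducing everything to the classical boundary theory of Poisson integrals of finite measures. The starting point is the elementary identity, valid for $t\in\bb R$ and $z\in\bb C\setminus\bb R$,
\[
	\frac{1+tz}{t-z}=z+\frac{1+z^2}{t-z}\,.
\]
Since $\nu$ is finite, the ordinary Cauchy transform $F_\nu(z):=\int_{\bb R}\frac{d\nu(t)}{t-z}$ is a well-defined (in fact Herglotz) function on $\bb C\setminus\bb R$, and integrating the identity against $\nu$ gives
\[
	m_\nu(z)=\nu(\bb R)\,z+(1+z^2)\,F_\nu(z),\qquad z\in\bb C\setminus\bb R\,.
\]

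Next I would show that the extra terms produced by this decomposition do not affect the normal boundary value of the imaginary part. Writing $z=x+i\varepsilon$ and separating $1+z^2=(1+x^2-\varepsilon^2)+2ix\varepsilon$, one gets
\[
	\Im m_\nu(x+i\varepsilon)=(1+x^2)\,\Im F_\nu(x+i\varepsilon)+\varepsilon\,\nu(\bb R)-\varepsilon^2\,\Im F_\nu(x+i\varepsilon)+2x\varepsilon\,\Re F_\nu(x+i\varepsilon)\,.
\]
The claim is that the last three summands tend to $0$ as $\varepsilon\downarrow0$ for \emph{every} $x\in\bb R$: the first is trivial; for the second use $\varepsilon^2\Im F_\nu(x+i\varepsilon)=\int_{\bb R}\frac{\varepsilon^3}{(t-x)^2+\varepsilon^2}\,d\nu(t)\le\varepsilon\,\nu(\bb R)$; and for the third note $\varepsilon\Re F_\nu(x+i\varepsilon)=\int_{\bb R}\frac{\varepsilon(t-x)}{(t-x)^2+\varepsilon^2}\,d\nu(t)$, where the integrand is bounded in modulus by $\tfrac12$ and tends pointwise to $0$, so dominated convergence applies (this is where finiteness of $\nu$ enters). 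Consequently, for every $x\in\bb R$, the limit $\lim_{\varepsilon\downarrow0}\Im m_\nu(x+i\varepsilon)$ exists in $[0,\infty]$ (respectively, is finite) if and only if $\lim_{\varepsilon\downarrow0}\Im F_\nu(x+i\varepsilon)$ does, and then they are related by the factor $1+x^2$ (with the convention $(1+x^2)\cdot\infty:=\infty$).

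It then remains to invoke the classical statement for the Cauchy transform of a finite positive measure. Since $\Im F_\nu(x+i\varepsilon)=\int_{\bb R}\frac{\varepsilon}{(t-x)^2+\varepsilon^2}\,d\nu(t)$ is $\pi$ times the Poisson integral of $\nu$, the standard Poisson boundary theory (see \cite[\S2.3]{Pearson-1988}) gives $\lim_{\varepsilon\downarrow0}\tfrac1\pi\Im F_\nu(x+i\varepsilon)=\frac{d\nu}{d\lambda}(x)$ whenever $\frac{d\nu}{d\lambda}(x)$ exists in $[0,\infty]$, and conversely that finiteness of $\lim_{\varepsilon\downarrow0}\Im F_\nu(x+i\varepsilon)$ forces $\frac{d\nu}{d\lambda}(x)$ to exist. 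Combined with the previous paragraph this yields $(i)$ in the form $\lim_{\varepsilon\downarrow0}\Im m_\nu(x+i\varepsilon)=\pi(1+x^2)\frac{d\nu}{d\lambda}(x)$, and $(ii)$ (with the boundary value in $(ii)$ then automatically equal to $\pi(1+x^2)\frac{d\nu}{d\lambda}(x)$).

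The only point needing a little care is the vanishing of the three correction terms, i.e.\ checking that replacing the bare Cauchy kernel $\frac1{t-z}$ by $\frac{1+tz}{t-z}$ is harmless at the real boundary; the short estimates above settle this and form the technical heart of the argument. An alternative route, closer to the way the result is usually quoted, is to observe directly that $\Im m_\nu(x+i\varepsilon)=\int_{\bb R}\frac{(1+t^2)\varepsilon}{(t-x)^2+\varepsilon^2}\,d\nu(t)$ is a Poisson integral of the measure $(1+t^2)\cdot\nu$; after localizing near $x$ (cf.\ Remark~\ref{K23}, $(i)$) so that this measure becomes finite, one applies the cited Poisson result to it and then uses Remark~\ref{K23}, $(ii)$, together with a brief comparison of the two symmetric derivatives, to rewrite $\frac{d((1+t^2)\cdot\nu)}{d\lambda}(x)$ as $(1+x^2)\frac{d\nu}{d\lambda}(x)$.
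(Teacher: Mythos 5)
Your proposal is correct. The paper itself offers no proof of this statement: it is quoted from \cite[\S2.3]{Pearson-1988} (which is formulated for the bare Cauchy transform $F_\nu(z)=\int_{\bb R}\frac{d\nu(t)}{t-z}$), with only the one-line remark that ``matching the literature is done using Remark~\ref{K23}, $(ii)$'' --- i.e.\ exactly the second, alternative route you sketch at the end, reading $\Im m_\nu$ as the Poisson integral of $(1+t^2)\cdot\nu$. Your primary argument supplies the bridging computation explicitly and correctly: the kernel identity $\frac{1+tz}{t-z}=z+\frac{1+z^2}{t-z}$, the resulting decomposition $m_\nu(z)=\nu(\bb R)z+(1+z^2)F_\nu(z)$, and the three error estimates (the bound $\varepsilon^2\Im F_\nu\le\varepsilon\nu(\bb R)$ and the dominated-convergence argument for $\varepsilon\Re F_\nu$, using $|\varepsilon(t-x)|\le\frac12\bigl((t-x)^2+\varepsilon^2\bigr)$) are all valid, and since $1+x^2-\varepsilon^2$ stays bounded away from $0$ the equivalence of the two normal limits holds also when they are infinite. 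This is arguably cleaner than the route the paper gestures at, because it avoids the localization issue ($(1+t^2)\cdot\nu$ need not be finite, so the alternative route additionally requires a tail estimate on $\int_{|t-x|>\delta}\frac{(1+t^2)\varepsilon}{(t-x)^2+\varepsilon^2}\,d\nu(t)$, which you only implicitly acknowledge). Both routes ultimately rest on the same classical Poisson-integral facts from Pearson, which is consistent with how the paper uses the result.
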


\noindent
Let us note explicitly that no conclusion is drawn if $\Im m_\nu(z)$ has an infinite normal boundary value at $x$.

The second result is in the same flavor, but may be less widely known. It is proved in \cite[Lemma 1]{Kac-1963}.

\begin{theorem}[\cite{Kac-1963}]\label{Kac lemma}
	Let $\nu$ be a real measure and $\sigma$ be a finite positive measure.
	Assume that $\frac{d\nu}{d\sigma}(x)$ exists in $\bb R$, and that $\frac{d\sigma}{d\lambda}(x)$ exists (possibly
	equal to $\infty$) and is nonzero. Then
	\[
		\lim_{\varepsilon\downarrow 0}\frac{\Im m_{\nu}(x+i\varepsilon)}{\Im m_{\sigma}(x+i\varepsilon)}=\frac{d\nu}{d\sigma}(x)
		\,.
	\]
\end{theorem}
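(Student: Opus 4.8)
The plan is to reduce the assertion to a comparison of two Poisson--type integrals and to estimate the difference by an integration by parts. First I would record, for any finite real (in particular, positive) measure $\varrho$, the elementary identity $\Im m_\varrho(x+i\varepsilon)=\int_{\bb R}\frac{\varepsilon(1+t^2)}{(t-x)^2+\varepsilon^2}\,d\varrho(t)$, obtained by inserting $z=x+i\varepsilon$ into $\Im\frac{1+tz}{t-z}=\frac{\Im z\,(1+t^2)}{|t-z|^2}$. Since $\sigma$ is a finite positive measure and $\frac{d\sigma}{d\lambda}(x)$ exists in $[0,\infty]$ and is nonzero, Theorem~\ref{prop Herglotz}$(i)$ gives $\Im m_\sigma(x+i\varepsilon)\to\pi(1+x^2)\frac{d\sigma}{d\lambda}(x)\in(0,\infty]$, so $\liminf_{\elim}\Im m_\sigma(x+i\varepsilon)>0$; this disposes of the denominator (note also $x\in\supp\sigma$, so $\sigma([x-u,x+u])>0$ for all $u>0$). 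Next I would set $c:=\frac{d\nu}{d\sigma}(x)\in\bb R$ and pass to the finite real measure $\tau:=\nu-c\sigma$. By linearity of $m$ we have $\Im m_\nu=c\,\Im m_\sigma+\Im m_\tau$, and directly from the definition of the symmetric derivative $\frac{d\tau}{d\sigma}(x)=0$ and $\tau(\{x\})=0$. Hence it suffices to prove $\Im m_\tau(x+i\varepsilon)=o\bigl(\Im m_\sigma(x+i\varepsilon)\bigr)$ as $\elim$.

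For this core estimate I would fix $\delta>0$ and split $\Im m_\tau(x+i\varepsilon)=I_{\mathrm{near}}+I_{\mathrm{far}}$ into the parts of the integral over $\{|t-x|\le\delta\}$ and over its complement. The far part is uniformly $O(\varepsilon)$: $|I_{\mathrm{far}}|\le\varepsilon\,C_\delta\,|\tau|(\bb R)$ with $C_\delta:=\sup_{|u|\ge\delta}\frac{1+(x+u)^2}{u^2}<\infty$. In $I_{\mathrm{near}}$ I would write $1+t^2=(1+x^2)+(t^2-x^2)$; using $\frac{\varepsilon|t-x|}{(t-x)^2+\varepsilon^2}\le\frac12$ and $|t^2-x^2|\le(2|x|+\delta)|t-x|$, the part coming from $t^2-x^2$ is bounded by $\tfrac12(2|x|+\delta)\,|\tau|([x-\delta,x+\delta])$, which is small once $\delta$ is small because $|\tau|([x-\delta,x+\delta])\to|\tau(\{x\})|=0$. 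What remains is $K(\varepsilon,\delta):=\int_{|t-x|\le\delta}\frac{\varepsilon}{(t-x)^2+\varepsilon^2}\,d\tau(t)$. Passing to the radial variable $u=|t-x|$ and writing $\Phi(u):=\tau([x-u,x+u])$, $\Psi(u):=\sigma([x-u,x+u])$, an integration by parts (licit since $\Phi$ is right-continuous of bounded variation with $\Phi(0)=\tau(\{x\})=0$) yields
\[
	K(\varepsilon,\delta)=\frac{\varepsilon}{\delta^2+\varepsilon^2}\,\Phi(\delta)+\int_0^\delta\Phi(u)\,\frac{2u\varepsilon}{(u^2+\varepsilon^2)^2}\,du
\]
and the same identity, with $\Psi$ in place of $\Phi$, for the corresponding quantity $K_\sigma(\varepsilon,\delta)\ge0$ with all terms nonnegative. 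Given $\eta>0$, since $\frac{d\tau}{d\sigma}(x)=0$ I can shrink $\delta$ so that $|\Phi(u)|\le\eta\,\Psi(u)$ for $0<u\le\delta$; then $|K(\varepsilon,\delta)|\le\eta\,K_\sigma(\varepsilon,\delta)$.

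Finally I would assemble the pieces. Since $1+t^2\ge1$ and the far part of $\Im m_\sigma$ is nonnegative, $K_\sigma(\varepsilon,\delta)\le\Im m_\sigma(x+i\varepsilon)$, hence for a suitable $\delta=\delta(\eta)$ (also chosen $\le1$ and small enough that $|\tau|([x-\delta,x+\delta])\le\eta$)
\[
	|\Im m_\tau(x+i\varepsilon)|\le(1+x^2)\,\eta\,\Im m_\sigma(x+i\varepsilon)+(|x|+1)\,\eta+\varepsilon\,C_\delta\,|\tau|(\bb R).
\]
Dividing by $\Im m_\sigma(x+i\varepsilon)$ and letting $\elim$ (recall $\liminf_{\elim}\Im m_\sigma(x+i\varepsilon)>0$), the three summands contribute $(1+x^2)\eta$, a fixed multiple of $\eta$, and $0$ respectively, so $\limsup_{\elim}\frac{|\Im m_\tau(x+i\varepsilon)|}{\Im m_\sigma(x+i\varepsilon)}\le C\eta$ with $C$ independent of $\eta$. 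Since $\eta>0$ is arbitrary this limit equals $0$, and together with $\frac{\Im m_\nu}{\Im m_\sigma}=c+\frac{\Im m_\tau}{\Im m_\sigma}$ this gives $\lim_{\elim}\frac{\Im m_\nu(x+i\varepsilon)}{\Im m_\sigma(x+i\varepsilon)}=c=\frac{d\nu}{d\sigma}(x)$.

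The step I expect to be the main obstacle is the comparison $|K(\varepsilon,\delta)|\le\eta\,K_\sigma(\varepsilon,\delta)$: the hypothesis $\frac{d\tau}{d\sigma}(x)=0$ only controls the interval masses $\Phi,\Psi$ and not their Stieltjes variations, so one genuinely has to integrate by parts to move the differentiation onto the kernel and only then invoke the pointwise envelope $|\Phi|\le\eta\Psi$; one also has to dispose of the possible atom of the measures at $x$ (handled by $\tau(\{x\})=0$, which follows from finiteness of $\sigma$) and of the non-radial weight $1+t^2$ (handled by the crude estimate above, using $|\tau|([x-\delta,x+\delta])\to0$).
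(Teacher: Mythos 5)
Your proof is correct. Note that the paper does not prove this statement at all: it is imported verbatim from Kac's 1963 paper (cited as \cite[Lemma 1]{Kac-1963}, with the argument also reproduced in Gilbert's survey), and the only in-paper work on it is the routine extension to general Herglotz functions in Corollary~\ref{K50}. So there is no internal proof to compare against; what you have written is a complete, self-contained derivation along the classical lines one would expect in Kac's original argument: reduce to $\tau=\nu-c\sigma$ with $\frac{d\tau}{d\sigma}(x)=0$, strip off the harmless weight $1+t^2$ and the tail of the Poisson kernel, and compare the remaining radial integrals of $\tau$ and $\sigma$ after moving the derivative onto the kernel. The one step you flag as delicate is handled correctly: your identity $K(\varepsilon,\delta)=\frac{\varepsilon}{\delta^2+\varepsilon^2}\Phi(\delta)+\int_0^\delta\Phi(u)\frac{2u\varepsilon}{(u^2+\varepsilon^2)^2}\,du$ follows from writing $g(u)=g(\delta)+\int_u^\delta(-g'(s))\,ds$ and applying Fubini (legitimate since $|\tau|$ is finite and $-g'\ge0$ on $[0,\delta]$), and since the two weights appearing there are nonnegative, the envelope $|\Phi|\le\eta\Psi$ on $(0,\delta]$ transfers termwise to $|K|\le\eta K_\sigma\le\eta\,\Im m_\sigma(x+i\varepsilon)$. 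The remaining small points (that $\tau(\{x\})=0$, that $x\in\supp\sigma$, that $\liminf_{\elim}\Im m_\sigma(x+i\varepsilon)>0$ via Theorem~\ref{prop Herglotz}) are all justified as you state them. I see no gap.
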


\noindent
The real part of a Cauchy integral is (as a singular integral) much harder to control than its imaginary part. We make
use of the following two rather recent results, which deal with boundary values of $\Re m_\nu(z)$, or even of $m_\nu(z)$ itself.

The first one says that the set of points $x\in\bb R$ for which $|\Re m_\nu(z)|$ dominates $\Im m_\nu(z)$ when $z$ approaches $x$, is small.
It has been shown in \cite[Theorem 2.6]{Poltoratski-2003} for Cauchy integrals of measures on the unit circle. The half-plane version stated
below follows using the standard fractional-linear transform.

\begin{theorem}[\cite{Poltoratski-2003}]\label{theorem Poltoratski-1}
	Let $\nu$ be a finite positive measure.
	Then the set of all points $x\in\bb R$ for which there exists a continuous non-tangential path $\gamma_x$ from $\mathbb C_+$ to $x$,
	such that
	\[
		\lim_{\substack{z\to x\\ z\in\gamma_x}}\frac{|\Re m_{\nu}(z)|}{\Im m_{\nu}(z)}=\infty
		\,,
	\]
	is a $\nu$-zero set.
\end{theorem}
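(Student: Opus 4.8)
The plan is to pull the statement back to the unit disk and invoke the known disk version, \cite[Theorem~2.6]{Poltoratski-2003}. If $\nu=0$ the assertion is trivial, so assume $\nu\neq0$; then $m_\nu$ is a Herglotz function (its representing measure in \eqref{K51} is $\nu$, with $a=b=0$), and $\Im m_\nu(z)>0$ everywhere on $\bb C^+$, so the ratio $|\Re m_\nu(z)|/\Im m_\nu(z)$ is well defined there. Fix a M\"obius transformation $\psi:\bb D\to\bb C^+$, e.g.\ $\psi(w):=i\frac{1-w}{1+w}$, and set $g(w):=-i\,m_\nu(\psi(w))$, so that $\Re g=\Im m_\nu\circ\psi\geq0$ on $\bb D$ and $\Im g=-\Re m_\nu\circ\psi$. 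By the Riesz--Herglotz theorem, $g(w)=ic+\int_{\bb T}\frac{\zeta+w}{\zeta-w}\,d\mu(\zeta)$ for some real $c$ and some finite positive Borel measure $\mu$ on $\bb T$; thus $\Re g$ is the Poisson integral of $\mu$ and $\Im g$ its conjugate Poisson integral, and
\[
	\frac{|\Re m_\nu(\psi(w))|}{\Im m_\nu(\psi(w))}=\frac{|\Im g(w)|}{\Re g(w)}\,,\qquad w\in\bb D\,.
\]
Hence the quantity that must be controlled is precisely the one treated in \cite{Poltoratski-2003} on the disk.

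Two routine transfer facts are then needed. First, $\psi$ is conformal on $\overline{\bb D}\setminus\{-1\}$, so for every boundary point $\zeta_0\neq-1$ it maps non-tangential approach regions at $\zeta_0$ onto sets comparable with non-tangential approach regions at $\psi(\zeta_0)\in\bb R$; consequently a continuous non-tangential path to a point $x\in\bb R$ is exactly the $\psi$-image of a continuous non-tangential path to $\psi^{-1}(x)\in\bb T\setminus\{-1\}$. Second, off the single point $-1$ (which corresponds to $z=\infty$) the measure $\mu$ is mutually absolutely continuous with the push-forward of $\nu$ under $\psi^{-1}$, with a smooth strictly positive density; in particular, a subset of $\bb R$ is $\nu$-zero if and only if its $\psi^{-1}$-image is $\mu$-zero. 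Granting these, the set $Z$ figuring in the statement is the $\psi$-image of
\[
	Z':=\Big\{\zeta\in\bb T:\ \exists\text{ non-tangential }\gamma_\zeta\to\zeta\text{ with }\lim_{\substack{w\to\zeta\\ w\in\gamma_\zeta}}\frac{|\Im g(w)|}{\Re g(w)}=\infty\Big\}\,,
\]
and $Z'$ is $\mu$-zero by \cite[Theorem~2.6]{Poltoratski-2003}. Therefore $Z$ is $\nu$-zero.

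The main point requiring attention is the bookkeeping in the second paragraph: one must verify that ``non-tangential path'' is genuinely a conformally stable notion at every boundary point except $-1$, and confirm that any atom $\mu$ may carry at $-1$ is harmlessly discarded because it records only the behaviour of $m_\nu$ near $z=\infty$, which is not a point of $\bb R$. No new work on singular integrals is required; that content is entirely supplied by \cite{Poltoratski-2003}.
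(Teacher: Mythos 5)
Your proposal is correct and is exactly the route the paper takes: the paper gives no independent proof of this statement but cites \cite[Theorem~2.6]{Poltoratski-2003} and remarks that the half-plane version ``follows using the standard fractional-linear transform'', which is precisely your Cayley-transform transfer. The only detail worth pinning down is the additive real constant $c$ in the Riesz--Herglotz representation of $g$ (so that $\Im g$ equals the conjugate Poisson integral of $\mu$ only up to $c$); this is harmless after restricting to the $\mu$-full set of boundary points where the non-tangential lower limit of the Poisson integral of $\mu$ is positive, the same device the paper itself employs in Corollary~\ref{K50}.
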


\noindent
The second result on singular integrals says that for certain points $x\in\bb R$ the Radon-Nikodym derivative of two measures can be
calculated from the boundary behavior of
the respective Cauchy integrals. This fact is shown in \cite{Poltoratski-1994} for the disk, the half-plane version stated below is
\cite[Theorem 1.1]{Poltoratski-2009}

\begin{theorem}[\cite{Poltoratski-1994}]\label{theorem Poltoratski-2}
	Let $\nu$ and $\sigma$ be finite positive measures, assume that $\nu\ll\sigma$, and let $\sigma_s$ be the singular part
	of $\sigma$ with respect to the Lebesgue measure. Then:
	\begin{enumerate}[$(i)$]
		\item For $\sigma$-a.a.\ points $x\in\bb R$ the non-tangential limit $\lim\limits_{\zlim}\frac{m_{\nu}(z)}{m_{\sigma}(z)}$
			exists in $[0,\infty)$.
		\item For $\sigma_s$-a.a.\ points $x\in\bb R$ we have
			\[
				 \lim_{\zlim}\frac{m_{\nu}(z)}{m_{\sigma}(z)}=\frac{d\nu}{d\sigma}(x)
				\,.
			\]
	\end{enumerate}
\end{theorem}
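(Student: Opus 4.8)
This statement is, up to the particular choice of kernel $\frac{1+xz}{x-z}$ in \eqref{K25}, A.~Poltoratski's theorem on normalized Cauchy transforms: for measures on the unit circle it is proven in \cite{Poltoratski-1994}, and the half-plane formulation stated here is \cite[Theorem~1.1]{Poltoratski-2009}. Accordingly, rather than giving a self-contained argument, the plan is to indicate how the statement is reduced to the disk version, which is treated as a black box. I record first that the tools already assembled in this section come close but do not suffice. Writing $f:=D_\sigma\nu\in L^1(\sigma)$, Corollary~\ref{cor Vallee-Poussin measures}$(ii)$ applied to $\sigma$ and $\lambda$ gives $\frac{d\sigma}{d\lambda}(x)\in(0,\infty]$ for $\sigma$-a.a.\ $x$, while by Theorem~\ref{K7} and \eqref{K21} the symmetric derivative $\frac{d\nu}{d\sigma}$ exists and equals $f$ for $\sigma$-a.a.\ $x$; hence Theorem~\ref{Kac lemma} yields $\frac{\Im m_\nu(x+i\varepsilon)}{\Im m_\sigma(x+i\varepsilon)}\to f(x)$ as $\elim$, $\sigma$-a.e. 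This controls only the radial limit of the ratio of the imaginary parts; passing to non-tangential limits of the full quotient, and in particular identifying that limit on $\sigma_s$, is exactly where Poltoratski's theorem enters.

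Next, the Cayley transfer. Let $w=\frac{z-i}{z+i}$ map $\bb C^+$ conformally onto $\bb D$ and $x\mapsto\zeta=\frac{x-i}{x+i}$ map $\bb R\cup\{\infty\}$ onto $\partial\bb D$, and let $\nu^\ast,\sigma^\ast$ be the push-forwards of $\nu,\sigma$. A direct computation gives the kernel identity $\frac{1+xz}{x-z}=i\,\frac{\zeta+w}{\zeta-w}$, so that $m_\nu(z)=i\int_{\partial\bb D}\frac{\zeta+w}{\zeta-w}\,d\nu^\ast(\zeta)=i\big(2\,(\mc C\nu^\ast)(w)-\nu^\ast(\partial\bb D)\big)$, where $(\mc C\mu)(w):=\int_{\partial\bb D}\frac{d\mu(\zeta)}{1-\bar\zeta w}$, and likewise for $\sigma$; both the factor $i$ and the additive constants cancel in the quotient $m_\nu/m_\sigma$. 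From $\nu\ll\sigma$ we get $\nu^\ast\ll\sigma^\ast$ with Radon--Nikodym derivative $f$ transported along the Cayley map, and since that map and its inverse preserve Lebesgue-null sets, the Lebesgue-singular part $\sigma^\ast_s$ of $\sigma^\ast$ is the transport of $\sigma_s$. For $\sigma_s$-a.a.\ $x$ one has $\Im m_\sigma(z)\to\infty$ along non-tangential paths to $x$ (radially this is Theorem~\ref{prop Herglotz}$(i)$ together with $\frac{d\sigma}{d\lambda}(x)=\infty$ for $\sigma_s$-a.a.\ $x$, which is Corollary~\ref{cor Vallee-Poussin measures}$(iv)$; the non-tangential strengthening is classical), hence $|\mc C\sigma^\ast(w)|\to\infty$ there. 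Dividing numerator and denominator of $m_\nu/m_\sigma$ by $\mc C\sigma^\ast$ and letting $w\to\zeta_0$ non-tangentially, the constants wash out and the limit equals the non-tangential limit of $\mc C\nu^\ast/\mc C\sigma^\ast$, which by the disk theorem of \cite{Poltoratski-1994} exists $\sigma^\ast$-a.e.\ and equals $D_{\sigma^\ast}\nu^\ast(\zeta_0)=\frac{d\nu}{d\sigma}(x)$ for $\sigma^\ast_s$-a.a.\ $\zeta_0$; transporting back gives item $(ii)$, and on $\sigma_s$ the limit lies in $[0,\infty)$ since $f\geq 0$ $\sigma$-a.e.\ (as $\nu$ is positive) and $f<\infty$ $\sigma$-a.e.\ by Corollary~\ref{cor Vallee-Poussin measures}$(i)$. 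For the remaining part of item $(i)$, concerning $\sigma_{ac}$: restricting to a minimal support of $\sigma_{ac}$, Fatou's theorem provides finite non-tangential boundary values of $m_\nu$ and $m_\sigma$ at Lebesgue-a.a.\ points, and Theorem~\ref{prop Herglotz}$(i)$ shows $\Im m_\sigma\neq 0$ there, so the quotient has a finite non-tangential limit; that this limit falls in $[0,\infty)$ is the $\sigma_{ac}$-content of \cite[Theorem~1.1]{Poltoratski-2009}, which we invoke directly.

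The main obstacle is entirely the disk theorem of \cite{Poltoratski-1994}, asserting that for a finite positive measure $\sigma^\ast$ on $\partial\bb D$ and $g\in L^1(\sigma^\ast)$ the normalized Cauchy transform $\mc C(g\sigma^\ast)/\mc C\sigma^\ast$ has a non-tangential — not merely radial — boundary limit $\sigma^\ast$-a.e., equal to $g$ on the Lebesgue-singular part of $\sigma^\ast$. Its proof rests on Aleksandrov--Clark theory: for singular $\sigma^\ast$ the map $g\mapsto\mc C(g\sigma^\ast)/\mc C\sigma^\ast$ is unitary onto a model space $K_\theta$, and one combines the structure of the associated family of rank-one perturbations (whose spectral measures are mutually singular) with a suitable maximal-function estimate. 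Since this lies well outside the scope of the elementary tools gathered above — which, as noted, yield only the radial limit of $\Im m_\nu/\Im m_\sigma$ via Theorem~\ref{Kac lemma}, and only prevent $|\Re m_\sigma|/\Im m_\sigma$ from blowing up along non-tangential paths via Theorem~\ref{theorem Poltoratski-1} — no self-contained proof is attempted, and the statement is taken from \cite[Theorem~1.1]{Poltoratski-2009}.
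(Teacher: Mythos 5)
The paper gives no proof of this statement: it is imported directly from the literature (proved for the disk in \cite{Poltoratski-1994}, with the half-plane form being \cite[Theorem~1.1]{Poltoratski-2009}), and your proposal does exactly the same, treating Poltoratski's theorem as a black box. Your additional Cayley-transform bookkeeping (the kernel identity $\frac{1+xz}{x-z}=i\,\frac{\zeta+w}{\zeta-w}$, the cancellation of the additive constants on the singular part where $|m_\sigma|\to\infty$, and the Fatou argument on the absolutely continuous part) is correct, so the reduction to the disk version is sound; there is nothing further to compare.
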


\noindent
Together these two theorems imply a statement which is essential for our present purposes.

\begin{corollary}\label{lemma Poltoratski}
	Let $\nu$ and $\sigma$ be finite positive measures, and assume that $\nu\ll\sigma$. Then for $\sigma$-a.a.\ points $x\in\bb R$ there
	exists a sequence $\{\varepsilon_n\}_{n\in\bb N}$ (which may depend on $x$), such that $\varepsilon_n\downarrow 0$ and the limit
	\[
		\lim_{n\to\infty}\frac{\Re m_{\nu}(x+i\varepsilon_n)}{\Im m_{\sigma}(x+i\varepsilon_n)}
	\]
	exists and is finite.
\end{corollary}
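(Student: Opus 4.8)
The plan is to combine Theorem~\ref{theorem Poltoratski-2}, which controls the \emph{ratio} $m_\nu/m_\sigma$ non-tangentially, with Theorem~\ref{theorem Poltoratski-1}, which prevents $|\Re m_\sigma|$ from being much larger than $\Im m_\sigma$ along some approach. One may assume $\sigma\neq 0$, for otherwise $\nu=0$ as well and the assertion is vacuous. Recall that for $x\in\bb R$ and $\varepsilon>0$ one has $\Im m_\sigma(x+i\varepsilon)=\int_{\bb R}\frac{\varepsilon(1+t^2)}{|t-x-i\varepsilon|^2}\,d\sigma(t)>0$, so all quotients appearing below are well defined.

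First I would invoke Theorem~\ref{theorem Poltoratski-2}, $(i)$ (applicable since $\nu\ll\sigma$): there is a $\sigma$-full set $S_1$ so that for every $x\in S_1$ the non-tangential limit $c(x):=\lim_{\zlim}\frac{m_\nu(z)}{m_\sigma(z)}$ exists in $[0,\infty)$; in particular, letting $z=x+i\varepsilon$ and $\elim$, one has $\Re\frac{m_\nu(x+i\varepsilon)}{m_\sigma(x+i\varepsilon)}\to c(x)$ and $\Im\frac{m_\nu(x+i\varepsilon)}{m_\sigma(x+i\varepsilon)}\to 0$, since $c(x)$ is real. Next I would apply Theorem~\ref{theorem Poltoratski-1} to the measure $\sigma$: there is a $\sigma$-full set $S_2$ such that for every $x\in S_2$ there is \emph{no} continuous non-tangential path along which $\frac{|\Re m_\sigma|}{\Im m_\sigma}$ tends to $\infty$. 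Applied with the vertical ray $\{x+i\varepsilon:\varepsilon>0\}$, which is such a path, this yields $\liminf_{\elim}\frac{|\Re m_\sigma(x+i\varepsilon)|}{\Im m_\sigma(x+i\varepsilon)}<\infty$.

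Now fix $x$ in the $\sigma$-full set $S_1\cap S_2$ (the union of two $\sigma$-zero sets is $\sigma$-zero). By the previous step, choose $\varepsilon_n\downarrow 0$ along which $\frac{|\Re m_\sigma(x+i\varepsilon_n)|}{\Im m_\sigma(x+i\varepsilon_n)}$ is bounded, and then, passing to a subsequence, arrange that $\frac{\Re m_\sigma(x+i\varepsilon_n)}{\Im m_\sigma(x+i\varepsilon_n)}$ converges to some $d\in\bb R$. Writing $w_n:=\frac{m_\nu(x+i\varepsilon_n)}{m_\sigma(x+i\varepsilon_n)}$ and using $m_\nu=w_n m_\sigma$ at the point $x+i\varepsilon_n$ together with $\Re(w s)=\Re w\,\Re s-\Im w\,\Im s$, one gets the elementary identity
\[
	\frac{\Re m_\nu(x+i\varepsilon_n)}{\Im m_\sigma(x+i\varepsilon_n)}
	=(\Re w_n)\cdot\frac{\Re m_\sigma(x+i\varepsilon_n)}{\Im m_\sigma(x+i\varepsilon_n)}-\Im w_n
	\,.
\]
By the first step $\Re w_n\to c(x)$ and $\Im w_n\to 0$, and by the choice of the subsequence the quotient on the right converges to $d$; hence the left-hand side converges to the finite number $c(x)\,d$, which is exactly the claim.

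There is no deep obstacle here; the point that needs a little care is the correct reading of Theorem~\ref{theorem Poltoratski-1}: its conclusion concerns the \emph{existence} of a bad path, so its negation at a point $x$ asserts that \emph{along every} non-tangential path the ratio fails to diverge to $\infty$. Specializing to the vertical ray turns this into the $\liminf<\infty$ statement, which is precisely what produces the sequence $\{\varepsilon_n\}$; the rest is bookkeeping with $\sigma$-full sets and one subsequence extraction.
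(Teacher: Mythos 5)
Your proposal is correct and follows essentially the same route as the paper: both arguments discard the $\sigma$-zero sets coming from Theorem~\ref{theorem Poltoratski-1} (applied to $\sigma$) and Theorem~\ref{theorem Poltoratski-2}, $(i)$, extract a sequence $\varepsilon_n\downarrow 0$ along which $\Re m_\sigma/\Im m_\sigma$ converges, and conclude via the same identity $\frac{\Re m_\nu}{\Im m_\sigma}=(\Re w_n)\frac{\Re m_\sigma}{\Im m_\sigma}-\Im w_n$ (the paper writes it as $\Re\big[\frac{m_\nu}{m_\sigma}\cdot(i+\frac{\Re m_\sigma}{\Im m_\sigma})\big]$). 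Your explicit remark on reading the negation of Theorem~\ref{theorem Poltoratski-1} as a $\liminf<\infty$ statement along the vertical ray is a correct and slightly more careful rendering of the step the paper performs implicitly.
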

\begin{proof}
	Consider the sets
	\begin{align*}
		\mc E_1 & :=\Big\{x\in\bb R:\,\lim_{\varepsilon\downarrow 0}
			\frac{|\Re m_{\sigma}(x+i\varepsilon)|}{\Im m_{\sigma}(x+i\varepsilon)}=\infty\Big\}
			\,,
			\\
		\mc E_2 & :=\Big\{x\in\bb R:\,\lim_{\zlim}\frac{m_{\nu}(z)}{m_{\sigma}(z)}\text{ does not exist in $\bb C$}\Big\}
			\,.
	\end{align*}
	Then $\sigma(\mc E_1\cup\mc E_2)=0$. Let $x\in(\mc E_1\cup\mc E_2)^c$, and choose a sequence $\varepsilon_j\downarrow0$ such that the
	limit $\lim_{j\to\infty}\frac{\Re m_{\sigma}(x+i\varepsilon_j)}{\Im m_{\sigma}(x+i\varepsilon_j)}$ exists in $\bb R$. Since
	\[
		\frac{m_{\nu}(z)}{m_{\sigma}(z)}=\frac{m_{\nu}(z)}{\Im m_{\sigma}(z)\left(i+\frac{\Re m_{\sigma}(z)}{\Im m_{\sigma}(z)}\right)}
		,\quad z\in\bb C\setminus\bb R
		\,,
	\]
	it follows that
	\[
		\lim_{j\to\infty}\frac{\Re m_{\nu}(x+i\varepsilon_j)}{\Im m_{\sigma}(x+i\varepsilon_j)}=
		\Re\bigg[
		 \Big(\lim_{j\to\infty}\frac{m_{\nu}(x+i\varepsilon_j)}{m_{\sigma}(x+i\varepsilon_j)}\Big)
		\cdot
		\Big(i+\lim_{j\to\infty}\frac{\Re m_{\sigma}(x+i\varepsilon_j)}{\Im m_{\sigma}(x+i\varepsilon_j)}\Big)
		\bigg].
	\]
\end{proof}

\noindent
We need the above stated facts in a slightly more general situation. Namely, for arbitrary Herglotz functions rather than
Cauchy-type integrals. This is easy to deduce.

\begin{corollary}\label{K50}
	The above statements \ref{prop Herglotz}, \ref{Kac lemma}, \ref{theorem Poltoratski-1}, \ref{theorem Poltoratski-2}, $(ii)$, and
	\ref{lemma Poltoratski} remain true when the Cauchy-type
	integrals $m_\nu$ and $m_\sigma$ are substituted by arbitrary scalar valued Herglotz functions having the measures $\nu$ and
	$\sigma$ in their Herglotz integral representation.
\end{corollary}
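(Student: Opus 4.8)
The plan is to reduce each of the listed statements from the case of an arbitrary scalar Herglotz function $M$ to the case of a Cauchy-type integral $m_\nu$ by writing $M$ in its Herglotz integral representation and separating off the ``trivial'' part $a+bz$. Concretely, if $M$ has measure $\nu$ in its representation \eqref{K51}, then $M(z)=a+bz+m_\nu(z)$ with $a\in\bb R$, $b\geq 0$. Since $b\geq 0$, we have $\Im(a+bz)=b\,\Im z\geq 0$ for $z\in\bb C^+$, and more importantly $\Im(a+b(x+i\varepsilon))=b\varepsilon\to 0$ as $\varepsilon\downarrow 0$. Thus $\Im M(x+i\varepsilon)=\Im m_\nu(x+i\varepsilon)+b\varepsilon$, and the correction term tends to $0$; it is harmless in all statements about normal (or non-tangential) boundary values of imaginary parts. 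This immediately transfers Theorem~\ref{prop Herglotz}: if $\frac{d\nu}{d\lambda}(x)$ exists, then $\lim_{\varepsilon\downarrow 0}\Im M(x+i\varepsilon)=\pi(1+x^2)\frac{d\nu}{d\lambda}(x)+0$, and conversely a finite normal boundary value of $\Im M$ forces one for $\Im m_\nu$.

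For the statements involving ratios --- Theorem~\ref{Kac lemma}, Theorem~\ref{theorem Poltoratski-1}, Theorem~\ref{theorem Poltoratski-2}$(ii)$, and Corollary~\ref{lemma Poltoratski} --- I would argue as follows. Write $M=a+bz+m_\nu$ and $L=a'+b'z+m_\sigma$ for the two Herglotz functions, with measures $\nu$, $\sigma$. For ratios of imaginary parts (Theorem~\ref{Kac lemma} and Corollary~\ref{lemma Poltoratski}), one uses that $\Im M(x+i\varepsilon)=\Im m_\nu(x+i\varepsilon)+b\varepsilon$ and similarly for $L$; under the hypotheses of Theorem~\ref{Kac lemma} one has $\Im m_\sigma(x+i\varepsilon)\to\infty$ (because $\frac{d\sigma}{d\lambda}(x)\neq 0$ and Theorem~\ref{prop Herglotz}$(i)$ applies; if $\frac{d\sigma}{d\lambda}(x)=\infty$ then the Poisson integral blows up, and if it is finite and nonzero it still tends to a nonzero multiple of $\pi(1+x^2)$ --- but in the latter case one needs the limit to be taken so that the ratio is well-defined, which is exactly what the original theorem handles), so dividing numerator and denominator by $\varepsilon$ or just noting the additive perturbations $b\varepsilon$, $b'\varepsilon$ are negligible against the divergent denominator gives the claim. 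For the ratio $m_\nu/m_\sigma$ in Theorem~\ref{theorem Poltoratski-2}$(ii)$, I would use that along a non-tangential approach $z=x+i\varepsilon$ the terms $a$, $a'$ are bounded and $bz$, $b'z$ are $O(\varepsilon)$, while --- for $\sigma_s$-a.a.\ $x$ --- one has $|m_\sigma(z)|\to\infty$ (this follows from Theorem~\ref{theorem Poltoratski-2}$(ii)$ applied with $\sigma$ in place of $\nu$, giving $\frac{m_\sigma}{m_\sigma}\to 1$, together with the fact that at singular points of $\sigma$ the Cauchy integral $m_\sigma$ cannot stay bounded; alternatively invoke that $\frac{d\sigma}{d\lambda}(x)=\infty$ $\sigma_s$-a.e.\ and Theorem~\ref{prop Herglotz}$(i)$ forces $\Im m_\sigma\to\infty$). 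Hence $M(z)/L(z)=(m_\nu(z)+O(1+\varepsilon))/(m_\sigma(z)+O(1+\varepsilon))$ has the same non-tangential limit as $m_\nu(z)/m_\sigma(z)$.

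For Theorem~\ref{theorem Poltoratski-1}, the statement concerns the set where $|\Re m_\nu(z)|/\Im m_\nu(z)\to\infty$ along some non-tangential path. Replacing $m_\nu$ by $M=a+bz+m_\nu$ changes $\Re m_\nu(z)$ by $a+b\,\Re z$, which is bounded on a non-tangential path ending at $x$ (indeed $\Re z\to x$), and changes $\Im m_\nu(z)$ by $b\,\Im z\to 0$; so $|\Re M(z)|/\Im M(z)\to\infty$ if and only if $|\Re m_\nu(z)|/\Im m_\nu(z)\to\infty$ (the bounded additive perturbation in the numerator and the vanishing one in the denominator cannot create or destroy the blow-up), and the exceptional set is unchanged.

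\textbf{Main obstacle.} The routine part is controlling the additive perturbations $a+bz$; the one place requiring slight care is ensuring that the \emph{denominator} in each ratio statement genuinely diverges (or is at least bounded away from $0$) along the relevant sequence/path, so that the bounded perturbation $a'+b'z$ is truly negligible rather than potentially cancelling the denominator. This is guaranteed $\sigma$-a.e.\ or $\sigma_s$-a.e.\ by the hypotheses together with Theorem~\ref{prop Herglotz} and Theorem~\ref{theorem Poltoratski-2}, but it is the point that must be checked rather than waved away; everything else is bookkeeping. I expect the whole argument to occupy only a few lines once this observation is made.
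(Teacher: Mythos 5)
Your proposal is correct and follows essentially the same route as the paper: write $M(z)=a+bz+m_\nu(z)$ (and likewise for the second function) and check that the linear term is negligible in each boundary limit. The one point to tighten is your treatment of Theorem~\ref{theorem Poltoratski-1}: a \emph{bounded} additive perturbation of the numerator can in fact create blow-up of $|\Re M(z)|/\Im M(z)$ if $\Im m_\nu(z)\to 0$, so before arguing that the exceptional set is unchanged you must first discard the $\nu$-zero set where $\frac{d\nu}{d\lambda}(x)=0$, ensuring $\liminf_{\varepsilon\downarrow 0}\Im m_\nu(x+i\varepsilon)>0$ on the remaining $\nu$-full set; this is exactly what the paper does, and it is the same ``denominator bounded away from zero'' issue you flag in your closing paragraph for the two-measure statements, except that here it concerns $\nu$ itself (via de la Vall\'ee-Poussin) rather than any hypothesis of the theorem.
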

\begin{proof}
	Throughout this proof, let $m,\tilde m$ be Herglotz functions, and write $m(z)=a+bz+m_\nu(z)$ and
	$\tilde m(z)=\tilde a+\tilde bz+m_\sigma(z)$.

	\hspace*{0pt}\\[-2mm]\textit{Theorem~\ref{prop Herglotz}:}
	We have
	\[
		\lim_{\varepsilon\downarrow 0}\Im\big(a+b(x+i\varepsilon)\big)=0,\quad x\in\bb R
		\,.
	\]
	Hence the limit $\lim_{\varepsilon\downarrow 0}\Im m(x+i\varepsilon)$ exists if and only if
	$\lim_{\varepsilon\downarrow 0}\Im m_\nu(x+i\varepsilon)$ does. Moreover, if these limits exist, they coincide.

	\hspace*{0pt}\\[-2mm]\textit{Theorem~\ref{Kac lemma}:}
	Since $\frac{d\sigma}{d\lambda}(x)$ exists and is nonzero, we have
	\[
		\lim_{\varepsilon\downarrow 0}\Im m_\sigma(x+i\varepsilon)>0
		\,.
	\]
	Thus
	\begin{multline*}
		\lim_{\varepsilon\downarrow 0}\frac{\Im m(x+i\varepsilon)}{\Im \tilde m(x+i\varepsilon)}=
		\lim_{\varepsilon\downarrow 0}\bigg[\Big(\frac{b\varepsilon}{\Im m_\sigma(x+i\varepsilon)}+
		\frac{\Im m_\nu(x+i\varepsilon)}{\Im m_\sigma(x+i\varepsilon)}\Big)\cdot
		\\
		\cdot\Big(1+\frac{\tilde b\varepsilon}{\Im m_\sigma(x+i\varepsilon)}\Big)^{-1}\bigg]=
		\lim_{\varepsilon\downarrow 0}\frac{\Im m_\nu(x+i\varepsilon)}{\Im m_\sigma(x+i\varepsilon)}
		\,.
	\end{multline*}
	
	\hspace*{0pt}\\[-2mm]\textit{Theorem~\ref{theorem Poltoratski-1}:}
	The set of all points $x\in\bb R$ with $\frac{d\nu}{d\lambda}(x)\in(0,\infty]$ is $\nu$-full. Hence, we may restrict all
	considerations to points $x$ belonging to this set, and hence assume that
	$\lim_{\varepsilon\downarrow 0}\Im m_\nu(x+i\varepsilon)>0$. Now use the estimate
	\begin{multline*}
		\frac{|\Re m_\nu(z)|}{\Im m_\nu(z)}-\frac{|a|+|b|\cdot|z|}{\Im m_\nu(z)}\leq
		\frac{|\Re m(z)|}{\Im m(z)}\Big(1+\frac{b\Im z}{\Im m_\nu(z)}\Big)\leq
		\\
		\leq\frac{|\Re m_\nu(z)|}{\Im m_\nu(z)}+\frac{|a|+|b|\cdot|z|}{\Im m_\nu(z)}
		\,.
	\end{multline*}
	
	\hspace*{0pt}\\[-2mm]\textit{Theorem~\ref{theorem Poltoratski-2}, $(ii)$:}
	For $\sigma_s$-a.a.\ points $x\in\bb R$ we have $\frac{d\sigma}{d\lambda}(x)=\infty$, and hence
	$\lim_{\varepsilon\downarrow 0}\Im m_\sigma(x+i\varepsilon)=\infty$. Thus also
	$\lim_{\varepsilon\downarrow 0}|m_\sigma(x+i\varepsilon)|=\infty$, and it follows that
	\[
		\lim_{\zlim}\frac{m(z)}{\tilde m(z)}=
		\lim_{\zlim}\bigg[\Big(\frac{a+bz}{m_\sigma(z)}+
		\frac{m_\nu(z)}{m_\sigma(z)}\Big)\cdot\Big(1+\frac{\tilde a+\tilde bz}{m_\sigma(z)}\Big)^{-1}\bigg]=
		\lim_{\zlim}\frac{m_\nu(z)}{m_\sigma(z)}
		\,.
	\]
	
	\hspace*{0pt}\\[-2mm]\textit{Corollary~\ref{lemma Poltoratski}:}
	For $\sigma$-a.a.\ points $x\in\bb R$ we have $\frac{d\sigma}{d\lambda}(x)\in(0,\infty]$, and hence
	$\lim_{\varepsilon\downarrow 0}\Im m_\sigma(x+i\varepsilon)>0$. Thus also $\lim_{\varepsilon\downarrow 0}\Im\tilde m(x+i\varepsilon)>0$,
	and it follows that
	\[
		\lim_{n\to\infty}\frac{\Re m(x+i\varepsilon_n)}{\Im\tilde m(x+i\varepsilon_n)}=
		\frac{a+bx}{\lim_{n\to\infty}\Im m_{\sigma}(x+i\varepsilon_n)}+
		\lim_{n\to\infty}\frac{\Re m_{\nu}(x+i\varepsilon_n)}{\Im m_{\sigma}(x+i\varepsilon_n)}
		\,.
	\]
\end{proof}

\noindent
\textbf{Convention:} \textit{When referring to one of the above statements \ref{prop Herglotz}, \ref{Kac lemma}, \ref{theorem Poltoratski-1},
\ref{theorem Poltoratski-2}, $(ii)$, \ref{lemma Poltoratski}, we mean their general versions provided in
Corollary~\ref{K50}.}

\section{Boundary relations}

Throughout the following we use without further notice the language and theory of linear relations. In particular, we
will think of a linear operator $T$ interchangeably as a map or as a linear relation
(i.e., identify the operator $T$ with its graph). Notationally, we interchangeably write $y=Tx$ or $(x;y)\in T$.

Our standard references for the theory of boundary triples are \cite{Derkach-Hassi-Malamud-deSnoo-2006} and the survey
article \cite{Derkach-Hassi-Malamud-deSnoo-2009}. There also some basic notations and results about linear relations can be
found.

\subsection{Boundary relations and Weyl families}

Boundary relations provide a general framework to study symmetric operators and their extensions.
Let us recall their definition, see, e.g., \cite[Definition 3.1]{Derkach-Hassi-Malamud-deSnoo-2009}.

\begin{definition}\label{K16}
	Let $S$ be a closed symmetric linear relation in a Hilbert space $H$, and let $B$ be an auxiliary Hilbert space.
	A linear relation $\Gamma\subseteq H^2\times B^2$ is called a \emph{boundary relation for $S^*$}, if
	\begin{axioms}{14mm}
	\item[BR1] The domain of $\Gamma$ is contained in $S^*$ and is dense there.
	\item[BR2] For each two elements $((f;g);(\alpha;\beta)),((f';g');(\alpha';\beta'))\in \Gamma$ the abstract Green's identity
		\[
			 (g,f')_{H}-(f,g')_{H}=(\beta,\alpha')_B-(\alpha,\beta')_B
		\]
		holds.
	\item[BR3] The relation $\Gamma$ is maximal with respect to the properties {\rm(BR1)} and {\rm(BR2)}.
	\end{axioms}
\end{definition}

\noindent
If the auxiliary space $B$ is finite-dimensional, the theory of boundary relations becomes significantly simpler. Since this is all we
need in the present paper, we will in most cases assume that $\displaystyle \dim B<\infty$.

A central notion is the Weyl family associated with a boundary relation, cf.\ \cite[Definition 3.4]{Derkach-Hassi-Malamud-deSnoo-2009}.

\begin{definition}\label{K18}
	Let $\Gamma\subseteq H^2\times B^2$ be a boundary relation for $S^*$.
	Then, for each $z\in\bb C\setminus\bb R$, we define a linear relation $M(z)$ as
	\[
		M(z):=\big\{(\alpha;\beta)\in B^2:\ \exists\,f\in H\text{ with }\big((f;zf);(\alpha;\beta)\big)\in\Gamma\big\}
		\,.
	\]
	This family of relations is called the \emph{Weyl family of $\Gamma$}. If $\mul M(z)=\{0\}$ for all $z$, one also refers
	to $M$ as the \emph{Weyl function of $\Gamma$}.
\end{definition}

\begin{definition}
A family of boundary relations $M(z)$ in the Hilbert space $B$ is called a \emph{Nevanlinna family}, if
	\begin{enumerate}[$(i)$]
		\item for each $z\in\bb C^+$, the relation $M(z)$ is maximal dissipative;
		\item $\displaystyle M(z)^*=M(\overline z),\quad z\in\bb C\setminus\bb R$;
		\item for some $w\in\bb C^+$ the operator valued function $z\mapsto(M(z)+w)^{-1}$ is holomorphic in $\bb C^+$.
	\end{enumerate}
\end{definition}

\noindent
The basic representation theorem for Weyl families reads as follows, see, e.g., \cite[Theorem~3.6]{Derkach-Hassi-Malamud-deSnoo-2009} or
\cite[Theorem~3.9]{Derkach-Hassi-Malamud-deSnoo-2006}.

\begin{theorem}[\cite{Derkach-Hassi-Malamud-deSnoo-2009}]\label{K62}
	Let $\Gamma\subseteq H^2\times B^2$ be a boundary relation for $S^*$, and let $M$ be its Weyl family. Then $M$ is	a Nevanlinna family. Conversely, each Nevanlinna family can be represented as the Weyl family of a boundary relation for the adjoint of some symmetric linear relation $S$. Moreover, $S$
	can be chosen to be completely non-selfadjoint (\emph{simple}).
\end{theorem}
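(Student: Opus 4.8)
The plan is to treat both directions through the \emph{main transform}, which identifies a boundary relation with a selfadjoint relation on a larger Hilbert space, so that the Nevanlinna property of the Weyl family becomes a statement about compressed resolvents of selfadjoint relations and the converse becomes a dilation statement. Concretely, to a boundary relation $\Gamma\subseteq H^2\times B^2$ for $S^*$ I associate, in the Hilbert space $\mc H:=H\oplus B$, the linear relation
\[
\mf A:=\big\{\big((f;\beta);(g;\alpha)\big):\ \big((f;g);(\alpha;\beta)\big)\in\Gamma\big\}.
\]
A one-line rearrangement of the abstract Green identity shows that {\rm(BR2)} is equivalent to $\mf A$ being symmetric in $\mc H$; {\rm(BR1)} says that the $H$-component of $\dom\mf A$ is dense in $S^*$; and {\rm(BR1)} together with {\rm(BR3)} upgrades $\mf A$ to a \emph{selfadjoint} relation $\mf A=\mf A^*$. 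Conversely, every selfadjoint relation in $H\oplus B$ arises as the main transform of a unique boundary relation, with $S$ recovered as $\{(f;g)\in H^2:\,((f;g);(0;0))\in\Gamma\}$.

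Fix now $z\in\bb C\setminus\bb R$. Since $\mf A=\mf A^*$, the resolvent $(\mf A-z)^{-1}$ is a bounded everywhere-defined operator on $\mc H$ with $\|(\mf A-z)^{-1}\|\le|\Im z|^{-1}$. Unwinding Definition~\ref{K18} gives $(\alpha;\beta)\in M(z)$ precisely when $(f;\beta)=(\mf A-z)^{-1}(0;\alpha-z\beta)$ for some $f\in H$, and hence, after projecting onto $B$, $\beta=R(z)(\alpha-z\beta)$ where $R(z):=P_B(\mf A-z)^{-1}|_B$ is the compressed resolvent. Thus $M$ is the image of $R$ under an explicit linear-fractional transformation, $R$ being an operator-valued Herglotz function on $B$ (from $\Im\big((\mf A-z)^{-1}\big)=\Im z\,(\mf A-\bar z)^{-1}(\mf A-z)^{-1}\ge0$ one gets $\Im R(z)\ge0$, and $R(\bar z)=R(z)^*$). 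The three defining properties of a Nevanlinna family then follow by the standard bookkeeping for such transformations: dissipativity of $M(z)$ for $z\in\bb C^+$ is in fact the direct reading $\Im(\beta,\alpha)_B=\Im z\,\|f\|^2\ge0$ of Green's identity, maximality of this dissipative relation reflects that $(\mf A-z)^{-1}$ is everywhere defined, the identity $M(z)^*=M(\bar z)$ comes from $(\mf A-\bar z)^{-1}=((\mf A-z)^{-1})^*$, and holomorphy of $z\mapsto(M(z)+w)^{-1}$ from analyticity of $z\mapsto(\mf A-z)^{-1}$ on $\bb C^+$.

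For the converse, given a Nevanlinna family $M$ in $B$ I would invoke the known representation of Nevanlinna families (in the finite-dimensional case one can build this by hand from the matrix Herglotz integral representation, Theorem~\ref{K52}, after splitting off the $z$-independent multivalued part $\mul M(z)$): there exist a Hilbert space $\mc H\supseteq B$ and a selfadjoint relation $\mf A$ in $\mc H$ whose compressed resolvent $P_B(\mf A-z)^{-1}|_B$ is exactly the function $R$ attached to $M$ by the linear-fractional formula above. Putting $H:=\mc H\ominus B$, letting $\Gamma$ be the boundary relation with main transform $\mf A$, and $S$ as above, one obtains a boundary relation for $S^*$ whose Weyl family is $M$. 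To make $S$ simple, decompose $H=H_0\oplus H_s$ where $H_0:=\overline{\spn}\{\ker(S^*-z):z\in\bb C\setminus\bb R\}$ and $H_s$ reduces $S$ to a selfadjoint operator; since the restriction of $\Gamma$ to $H_0^2\times B^2$ has the same Weyl family as $\Gamma$ (the Weyl family feels only the defect subspaces), replacing $\mc H$ by $H_0\oplus B$ delivers a simple model.

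I expect the genuine difficulty to sit in two places: the step upgrading the symmetric relation $\mf A$ to a selfadjoint one, where {\rm(BR3)} must really be used; and, on the converse side, producing the selfadjoint dilation $\mf A$ realizing a prescribed Nevanlinna family, together with the verification that discarding the selfadjoint part $H_s$ of $S$ leaves the Weyl family unchanged. Everything else reduces to formal manipulations with the resolvent of $\mf A$.
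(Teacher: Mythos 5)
The paper does not actually prove Theorem~\ref{K62} — it is quoted from Derkach--Hassi--Malamud--de Snoo — and your proposal reproduces essentially the argument of that cited source: the main transform turning $\Gamma$ into a selfadjoint relation $\mathfrak{A}$ in $H\oplus B$, the identification of the Weyl family as a linear-fractional image of the compressed resolvent $P_B(\mathfrak{A}-z)^{-1}|_B$, and a Naimark-type dilation for the converse, followed by splitting off the selfadjoint part of $S$ to achieve simplicity. Your outline is correct as far as it goes, and it honestly flags the two genuinely nontrivial steps (selfadjointness of $\mathfrak{A}$ via {\rm(BR3)}, and the realization of a prescribed Nevanlinna family as a compressed resolvent), so there is nothing to object to beyond the fact that those steps are deferred to the literature exactly as the paper itself does.
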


\noindent
This representation theorem is accompanied by the following uniqueness result, see corresponding part in the proof of
\cite[Theorem~3.9]{Derkach-Hassi-Malamud-deSnoo-2006}.

\begin{theorem}[\cite{Derkach-Hassi-Malamud-deSnoo-2006}]\label{K60}
	Let $S_j$ be closed symmetric simple linear relations in Hilbert spaces $H_j$, and $\Gamma_j\subseteq H_j^2\times B^2$ be boundary relations for $S_j^*$, let $M_j$ be their Weyl families, $j=1,2$. If $M_1=M_2$, then there exists a unitary operator $U$ of $H_1$ onto $H_2$ such that
	\begin{equation}\label{unitarily equivalent relations} \Gamma_2=\Big\{\big((Uf;Ug);(\alpha;\beta)\big):\,\big((f;g);(\alpha;\beta)\big)\in\Gamma_1\Big\}
		\,.
	\end{equation}
\end{theorem}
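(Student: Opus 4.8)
The plan is to reduce the uniqueness statement to the known uniqueness of the minimal model for operator-valued Herglotz (Nevanlinna) families, and then transfer it back to the level of boundary relations. First I would recall that a boundary relation $\Gamma_j$ for $S_j^*$ gives rise, via the abstract Green identity (BR2) together with the density/maximality axioms (BR1), (BR3), to a model in which the symmetric relation $S_j$ is recovered as $\ker\Gamma_j$ and $S_j^*=\dom\Gamma_j$ (the closure thereof). The Weyl family $M_j$ is built from the ``diagonal'' elements $((f;zf);(\alpha;\beta))\in\Gamma_j$. The crucial structural fact, which I would cite from \cite{Derkach-Hassi-Malamud-deSnoo-2006}, is that when $S_j$ is simple (completely non-selfadjoint), the Hilbert space $H_j$ is spanned by the defect elements $f_z$, $z\in\bb C\setminus\bb R$, appearing in these diagonal elements; more precisely $H_j=\overline{\spn}\{f_z^{(\alpha)}: z\in\bb C\setminus\bb R,\ (\alpha;\beta)\in M_j(z)\}$, and the inner product of two such defect elements is expressible purely in terms of $M_j$ via the identity obtained from Green's formula,
\[
	(f_z^{(\alpha)},f_w^{(\alpha')})_{H_j}
	=\frac{(\beta,\alpha')_B-(\alpha,\beta')_B - \big((z-\overline w)(f_z^{(\alpha)},f_w^{(\alpha')})\big)\ \text{terms}}{z-\overline w}
	\,,
\]
which upon solving gives $(f_z^{(\alpha)},f_w^{(\alpha')})_{H_j}$ as an explicit function of $M_j(z),M_j(w)$ and $\alpha,\alpha',z,w$. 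Since $M_1=M_2$, these Gram data coincide.

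The second step is to build the unitary $U$. On the dense linear span of defect elements in $H_1$ I would define $U$ by sending the defect element of $\Gamma_1$ at $z$ with boundary data $(\alpha;\beta)\in M_1(z)$ to the corresponding defect element of $\Gamma_2$ at $z$ with the same boundary data $(\alpha;\beta)\in M_2(z)$ (well-defined because $M_1=M_2$). By the Gram-matrix computation of the previous step, $U$ is isometric on this span; since both spans are dense by simplicity, $U$ extends to a unitary operator of $H_1$ onto $H_2$. I would then check that $U$ intertwines the resolvent-type data: applying $U$ to $((f_z^{(\alpha)}; z f_z^{(\alpha)});(\alpha;\beta))\in\Gamma_1$ yields $((Uf_z^{(\alpha)}; z\, Uf_z^{(\alpha)});(\alpha;\beta))$, which lies in $\Gamma_2$ by construction. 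This shows that the right-hand side of \eqref{unitarily equivalent relations} contains all diagonal elements of $\Gamma_1$, mapped into $\Gamma_2$.

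The third and final step upgrades this from diagonal elements to all of $\Gamma_1$. Here I would use that $\Gamma_j$ is determined by its diagonal part together with the relation $S_j=\ker\Gamma_j$ and the algebraic structure imposed by (BR2), (BR3): an arbitrary element $((f;g);(\alpha;\beta))\in\Gamma_1$ can be written, modulo $S_1\times\{(0;0)\}$, as a (limit of) linear combinations of diagonal elements, because $S_1^*=\dom\Gamma_1$ is the closed linear span of $S_1$ and the defect spaces $\widehat{\mathfrak N}_z$. Since $U$ maps $S_1$ onto $S_2$ (being the intertwining unitary of the two simple symmetric relations, which itself follows from the defect-element description) and maps diagonal elements of $\Gamma_1$ into $\Gamma_2$, the transported relation $\{((Uf;Ug);(\alpha;\beta)):((f;g);(\alpha;\beta))\in\Gamma_1\}$ is contained in $\Gamma_2$; a symmetric argument with $U^{-1}$ gives the reverse inclusion, hence equality \eqref{unitarily equivalent relations}. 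The main obstacle I anticipate is not the construction of $U$ itself but the bookkeeping in this last step: verifying that the closed linear relation $\Gamma_2$ has no ``extra'' elements beyond the closure of the transported diagonal part plus $S_2\times\{(0;0)\}$, i.e.\ that the maximality axiom (BR3) genuinely pins $\Gamma_j$ down given its diagonal and kernel. This is exactly the content one extracts from the corresponding step in the proof of \cite[Theorem~3.9]{Derkach-Hassi-Malamud-deSnoo-2006}, which I would invoke rather than reprove in full detail.
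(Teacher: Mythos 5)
The paper does not prove this statement at all: it is quoted verbatim from the uniqueness part of \cite[Theorem~3.9]{Derkach-Hassi-Malamud-deSnoo-2006}, and the text only points the reader there. Your sketch is a faithful reconstruction of the standard argument behind that result: Green's identity (BR2) applied to two diagonal elements $((f;zf);(\alpha;\beta))$, $((f';wf');(\alpha';\beta'))$ gives $(z-\overline w)(f,f')_H=(\beta,\alpha')_B-(\alpha,\beta')_B$, so the Gram data of the defect elements are determined by the Weyl family; simplicity makes their span dense, so $U$ is well defined, isometric and onto, and carries diagonal elements of $\Gamma_1$ into $\Gamma_2$. Two points deserve flagging. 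First, your displayed Gram formula is written circularly (the inner product appears on both sides); the correct statement is the resolved identity above, valid for $z\neq\overline w$, with the case $w=\overline z$ recovered by analyticity. Second, the step you yourself identify as delicate --- passing from the diagonal elements to all of $\Gamma_1$ --- is genuinely the crux: $\dom\Gamma_1$ need not contain the von~Neumann components of its elements, $\Gamma_1$ may be multivalued, and the defect subspaces $\mathfrak N_z(\Gamma_1)$ are a priori only dense in $\ker(S_1^*-z)$. In \cite{Derkach-Hassi-Malamud-deSnoo-2006} this is handled not at the level of $\Gamma$ but via the main transform to a selfadjoint relation $\widetilde A_j$ in $H_j\oplus B$, where $M_j$ determines the compressed resolvent, simplicity of $S_j$ becomes minimality of $\widetilde A_j$ relative to $B$, and the uniqueness of minimal selfadjoint realizations yields a unitary $\widetilde U=U\oplus I_B$ intertwining $\widetilde A_1$ and $\widetilde A_2$; undoing the transform gives \eqref{unitarily equivalent relations} without any bookkeeping about non-diagonal elements. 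Since you explicitly invoke the reference for exactly this step, your proposal is acceptable and in substance the same proof, but be aware that the ``maximality pins $\Gamma_j$ down'' claim is not something you could leave as an exercise if you were writing the argument out in full.
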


\noindent
Two boundary relations which are related as in \eqref{unitarily equivalent relations} are called \emph{unitarily equivalent}.

The following properties, which a boundary relation may or may not possess, play a role in the present paper.

\begin{definition}\label{K58}
	Let $\Gamma\subseteq H^2\times B^2$ be a boundary relation for $S^*$.
	\begin{enumerate}[$(i)$]
		\item $\Gamma$ is called of \emph{function type}, if
			\[
				\mul\Gamma\cap\big(\{0\}\times B\big)=\{0\}
				\,.
			\]
		\item $\Gamma$ is called a \emph{boundary function}, if
			\[
				\mul\Gamma=\{0\}
				\,.
			\]
	\end{enumerate}
\end{definition}

\noindent
It is an important fact that these properties reflect in properties of the Weyl family associated with $\Gamma$.
For the following statement see, e.g., \cite[Proposition~3.7]{Derkach-Hassi-Malamud-deSnoo-2009} and
\cite[Lemma~4.1]{Derkach-Hassi-Malamud-deSnoo-2006}.

\begin{theorem}[\cite{Derkach-Hassi-Malamud-deSnoo-2006}]\label{K59}
	Let $\dim B<\infty$ and $\Gamma\subseteq H^2\times B^2$ be a boundary relation for $S^*$. Then
	\begin{enumerate}[$(i)$]
		\item $\Gamma$ is of function type, if and only if $\mul M(z)=\{0\}$, $z\in\bb C\setminus\bb R$.
		\item If $\Gamma$ is a boundary function, then $M(z)$ is an invertible operator in $B$ for every $z\in\mathbb C\setminus\mathbb R$.
	\end{enumerate}
\end{theorem}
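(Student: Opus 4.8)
The plan is to deduce both assertions from one elementary identity that comes straight out of the abstract Green identity (BR2), namely
\[
  \mul M(z)=\big\{\beta\in B:\,(0;\beta)\in\mul\Gamma\big\},\qquad z\in\bb C\setminus\bb R .
\]
The inclusion ``$\supseteq$'' is immediate: if $\big((0;0);(0;\beta)\big)\in\Gamma$, then choosing $f=0$ exhibits $(0;\beta)\in M(z)$, i.e.\ $\beta\in\mul M(z)$. For ``$\subseteq$'', let $\beta\in\mul M(z)$ and pick $f\in H$ with $\big((f;zf);(0;\beta)\big)\in\Gamma$. I would apply (BR2) to this element paired with itself: the left-hand side is $(zf,f)_H-(f,zf)_H=(z-\overline z)\|f\|_H^2$ and the right-hand side is $(\beta,0)_B-(0,\beta)_B=0$, so $\|f\|_H=0$ because $z\notin\bb R$; hence $\big((0;0);(0;\beta)\big)\in\Gamma$, that is $(0;\beta)\in\mul\Gamma$. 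Item $(i)$ then follows at once: ``$\Gamma$ of function type'' means $\mul\Gamma\cap(\{0\}\times B)=\{0\}$, and by the displayed identity this is equivalent to $\mul M(z)=\{0\}$; moreover the identity shows this holds for one $z$ iff it holds for every $z\in\bb C\setminus\bb R$. (Note that part $(i)$ does not actually use $\dim B<\infty$.)

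For item $(ii)$ I would start from $\Gamma$ being a boundary function, i.e.\ $\mul\Gamma=\{0\}$. By $(i)$ this gives $\mul M(z)=\{0\}$, so $M(z)$ is (the graph of) an operator. The same computation controls the kernel: if $(\alpha;0)\in M(z)$, take $f\in H$ with $\big((f;zf);(\alpha;0)\big)\in\Gamma$ and apply (BR2) to this element and itself to get $(z-\overline z)\|f\|_H^2=(0,\alpha)_B-(\alpha,0)_B=0$, whence $f=0$, so $\big((0;0);(\alpha;0)\big)\in\Gamma$ and thus $(\alpha;0)\in\mul\Gamma=\{0\}$, i.e.\ $\alpha=0$. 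Therefore $M(z)$ is an \emph{injective} operator, equivalently both $M(z)$ and $M(z)^{-1}$ are single-valued.

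It remains to upgrade ``injective operator'' to ``invertible operator in $B$'', and this is the single step where finite-dimensionality is used, and where I expect the only real obstacle to lie. By Theorem~\ref{K62} the Weyl family $M$ is a Nevanlinna family, so for $z\in\bb C^+$ the relation $M(z)$ is maximal dissipative; for a maximal dissipative relation a suitable resolvent $(M(z)-\zeta)^{-1}$ is an everywhere defined bounded operator on $B$, and since $\dim B<\infty$ this forces $\dim M(z)=\dim B$ (viewing $M(z)$ as a subspace of $B^2$). Combined with $\mul M(z)=\{0\}$ this yields $\dom M(z)=B$, so $M(z)$ is a bounded injective operator on a finite-dimensional space, hence invertible, for every $z\in\bb C^+$; for $z\in\bb C^-$ one uses $M(z)=M(\overline z)^*$, the adjoint of a bounded invertible operator, which is again bounded and invertible. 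Thus the hard point is not the Green-identity bookkeeping (which only gives single-valuedness of $M(z)$ and of $M(z)^{-1}$) but the fact that the Nevanlinna/maximal-dissipative structure together with $\dim B<\infty$ is what makes $M(z)$ everywhere defined on $B$.
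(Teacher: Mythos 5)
The paper does not actually prove Theorem~\ref{K59}: it is quoted from \cite{Derkach-Hassi-Malamud-deSnoo-2009} and \cite{Derkach-Hassi-Malamud-deSnoo-2006}, so there is no in-text argument to compare against. Your proof is correct and self-contained. The identity $\mul M(z)=\{\beta\in B:\,(0;\beta)\in\mul\Gamma\}$, obtained by pairing the element $\big((f;zf);(0;\beta)\big)\in\Gamma$ with itself in the Green identity {\rm(BR2)} and using $z\neq\overline z$ to force $f=0$, is exactly the mechanism of the cited sources; it settles item $(i)$, makes the $z$-independence of $\mul M(z)$ explicit, and (as you note) does not need $\dim B<\infty$. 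For item $(ii)$ the same computation gives $\ker M(z)=\{0\}$, and you correctly isolate the one genuinely nontrivial step: everywhere-definedness. Your dimension count is sound — since $M$ is a Nevanlinna family (Theorem~\ref{K62}), $(M(z)+w)^{-1}$ is an everywhere defined operator on $B$ for suitable $w$, so $\dim M(z)=\dim B$ as a subspace of $B^2$; together with $\mul M(z)=\{0\}$ and $\dim B<\infty$ this forces $\dom M(z)=B$, and an injective everywhere defined operator on a finite-dimensional space is invertible. The reduction of $z\in\bb C^-$ to $z\in\bb C^+$ via $M(z)=M(\overline z)^*$ is also fine (alternatively, the Green-identity injectivity argument works verbatim in the lower half-plane). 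I see no gaps.
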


\noindent
Given a boundary relation $\Gamma$ for $S^*$ which is of function type, we can single out a particular selfadjoint
extension of $S$. Namely, let $\pi_1:B^2\to B$ be the projection onto the first component, and set
\begin{equation}\label{K64}
	A:=\ker\big[\pi_1\circ\Gamma\big]
	\,.
\end{equation}
The fact that the relation $A$ is selfadjoint, follows from \cite[Proposition~3.16]{Derkach-Hassi-Malamud-deSnoo-2009}, since the auxiliary space $B$ is finite-dimensional.

\noindent
For later use, let us mention the following facts which follow from \cite[Proposition~3.2]{Derkach-Hassi-Malamud-deSnoo-2009}.

\begin{remark}\label{K56}
	Let $\dim B<\infty$ and $\Gamma\subseteq H^2\times B^2$ be a boundary relation for $S^*$.
	\begin{enumerate}[$(i)$]
		\item The relation $S$ has finite and equal defect indices $n_\pm(S)$. Moreover,
			\[
				\dim B=n_\pm(S)+\dim\mul\Gamma
				\,.
			\]
		\item Assume that $\dim B=1$. Then either $\mul\Gamma=\{0\}$ and $S$ has defect index $(1,1)$, or
			$\dim\mul\Gamma=1$ and $S$ is selfadjoint.
		\item Assume that $\dim B=1$, $\mul\Gamma\neq\{0\}$ and that $S$ is simple. Then either $\Gamma=\{0\}^2\times(\{0\}\times\mathbb C)$ or $\Gamma=\{0\}^2\times\{(w;mw),w\in\mathbb C\},m\in\mathbb R$, and the Weyl function is equal to $m$, a real constant\footnote{The case that $\mul\Gamma=\{0\}\times\bb C$ informally corresponds to the ``Weyl function'' $m\equiv\infty$ (formally to the Weyl family $m\equiv\{0\}\times\bb C$).}.
	\end{enumerate}
\end{remark}

\noindent
Next, we recall four methods to construct new boundary relations from given ones.
The first one is just taking orthogonal sums, the second is making a change of basis.
Both are easy to verify (and common knowledge); we skip the details.

\begin{lemma}\label{K61}
	Let $n\in\bb N$, and let for each $l\in\{1,\dots,n\}$ a boundary relation $\Gamma_l\subseteq H_l^2\times B_l^2$ for $S_l^*$ be given. Define
	\begin{multline*}
		\prod_{l=1}^n\Gamma_l:=\Bigg\{
		\Bigg(\bigg(\begin{pmatrix}f_1\\ \vdots\\ f_n\end{pmatrix};\begin{pmatrix}g_1\\ \vdots\\ g_n\end{pmatrix}\bigg);
		\bigg(\begin{pmatrix}\alpha_1\\ \vdots\\ \alpha_n\end{pmatrix};\begin{pmatrix}\beta_1\\ \vdots\\ \beta_n\end{pmatrix}\bigg)\Bigg):
		\\
		\big((f_l;g_l);(\alpha_l;\beta_l)\big)\in\Gamma_l,\ l=1,\dots,n\Bigg\}
		\subseteq\Big(\prod_{l=1}^n H_l\Big)^2\times\Big(\prod_{l=1}^nB_l\Big)^2
		\,.
	\end{multline*}
	Then $\prod_{l=1}^n\Gamma_l$ is a boundary relation for $\prod_{l=1}^nS_l^*$. Its Weyl family is given as
	\[
		\prod_{l=1}^nM_l(z):=\Bigg\{
		\bigg(\!\begin{pmatrix}\alpha_1\\ \vdots\\ \alpha_n\end{pmatrix}\!;\!\begin{pmatrix}\beta_1\\ \vdots\\ \beta_n\end{pmatrix}\!\bigg)\!:
		(\alpha_l;\beta_l)\in M_l(z),\ l\!=\!1,\dots,n\Bigg\}
		\!\subseteq\!\Big(\!\prod_{l=1}^nB_l\!\Big)^2
		.
	\]
	The relation $\prod_{l=1}^n\Gamma_l$ of function type (a boundary function) if and only if all relations $\Gamma_l$ are.
\end{lemma}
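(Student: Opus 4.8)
The plan is to verify the three axioms (BR1)--(BR3) directly from the definitions, and then to read off the Weyl family and the function-type/boundary-function properties. First I would check (BR2): given two elements of $\prod_{l=1}^n\Gamma_l$, the left-hand side of the abstract Green's identity in $\bigl(\prod_l H_l\bigr)^2$ decomposes as $\sum_{l=1}^n\bigl[(g_l,f_l')_{H_l}-(f_l,g_l')_{H_l}\bigr]$, and the right-hand side in $\bigl(\prod_l B_l\bigr)^2$ decomposes analogously; since each summand pair agrees by (BR2) for $\Gamma_l$, the identity holds. For (BR1), $\dom\bigl(\prod_l\Gamma_l\bigr)=\prod_l\dom\Gamma_l$ is contained in $\prod_l S_l^*=\bigl(\prod_l S_l\bigr)^*$, and density of $\dom\Gamma_l$ in $S_l^*$ for each $l$ gives density of the product in the product.

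The step requiring a little care is (BR3), maximality. Here I would argue by contradiction: if some $\Gamma'\supsetneq\prod_l\Gamma_l$ still satisfies (BR1) and (BR2), then for each fixed $l$ one considers the ``slice'' obtained by pairing a candidate element of $\Gamma'$ with all elements of $\prod_{k\ne l}\Gamma_k$ (padded with zeros in the $l$-th slot); Green's identity for $\Gamma'$ against these then forces the $l$-th component of the new element, together with $\Gamma_l$, to still satisfy (BR1) and (BR2) in $H_l^2\times B_l^2$, contradicting maximality of $\Gamma_l$. An alternative, perhaps cleaner, route uses Theorem~\ref{K62}: each $M_l$ is a Nevanlinna family, the componentwise product $\prod_l M_l(z)$ is visibly again a Nevanlinna family (maximal dissipativity, the symmetry $M(z)^*=M(\overline z)$, and holomorphy of $(M(z)+w)^{-1}=\bigoplus_l(M_l(z)+w)^{-1}$ all pass through direct sums), so it is the Weyl family of \emph{some} boundary relation $\Gamma_0$; one then checks $\prod_l\Gamma_l\subseteq\Gamma_0$ after identifying Weyl families, and since both have the same underlying symmetry (defect indices add) equality follows. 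I expect the direct verification of (BR3) to be the main obstacle, as it is the only axiom that is not a termwise bookkeeping exercise.

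Next I would compute the Weyl family. By Definition~\ref{K18}, $\bigl(\bm\alpha;\bm\beta\bigr)$ with $\bm\alpha=(\alpha_1,\dots,\alpha_n)$, $\bm\beta=(\beta_1,\dots,\beta_n)$ lies in the Weyl family of $\prod_l\Gamma_l$ at $z$ iff there is $\bm f=(f_1,\dots,f_n)$ with $\bigl((\bm f;z\bm f);(\bm\alpha;\bm\beta)\bigr)\in\prod_l\Gamma_l$, which by the definition of the product relation means exactly $\bigl((f_l;zf_l);(\alpha_l;\beta_l)\bigr)\in\Gamma_l$ for every $l$, i.e.\ $(\alpha_l;\beta_l)\in M_l(z)$ for every $l$. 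This is precisely the asserted description $\prod_l M_l(z)$.

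Finally, the function-type and boundary-function claims follow by inspecting multivalued parts. One has $\mul\bigl(\prod_l\Gamma_l\bigr)=\prod_l\mul\Gamma_l$ as subsets of $\bigl(\prod_l H_l\bigr)\times\bigl(\prod_l B_l\bigr)$, directly from the definition of the product relation with $f_l=g_l=0$. Hence $\mul\bigl(\prod_l\Gamma_l\bigr)\cap\bigl(\{0\}\times\prod_lB_l\bigr)=\prod_l\bigl(\mul\Gamma_l\cap(\{0\}\times B_l)\bigr)$, which is $\{0\}$ iff each factor is $\{0\}$; this gives the function-type equivalence. Likewise $\mul\bigl(\prod_l\Gamma_l\bigr)=\{0\}$ iff $\mul\Gamma_l=\{0\}$ for all $l$, giving the boundary-function equivalence. (Equivalently, one could invoke Theorem~\ref{K59}\,$(i)$ together with the already-established formula $\mul\bigl(\prod_lM_l(z)\bigr)=\prod_l\mul M_l(z)$.)
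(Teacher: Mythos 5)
The paper gives no proof of this lemma at all (it is dismissed with ``Both are easy to verify (and common knowledge); we skip the details''), so there is nothing to compare line by line. Your treatment of (BR1), (BR2), the Weyl-family formula, and the identities $\mul\big(\prod_l\Gamma_l\big)=\prod_l\mul\Gamma_l$ and $\mul\big(\prod_l\Gamma_l\big)\cap\big(\{0\}\times\prod_lB_l\big)=\prod_l\big(\mul\Gamma_l\cap(\{0\}\times B_l)\big)$ is correct and is exactly the termwise bookkeeping the authors have in mind.

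The maximality axiom (BR3), which you rightly identify as the only nontrivial point, is also where your argument has a gap. Testing a candidate element $E=((f_1,\dots,f_n;g_1,\dots,g_n);(\alpha_1,\dots,\alpha_n;\beta_1,\dots,\beta_n))\in\Gamma'$ against elements of $\prod_k\Gamma_k$ supported in a single slot does show that each component $e_l:=((f_l;g_l);(\alpha_l;\beta_l))$ satisfies Green's identity against \emph{every element of} $\Gamma_l$. But to contradict (BR3) for $\Gamma_l$ you must produce a relation properly containing $\Gamma_l$ and satisfying (BR2) for \emph{all} pairs; for $\Gamma_l+\spn\{e_l\}$ this includes the self-pairing $\Im(g_l,f_l)_{H_l}=\Im(\beta_l,\alpha_l)_{B_l}$, which your slicing does not deliver --- applying (BR2) for $\Gamma'$ to the pair $(E,E)$ only yields $\sum_k\big[\Im(g_k,f_k)_{H_k}-\Im(\beta_k,\alpha_k)_{B_k}\big]=0$, with no control of the individual summands. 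The clean repair is to use the characterization of boundary relations as \emph{unitary} relations from $(H^2,J_H)$ to $(B^2,J_B)$ (this is how they are defined in [DHMdS06]): a boundary relation is not merely maximal among relations satisfying (BR1)--(BR2), it coincides with the inverse of its adjoint taken with respect to the indefinite inner products induced by $J_{H_l}$ and $J_{B_l}$. With that fact, your cross-pairing computation says precisely that $e_l$ lies in this inverse adjoint of $\Gamma_l$, hence in $\Gamma_l$ itself; indeed the whole lemma then collapses to the identity that the (Krein-space) adjoint of a finite orthogonal sum is the orthogonal sum of the adjoints. Your alternative route via Theorem~\ref{K62} does not work as stated: Theorem~\ref{K60} presupposes that the underlying symmetries are simple (the $S_l$ in the lemma need not be), and, more seriously, to compare $\prod_l\Gamma_l$ with the abstractly constructed $\Gamma_0$ via that uniqueness theorem you would already need to know that $\prod_l\Gamma_l$ is a boundary relation, which is the assertion being proved.
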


\begin{lemma}\label{K66}
	Let $\Gamma\subseteq H^2\times B^2$ be a boundary relation for $S^*$ of function type with Weyl function $M$. Moreover, let $U:B\to B$ be unitary, and define
	\[
		\Gamma_1:=\bigg\{\big((f;g);(U\alpha;U\beta)\big):\ \big((f;g);(\alpha;\beta)\big)\in\Gamma\bigg\}
		\subseteq H^2\times B^2
		\,.
	\]
	Then $\Gamma_1$ is a boundary relation for $S^*$ which is of function type, and the Weyl function $M_1$ is given as
	\[
		M_1(z)=UM(z)U^{-1}
		\,.
	\]
	The relation $\Gamma_1$ is a boundary function, if and only if $\Gamma$ is.
\end{lemma}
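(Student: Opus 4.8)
The plan is to verify the boundary-relation axioms (BR1)--(BR3) for $\Gamma_1$, then read off the Weyl family from the definition, and finally translate the function-type and boundary-function properties. The key observation is that $\Gamma_1$ is obtained from $\Gamma$ by applying the unitary map $V:B^2\to B^2$, $(\alpha;\beta)\mapsto(U\alpha;U\beta)$, to the auxiliary component only, leaving the $H^2$-component untouched, and that $V$ preserves the form $\big((\alpha;\beta),(\alpha';\beta')\big)\mapsto(\beta,\alpha')_B-(\alpha,\beta')_B$ entering Green's identity, precisely because $U$ is unitary.

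Since $V$ does not move the $H^2$-side, $\dom\Gamma_1=\dom\Gamma$, so (BR1) for $\Gamma_1$ coincides with (BR1) for $\Gamma$. For (BR2), any two elements of $\Gamma_1$ are of the form $\big((f;g);(U\alpha;U\beta)\big)$ and $\big((f';g');(U\alpha';U\beta')\big)$ with $\big((f;g);(\alpha;\beta)\big),\big((f';g');(\alpha';\beta')\big)\in\Gamma$; unitarity of $U$ gives $(U\beta,U\alpha')_B-(U\alpha,U\beta')_B=(\beta,\alpha')_B-(\alpha,\beta')_B$, so Green's identity for $\Gamma_1$ reduces to Green's identity for $\Gamma$. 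For (BR3) I would argue by transport: if $\widetilde\Gamma_1\supseteq\Gamma_1$ satisfies (BR1) and (BR2), then $\widetilde\Gamma:=\{\,((f;g);(\alpha;\beta)):((f;g);(U\alpha;U\beta))\in\widetilde\Gamma_1\,\}$ contains $\Gamma$ and, by the same unitary computation, satisfies (BR1) and (BR2); maximality of $\Gamma$ forces $\widetilde\Gamma=\Gamma$, and since $V$ is bijective this gives $\widetilde\Gamma_1=\Gamma_1$ as well. Hence $\Gamma_1$ is a boundary relation for $S^*$.

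For the Weyl family, unwinding Definition~\ref{K18} shows that $\big((f;zf);(\alpha;\beta)\big)\in\Gamma_1$ holds if and only if $\big((f;zf);(U^{-1}\alpha;U^{-1}\beta)\big)\in\Gamma$, i.e.\ $(U^{-1}\alpha;U^{-1}\beta)\in M(z)$; read as an identity of linear relations this says $(\alpha;\beta)\in UM(z)U^{-1}$, so $M_1(z)=UM(z)U^{-1}$ for every $z\in\mathbb C\setminus\mathbb R$. In particular $\mul M_1(z)=U(\mul M(z))$, and directly from the definition of $\Gamma_1$ one reads off $\mul\Gamma_1=V(\mul\Gamma)$. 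Since $\Gamma$ is of function type, Theorem~\ref{K59}$(i)$ gives $\mul M(z)=\{0\}$, whence $\mul M_1(z)=\{0\}$ and $\Gamma_1$ is of function type, with Weyl function $M_1=UMU^{-1}$; and since $V$ is injective, $\mul\Gamma_1=\{0\}$ if and only if $\mul\Gamma=\{0\}$, i.e.\ $\Gamma_1$ is a boundary function if and only if $\Gamma$ is. I expect the only step calling for a moment's thought to be the transport of maximality in (BR3); even this is routine, because $V$ is a bijection of $B^2$ preserving the form in Green's identity, so everything else is mechanical bookkeeping of $V$.
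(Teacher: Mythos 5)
Your proof is correct; the paper itself gives no argument for this lemma (it declares the change-of-basis construction ``easy to verify'' and skips the details), and your verification of (BR1)--(BR3) via the form-preserving bijection $V:(\alpha;\beta)\mapsto(U\alpha;U\beta)$, together with the computation of $M_1$ and of $\mul\Gamma_1=V(\mul\Gamma)$, is exactly the routine argument being alluded to. One tiny remark: for the function-type claim you can bypass Theorem~\ref{K59}$(i)$ (which is stated under $\dim B<\infty$) by noting directly that $V$ maps $\{0\}\times B$ onto itself, so $\mul\Gamma_1\cap(\{0\}\times B)=V\big(\mul\Gamma\cap(\{0\}\times B)\big)=\{0\}$.
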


\noindent
The third construction shows how to realize the sum of two Weyl functions as a Weyl function,
cf.\ \cite[Corollary~4.5]{Derkach-Hassi-Malamud-deSnoo-2009}.

\begin{theorem}[\cite{Derkach-Hassi-Malamud-deSnoo-2009}]\label{K63}
Assume that $\dim B<\infty$. For $j=1,2$, let $\Gamma_j\subseteq H_j^2\times B^2$ be boundary relations for $S_j^*$ of function type with corresponding
	Weyl functions $M_j$. Define
	\begin{multline*}
		\Gamma:=\bigg\{\bigg(
		\Big(\begin{pmatrix}f_1\\ f_2\end{pmatrix};\begin{pmatrix}g_1\\ g_2\end{pmatrix}\Big);
		\big(\alpha;\beta_1+\beta_2\big)\bigg):
		\\
		\big((f_j;g_j);(\alpha;\beta_j)\big)\in\Gamma_j,j=1,2\bigg\}
		\subseteq\big(H_1\!\times\! H_2\big)^2\times B^2
		\,,
	\end{multline*}
	and
	\begin{multline*}
		S:=\bigg\{
		\Big(\begin{pmatrix}f_1\\ f_2\end{pmatrix};\begin{pmatrix}g_1\\ g_2\end{pmatrix}\Big):\quad
		\exists\beta\in B\text{ with}
		\\
\big((f_1;g_1);(0;\beta)\big)\in\Gamma_1,\big((f_2;g_2);(0;-\beta)\big)\in\Gamma_2\bigg\}
		\subseteq\big(H_1\!\times\! H_2\big)^2
		\,.
	\end{multline*}
	Then $S$ is a closed symmetric relation in $H_1\!\times\! H_2$, and $\Gamma$ is a boundary relation for $S^*$.
	The relation $\Gamma$ is of function type, and its Weyl function is given as
	\[
		M(z)=M_1(z)+M_2(z)
		\,.
	\]
\end{theorem}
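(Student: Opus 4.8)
The plan is to verify directly that $\Gamma$ satisfies the three axioms (BR1)--(BR3) of Definition~\ref{K16} for the given symmetry $S$, and then to read off the Weyl function from Definition~\ref{K18}. First I would identify $S^*$: since $S$ is defined by the condition that both Dirichlet-type components $\alpha$ of $(\alpha;\beta_j)$ vanish (with matching $\pm\beta$), one expects $S^*$ to consist of those $\big((f_1;f_2);(g_1;g_2)\big)$ for which each $(f_j;g_j)$ lies in $\dom\Gamma_j$ after projecting away the auxiliary data, i.e.\ $S^* = \dom\Gamma$ after closure; establishing this, together with $S$ closed and symmetric, is step one. The key computational input is the abstract Green identity (BR2) for $\Gamma$: writing two elements of $\Gamma$ with data $(\alpha;\beta_1+\beta_2)$ and $(\alpha';\beta_1'+\beta_2')$, one adds the Green identities for $\Gamma_1$ and $\Gamma_2$,
\[
	(g_1,f_1')-(f_1,g_1') + (g_2,f_2')-(f_2,g_2') = (\beta_1,\alpha')-(\alpha,\beta_1') + (\beta_2,\alpha')-(\alpha,\beta_2'),
\]
and the right-hand side collapses to $(\beta_1+\beta_2,\alpha')-(\alpha,\beta_1'+\beta_2')$, which is exactly the Green identity for $\Gamma$. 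This is a routine but essential check.

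The main obstacle is the maximality axiom (BR3). One cannot simply quote (BR2); one must show that $\Gamma$ cannot be properly enlarged while retaining (BR1) and (BR2). The cleanest route, and the one I would take, is to avoid (BR3) altogether by invoking Theorem~\ref{K62}: it suffices to exhibit $\Gamma$ as \emph{some} boundary relation for \emph{some} symmetric relation with Weyl family $M_1+M_2$, because the construction then automatically produces a genuine boundary relation, and by the definition of Weyl family one checks that the family associated with the constructed $\Gamma$ is precisely $z\mapsto M_1(z)+M_2(z)$. Concretely: $(\alpha;\beta)\in M(z)$ for the constructed $\Gamma$ iff there exist $f_1,f_2$ with $\big((f_j;zf_j);(\alpha;\beta_j)\big)\in\Gamma_j$ and $\beta=\beta_1+\beta_2$; since $\Gamma_j$ is of function type, $\beta_j$ is uniquely determined by $\alpha$ and equals $M_j(z)\alpha$ (here Theorem~\ref{K59}$(i)$ is used to know $M_j(z)$ is an operator), so $\beta=(M_1(z)+M_2(z))\alpha$. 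Since a sum of two Nevanlinna functions is again Nevanlinna, Theorem~\ref{K62} applies; function type of $\Gamma$ follows because $M(z)=M_1(z)+M_2(z)$ is an operator (Theorem~\ref{K59}$(i)$ again, read backwards).

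Finally I would argue that this constructed boundary relation genuinely has the explicit form stated in the theorem — that is, that no information is lost in the quotient by $S$ — using that $\dim B<\infty$ so all relations in play have finite codimension, and that $\dom\Gamma$ is dense in $S^*$ by (BR1) for $\Gamma_1,\Gamma_2$. The density and the domain computation $\dom\Gamma = \dom\Gamma_1 \oplus \dom\Gamma_2$ restricted appropriately are the last pieces; they follow from the corresponding properties of $\Gamma_1$ and $\Gamma_2$ componentwise. I expect the domain/density bookkeeping and the careful identification of $S^*$ to be the only genuinely delicate points, with everything else being formal once Theorem~\ref{K62} is brought in to sidestep a direct verification of maximality.
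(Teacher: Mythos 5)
The paper does not prove this statement at all: it is imported verbatim from the literature (\cite[Corollary~4.5]{Derkach-Hassi-Malamud-deSnoo-2009}), so there is no in-paper argument to compare against and your proposal must stand on its own. Parts of it do: the Green-identity computation (BR2) is correct, and the identification of the formal Weyl family of $\Gamma$ as $M_1(z)+M_2(z)$ (using that each $M_j(z)$ is an everywhere-defined operator because $\Gamma_j$ is of function type and $\dim B<\infty$) is sound.

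The genuine gap is exactly at the point you flag as the main obstacle and then try to sidestep: maximality (BR3). Theorem~\ref{K62} is an \emph{existence} statement — it produces \emph{some} boundary relation $\tilde\Gamma$ in \emph{some} model space whose Weyl family is the Nevanlinna family $M_1+M_2$. It says nothing about whether your explicitly defined relation $\Gamma$ is maximal; a proper subrelation of a boundary relation can satisfy (BR1), (BR2) and have the same formal Weyl family without being a boundary relation. To transfer from $\tilde\Gamma$ to $\Gamma$ you would need the uniqueness statement, Theorem~\ref{K60}, but that theorem presupposes that \emph{both} relations are already known to be boundary relations and that the underlying symmetries are simple — neither hypothesis is available here ($S_1,S_2$ are not assumed simple, and whether $\Gamma$ is a boundary relation is precisely what is to be proved). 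So the argument is circular at its key step. In addition, (BR1) is asserted but not shown: $\dom\Gamma$ is not $\dom\Gamma_1\times\dom\Gamma_2$ but the subspace where the $\alpha$-components agree, so density of $\dom\Gamma$ in $S^*$ (in the graph topology) and the identification of $S^*$ itself require an argument that does not simply go ``componentwise''. A workable route is the one used in the cited source: characterize boundary relations as unitary relations between the Kre\u{\i}n spaces $(H^2,J_H)$ and $(B^2,J_B)$, write $\Gamma$ as the composition of the orthogonal sum $\Gamma_1\times\Gamma_2$ (a boundary relation by Lemma~\ref{K61}) with an explicit unitary relation from $(B\times B)^2$ to $B^2$ implementing the coupling $\{((\alpha,\alpha);(\beta_1,\beta_2))\mapsto(\alpha;\beta_1+\beta_2)\}$, and invoke the composition theorem for unitary relations; maximality then comes for free.
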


\noindent
With the fourth procedure we construct a new boundary relation via a fractional linear transform. A proof can be found in \cite[Proposition~3.11]{Derkach-Hassi-Malamud-deSnoo-2009}. Before we can formulate this, let us introduce one more notation. We denote by $J_{\bb C^n}$ the $2n\!\times\!2n$-matrix
\[
	J_{\bb C^n}:=i
	\begin{pmatrix}
		0 & I_{\mathbb C^n}
		\\
		-I_{\mathbb C^n} & 0
	\end{pmatrix}
    .
\]
Then $J_{\bb C^n}$ defines a non-degenerated inner product on $\bb C^{2n}$. Let $w$ be a $2n\!\times\!2n$-matrix. Then $w$ is $J_{\bb C^n}$-unitary
(i.e., unitary with respect to the inner product induced by $J_{\bb C^n}$) if and only if
\begin{equation}\label{w}
	w^*J_{\bb C^n}w=J_{\bb C^n}
	\,.
\end{equation}

\begin{theorem}[\cite{Derkach-Hassi-Malamud-deSnoo-2009}]\label{prop frac-lin}
	Let $\Gamma\subseteq H^2\times(\bb C^n)^2$ be a boundary relation for $S^*$. Let $w$ be a $J_{\bb C^n}$-unitary
	$2n\!\times\!2n$-matrix, and write $w$ in block form as $w=(w_{ij})_{i,j=1}^2$ with $n\!\times\!n$-blocks $w_{ij}$, $i,j=1,2$.
	Then the composition
	\[
		\Gamma_1:=w\circ\Gamma=
		 \Big\{\big((f;g);(w_{11}\alpha+w_{12}\beta;w_{21}\alpha+w_{22}\beta)\big):\,\big((f;g);(\alpha;\beta)\big)\in\Gamma\Big\}
		\,.
	\]
is a boundary relation for $S^*$, and its Weyl family $M_1(z)$ is given as
	\begin{equation}\label{M-function}
		 M_1(z)=\big\{\big(w_{11}\alpha+w_{12}\beta;w_{21}\alpha+w_{22}\beta\big):\, (\alpha;\beta)\in M(z)\big\},\quad
		z\in\bb C\setminus\bb R
		\,.
	\end{equation}
\end{theorem}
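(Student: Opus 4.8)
The plan is to check the three axioms (BR1), (BR2), (BR3) of Definition~\ref{K16} for $\Gamma_1=w\circ\Gamma$ and then to read off the Weyl family directly from Definition~\ref{K18}. The entire argument rests on one observation: the sesquilinear form occurring on the right-hand side of Green's identity is, up to the factor $i$, the indefinite inner product induced by $J_{\bb C^n}$. Indeed, with $J_{\bb C^n}=i\bigl(\begin{smallmatrix}0&I\\-I&0\end{smallmatrix}\bigr)$ one has $(\beta,\alpha')_B-(\alpha,\beta')_B=\frac1i\bigl\langle J_{\bb C^n}\bigl(\begin{smallmatrix}\alpha\\\beta\end{smallmatrix}\bigr),\bigl(\begin{smallmatrix}\alpha'\\\beta'\end{smallmatrix}\bigr)\bigr\rangle_{\bb C^{2n}}$, and the $J_{\bb C^n}$-unitarity $w^*J_{\bb C^n}w=J_{\bb C^n}$ says exactly that this form is invariant under $w$. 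Note also that a $J_{\bb C^n}$-unitary $w$ is invertible, with $w^{-1}=J_{\bb C^n}^{-1}w^*J_{\bb C^n}$ again $J_{\bb C^n}$-unitary; this will be used for maximality.

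First I would treat (BR1). Since $w$ acts only on the $(\bb C^n)^2$-coordinate and is invertible, passing from $\Gamma$ to $\Gamma_1$ does not change which pairs $(f;g)$ occur, so $\dom\Gamma_1=\dom\Gamma$; hence (BR1) for $\Gamma_1$ is literally (BR1) for $\Gamma$. For (BR2), take elements $\bigl((f;g);(\alpha';\beta')\bigr),\bigl((f';g');(\tilde\alpha';\tilde\beta')\bigr)\in\Gamma_1$ and choose preimages $\bigl((f;g);(\alpha;\beta)\bigr),\bigl((f';g');(\tilde\alpha;\tilde\beta)\bigr)\in\Gamma$, so that $(\alpha';\beta')=w(\alpha;\beta)$ and $(\tilde\alpha';\tilde\beta')=w(\tilde\alpha;\tilde\beta)$. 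Writing the right-hand side of Green's identity via the $J_{\bb C^n}$-form and using $w^*J_{\bb C^n}w=J_{\bb C^n}$ yields $(\beta',\tilde\alpha')_B-(\alpha',\tilde\beta')_B=(\beta,\tilde\alpha)_B-(\alpha,\tilde\beta)_B$, which by Green's identity for $\Gamma$ equals $(g,f')_H-(f,g')_H$. This is (BR2) for $\Gamma_1$.

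For (BR3) I would use invertibility of $w$. Suppose $\Gamma_1'\supseteq\Gamma_1$ is a linear relation in $H^2\times(\bb C^n)^2$ satisfying (BR1) and (BR2). Then $w^{-1}\circ\Gamma_1'\supseteq w^{-1}\circ\Gamma_1=\Gamma$, and, applying the argument of the previous paragraph with $w^{-1}$ (which is again $J_{\bb C^n}$-unitary) in place of $w$, the relation $w^{-1}\circ\Gamma_1'$ also satisfies (BR1) and (BR2). By maximality of $\Gamma$ we get $w^{-1}\circ\Gamma_1'=\Gamma$, hence $\Gamma_1'=w\circ\Gamma=\Gamma_1$; thus $\Gamma_1$ is maximal. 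Finally, formula~\eqref{M-function} is immediate from the definitions: unwinding Definition~\ref{K18}, $(\alpha';\beta')\in M_1(z)$ means there is $f\in H$ with $\bigl((f;zf);(\alpha';\beta')\bigr)\in\Gamma_1$, i.e.\ there is $(\alpha;\beta)$ with $\bigl((f;zf);(\alpha;\beta)\bigr)\in\Gamma$ and $(\alpha';\beta')=(w_{11}\alpha+w_{12}\beta;w_{21}\alpha+w_{22}\beta)$; the existence of such an $f$ is precisely the condition $(\alpha;\beta)\in M(z)$.

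The only genuinely delicate point — the one I would write out with care — is the form identification underlying (BR2): one must track the placement of complex conjugates and the order of the arguments so that ``$w$ is $J_{\bb C^n}$-unitary'' translates exactly into invariance of the form $\bigl((\alpha;\beta),(\alpha';\beta')\bigr)\mapsto(\beta,\alpha')_B-(\alpha,\beta')_B$. Everything else is bookkeeping, and no analytic input is needed beyond Definition~\ref{K16} and the invertibility of $w$.
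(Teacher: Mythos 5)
Your argument is correct. Note that the paper does not prove this statement at all: it is quoted from the literature, with the proof delegated to \cite[Proposition~3.11]{Derkach-Hassi-Malamud-deSnoo-2009}, so there is no in-paper proof to compare against. Your direct verification is essentially the standard argument behind that reference: the Green-identity form on $(\bb C^n)^2$ is $\tfrac1i\langle J_{\bb C^n}\cdot,\cdot\rangle$, so $J_{\bb C^n}$-unitarity of $w$ gives (BR2); (BR1) is unchanged because $w$ is total and acts only on the boundary coordinate; and maximality transports back and forth via $w^{-1}$, which is again $J_{\bb C^n}$-unitary (from $w^*J w=J$ and invertibility of $J$ one gets $(w^{-1})^*Jw^{-1}=J$). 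The ``delicate point'' you flag is in fact already done correctly in your first paragraph, and the identification of $M_1(z)$ is indeed pure unwinding of Definition~\ref{K18}. No gaps.
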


\subsection{The Titchmarsh-Kodaira formula}

Let $\Gamma$ be a boundary relation of function type, and let $A$ be the selfadjoint relation \eqref{K64}. Then the data $a,b,\Omega$ in the
integral representation \eqref{K51} of the Weyl function $M$ associated with $\Gamma$ can be used to construct a functional model
for $A=\ker[\pi_1\circ\Gamma]$ acting as the multiplication operator in an $L^2$-space
(to be exact, the relational analogue of the multiplication operator).
For a Schr\"odinger operator on the half line (meaning limit-circle on one end and limit-point on the other)
or on the whole line (limit-point at both ends) this is a classical fact.
In the first case (where the spectral measure is scalar) this goes back to the initial considerations of H.Weyl, cf.\
\cite{Weyl-1910}, in the second case (where the spectral measure is $2\!\times\!2$-matrix valued) to the independent works of E.C.Titchmarsh and
K.Kodaira, cf.\ \cite{Titchmarsh-1962} and \cite{Kodaira-1949}. See also \cite[XII.5.Theorems 13 and 14]{Dunford-Schwartz-1963}, where
differential expressions of arbitrary order are studied.

In the present context this functional model plays an important role, since it allows us to compute the spectral multiplicity function
by computing the rank of a certain matrix; the precise statement being Proposition~\ref{K68} below.

First, let us recall the appropriate notion of a ``multiplication operator''.
Let $\Omega=(\Omega_{ij})_{i,j=1}^n$ be a positive $n\!\times\!n$-matrix valued measure, and denote by $\rho$ the (scalar) trace-measure $\rho:=\sum_{i=1}^n\Omega_{ii}$. Since, for each Borel set $\Delta$, the matrix $\Omega(\Delta)$ is positive semidefinite, we have
$|\Omega_{ij}(\Delta)|\leq\sqrt{\Omega_{ii}(\Delta)\Omega_{jj}(\Delta)}$, $i,j=1,\dots,n$, and this yields that $\Omega\sim\rho$. Hence, the Radon-Nikodym derivative $D_\rho\Omega=(D_\rho\Omega_{ij})_{i,j=1}^n$ is well-defined and $\rho$-a.e.\ positive.

Consider now the set of all $\rho$-a.e.\ finite functions $f:\mathbb R\mapsto\mathbb C^n$, such that each component is $\rho$-measurable
and such that
\[
	\int_{\mathbb R}\big(f(x),D_{\rho}\Omega(x)f(x)\big)_{\mathbb C^n}\,d\rho(x)<\infty
	\,.
\]
The space $L_2(\Omega)$ is the space of equivalence classes of such functions under the equivalence relation
\[
	f\sim g\text{ if and only if }\int_{\mathbb R}\big(f(x)-g(x),D_{\rho}\Omega(x)(f(x)-g(x))\big)_{\mathbb C^n}d\rho(x)=0
	\,.
\]
When endowed with the inner product\footnote{The right-hand side of this formula does not depend on the choice of the representative of the
equivalence class; as usual we slightly abuse notation.}
\[
	(f,g)_\Omega:=\int_{\mathbb R}\big(f(x),D_{\rho}\Omega(x)g(x)\big)_{\mathbb C^n}d\rho(x),\quad f,g\in L^2(\Omega)
	\,,
\]
this space becomes a Hilbert space. The operator of multiplication $A_t$ by the independent variable $t$ in this space is selfadjoint,
see \cite{Kac-1950} or \cite[XIII.5.Theorem 10]{Dunford-Schwartz-1963}.

Moreover, for a positive semidefinite $n\!\times\!n$-matrix $b$, denote by $G_b$ the space $\ran b$ endowed with the inner product defined as
\[
	(bx,by)_{G_b}:=(bx,y)_{\bb C^n},\quad x,y\in\bb C^n
	\,.
\]

\begin{definition}\label{K67}
	Let $\Omega$ be a positive $n\!\times\!n$-matrix valued measure, and let $b$ be a positive semidefinite $n\!\times\!n$-matrix. Then we
	set
	\[
		H_{\Omega,b}:=L^2(\Omega)\oplus G_b,\quad
		A_{\Omega,b}:=A_t\oplus\big(\{0\}\times G_b\big)
		\,.
	\]
\end{definition}

\noindent
Clearly, $H_{\Omega,b}$ is a Hilbert space and $A_{\Omega,b}$ is a selfadjoint linear relation in $H_{\Omega,b}$.

Let us now provide the afore mentioned functional model for $A=\ker[\pi_1\circ\Gamma]$.
The essence of this result is the model constructed in \cite[Proposition~5.2]{Derkach-Malamud-1995}, and it is easily deduced from this.

\begin{proposition}\label{K65}
	Let $\Gamma\subseteq H^2\times \mathbb C^{2n}$ be a boundary relation of function type with Weyl function $M$, let $a,b,\Omega$ be the data in the integral representation \eqref{K51} of $M$, and set $\tilde\Omega:=(1+t^2)\cdot\Omega$. Then the selfadjoint relation $A:=\ker[\pi\circ\Gamma]$ is unitarily equivalent to the relation $A_{\tilde\Omega,b}$.
\end{proposition}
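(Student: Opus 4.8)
The plan is to reduce the statement to the known functional model from \cite[Proposition~5.2]{Derkach-Malamud-1995} by a two-step matching argument. In \cite{Derkach-Malamud-1995} a model is built directly from the Nevanlinna function $M$, producing a selfadjoint relation (the multiplication operator in the appropriate $L^2$-space, plus a multivalued part coming from the linear term $bz$) which realizes $\ker[\pi_1\circ\Gamma]$ up to unitary equivalence; one has to check that the model there, once translated into the present notation and normalization of the Herglotz representation \eqref{K51}, is precisely $A_{\tilde\Omega,b}$ in the sense of Definition~\ref{K67}. First I would recall that, since $\Gamma$ is of function type, $\mul M(z)=\{0\}$ by Theorem~\ref{K59}, so $M$ is genuinely a matrix-valued Herglotz function and Theorem~\ref{K52} gives the unique data $a,b,\Omega$ in \eqref{K51}. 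Then I would invoke the uniqueness result Theorem~\ref{K60}: it suffices to exhibit \emph{one} boundary relation whose Weyl function is $M$ and whose kernel $\ker[\pi_1\circ\,\cdot\,]$ is $A_{\tilde\Omega,b}$, because any two simple boundary relations with the same Weyl function are unitarily equivalent, and the extension $\ker[\pi_1\circ\Gamma]$ is transported by that unitary.

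Concretely, the key steps are: (1) On the model space $H_{\tilde\Omega,b}=L^2(\tilde\Omega)\oplus G_b$, write down the natural boundary relation $\Gamma_{\mathrm{mod}}$ whose first components run through $\ker(A_{\tilde\Omega,b}-z)$ for $z\in\bb C\setminus\bb R$ and whose boundary values $(\alpha;\beta)$ are read off via the reproducing-kernel structure of $L^2(\tilde\Omega)$; this is exactly the model of \cite[Proposition~5.2]{Derkach-Malamud-1995}, and one records that $\ker[\pi_1\circ\Gamma_{\mathrm{mod}}]=A_{\tilde\Omega,b}$ by construction. (2) Compute the Weyl function of $\Gamma_{\mathrm{mod}}$ and verify it equals $M$; here the only real content is bookkeeping with the two forms \eqref{K51} and \eqref{K26} of the Herglotz representation and the relation $\tilde\Omega=(1+t^2)\cdot\Omega$, together with checking that the contribution of the $G_b$-summand produces exactly the linear term $bz$. (3) The underlying symmetry of $\Gamma_{\mathrm{mod}}$ is simple (this is part of the cited model, or follows because the model is built minimally from $M$), so Theorem~\ref{K60} applies to $\Gamma$ and $\Gamma_{\mathrm{mod}}$ — assuming $S$ itself is simple; if not, one first splits off the selfadjoint part of $S$, which contributes nothing to $M$ and whose model is trivial, and applies the argument to the simple part. (4) Transport $\ker[\pi_1\circ\Gamma]$ through the resulting unitary $U:H\to H_{\tilde\Omega,b}$ to conclude $A\cong A_{\tilde\Omega,b}$.

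The main obstacle I expect is step (2): making sure the normalization conventions line up, in particular that the linear term $bz$ in \eqref{K51} corresponds precisely to the summand $A_t\oplus(\{0\}\times G_b)$ and not to some rescaled version of $b$, and that passing from $\Omega$ (finite, as in \eqref{K51}) to $\tilde\Omega=(1+t^2)\cdot\Omega$ (the measure actually carrying the spectral data, as in \eqref{K26}) is done consistently in the inner product of $L^2(\tilde\Omega)$. A secondary technical point is handling the case where $S$ is not already simple; this is harmless but must be mentioned, since Theorem~\ref{K60} is stated only for simple symmetric relations. Everything else — analyticity, the Green identity, maximality (BR1)--(BR3) for $\Gamma_{\mathrm{mod}}$ — is inherited verbatim from \cite{Derkach-Malamud-1995} and need not be redone.
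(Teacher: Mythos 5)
Your overall strategy is the one the paper uses: build a concrete boundary relation on the model space whose Weyl function is $M$ and whose distinguished extension is $A_{\tilde\Omega,b}$, then invoke the uniqueness statement of Theorem~\ref{K60} to transport $\ker[\pi_1\circ\Gamma]$ onto it. However, you have glossed over the step that constitutes almost the entire content of the paper's proof. The model of \cite[Proposition~5.2]{Derkach-Malamud-1995} is not applicable to $M$ directly: it requires the imaginary part of the Nevanlinna function to be invertible, whereas here $\Im M(z)$ may have a nontrivial (and $z$-independent) kernel -- and in the application at hand it typically does, since the Weyl function \eqref{M} of the pasting is built from scalar functions some of which may be real constants. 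The paper therefore first splits off $\ker\Im M(i)$ by a constant unitary $U$, writes $U^{-1}(M-a)U=M_0\oplus 0$ with $\Im M_0(z)$ invertible, applies the cited model only to $M_0$, then re-attaches a pure relational summand $\{0\}^2\times\{(w;0)\}$ to account for the zero block, conjugates back by $U$, and finally restores the constant $a$ via the sum construction of Theorem~\ref{K63}, checking at each stage that the distinguished extension is unchanged. Your step (2) (``the only real content is bookkeeping with the two forms of the Herglotz representation'') does not cover this; without the reduction, $\Gamma_{\mathrm{mod}}$ as you describe it simply is not produced by the cited result.

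A second, smaller point: your fallback for non-simple $S$ is wrong as stated. A selfadjoint part of $S$ is invisible to $M$ but is contained in $A=\ker[\pi_1\circ\Gamma]$, so it does contribute to the spectrum of $A$; if $S$ is not simple the proposition is simply false, and simplicity must be assumed (as it is, implicitly, in the paper via the standing hypothesis {\rm(Hyp1)} on the constituents), not argued away by ``splitting off a trivial model''.
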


\begin{proof}
	The following facts are well-known (for an explicit proof see, e.g., \cite[Proof of Theorem~3.2]{Behrndt-2009}): The kernel of $\Im M(z)$ for $z\in\mathbb C\setminus\mathbb R$ is independent of $z$. Denote its codimension by $n_0$. There exists a constant unitary matrix $U$ such that $U^{-1}\ker\Im M(i)=\{0\}\oplus\mathbb C^{n-n_0}$ and $U^{-1}(M(z)-a)U=M_0(z)\oplus0$ (the block form with respect to the decomposition $\mathbb C^n=\mathbb C^{n_0}\oplus\mathbb C^{n-n_0}$). At the same time $U^{-1}\tilde\Omega U=\tilde\Omega_0\oplus0$, $U^{-1}bU=b_0\oplus0$ and
\begin{equation*}
M_0(z)=b_0z+\int_{\bb R}\Big(\frac 1{x-z}-\frac x{1+x^2}\Big)\,d\tilde\Omega_0(x)
\end{equation*}
is the Herglotz-integral representation for the function $M_0$. The latter is such that $\Im M_0(z)$ is invertible for each $z\in\bb C\setminus\bb R$. We apply \cite[Proposition~5.2]{Derkach-Malamud-1995} to obtain a functional model for $M_0$: The relation
	\[
		S:=\bigg\{(f(t)\oplus 0;tf(t)\oplus y)\in H_{\tilde\Omega_0,b_0}^2:\ \int_{\bb R}d\tilde\Omega_0(t)f(t)+y=0\bigg\}
	\]
	is closed symmetric and simple, its adjoint is given by
	\[
		S^*:=\bigg\{(f(t)\oplus x;g(t)\oplus y)\in H_{\tilde\Omega_0,b_0}^2:\
		\exists h\in \mathbb C^{n_0}:\ g(t)-tf(t)=-h,x=b_0h\bigg\}
		\,.
	\]
	Obviously, the element $h$ in this formula is uniquely determined by the element $(f(t)\oplus x;g(t)\oplus y)$ of $S^*$. Hence,
	we may define
	\begin{align*}
		\Gamma_0\big((f(t)\oplus x;g(t)\oplus y)\big):= & h,
			\\
		\Gamma_1\big((f(t)\oplus x;g(t)\oplus y)\big):= & y+\int_{\bb R}d\tilde\Omega_0(t)\frac{tg(t)+f(t)}{1+t^2}
			\,.
	\end{align*}
	Then it follows from \cite{Derkach-Malamud-1995} that the relation
	\begin{multline*}
		\mr\Gamma:=\bigg\{\Big(\big(f(t)\oplus x;g(t)\oplus y\big);\big(\Gamma_0(f(t)\oplus x;g(t)\oplus y);
		\Gamma_1(f(t)\oplus x;g(t)\oplus y)\big)\Big):
		\\
		\big(f(t)\oplus x;g(t)\oplus y\big)\in S^*\bigg\}
	\end{multline*}
	is a boundary function for $S^*$. The Weyl function of $\mr\Gamma$ is equal to $M_0$, and $\ker\Gamma_0=A_{\tilde\Omega_0,b_0}$. Obviously $\Gamma_U:=\mr\Gamma\oplus(\{0\}^2\times\{(w;0),w\in\mathbb C^{n-n_0}\}$ is a boundary relation of function type for $S^*$ with the Weyl function $M_0\oplus0$ and $\ker[\pi_1\circ\Gamma_U]=A_{\tilde\Omega_0,b_0}$. Next, $\widehat\Gamma:=\{((f;g);(U\alpha;U\beta)):((f;g);(\alpha;\beta))\in\Gamma_U\}$ is another boundary relation of function type for $S^*$ with the Weyl function $M-a$ and $\ker[\pi_1\circ\widehat\Gamma]=A_{\tilde\Omega_0,b_0}$. Finally, the selfadjoint constant $a$ is the Weyl function of the boundary relation $\{0\}^2\times\{(w;aw),w\in \mathbb C^n\}$ acting in $\{0\}^2\times \mathbb C^{2n}$. Using Theorem~\ref{K63}, we obtain a boundary relation $\widetilde\Gamma$ having $M$ as its Weyl function. Explicitly computing $\widetilde\Gamma$ shows that the relation $\ker[\pi_1\circ\widetilde\Gamma]$ coincides with $A_{\tilde\Omega_0,b_0}$. Uniqueness part of Theorem \ref{K60} ensures that $\ker[\pi_1\circ\Gamma]$ is unitary equivalent to $\ker[\pi_1\circ\widetilde\Gamma]=A_{\tilde\Omega_0,b_0}$. Obviously, the relation $A_{\tilde\Omega_0,b_0}$ can be identified with the relation $A_{\tilde\Omega_0\oplus0,b_0\oplus0}=A_{U^{-1}\tilde\Omega U,U^{-1}b U}$, which is unitarily equivalent to $A_{\tilde\Omega,b}$.
\end{proof}

\noindent
Now we come to the promised way to compute the spectral multiplicity function.
For the case ``$n=2$'' this fact is proved and used in \cite{Kac-1962}, see also \cite{Gilbert-1998}. It is of course not hard to believe
that it holds for arbitrary $n\geq 2$, however, we are not aware of an explicit reference, and therefore provide a complete proof.

\begin{proposition}\label{K68}
	Let $\Gamma\subseteq H^2\times B^2$ be a boundary relation of function type. Denote by $M$ its Weyl function, and
	set $A:=\ker[\pi_1\circ\Gamma]$. Let $a,b,\Omega$ be the data in the integral representation \eqref{K51} of $M$,
	let $\rho$ be the trace measure of $\Omega$, and let $\omega$ be the symmetric derivative
	\begin{equation}\label{K77}
		\omega:=\frac{d\Omega}{d\rho}
		\,.
	\end{equation}
	Then ($N_A$ is the spectral multiplicity function of $A$)
	\begin{equation}\label{K78}
		N_A=\rank\omega,\quad \rho\text{-a.e.}
	\end{equation}
\end{proposition}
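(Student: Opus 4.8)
The plan is to reduce the statement to a pointwise linear-algebra fact by using the functional model from Proposition~\ref{K65}, and then to read off the spectral multiplicity function from the structure of the model space $L^2(\tilde\Omega)\oplus G_b$. First I would observe that $A=\ker[\pi_1\circ\Gamma]$ is, by Proposition~\ref{K65}, unitarily equivalent to $A_{\tilde\Omega,b}=A_t\oplus(\{0\}\times G_b)$ with $\tilde\Omega=(1+t^2)\cdot\Omega$. Since $N_A$ is a unitary invariant and, for a relation, depends only on the operator part, the relational summand $\{0\}\times G_b$ contributes nothing, so it suffices to compute $N_{A_t}$ where $A_t$ is multiplication by $t$ in $L^2(\tilde\Omega)$. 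Moreover $\tilde\Omega\sim\Omega\sim\rho$, and the Radon--Nikodym derivative $D_\rho\tilde\Omega=(1+t^2)\,D_\rho\Omega=(1+t^2)\,\omega$ has the same rank as $\omega$ at $\rho$-a.a.\ points; hence it is enough to show that the multiplicity function of multiplication by $t$ in $L^2(\Omega)$ equals $\rank\omega$, $\rho$-a.e.

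The second, main step is the pointwise diagonalization of the matrix density. At $\rho$-a.a.\ points $x$ the matrix $\omega(x)=D_\rho\Omega(x)$ is positive semidefinite with a well-defined rank $k(x):=\rank\omega(x)$; by a measurable-selection argument one can choose $\rho$-measurable unitary-matrix-valued functions diagonalizing $\omega(x)$, so that $L^2(\Omega)$ is unitarily equivalent (via pointwise change of basis, which intertwines multiplication by $t$ with itself) to a space of the form $\bigoplus$ of scalar $L^2$-spaces with densities supported exactly on the sets $\{k(x)\ge j\}$. Concretely, writing $Y_j:=\{x: k(x)\ge j\}$ one gets $A_t$ in $L^2(\Omega)$ unitarily equivalent to multiplication by $t$ in $\prod_{j=1}^n L^2(\mathds{1}_{Y_j}\cdot\rho)$, and these measures satisfy $\mathds{1}_{Y_n}\cdot\rho\ll\dots\ll\mathds{1}_{Y_1}\cdot\rho\sim\rho\sim E$, i.e.\ they form a generating basis in the sense of \eqref{measures}. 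The sets $Y_j$ in \eqref{K19} are then precisely (up to $E$-zero sets) these $\{k\ge j\}$, and therefore $N_{A_t}(x)=\#\{j: x\in Y_j\}=k(x)=\rank\omega(x)$ for $\rho$-a.a.\ $x$.

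I expect the main obstacle to be making the ``pointwise diagonalization'' rigorous at the level of measurable functions: one needs a $\rho$-measurable choice of an orthonormal eigenbasis for the family $x\mapsto\omega(x)$ (equivalently, a measurable unitary $U(x)$ with $U(x)^*\omega(x)U(x)$ diagonal with the nonzero eigenvalues in the first $k(x)$ slots), and one has to verify that the induced map on $L^2$-spaces is a well-defined unitary that intertwines the two multiplication operators and carries the standard generating system of $\prod_j L^2(\mathds{1}_{Y_j}\cdot\rho)$ to a generating basis for $A_t$ in $L^2(\Omega)$. This is a standard but slightly technical direct-integral / measurable-family argument (of the type underlying \cite[Theorem 7.3.7, Theorem 7.4.2]{Birman-Solomyak-1987}); once it is in place, matching it with the definition \eqref{K19} of $N_A$ is immediate, and combined with the reductions of the first paragraph it yields \eqref{K78}.
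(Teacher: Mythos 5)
Your proposal is correct and follows essentially the same route as the paper: reduction via the functional model of Proposition~\ref{K65} to the multiplication operator in $L^2(\Omega)$, followed by a measurable pointwise diagonalization of $\omega(x)$ and reading off $N_A=\rank\omega$ from the resulting generating basis. The one step you flag as the main obstacle — the measurable choice of an orthonormal eigenbasis — is exactly what the paper supplies, using Hammersley's theorem on measurable roots of the characteristic polynomial together with Cramer's rule and Gram--Schmidt on a Borel partition of $\bb R$.
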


\begin{proof}
	From the readily established by Proposition~\ref{K65} unitary equivalence, we see that it is enough to compute the spectral multiplicity function of the multiplication operator $A_t$ in the space $L^2(\Omega)$. To do this, the idea is to construct a measurable ($\rho$-measurable, or Borel measurable on a compliment of some Borel $\rho$-zero set) diagonalization of $\omega(x)$. Once this is done, it is easy to give a unitarily equivalent form of $A$ (and a particular generating basis) from which $N_A(x)$
can be read off. The essential tool in the proof is Hammersley's theorem on the measurability of the zeros of a random polynomial, cf.\ \cite[Theorem~4.1]{Hammersley-1956}\footnote{See also \cite[Theorem~2.2]{BharuchaReid-Sambandham-1986}, where a short proof based on von Neumann's measurable selection theorem is given.}.

By Hammersley's theorem there exist measurable functions $\xi_1,\dots,\xi_n$ such that
	\[
		\det\big[\omega(x)-t\big]=(-1)^n\prod_{j=1}^n\big(t-\xi_j(x)\big),\quad x\in\bb R
		\,.
	\]
	Since $\omega(x)$ is nonnegative, we have $\xi_j(x)\geq 0$, $j=1,\dots,n$. By pointwise rearranging (which can be done in a measurable way)
	we can redefine the functions $\xi_j$, such that in addition
	\[
		0\leq\xi_n(x)\leq\xi_{n-1}(x)\leq\dots\leq\xi_1(x),\quad x\in\bb R
		\,.
	\]
	What follows is basic linear algebra. For $j,k\in\{1,\dots,n\}$ and $\{i_1,\dots,i_k\}\subseteq\{1,\dots,n\}$, $i_1<\dots<i_k$, set
	\begin{multline*}
		M^{j,k}_{i_1,\dots,i_k}:=\Big\{x\in\bb R:\ \rank\big[\omega(x)-\xi_j(x)\big]=k,
		\\
		\det\big(w_{i_li_m}(x)-\xi_j(x)\delta_{i_li_m}\big)_{l,m=1}^k\neq 0\Big\}
		\,.
	\end{multline*}
	The determinant of a matrix is a polynomial of the entries, and hence is measurable.	 The rank of a matrix depends, as the maximal order of an invertible square minor, measurably on the entries of the matrix.
	It follows that $M^{j,k}_{i_1,\dots,i_k}$ is a Borel set. Also the set
	\[
		M^{j,0}_\emptyset:=\big\{x\in\bb R:\ \omega(x)=\xi_j(x)\big\}
	\]
	is a Borel set.

	Let $j\in\{1,\dots,n\}$ be fixed. For each $x\in M^{j,k}_{i_1,\dots,i_k}$ the submatrix $(w_{i_li_m}(x)-\xi_j(x)\delta_{i_li_m})_{l,m=1}^k$ of $[\omega(x)-\xi_j(x)]$
is invertible. Applying Cramer's rule, we find a basis of the eigenspace $\ker[\omega(x)-\xi_j(x)]$ which depends measurably on $x\in M^{j,k}_{i_1,\dots,i_k}$. Applying the Gram-Schmidt orthogonalization procedure, we obtain an	orthonormal basis which also depends measurably on $x\in M^{j,k}_{i_1,\dots,i_k}$. For $x\in M^{j,0}_\emptyset$ the canonical basis of $\bb C^n$ is an orthonormal basis of $\ker[\omega(x)-\xi_j(x)]$.

Clearly, for each $j$, the sets $M^{j,k}_{i_1,\dots,i_k}$, $k\in\{0,\dots,n\}$, $\{i_1,\dots,i_k\}\subseteq\{1,\dots,n\}$ together cover the whole line. Hence, we can produce a disjoint covering of $\bb R$ with each set of the covering being a Borel subset of some intersection
	\[
		\bigcap_{j=1}^n M^{j,k_j}_{i_{j,1},\dots,i_{j,k_j}}
		\,.
	\]
	By the above paragraph, we can thus find a measurable orthonormal basis in $\bb C^n$ which consists of eigenvectors of $\omega$. The
	corresponding basis transform $U(x)$ is a measurable function and diagonalizes $\omega(x)$:
	\[
		U(x)^{-1}\omega(x)U(x)=D(x)
	\]
	with
	\[
		D(x):=
		\begin{pmatrix}
			\xi_1(x) & & \\
			& \ddots & \\
			& & \xi_n(x)
		\end{pmatrix}
		\,.
	\]
	The map $f\mapsto U^{-1}f$ is an isometric isomorphism of $L^2(\Omega)$ onto $L^2(D\cdot\rho)$, and establishes a unitary equivalence between the respective multiplication operators. One can regard $\xi_l\cdot\rho$ as $\nu_l$ from \eqref{measures}, so $D_{\nu_1}\nu_l(x)=\xi_l(x)$. Therefore the spectral multiplicity function computes as
	\[
		\#\big\{l\in\{1,\dots,n\}:\,\xi_l(x)>0\big\}=\rank D(x)=\rank\omega(x)
		\,.
	\]
\end{proof}

\subsection{Pasting of boundary relations with standard interface conditions}

Let $n\geq 2$, and let for each $l\in\{1,\dots,n\}$ a closed symmetric relation $S_l$ in a Hilbert space $H_l$ and a boundary
relation $\Gamma_l\subseteq H_l^2\times\bb C^2$ for $S_l^*$ be given. Moreover, denote
\begin{equation}\label{K34}
	A_l:=\ker\big[\pi_1\circ\Gamma_l\big]
	\,.
\end{equation}
Consider the Hilbert space $H:=\prod_{l=1}^n H_l$, the linear relation $S:=\prod_{l=1}^n S_l$ acting in this space, and the orthogonal sum $\widetilde\Gamma=\prod_{l=1}^n\Gamma_l$, cf. Lemma \ref{K61}.

Now we define another boundary relation $\Gamma$ by
using in Theorem~\ref{prop frac-lin} the $J_{\bb C^n}$-unitary matrix $w=(w_{ij})_{i,j=1}^2$ whose blocks $w_{ij}$ are given as
\begin{equation}\label{K40}
	w_{11}:=
	\begin{pmatrix}
		-1 & 0 & \cdots & 0 & 1
		\\
		0 & -1 & \cdots & 0 & 1
		\\
		\vdots & \vdots & \ddots & \vdots & \vdots
		\\
		0 & 0 & \cdots & -1 & 1
		\\
		0 & 0 & \cdots & 0 & 0
	\end{pmatrix}
	,\
	w_{12}:=	
	\begin{pmatrix}
		0 & 0 & \cdots & 0 & 0
		\\
		0 & 0 & \cdots & 0 & 0
		\\
		\vdots & \vdots & \ddots & \vdots & \vdots
		\\
		0 & 0 & \cdots & 0 & 0
		\\
		1 & 1 & \cdots & 1 & 1
	\end{pmatrix}
	\,,
\end{equation}
\begin{equation}\label{K41}
	w_{21}:=
	\begin{pmatrix}
		0 & 0 & \cdots & 0 & 0
		\\
		0 & 0 & \cdots & 0 & 0
		\\
		\vdots & \vdots & \ddots & \vdots & \vdots
		\\
		0 & 0 & \cdots & 0 & 0
		\\
		0 & 0 & \cdots & 0 & -1
	\end{pmatrix}
	,\
	w_{22}:=	
	\begin{pmatrix}
		-1 & 0 & \cdots & 0 & 0
		\\
		0 & -1 & \cdots & 0 & 0
		\\
		\vdots & \vdots & \ddots & \vdots & \vdots
		\\
		0 & 0 & \cdots & -1 & 0
		\\
		0 & 0 & \cdots & 0 & 0
	\end{pmatrix}
	\,.
\end{equation}
A straightforward computation shows that this matrix $w$ indeed satisfies \eqref{w}.
Explicitly, the relation $\Gamma$ is given as
\begin{multline}\label{K43}
	\Gamma:=w\circ\tilde\Gamma=\Bigg\{
	\Bigg(\bigg(\begin{pmatrix}f_1\\ \vdots\\ f_n\end{pmatrix};\begin{pmatrix}g_1\\ \vdots\\ g_n\end{pmatrix}\bigg);
	\bigg(\begin{pmatrix}-\alpha_1+\alpha_n\\ \vdots\\ -\alpha_{n-1}+\alpha_n\\ \beta_1+\dots+\beta_n\end{pmatrix};
	\begin{pmatrix}-\beta_1\\ \vdots\\ -\beta_{n-1}\\ -\alpha_n\end{pmatrix}\bigg)\Bigg):
		\\
	\big((f_l;g_l);(\alpha_l;\beta_l)\big)\in\Gamma_l,\ l=1,\dots,n\Bigg\}
	\,.
\end{multline}

\begin{definition}\label{K42}
	Let $n\geq 2$, and let for each $l\in\{1,\dots,n\}$ a closed symmetric relation $S_l$ in a Hilbert space $H_l$ and a boundary	relation $\Gamma_l\subseteq H_l^2\times\bb C^2$ for $S_l^*$ be given. Assume that
	\begin{axioms}{15mm}
	\item[Hyp1] Each relation $S_l$ is simple.
	\item[Hyp2] Each boundary relation $\Gamma_l$ is of function type.
	\item[Hyp3] There exists $l\in\{1,\dots,n\}$, such that $\Gamma_l$ is a boundary function.
	\end{axioms}
Obviously, the relation $\Gamma$ depends on the order in which the relations $\Gamma_1,\Gamma_2,...,\Gamma_n$ are taken, but the selfadjoint relation $A:=\ker[\pi_1\circ\Gamma]$ does not. Thus we call $A$ the relation constructed	 by \emph{pasting the family $\{\Gamma_l:\,l=1,\dots,n\}$, with standard interface conditions}.
\end{definition}

\noindent
The hypothesis {\rm(Hyp1)} ensures that knowledge about the spectrum of $A$ can be deduced from the associated Weyl family (in fact, the Weyl function, see below).
The hypotheses {\rm(Hyp2)}, {\rm(Hyp3)}, are required in order to avoid trivial cases (remember Remark~\ref{K56}).

To justify our choice of terminology, let us return to our model example.

\begin{example}\label{K36}
	Let a Schr\"odinger operator on a star-shaped graph be given by the data $(1)$--$(3)$. Let $S_l$ be the minimal operator
	on the $l$-th edge, i.e.\
	\begin{align*}
		\dom S_l:=\Big\{u\in L^2&(0,e_l):\,u,u'\text{ absolutely continuous},
		\\
		& -u''+q_lu\in L_2(0,e_l),
		\\
		& u(0)=u'(0)=0,\ u\text{ satisfies b.c.\ at }e_l,\text{ if present}\Big\}
		\,.
	\end{align*}
	\[
		S_lu:=-u''+q_lu,\quad u\in\dom S_l
		\,.\rule{58mm}{0pt}
	\]
	Moreover, define
	\[
		\Gamma_l:=\big\{\big((u;v);(u(0);u'(0)\big):\, (u;v)\in S_l^*\big\}
		\,.
	\]
	Then $\Gamma_l$ is a boundary relation for $S_l^*$.
	The selfadjoint extension $A_l=\ker[\pi_1\circ\Gamma_l]$ is just the Schr\"odinger operator given by the potential $q_l$ with Dirichlet boundary conditions at $0$.
	
	Now consider the boundary relation $\Gamma$ defined by \eqref{K43}.
	The operator $A:=\ker[\pi_1\circ\Gamma]$, is nothing but the operator defined by
	\eqref{K10}, \eqref{K11}, using the standard interface condition \eqref{K1}.
\end{example}

\noindent
In order to understand the spectrum of a pasting with standard interface conditions, we will analyze the Weyl function of the boundary relation
$\Gamma$. Using Theorem~\ref{prop frac-lin}, this Weyl function can be
computed explicitly in terms of the Weyl functions of the boundary relations $\Gamma_l$.

\begin{proposition}\label{K37}
	Let $\Gamma_l$ be as in Definition~\ref{K42}, and let $\Gamma$ be the boundary relation given by \eqref{K43}. Denote by $m_l$ the Weyl function of $\Gamma_l$, and set $m:=\sum_{l=1}^n m_l$. Then we have $\mul\Gamma\cap(\{0\}\times\bb C^n)=\{0\}$, and the Weyl function $M$ of $\Gamma$ is given as
	\begin{equation}\label{M}
		M=\frac1{m}\!
		\begin{pmatrix}
			m_1(m\!-\!m_1)&-m_2m_1&\cdots&-m_{n-1}m_1&-m_1\\
			-m_1m_2&m_2(m\!-\!m_2)\!&\cdots&-m_{n-1}m_2&-m_2\\
			\vdots&\vdots&\ddots&\vdots&\vdots\\
			-m_1m_{n-1}&-m_2m_{n-1}&\cdots&\!m_{n-1}(m\!-\!m_{n-1})&-m_{n-1}\\
			-m_1&-m_2&\cdots&-m_{n-1}&-1
		\end{pmatrix}
        .
	\end{equation}
\end{proposition}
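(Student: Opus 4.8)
The plan is to apply Theorem~\ref{prop frac-lin} directly with the $J_{\bb C^n}$-unitary matrix $w$ specified by the blocks in \eqref{K40}, \eqref{K41}, and then carry out the linear algebra. First I would record the Weyl family $\widetilde M(z)$ of the product boundary relation $\widetilde\Gamma=\prod_l\Gamma_l$: by Lemma~\ref{K61} it is the block-diagonal family, and since each $\Gamma_l$ is of function type (Hyp2), Theorem~\ref{K59}$(i)$ gives $\mul\widetilde M(z)=\{0\}$, i.e.\ $\widetilde M(z)=\operatorname{diag}(m_1(z),\dots,m_n(z))$. So a generic element of $\widetilde M(z)$ is $(\alpha;\beta)$ with $\beta_l=m_l\alpha_l$, $\alpha_l\in\bb C$ free. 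Feeding this into \eqref{M-function}, the Weyl family of $\Gamma$ is
\[
	M(z)=\Big\{\big(w_{11}\alpha+w_{12}\beta;\ w_{21}\alpha+w_{22}\beta\big):\ \alpha\in\bb C^n,\ \beta_l=m_l\alpha_l\Big\}.
\]
Writing this out using \eqref{K40}--\eqref{K41}, the ``input'' vector has components $(-\alpha_l+\alpha_n)_{l=1}^{n-1}$ in its first $n-1$ slots and $\sum_{l=1}^n m_l\alpha_l$ in its last slot, while the ``output'' vector has components $(-m_l\alpha_l)_{l=1}^{n-1}$ and $-\alpha_n$.

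Next I would verify $\mul\Gamma\cap(\{0\}\times\bb C^n)=\{0\}$, equivalently (Theorem~\ref{K59}$(i)$ again, applied to $\Gamma$) that $\mul M(z)=\{0\}$, i.e.\ that the input vector being $0$ forces the output vector to be $0$. Input $=0$ means $\alpha_1=\dots=\alpha_n$ and $\sum_l m_l\alpha_l=0$, i.e.\ $\big(\sum_l m_l\big)\alpha_n=0$, so $m(z)\alpha_n=0$. Here I invoke Hyp3: some $\Gamma_{l_0}$ is a boundary function, so by Theorem~\ref{K59}$(ii)$ $m_{l_0}(z)$ is invertible (nonzero) for $z\in\bb C\setminus\bb R$; combined with $\Im m_l(z)\ge 0$ on $\bb C^+$ this yields $\Im m(z)=\sum_l\Im m_l(z)>0$, hence $m(z)\ne0$. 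Therefore $\alpha_n=0$, so all $\alpha_l=0$, so the output is $0$. This also shows $m(z)\neq 0$ throughout, which is needed to divide by $m$ in \eqref{M}.

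The remaining step is to solve for $M(z)$ as an honest matrix. Since $\mul M(z)=\{0\}$, for each input vector $\xi\in\bb C^n$ there is a unique output; I parametrize by $\alpha$. From the first $n-1$ input equations $\xi_k=-\alpha_k+\alpha_n$ I get $\alpha_k=\alpha_n-\xi_k$ for $k<n$; substituting into the last input equation $\xi_n=\sum_l m_l\alpha_l=\alpha_n\sum_l m_l-\sum_{k<n}m_k\xi_k$ gives $\alpha_n=\frac1m\big(\xi_n+\sum_{k=1}^{n-1}m_k\xi_k\big)$, and then $\alpha_k=\frac1m\big(\xi_n+\sum_{j=1}^{n-1}m_j\xi_j\big)-\xi_k$. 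Plugging these $\alpha$'s into the output components $\big((-m_k\alpha_k)_{k=1}^{n-1};-\alpha_n\big)$ and collecting the coefficient of each $\xi_j$ produces exactly the matrix in \eqref{M}; for instance the $(k,k)$-entry ($k<n$) comes out $-m_k\big(\frac{m_k}{m}-1\big)=\frac{m_k(m-m_k)}{m}$, the $(k,j)$-entry ($k\ne j$, both $<n$) is $-\frac{m_km_j}{m}$, the $(k,n)$ and $(n,k)$ entries are $-\frac{m_k}{m}$, and the $(n,n)$-entry is $-\frac1m$. I expect the only real ``obstacle'' here is bookkeeping: keeping the index ranges $k<n$ versus $k=n$ straight and tracking signs through the substitution. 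Once the matrix identity is checked entrywise, the proof is complete, since $M(z)$ is a Herglotz function (being a Weyl function, by Theorem~\ref{K62}) and its entries are the claimed rational combinations of the $m_l$.
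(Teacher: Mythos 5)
Your proposal is correct and follows essentially the same route as the paper: apply Theorem~\ref{prop frac-lin} to $w\circ\widetilde\Gamma$, observe that $m=\sum_{l}m_l$ does not vanish on $\bb C\setminus\bb R$, and then do the linear algebra (the paper verifies $M(w_{11}+w_{12}\widetilde M)=w_{21}+w_{22}\widetilde M$ and inverts, which is the same computation as your explicit solution of the system for $\alpha$; likewise, the paper proves $\mul\Gamma\cap(\{0\}\times\bb C^n)=\{0\}$ directly from \eqref{K43}, while your detour through $\mul M(z)=\{0\}$ and Theorem~\ref{K59}$(i)$ is legitimate because $\dim B<\infty$). One small imprecision: Theorem~\ref{K59}$(ii)$ only gives $m_{l_0}(z)\neq 0$, and ``nonzero plus $\Im m_l\geq 0$'' does not by itself yield $\Im m(z)>0$ (a nonzero real constant is invertible). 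What you actually need is that $m_{l_0}$ is not a real constant --- which holds because $\Gamma_{l_0}$ is a boundary function for a \emph{simple} symmetry, so $S_{l_0}$ has defect index $(1,1)$ and the Green's identity gives $\Im m_{l_0}(z)=\Im z\,\|f_z\|^2$ with $f_z\neq 0$ --- whence $\Im m_{l_0}(z)>0$ on $\bb C^+$ and $m(z)\neq 0$, exactly the fact the paper invokes.
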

\begin{proof}
	Consider an element of $\mul\Gamma\cap(\{0\}\times\bb C^n)$. By the definition of $\Gamma$ there exist
	$(\alpha_l;\beta_l)\in\mul\Gamma_l$, $l=1,\dots,n$, such that this element is equal to
	\[
		\Bigg(\begin{pmatrix}-\alpha_1+\alpha_n\\ \vdots\\ -\alpha_{n-1}+\alpha_n\\ \beta_1+\dots+\beta_n\end{pmatrix};
		\begin{pmatrix}-\beta_1\\ \vdots\\ -\beta_{n-1}\\ -\alpha_n\end{pmatrix}\Bigg)
=
		\Bigg(\begin{pmatrix}0\\ \vdots\\0\\0\end{pmatrix};
		\begin{pmatrix}-\beta_1\\ \vdots\\ -\beta_{n-1}\\ -\alpha_n\end{pmatrix}\Bigg)
		\,.
	\]
	By {\rm(Hyp3)}, there exists an index $l_0$ with $\alpha_{l_0}=\beta_{l_0}=0$. Now it follows that $\alpha_l=0$ for all $l$, and {\rm(Hyp2)} implies that also $\beta_l=0$ for all $l$. This shows that $\mul\Gamma\cap(\{0\}\times\bb C^n)=\{0\}$.
	
	The Weyl function of the boundary relation $\widetilde\Gamma$ is
	\[
		\tilde M=
		\begin{pmatrix}
			m_1 & 0 & \cdots & 0\\
			0 & m_2 & \cdots & 0\\
			\vdots & \vdots & \ddots & \vdots\\
			0 & 0 & \cdots & m_n
		\end{pmatrix}
		\ .
	\]
	Computation gives:
	\[
		w_{11}+w_{12}\tilde M=
		\begin{pmatrix}
			-1 & \cdots & 0 & 1\\
			\vdots & \ddots & \vdots & \vdots\\
			0 & \cdots & -1 & 1\\
			m_1 & \cdots & m_{n-1} & m_n
		\end{pmatrix}
		\ ,
	\]
	\[
		w_{21}+w_{22}\tilde M=
		\begin{pmatrix}
			m_1 & \cdots & 0 & 0\\
			\vdots & \ddots & \vdots & \vdots\\
			0 & \cdots & -m_{n-1} & 0\\
			0 & \cdots & 0 & -1
		\end{pmatrix}
		\ ,
	\]
	and
	\[
		\det\big(w_{11}+w_{12}\tilde M\big)=(-1)^{n-1}\sum_{l=1}^nm_l
		\,.
	\]
	Since there exists at least one index $l$ such that $m_l$ is not a real constant, this determinant does not vanish throughout $\bb C\setminus\bb R$.

	Next, let $M$ be the matrix defined by \eqref{M}. It is easy to check that $M$ satisfies
$M(w_{11}+w_{12}\tilde M)=w_{21}+w_{22}\tilde M$, and this implies that
	\[
		M=(w_{21}+w_{22}\tilde M)(w_{11}+w_{12}\tilde M)^{-1}
		\,.
	\]
	Theorem~\ref{prop frac-lin} now yields that $M$ is indeed the Weyl function of $\Gamma$.
\end{proof}

\subsection{The point spectrum}

It is elementary to locate the point spectrum of a pasting.

\begin{theorem}\label{K35}
	Let $n\geq 2$, and let for each $l\in\{1,\dots,n\}$ a closed symmetric relation $S_l$ in a Hilbert space $H_l$ and a boundary	relation $\Gamma_l\subseteq H_l^2\times\bb C^2$ for $S_l^*$ be given. Assume that these data are subject to	{\rm(Hyp1)}--{\rm(Hyp3)}, and consider the selfadjoint operator $A$ constructed by pasting $\{\Gamma_l:\,l=1,\dots,n\}$	 with standard interface conditions.

Let $x\in\bb R$. Then $x\in\sigma_p(A)$ if and only if one of the following two alternatives takes place.
	\begin{itemize}
		\item[{\rm(I/II)}] The point $x$ belongs to at least two of the point spectra $\sigma_p(A_l)$. In this case its multiplicity $N_A$ of an eigenvalue is equal to
			\begin{align*}
				N_A(x)= &\ \#\big\{l\in\{1,\dots,n\}:\,x\in\sigma_p(A_l)\big\}-1
				\\
				= &\ \#\big\{l\in\{1,\dots,n\}:\,\lim_{\varepsilon\downarrow 0}\varepsilon\Im m_l(x+i\varepsilon)>0\big\}-1
				\,.
			\end{align*}
		\item[{\rm(III)}] The limits $m_l(x):=\lim_{\varepsilon\downarrow 0}m_l(x+i\varepsilon)$ all exist, are real,
			we have $\lim_{\varepsilon\downarrow 0}\frac 1{i\varepsilon}\big(m_l(x+i\varepsilon)-m_l(x)\big)\in[0,\infty)$, and
			$\sum_{l=1}^n m_l(x)=0$. In this case $x$ is a simple eigenvalue.
	\end{itemize}
\end{theorem}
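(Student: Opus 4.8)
The plan is to compute $\ker(A-x)$ directly from $A=\ker[\pi_1\circ\Gamma]$ together with the explicit description \eqref{K43} of $\Gamma$, and to read off $N_A(x)=\dim\ker(A-x)$ from that. Unwinding \eqref{K43}, a tuple $(f_1,\dots,f_n)$ lies in $\ker(A-x)$ exactly when there exist $(\alpha;\beta_l)\in\mathbb{C}^2$ with $\bigl((f_l;xf_l);(\alpha;\beta_l)\bigr)\in\Gamma_l$ for $l=1,\dots,n$, with \emph{one and the same} first component $\alpha$, and with $\sum_{l=1}^n\beta_l=0$. The first step is therefore to split the analysis according to whether $\alpha=0$ or $\alpha\neq0$; I will show that (for a fixed $x$) these two cases cannot both produce eigenvectors, and handle them separately.

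If $\alpha=0$, then $(f_l;xf_l)\in\ker[\pi_1\circ\Gamma_l]=A_l$ for each $l$, so $f_l\in\ker(A_l-x)$, which is nonzero only for $l\in L:=\{l:x\in\sigma_p(A_l)\}$; fixing an eigenvector $u_l$ for $l\in L$ and writing $f_l=t_lu_l$, the side condition $\sum\beta_l=0$ collapses to $\sum_{l\in L}t_l\beta_l^{(0)}=0$, where $\beta_l^{(0)}$ is the (unique, because $\Gamma_l$ is of function type) second boundary value of $u_l$. The small but crucial point is that $\beta_l^{(0)}\neq0$: the abstract Green identity (BR2) shows $\ker\Gamma_l\subseteq S_l$, and a simple symmetric relation has no real eigenvalues, so $\beta_l^{(0)}=0$ would force $u_l=0$. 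Hence the space of $\alpha=0$-eigenvectors has dimension $\#L-1$ when $\#L\geq1$ (a single nondegenerate linear condition on $\mathbb{C}^{\#L}$) and dimension $0$ when $\#L=0$.

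If $\alpha\neq0$, normalise $\alpha=1$. Since at an eigenvalue $x\in\sigma_p(A_l)$ the one-dimensional space $\ker(S_l^*-x)=\ker(A_l-x)$ is spanned by a vector whose first boundary value is $0$, no $f_l$ with first boundary value $1$ can exist for $l\in L$; thus this case is incompatible with $\#L\geq1$. Moreover it forces $\dim\ker(S_l^*-x)=1$ for every nondegenerate $l$, and then $f_l$ is determined up to the overall scalar, with $\beta_l=m_l(x)$ equal to the value of the Weyl function at $x$. So an eigenvector with $\alpha\neq0$ exists iff $\#L=0$, each $\dim\ker(S_l^*-x)=1$, and $\sum_lm_l(x)=0$, and the eigenspace is then one-dimensional. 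The remaining, more delicate task is to identify ``$x\notin\sigma_p(A_l)$ and $\dim\ker(S_l^*-x)=1$ with Weyl value $m_l(x)$'' with the analytic conditions in alternative (III): $m_l(x)=\lim_{\varepsilon\downarrow0}m_l(x+i\varepsilon)$ exists, is real, and $\lim_{\varepsilon\downarrow0}\frac1{i\varepsilon}\bigl(m_l(x+i\varepsilon)-m_l(x)\bigr)\in[0,\infty)$. This rests on standard Herglotz boundary facts — the monotone limit $\lim_{\varepsilon\downarrow0}\varepsilon^{-1}\Im m_l(x+i\varepsilon)$ always exists in $(0,\infty]$, is finite precisely when the Weyl solution at $x$ is square-integrable (equivalently $\dim\ker(S_l^*-x)=1$ with $x\notin\sigma_p(A_l)$), and finiteness forces the non-tangential limit of $m_l$ at $x$ to exist and be real — together with the functional model of Proposition~\ref{K65} applied to $A_l$, which in particular gives $x\in\sigma_p(A_l)\Leftrightarrow\lim_{\varepsilon\downarrow0}\varepsilon\Im m_l(x+i\varepsilon)=(1+x^2)\mu_l(\{x\})>0$.

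Combining the two cases: if $\#L\geq1$ then $N_A(x)=\#L-1$, so $x\in\sigma_p(A)$ iff $\#L\geq2$ — this is item (I/II), and it also records that an eigenvalue carried by a single $A_l$ disappears; if $\#L=0$ then $x\in\sigma_p(A)$ iff the conditions of (III) hold, and then $N_A(x)=1$. I expect the main obstacle to be the dictionary in the previous paragraph, i.e.\ translating the operator-theoretic description of the point $x$ (an $L^2$ Weyl solution not belonging to $\dom A_l$) into the purely function-theoretic statements about $m_l$ in (III). An alternative that sidesteps this is to substitute the explicit Weyl matrix \eqref{M} into the residue formula $(1+x^2)\Omega(\{x\})=\lim_{\varepsilon\downarrow0}(-i\varepsilon)M(x+i\varepsilon)$ and compute $\rank\Omega(\{x\})$ via Proposition~\ref{K68}; but then the delicate point migrates to controlling the behaviour when none of the $m_l$ has a pole at $x$ yet $\sum_l m_l$ still forces an atom of $\Omega$.
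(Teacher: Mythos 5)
Your proposal is correct and follows essentially the same route as the paper's proof: the same split of $\ker(A-x)$ according to whether the common first boundary value $\alpha$ vanishes, the same dimension counts (including the key observation that the second boundary value of an eigenvector of $A_l$ cannot vanish because $\ker\Gamma_l\subseteq S_l$ and a simple symmetric relation has no real eigenvalues), and the same final translation into boundary asymptotics of the $m_l$. The one step you leave as a sketch --- that the second boundary value of a Weyl solution at $x$ equals $\lim_{\varepsilon\downarrow0}m_l(x+i\varepsilon)$ and that the conditions in (III) characterize its existence --- is exactly what the paper's Step~4 carries out via a weak-limit argument, and the Herglotz boundary facts you invoke for it are the correct ones.
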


\noindent
First we prove a technical statement which is an immediate consequence of {\rm(Hyp1)}, {\rm(Hyp2)}.

\begin{lemma}\label{K54}
	Let $S$ be a closed symmetric simple relation in a Hilbert space $H$, let $\Gamma\subseteq H^2\times\bb C^2$ be a boundary relation for $S^*$ of function type. Set $A:=\ker[\pi_1\circ\Gamma]$.
	\begin{enumerate}[$(i)$]
		\item For each $(f;g)\in A$ there exists unique $\beta\in\bb C$ such that $((f;g);(0;\beta))\in\Gamma$.
		\item Let $\Upsilon:A\to\bb C$ be defined as $\Upsilon(f;g)=\beta$ where $((f;g);(0;\beta))\in\Gamma$, and let
			$x\in\bb R$. Then the restriction of $\Upsilon$ to the set $\{(f;xf):\,f\in\ker(A-x)\}$ is injective.
		\item Let $x\in\bb R$ and $\alpha\in\bb C\setminus\{0\}$. Then there exists at most one element of the form
			$((f;xf);(\alpha;\beta))$ which belongs to $\Gamma$.
	\end{enumerate}
\end{lemma}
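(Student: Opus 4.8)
The plan is to dispatch the three items in order, with $(iii)$ resting on $(ii)$ and $(ii)$ on the simplicity of $S$.

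For $(i)$, existence of $\beta$ is nothing but the definition of $A=\ker[\pi_1\circ\Gamma]$ unravelled: $(f;g)\in A$ means precisely that $((f;g);(0;\beta))\in\Gamma$ for some $\beta\in\bb C$. For uniqueness, if $((f;g);(0;\beta))$ and $((f;g);(0;\beta'))$ both lie in $\Gamma$, then, $\Gamma$ being a linear relation, $(0;\beta-\beta')\in\mul\Gamma\cap(\{0\}\times\bb C)$, which is $\{0\}$ because $\Gamma$ is of function type; hence $\beta=\beta'$. In particular $\Upsilon$ is well defined, and $\Upsilon(0;0)=0$.

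For $(ii)$, I would first reduce to the implication: \emph{if $((f;xf);(0;0))\in\Gamma$ then $f=0$}. Indeed, if $f_1,f_2\in\ker(A-x)$ with $\Upsilon(f_1;xf_1)=\Upsilon(f_2;xf_2)=\beta$, then subtracting the two elements $((f_i;xf_i);(0;\beta))$, $i=1,2$, gives $((f_1-f_2;x(f_1-f_2));(0;0))\in\Gamma$, and the implication applied to $f_1-f_2$ yields injectivity. To prove the implication, fix an arbitrary $((f';g');(\alpha';\beta'))\in\Gamma$; the abstract Green identity (BR2) for this element paired with $((f;xf);(0;0))$ has vanishing right-hand side, and since $x\in\bb R$ its left-hand side collapses to $(f,g'-xf')_H$. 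Thus $(f,g'-xf')_H=0$ for every $(f';g')\in\dom\Gamma$; this functional is continuous on $S^*$ in the $H^2$-norm and $\dom\Gamma$ is dense in $S^*$ by (BR1), so it vanishes on all of $S^*$, i.e.\ $f\perp\ran(S^*-x)$. Since $S$ is closed and $x$ is real we have $(S^*-x)^*=S-x$, hence $f\in\big(\ran(S^*-x)\big)^\perp=\ker\big((S^*-x)^*\big)=\ker(S-x)$, that is $(f;xf)\in S$. If $f\neq0$, then for every $z\in\bb C\setminus\bb R$ and every $v\in\ker(S^*-z)$ the inclusions $(f;xf)\in S$ and $(v;zv)\in S^*$ yield $(z-x)(v,f)_H=0$, so $v\perp f$; since $S$ is simple, $\overline{\spn}\{\ker(S^*-z):z\in\bb C\setminus\bb R\}=H$, forcing $f=0$, a contradiction. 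Hence $f=0$.

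For $(iii)$, suppose $((f_1;xf_1);(\alpha;\beta_1)),((f_2;xf_2);(\alpha;\beta_2))\in\Gamma$ with $\alpha\neq0$. Applying (BR2) to the first element paired with the second, and then to the second element paired with itself, we find (the left-hand sides vanishing because $x\in\bb R$) that $(\beta_1,\alpha)_{\bb C}=(\alpha,\beta_2)_{\bb C}$ and $(\beta_2,\alpha)_{\bb C}=(\alpha,\beta_2)_{\bb C}$; subtracting, $(\beta_1-\beta_2,\alpha)_{\bb C}=0$, so $\beta_1=\beta_2$ because $\alpha\neq0$. Then the difference of the two given elements is $((f_1-f_2;x(f_1-f_2));(0;0))\in\Gamma$, whence $f_1-f_2\in\ker(A-x)$ with $\Upsilon(f_1-f_2;x(f_1-f_2))=0=\Upsilon(0;0)$, and $(ii)$ forces $f_1=f_2$. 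Thus the two elements coincide. The main obstacle is the central stretch of $(ii)$: extending Green's identity from $\dom\Gamma$ to all of $S^*$ by density and continuity, identifying $\big(\ran(S^*-x)\big)^\perp$ with $\ker(S-x)$ within the calculus of linear relations, and using the characterization of simplicity (density of the span of the defect subspaces) to exclude the real ``eigenvalue'' $x$ of $S$; everything else is formal bookkeeping.
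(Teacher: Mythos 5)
Your proof is correct. Items $(i)$ and $(ii)$ follow the same skeleton as the paper's: uniqueness in $(i)$ from $\mul\Gamma\cap(\{0\}\times\bb C)=\{0\}$, and injectivity in $(ii)$ from the two facts that $\ker\Upsilon\subseteq S$ and that a simple symmetric relation has no real eigenvalues. The paper simply quotes the first fact (it is the standard identity $\ker\Gamma=S$ for boundary relations) and takes the second as known, whereas you rederive both from scratch -- Green's identity plus density of $\dom\Gamma$ in $S^*$ plus $(\ran(S^*-x))^\perp=\ker(S-x)$ for the first, and orthogonality of a real eigenvector to all defect subspaces $\ker(S^*-z)$ for the second; this is more self-contained at the cost of length, and both sub-arguments are sound. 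For $(iii)$ you genuinely diverge: the paper argues that $f\neq 0$ forces $\mul\Gamma=\{0\}$, hence $n_\pm(S)=1$ and $\dim\ker(S^*-x)\leq 1$, writes $f'=\lambda f$ and extracts a nonzero element of $\mul\Gamma$, a contradiction; you instead use the abstract Green's identity twice to show $\beta_1=\beta_2$ directly (the left-hand sides vanish because $x$ is real), subtract, and invoke $(ii)$. Your route is cleaner in that it needs no defect-index count, no case distinction on $\mul\Gamma$, and no appeal to Remark~\ref{K56}; the paper's route makes the one-dimensionality of $\mc N_x$ explicit, which it reuses immediately afterwards in the proof of Theorem~\ref{K35}.
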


\begin{proof}
	Existence of $\beta$ is the definition of $A$. Uniqueness follows since $\mul\Gamma\cap(\{0\}\times\bb C)=\{0\}$. This shows $(i)$
	and that the map $\Upsilon$ in $(ii)$ is well-defined. Since $\Gamma$ is minimal, $S$ is completely non-selfadjoint.
	We clearly have $\ker\Upsilon\subseteq S$, and hence $\ker\Upsilon\cap\{(f;xf):\,f\in\ker(A-x)\}=\{0\}$. This shows $(ii)$.
	
	To show $(iii)$, assume that $((f;xf);(\alpha;\beta)),((f';xf');(\alpha;\beta'))\in\Gamma$ and that $f\neq f'$. Without loss
	of generality assume that $f\neq 0$. By minimality of $\Gamma$ this implies that (remember Remark~\ref{K56})
	\[
		\mul\Gamma=\{0\},\quad \dim S^*/S=2,\quad \dim\mc N_x\leq 1,\ x\in\bb R
		\,.
	\]
	Let $\lambda$ be such that $f'=\lambda f$, then $((0;0);((\lambda-1)\alpha;\lambda\beta-\beta'))\in\Gamma$. Since $\lambda\neq 1$ and
	$\alpha\neq 0$, we obtain $\mul\Gamma\neq\{0\}$ a contradiction. We conclude that $f=f'$. Since $\mul\Gamma\cap(\{0\}\times\bb C)=\{0\}$,
	it follows that also $\beta=\beta'$.
\end{proof}

\begin{proof}[Proof of Theorem \ref{K35}]
	\hspace*{0pt}\\[-0mm]\textit{Step 1; A preliminary observation:}
	In this step, we show that
	\[
		\bigg(\begin{pmatrix}f_1\\ \vdots\\ f_n\end{pmatrix};\begin{pmatrix}g_1\\ \vdots\\ g_n\end{pmatrix}\bigg)\in A
		\quad\Longrightarrow\quad
		\exists!\,\alpha,\beta_l:\ \big((f_l;g_l);(\alpha;\beta_l)\big)\in\Gamma_l
	\]
	Moreover, for these numbers $\beta_l$, it holds that $\beta_1+\dots+\beta_n=0$.

	By the definition of $A$ and $\Gamma$, cf.\ \eqref{K43}, there exist $\alpha_1,\dots,\alpha_n$ and $\beta_1,\dots,\beta_n$ such that
	\[
		\big((f_l;g_l);(\alpha_l;\beta_l)\big)\in\Gamma_l,\quad -\alpha_1+\alpha_n=\dots=-\alpha_{n-1}+\alpha_n=\beta_1+\dots+\beta_n=0
		\,.
	\]
	This proves the existence part (set $\alpha:=\alpha_1$). For uniqueness, assume that $\alpha'$ and $\beta_1',\dots,\beta_n'$ are such that
$((f_l;g_l);(\alpha';\beta_l'))\in\Gamma_l$. By {\rm(Hyp3)} there exists an index $l_0\in\{1,\dots,n\}$ with $\mul\Gamma_{l_0}=\{0\}$.
	It follows that $\alpha'=\alpha$ and $\beta_{l_0}'=\beta_{l_0}$. Due to {\rm(Hyp2)} it follows that for all indices $\beta_l'=\beta_l$.
	
	\hspace*{0pt}\\[-2mm]\textit{Step 2; Two examples of eigenvectors:}
	Let $x\in\bb R$. First, consider the space
	\[
		\mc L_x:=\bigg\{\begin{pmatrix}f_1\\ \vdots\\ f_n\end{pmatrix}\in\prod_{l=1}^n\ker(A_l-x):\
		\sum_{l=1}^n\Upsilon_l(f_l;xf_l)=0\bigg\}
		\,.
	\]
	Since each map $\Upsilon_l$ is injective, we have
	\[
		\dim\mc L_x=
		\begin{cases}
			0 &\hspace*{-3mm},\text{ if for every }l\,\ker(A_l-x)=\{0\},\\
			\sum_{l=1}^n\dim\ker(A_l-x)-1 &\hspace*{-3mm},\text{ if for some }l\,\ker(A_l-x)\neq\{0\}.\\
		\end{cases}
	\]
	It is clear that $\mc L_x\subseteq\ker(A-x)$.

	Second, assume that there exist elements $f_l\in\ker(S_l^*-x)$ and $\beta_l\in\bb C$ such that
	$((f_l;xf_l);(1;\beta_l))\in\Gamma_l$ and $\beta_1+\dots+\beta_n=0$. Then, clearly,
	$(f_1,\dots,f_n)\in\ker(A-x)$.

	\hspace*{0pt}\\[-2mm]\textit{Step 3; Determining the eigenspace:}
	Let $x\in\bb R$ and $(f_1,\dots,f_n)\in\ker(A-x)\setminus\{0\}$ be given, and let $\alpha$ and $\beta_1,\dots,\beta_n$ be the
	unique numbers with $((f_l;xf_l);(\alpha;\beta_l))\in\Gamma_l$, $l=1,\dots,n$. We distinguish the two cases that $\alpha=0$ and
	$\alpha\neq 0$.

	Assume that $\alpha=0$. Then $f_l\in\ker(A_l-x)$ and $\beta_l=\Upsilon_l(f_l;xf_l)$. Hence, in this case,
	$(f_1,\dots,f_n)\in\mc L_x$.

	Assume that $\alpha\neq 0$. Clearly, $f_l\in\mc N_{l,x}$ for all $l$. Moreover, whenever $\mul\Gamma_l=\{0\}$, we
	must have $f_l\neq 0$. Let us show that
	\[
		\ker(A_l-x)=\{0\},\quad l=1,\dots,n
		\,.
	\]
	If $f_l=0$, then $\mul\Gamma_l\neq\{0\}$, and
	hence $H_l=\{0\}$. If $f_l\neq 0$ and $\ker(A_l-x)\neq\{0\}$, then $f_l\in\ker(A_l-x)$ since $\dim\mc N_{l,x}\leq 1$. Thus there
	exists $\beta'_l$ with $((f_l;xf_l);(0;\beta'_l))\in\Gamma_l$, and it follows that $\mul\Gamma_l\neq\{0\}$.
	This contradicts the fact that $f_l\neq 0$.

	Next we show (still assuming $\alpha\neq 0$) that
	\[
		\ker(A-x)=\spn\big\{(f_1,\dots,f_n)\big\}
		\,.
	\]
	Let $g=(g_1,\dots,g_n)\in\ker(A-x)\setminus\{0\}$
	be given, and let $\alpha'$ and $\beta_1',\dots,\beta_n'$ be the unique numbers with $((g_l;xg_l);(\alpha';\beta_l'))\in\Gamma_l$,
	$l=1,\dots,n$. If $\alpha'=0$, we would have $\ker(A_l-x)\neq\{0\}$ for at least one index $l$. This contradicts what we showed
	in the previous paragraph, and we conclude that $\alpha'\neq 0$. Set $\lambda:=\frac{\alpha'}{\alpha}$. Then
	$((\lambda f_l;x\lambda f_l);(\alpha';\lambda\beta_l))\in\Gamma_l$, and it follows from Lemma~\ref{K54}, $(iii)$, that $g_l=\lambda f_l$.

	Putting together these facts with what we showed in Step 2, we obtain that for each real point $x$ one of the following three
	alternatives holds:
	\begin{enumerate}[$(i)$]
		\item $\ker(A-x)=\{0\}$.
		\item There exist at least two indices $l$ with $\ker(A_l-x)\neq\{0\}$.
		\item We have $\ker(A_l-x)=\{0\}$, $l=1,\dots,n$, and $\mc N_{l,x}\neq\{0\}$ whenever $\mul\Gamma_l=\{0\}$.
	\end{enumerate}
	If the alternative $(ii)$ takes place, then $\ker(A-x)=\mc L_x$. If $(iii)$ takes place, then $\dim\ker(A-x)=1$.

	\hspace*{0pt}\\[-2mm]\textit{Step 4; Asymptotics of $m_l$:}
	The characterizations stated in the theorem now follow from standard Weyl function theory. First, a point $x\in\bb R$ is an eigenvalue of
	$A_l$ if and only
	\[
		\lim_{\varepsilon\downarrow 0}\varepsilon\Im m_l(x+i\varepsilon)>0
		\,.
	\]
	Next, assume that $\mul\Gamma_l=\{0\}$. Then we have $\mc N_{l,x}\setminus\ker(A_l-x)\neq\emptyset$ if and only if
	\[
		m_l(x):=\lim_{\varepsilon\downarrow 0}m_l(x+i\varepsilon)\in\bb R,\quad
		\lim_{\varepsilon\downarrow 0}\frac 1{i\varepsilon}\big(m_l(x+i\varepsilon)-m_l(x)\big)\in(0,\infty)
		\,.
	\]
	If $\mul\Gamma_l\neq\{0\}$, then $m_l$ is identically equal to a real constant, and hence trivially
	$\lim_{\varepsilon\downarrow 0}m_l(x+i\varepsilon)$ exists in $\bb R$ and
	$\lim_{\varepsilon\downarrow 0}\frac 1{i\varepsilon}\big(m_l(x+i\varepsilon)-m_l(x)\big)=0$. Conversely, if these two
	relations hold, the function $m_l$ must be a real constant.

	Finally, we need to relate the limit $m_l(x)$ with the boundary relation $\Gamma_l$ under the assumption that this limit at all exists, and that $\lim_{\varepsilon\downarrow 0}\frac 1{i\varepsilon}\big(m_l(x+i\varepsilon)-m_l(x)\big)\in[0,\infty)$. For $\varepsilon>0$, let $f_{l,\varepsilon}$ and $\beta_{l,\varepsilon}$ be the unique elements with
	\[
		 \big((f_{l,\varepsilon};(x+i\varepsilon)f_{l,\varepsilon});(1;\beta_{l,\varepsilon})\big)\in\Gamma_l
		\,.
	\]
	Then, by the definition of $m_l$, we have $m_l(x+i\varepsilon)=\beta_{l,\varepsilon}$. The abstract Green's identity gives
	\[
		\varepsilon\|f_{l,\varepsilon}\|^2=\Im m_l(x+i\varepsilon)
		\,,
	\]
	and hence $\|f_{l,\varepsilon}\|$ remains bounded when $\varepsilon$ approaches $0$. Let $f_l$ be the weak limit of $f_{l,\varepsilon}$ for $\varepsilon\downarrow 0$. Since $\Gamma_l$ is a closed linear relation, it is weakly closed, and we obtain
	\[
		\big((f_l;xf_l);(1;m_l(x))\big)\in\Gamma_l
		\,.
	\]
	This finishes the proof.
\end{proof}

\section{Computation of rank for singular spectrum}

The following theorem is our main result, and this section is entirely devoted to its proof. Concerning terminology for boundary relations,
remember Definition~\ref{K58}.

\begin{theorem}\label{K38}
	Let $n\geq 2$, and let for each $l\in\{1,\dots,n\}$ a closed symmetric simple relation $S_l$ in a Hilbert space $H_l$ and a boundary relation $\Gamma_l\subseteq H_l^2\times\bb C^2$ for $S_l^*$ be given. Assume that each $\Gamma_j$ is of function type, and that at least one $\Gamma_l$ is a boundary function. Let $\mu_l$ be the measure in the Herglotz-integral representation of the Weyl function of $\Gamma_l$, set	 $\mu:=\sum_{l=1}^n\mu_l$, and let $\mu_s$ be the singular part of $\mu$ with respect to the Lebesgue measure.

Consider the selfadjoint operator $A$ constructed by pasting $\{\Gamma_l:\,l=1,\dots,n\}$ with standard interface conditions. Denote by $E$ the projection valued spectral measure of $A$, let $E_s$ be its singular part with respect to the Lebesgue measure, and let $E_{s,ac}$ and $E_{s,s}$ be the absolutely continuous and singular parts of $E_s$ with respect to $\mu$. Moreover, let $N_A$ be the spectral multiplicity function of $A$ and
	\[
		r(x):=\#\big\{l\in\{1,\dots,n\}:\,D_{\mu}{\mu_l}(x)>0\big\}
		\,.
	\]
	Then the following hold:
	\begin{enumerate}[{\rm(I)}]
		\item $E_{s,ac}\sim\mathds{1}_{X_{>1}}\cdot\mu_s$ where $X_{>1}:=r^{-1}(\{2,\dots,n\})$ .
		\item $N_A(x)=r(x)-1$ for $E_{s,ac}$-a.a.\ points $x\in\bb R$.
		\item $N_A(x)=1$ for $E_{s,s}$-a.a.\ points $x\in\bb R$.
	\end{enumerate}
\end{theorem}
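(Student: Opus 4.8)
The plan is to reduce the whole statement to a rank computation for one matrix. By Proposition~\ref{K65} the operator $A$ is unitarily equivalent to multiplication by the variable in $L^2(\tilde\Omega)\oplus G_b$, where $a,b,\Omega$ are the data in the integral representation \eqref{K51} of the Weyl function $M$ computed in Proposition~\ref{K37}; hence $E\sim\Omega\sim\rho:=\tr\Omega$, and by Proposition~\ref{K68} we have $N_A=\rank\omega$ $\rho$-a.e., with $\omega:=\frac{d\Omega}{d\rho}$. First I would rewrite \eqref{M} in the form
\[
 M(z)=\operatorname{diag}\bigl(m_1(z),\dots,m_{n-1}(z),0\bigr)-\frac1{m(z)}\,u(z)u(z)^{\!\top},\qquad u:=(m_1,\dots,m_{n-1},1)^{\!\top},\ \ m:=\textstyle\sum_{l=1}^n m_l ,
\]
which makes the rank-one nature of the interaction visible. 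The proof then splits into the part of $E_s$ that is singular with respect to $\mu$ (item~(III)) and the part that is absolutely continuous with respect to $\mu$ (items~(I),(II)).

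For item~(III) I would avoid Weyl functions and argue operator-theoretically. Both $A$ and $\hat A:=\bigoplus_{l=1}^n A_l$ are selfadjoint extensions of $S=\prod_l S_l$, and one checks that $S':=A\cap\hat A$ has defect index $(1,1)$, because the Dirichlet condition ``$\alpha=0$'' and the standard condition ``$\alpha_1=\dots=\alpha_n,\ \sum_l\beta_l=0$'' agree on an $(n-1)$-dimensional space of boundary values. Splitting $S'$ into its selfadjoint part $S_0'$ (acting in a reducing subspace $H_0$) and its simple part $S_1'$, of defect $(1,1)$, one gets $A=S_0'\oplus A'$ with $A'$ a selfadjoint extension of $S_1'$, hence of simple spectrum. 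Since $S_0'\subseteq\hat A$, the spectral measure of $S_0'$ is a restriction of $\hat E\sim\operatorname{diag}(\mu_1,\dots,\mu_n)\sim\mu$, so it vanishes on every $\mu$-zero set; consequently, on a $\mu$-zero set the spectral measure of $A$ agrees with that of $A'$, and therefore $N_A\le1$ there. As $E_{s,s}\perp\mu$, this yields $N_A=1$ $E_{s,s}$-a.e.\ (the bound $N_A\ge1$ on the support being trivial), which is item~(III).

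For items~(I) and~(II) I would work at ``good'' points $x$, which I declare to be those where the Radon--Nikodym derivatives $c_l:=D_\mu\mu_l(x)$ all exist (so $\sum_l c_l=1$), $\frac{d\mu}{d\lambda}(x)$ exists in $[0,\infty]$, the non-tangential limits $\lim_{\zlim}m_l(z)/m(z)=c_l$ exist in $[0,\infty)$ (Theorem~\ref{theorem Poltoratski-2}, $(ii)$, applied with $\sigma=\mu$ and $\nu=\mu_l\ll\mu$), $\liminf_{\varepsilon\downarrow0}|\Re m(x+i\varepsilon)|/\Im m(x+i\varepsilon)<\infty$ (Theorem~\ref{theorem Poltoratski-1} for $\mu$), and all symmetric derivatives needed below exist; since every exceptional set here is $\mu$-zero, hence $\mu_s$-zero, the good points form a $\mu_s$-full set. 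Now fix a good point $x$ with $\frac{d\mu}{d\lambda}(x)=\infty$ (a $\mu_s$-a.a.\ point) and put $k:=r(x)$. The key step will be to show that, along a sequence $\varepsilon_j\downarrow0$ realising the $\liminf$ above, $\Im M(x+i\varepsilon_j)/\Im m(x+i\varepsilon_j)\to\Theta(x)$, where $\Theta(x)$ is the real symmetric matrix with vanishing $n$-th row and column and with top-left $(n-1)\times(n-1)$ block $\operatorname{diag}(c_1,\dots,c_{n-1})-(c_jc_k)_{j,k<n}$; this uses $m_l/m\to c_l$ (real!) and $\Im m\to\infty$. Kac's Lemma (Theorem~\ref{Kac lemma}) then supplies the \emph{full} limits $\Im M_{jk}(x+i\varepsilon)/\Im(\tr M)(x+i\varepsilon)\to\omega_{jk}(x)$ and $\Im(\tr M)(x+i\varepsilon)/\Im m(x+i\varepsilon)\to\frac{d\rho}{d\mu}(x)$, the latter therefore equal to $\tr\Theta(x)=\sum_{j<n}c_j(1-c_j)$; this number is $>0$ precisely when $k\ge2$ and is $0$ when $k=1$. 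Hence on $X_{>1}$ one gets $\omega(x)=\Theta(x)/\tr\Theta(x)$, while on $X_{\le1}$ one gets $\frac{d\rho}{d\mu}(x)=0$, so that $E\sim\rho$ gives no mass to the good points of $X_{\le1}$ (Corollary~\ref{cor Vallee-Poussin sets}). An elementary kernel computation (with $\sum_l c_l=1$, distinguishing whether $n$ lies in $\{l:c_l>0\}$ or not) gives $\rank\Theta(x)=k-1$. I then expect to conclude: at every good $x\in X_{>1}$, $N_A(x)=\rank\omega(x)=r(x)-1$; on $X_{>1}$ (up to a $\mu_s$-zero set) $E\sim\rho\sim\tr\Theta\cdot\mathds{1}_{X_{>1}}\cdot\mu_s$, which is $\ll\mu$ and $\lambda$-singular, so there $E$ coincides with $E_{s,ac}$; and $E_{s,ac}$, being the $\mu$-absolutely continuous part of the $\lambda$-singular $E_s$, satisfies $E_{s,ac}\ll\mu_s$. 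Together these give $E_{s,ac}\sim\mathds{1}_{X_{>1}}\cdot\mu_s$ and $N_A=r-1$ $E_{s,ac}$-a.e., i.e.\ (I) and (II).

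The hard part will be the asymptotic identity $\Im M(x+i\varepsilon_j)/\Im m(x+i\varepsilon_j)\to\Theta(x)$, which forces one to control the real parts $\Re m_l(x+i\varepsilon)$ along the boundary; this is exactly the point where the boundary-value theorems of Poltoratski (Theorems~\ref{theorem Poltoratski-1} and~\ref{theorem Poltoratski-2}) are indispensable. The remaining difficulty is bookkeeping: one must intersect the countably many exceptional sets coming from Theorems~\ref{prop Herglotz}, \ref{Kac lemma}, \ref{theorem Poltoratski-1}, \ref{theorem Poltoratski-2}, from the de la Vall\'ee-Poussin theorem and its corollaries, check that their union is still $\mu_s$-zero so that the good points really support all of $\mu_s$, and throughout keep the merely subsequential limits of Theorem~\ref{theorem Poltoratski-1} separate from the full limits furnished by Kac's Lemma and the Radon--Nikodym theorem.
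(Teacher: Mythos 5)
Your proposal is correct, and its treatment of items (I) and (II) is essentially the paper's own argument: the same reduction via Propositions~\ref{K65} and~\ref{K68} to computing $\rank\omega$, the same set of ``good'' points (the paper's $X_{reg}$, split into $X_{reg}^1$ and $X_{reg}^{>1}$), the same use of Kac's Lemma together with a bounded subsequence of $\Re m/\Im m$ supplied by Theorem~\ref{theorem Poltoratski-1} and Lemma~\ref{lemma tech 2} to pass from $M_{lk}/m\to$ (real limit) to $\Im M_{lk}/\Im m\to$ (same limit), and the same limiting matrix $\Theta(x)$ with entries $c_l(1-c_l)$, $-c_lc_k$ whose rank is computed by an elementary row reduction (the paper's Lemma~\ref{lemma rank}); the case $r(x)=1$ is likewise disposed of by showing $\frac{d\rho}{d\mu}(x)=\tr\Theta(x)=0$ there. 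Where you genuinely diverge is item (III). The paper proves it analytically: on $\mu$-zero sets it establishes, through Lemmas~\ref{lemma d-nn>0} and~\ref{lemma mu-zero-tech}, that $\omega(x)$ exists $\rho$-a.e.\ and is an explicit rank-one matrix built from the real boundary values $m_l(x)$ (respectively from the limits $a_l(x)=\lim m_l/m_k$), which again requires Poltoratski-type control of real parts. You instead argue operator-theoretically: $S'=A\cap\hat A$ (with $\hat A=\bigoplus_l A_l$) has defect $(1,1)$, its selfadjoint part $S_0'$ is contained in $\hat A$ and hence has spectral measure $\ll\mu$, while the complementary part of $A$ extends a simple symmetric relation of defect $(1,1)$ and so has simple spectrum; therefore $N_A\le1$ $E$-a.e.\ on every $\mu$-zero set, which suffices since $E_{s,s}\perp\mu$. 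This route is shorter and more conceptual, and it sidesteps the most delicate asymptotic lemmas of the paper entirely; what it gives up is the explicit formula for the limiting rank-one matrix $\omega(x)$ on $\mu$-zero sets, which in the paper ties the ``new'' singular spectrum to the boundary-value condition $\sum_l m_l(x)=0$ of Theorem~\ref{K35}(III). Two points you should still make explicit when writing it up: the defect-index count for $S'$ needs the reduction of Remark~\ref{K69} when some $\Gamma_l$ are pure relations (so that the boundary-value bookkeeping in $\bb C^{2n}$ is legitimate), and the identification of $N_A$ on a Borel set $X$ with the multiplicity function of $A$ restricted to $\ran E(X)$ should be quoted or proved, since that is what lets you pass from $\mult A'=1$ to $N_A\le1$ on $\mu$-zero sets.
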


\begin{remark}\label{K69}
	We may assume without loss of generality that all boundary relations $\Gamma_l$ with possible exception of $\Gamma_n$ are boundary functions: First, reordering the boundary relations $\Gamma_l$ obviously does not change the relation $A$. Second, it is easy to see that the pasting with standard interface conditions of a collection as given in the theorem is always unitary equivalent to the pasting with standard interface conditions of a collection with at most one boundary relation being a proper relation (take instead of $k$ pure relations $\{((0;0);(w;\beta_lw)),w\in\mathbb C\}$, $l=l_1,...,l_k$, one relation $\{((0;0);(w;\sum_{j=1}^k\beta_{l_j}w)),w\in\mathbb C\}$).
\end{remark}

\noindent
Let us recall some notation:
The Weyl functions of the boundary relations $\Gamma_l$ are denoted as $m_l$, and we set $m:=\sum_{l=1}^nm_l$. The boundary relation $\Gamma$ is as in \ref{K43}, and we denote by $M(z)=(M_{ij}(z))_{i,j=1}^n$ its Weyl function. Explicitly, the function $M$ is given by \eqref{M}. Let $\Omega=(\Omega_{ij})_{i,j=1}^n$ be the $n\!\times\!n$-matrix valued measure in the Herglotz-integral representation \eqref{K25} of $M$, let $\rho$ be the trace measure $\rho:=\tr\Omega$, and let $\omega=(\omega_{ij})_{i,j=1}^n$ be the symmetric derivative of $\Omega$ with respect to $\rho$, i.e.\
\[
	\omega_{ij}(x):=\frac{d\Omega_{ij}}{d\rho}(x),\quad i,j=1,\dots,n
	\,,
\]
whenever these derivatives exist.

Moreover, remember \ref{K68} which says that the actual task is to compute $\rank\omega(x)$.

\begin{flushleft}
	\textbf{Stage 1: $\bm\mu$-singular part.}
\end{flushleft}
\vspace*{-2mm}
In this part, we prove the following statement.

\begin{proposition}\label{lemma mu-zero}
	If the set $X\subseteq\mathbb R$ is $\mu$-zero, then for $\rho$-a.a.\ points $x\in X$ the symmetric derivative $\omega(x)$ exists and
	$\rank\omega(x)=1$.
\end{proposition}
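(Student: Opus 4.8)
The plan is to compute $\rank\omega(x)$ via the boundary behaviour of $\Im M$, using the explicit form \eqref{M}. Since $\Omega\sim\rho$ componentwise, $\omega=\tfrac{d\Omega}{d\rho}$ exists $\rho$-a.e.\ and equals there the Radon--Nikodym derivative; so the issue is to show $\rank\omega(x)=1$ for $\rho$-a.a.\ $x\in X$ (remember \ref{K68}). Put $m_\rho:=\tr M$; this is a non-constant scalar Herglotz function whose representing measure is $\rho=\tr\Omega$, and $\Im m_\rho(z)>0$ for $z\in\bb C^+$. Kac' Lemma~\ref{Kac lemma} (in the general form \ref{K50}; its hypothesis on $\sigma=\rho$ holds $\rho$-a.e.\ by Theorem~\ref{K5} and Corollary~\ref{cor Vallee-Poussin measures}$(ii)$) gives three facts. (a) Applied to $z\mapsto\langle M(z)\xi,\xi\rangle$ — Herglotz with measure $\langle\Omega\xi,\xi\rangle\ll\rho$ — for $\xi$ in a finite set large enough that a Hermitian matrix $A$ is determined by the numbers $\langle A\xi,\xi\rangle$, it yields $\omega(x)=\lim_{\elim}\Im M(x+i\varepsilon)/\Im m_\rho(x+i\varepsilon)$ for $\rho$-a.a.\ $x$. (b) Since $X$ is $\mu_l$-zero, Corollary~\ref{cor Vallee-Poussin sets}$(ii)$ gives $\tfrac{d\mu_l}{d\rho}(x)=0$ for $\rho$-a.a.\ $x\in X$, hence (applying the Lemma to $m_l$) $\Im m_l(x+i\varepsilon)/\Im m_\rho(x+i\varepsilon)\to0$ for $\rho$-a.a.\ $x\in X$, $l=1,\dots,n$. (c) Summing (b), $\Im m(x+i\varepsilon)/\Im m_\rho(x+i\varepsilon)\to0$ for $\rho$-a.a.\ $x\in X$.

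Next I rewrite $\Im M$. By Theorem~\ref{prop frac-lin}, $M=QP^{-1}$ with $P:=w_{11}+w_{12}\tilde M$, $Q:=w_{21}+w_{22}\tilde M$, $\tilde M:=\operatorname{diag}(m_1,\dots,m_n)$; since $w$ is $J_{\bb C^n}$-unitary, $w^*J_{\bb C^n}w=J_{\bb C^n}$ gives, after a short computation, $\Im M(z)=P(z)^{-*}\operatorname{diag}(\Im m_1(z),\dots,\Im m_n(z))P(z)^{-1}$. From \eqref{K40}--\eqref{K41} one checks directly that $P(z)^{-1}=\tfrac1{m(z)}\mathbf 1\,v(z)^T-\operatorname{diag}(1,\dots,1,0)$, where $\mathbf 1=(1,\dots,1)^T$ and $v(z)=(m_1(z),\dots,m_{n-1}(z),1)^T$; hence the $l$-th row of $P(z)^{-1}$ is $\tfrac1{m(z)}u_l(z)^T$ with $u_l(z):=v(z)-m(z)e_l$ for $l<n$ and $u_n(z):=v(z)$, so that
\[
\Im M(z)=\frac1{|m(z)|^2}\sum_{l=1}^{n}\Im m_l(z)\,\overline{u_l(z)}\,u_l(z)^T .
\]
Each summand is Hermitian positive semidefinite of rank one, and $\|u_l(z)\|\ge1$ because the last coordinate of $u_l(z)$ is $1$. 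Writing $\Pi_l(z)$ for the rank-one orthogonal projection onto $\spn\{\overline{u_l(z)}\}$ and using $\tr\Im M=\Im m_\rho$, we obtain
\[
\frac{\Im M(z)}{\Im m_\rho(z)}=\sum_{l=1}^{n}w_l(z)\,\Pi_l(z),\qquad
w_l(z):=\frac{\Im m_l(z)\,\|u_l(z)\|^2}{|m(z)|^2\,\Im m_\rho(z)}\ge0,\quad\sum_{l=1}^n w_l(z)=1 ,
\]
i.e.\ $\Im M/\Im m_\rho$ is a convex combination of rank-one projections.

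The geometric heart is: for $\rho$-a.a.\ $x\in X$, $\liminf_{\elim}|m(x+i\varepsilon)|/\|v(x+i\varepsilon)\|=0$. If not, then on a $\rho$-positive subset of $X$ and for all small $\varepsilon$ we would have $\|v(x+i\varepsilon)\|\le\theta_0^{-1}|m(x+i\varepsilon)|$, whence $\|u_l(x+i\varepsilon)\|\le(1+\theta_0^{-1})|m(x+i\varepsilon)|$ for every $l$ and therefore
\[
\Im m_\rho(x+i\varepsilon)=\frac1{|m|^2}\sum_{l=1}^n\Im m_l\,\|u_l\|^2\le(1+\theta_0^{-1})^2\,\Im m(x+i\varepsilon),
\]
contradicting (c). Now fix $x$ in the $\rho$-full subset of $X$ where (a), (b), (c) and the $\liminf$ hold, and pick $\varepsilon_j\downarrow0$ with $|m(x+i\varepsilon_j)|/\|v(x+i\varepsilon_j)\|\to0$. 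For $l<n$, $\|u_l(x+i\varepsilon_j)-v(x+i\varepsilon_j)\|=|m(x+i\varepsilon_j)|=o(\|v(x+i\varepsilon_j)\|)$, so the unit vectors $u_l/\|u_l\|$ and $v/\|v\|$ differ by $o(1)$, hence $\|\Pi_l(x+i\varepsilon_j)-\Pi_n(x+i\varepsilon_j)\|\to0$ for all $l$; consequently
\[
\Big\|\frac{\Im M(x+i\varepsilon_j)}{\Im m_\rho(x+i\varepsilon_j)}-\Pi_n(x+i\varepsilon_j)\Big\|
=\Big\|\sum_{l=1}^n w_l(x+i\varepsilon_j)\big(\Pi_l(x+i\varepsilon_j)-\Pi_n(x+i\varepsilon_j)\big)\Big\|\longrightarrow0 .
\]
By (a) the first term tends to $\omega(x)$, and the rank-one orthogonal projections $\Pi_n(x+i\varepsilon_j)$ lie in a compact set; so $\omega(x)$ is a limit of such projections, hence itself a rank-one orthogonal projection. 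Thus $\omega(x)$ exists and $\rank\omega(x)=1$.

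I expect the main obstacle to be precisely the combination used above: identifying the convex-combination-of-projections form of $\Im M/\Im m_\rho$ (via the $J_{\bb C^n}$-unitarity identity and the explicit inverse of $P$), and then seeing that on a $\mu$-null set the projections $\Pi_l(x+i\varepsilon)$ must asymptotically coincide along a suitable sequence. This collapse is not a formal consequence of $\Im m_l/\Im m_\rho\to0$ alone; it is forced by the normalisation $\tr\Im M=\Im m_\rho$, which is what the contradiction in the $\liminf$ step exploits. The naive alternative — controlling the real parts $\Re m_l$ near the real axis through Poltoratski-type results — is both more delicate and, as the argument shows, unnecessary here.
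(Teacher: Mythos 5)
Your proof is correct, but it takes a genuinely different route from the paper's. The paper splits into two cases according to whether $\omega_{nn}(x)>0$ or $\omega_{nn}(x)=0$ (its Lemmas~\ref{lemma d-nn>0} and \ref{lemma mu-zero-tech}): in the first case it extracts real boundary values $m_l(x)$ and computes $\omega(x)$ explicitly; in the second it needs the auxiliary Lemmas~\ref{lemma tech 1}--\ref{lemma tech 2} and, crucially, Poltoratski's Theorem~\ref{theorem Poltoratski-1} (through Corollary~\ref{lemma Poltoratski}) to control the real parts $\Re M_{kk}$ along a subsequence before it can pass from ratios of the functions to ratios of their imaginary parts. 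You bypass all of this with the structural identity $\Im M=(P^{-1})^*\,\Im\tilde M\,P^{-1}$ (a correct consequence of $w^*J_{\bb C^n}w=J_{\bb C^n}$, and your formula for $P^{-1}$ checks out against $P\cdot P^{-1}=I$ and against the entries of \eqref{M}), which exhibits $\Im M/\Im\tr M$ as a convex combination of rank-one orthogonal projections with the normalisation coming from the trace; the $\mu$-zero hypothesis then forces, via Kac's Lemma~\ref{Kac lemma} alone, $\Im m/\Im\tr M\to 0$, hence $\liminf |m|/\|v\|=0$, hence the collapse of all projections $\Pi_l$ onto $\Pi_n$ along a sequence, and closedness of the set of rank-one projections finishes the argument. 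This is a unified, shorter proof that uses strictly less machinery (no singular-integral input at all for this proposition, no case distinction, no control of the individual $\Re m_l$); what it gives up relative to the paper's version is the by-product information in Lemma~\ref{lemma d-nn>0} (existence of real boundary values $m_l(x)$ with $m(x)=0$ and the explicit rank-one form of $\omega(x)$), which ties the $E_{s,s}$-part to the description of alternative {\rm(III)} in Theorem~\ref{K35}. Two cosmetic points: the negation in your $\liminf$ step is cleanest stated pointwise (at every $x$ where (c) holds one has $\liminf=0$; the ``$\rho$-positive subset'' detour is unnecessary and the threshold $\theta_0$ would anyway depend on $x$), and in (a) you should say explicitly that entrywise convergence of $\Im M/\Im\tr M$ follows from the quadratic-form limits by polarization.
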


\noindent
The proof is split into two parts. First, the case when the right lower entry $M_{nn}$ of $M$ dominates.

\begin{lemma}\label{lemma d-nn>0}
	Let $x\in\mathbb R$, and assume that
	\begin{enumerate}[$(i)$]
		\item The symmetric derivative $\omega(x)$ exists.
		\item $\frac{d\rho}{d\lambda}(x)=\infty$.
		\item $\omega_{nn}(x)>0$.
	\end{enumerate}
	Then the limits $m_l(x):=\lim\limits_{\varepsilon\downarrow 0}m_l(x+i\varepsilon)$, $l=1,\dots,n$, exist, are real, and
	\[
		m(x):=\lim_{\varepsilon\downarrow 0}m(x+i\varepsilon)=0
		\,.
	\]
	The rank of the matrix $\omega(x)$ is equal to one.
\end{lemma}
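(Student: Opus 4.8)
The plan is to extract information about the Weyl functions $m_l$ from hypotheses $(i)$–$(iii)$ by examining the boundary behavior of the matrix entries $M_{ij}(z)$ along the normal direction $z = x + i\varepsilon$, and then to read off $\rank\omega(x)$ from the resulting structure of $\omega(x)$. The key observation is that, by Theorem~\ref{prop Herglotz}~$(i)$ applied to the $(n,n)$-entry, assumption $(iii)$ forces $\Im M_{nn}(x+i\varepsilon) \to \pi(1+x^2)\omega_{nn}(x) > 0$; on the other hand, from the explicit form \eqref{M} we have $M_{nn} = -\tfrac1m$, so $\Im M_{nn} = \Im\bigl(\tfrac1{-m}\bigr) = \tfrac{\Im m}{|m|^2}$. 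Thus $\Im m(x+i\varepsilon)/|m(x+i\varepsilon)|^2$ has a positive finite limit. Since $\Im m = \sum_l \Im m_l$ with each $\Im m_l \ge 0$, and since $|m(x+i\varepsilon)|$ cannot blow up (otherwise the ratio would tend to $0$, using that $\Im m_l$ is bounded near a point where the boundary value is finite — or, more carefully, using that $\varepsilon\,|m(x+i\varepsilon)|^2$ stays controlled), one deduces that $m(x+i\varepsilon)$ stays bounded and bounded away from $0$ in modulus, and that $\Im m(x+i\varepsilon) \to 0$. I would first show $\lim_{\varepsilon\downarrow0} m(x+i\varepsilon)$ exists: since $m$ is a Herglotz function with $\Im m(x+i\varepsilon)\to 0$ and $|m|$ bounded below, standard Herglotz boundary theory (e.g.\ via Theorem~\ref{prop Herglotz}~$(ii)$ applied to each $m_l$, noting $\mu_l$ has a symmetric derivative w.r.t.\ $\lambda$ at $x$ equal to $0$) gives that each $m_l(x):=\lim_{\varepsilon\downarrow0}m_l(x+i\varepsilon)$ exists in $\bb R$, hence $m(x) = \sum_l m_l(x)$ exists and is real. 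It remains to see $m(x) = 0$: if $m(x)\neq 0$ then $1/m$ would be analytic and finite at $x$ with a real boundary value, whence $\omega_{nn}(x) = \tfrac1{\pi(1+x^2)}\lim\Im(-\tfrac1m) = 0$, contradicting $(iii)$. So $m(x) = 0$.

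Next I would compute $\omega(x)$ explicitly. From \eqref{M}, every entry of $M$ is a rational expression in $m_1,\dots,m_n$ divided by $m$. Write $M_{ij}(z) = -\,\tfrac{p_i(z)p_j(z)}{m(z)}$ for $i,j<n$ where $p_l := m_l$, $M_{in} = M_{ni} = -\tfrac{m_i}{m}$ for $i<n$, $M_{nn} = -\tfrac1m$, and $M_{ij}$ for $i,j<n$, $i\ne j$ equals $-\tfrac{m_im_j}{m}$, while the diagonal $M_{ii} = m_i - \tfrac{m_i^2}{m} = \tfrac{m_i(m-m_i)}{m}$ — consistently, $M_{ij} = \delta_{ij}m_i - \tfrac{m_im_j}{m}$ for $i,j<n$. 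Since each $m_l(x+i\varepsilon)\to m_l(x)\in\bb R$ and $m(x+i\varepsilon)\to 0$ while $\Im m(x+i\varepsilon)\to 0$, I would apply Theorem~\ref{prop Herglotz}~$(i)$ to each matrix entry: $\omega_{ij}(x) = \tfrac1{\pi(1+x^2)}\lim_{\varepsilon\downarrow0}\Im M_{ij}(x+i\varepsilon)$. The point is that $\Im M_{ij}(x+i\varepsilon) = -\,\Im\bigl(\tfrac{c_{ij}(x+i\varepsilon)}{m(x+i\varepsilon)}\bigr)$ where $c_{ij}$ is a polynomial in the $m_l$ with real boundary value $c_{ij}(x)$ at $x$, and $\Im(c_{ij}/m) = \tfrac{\Im c_{ij}\cdot\Re m - \Re c_{ij}\cdot\Im m}{|m|^2}$. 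As $\varepsilon\downarrow 0$ both $\Im c_{ij}(x+i\varepsilon)$ and $\Im m(x+i\varepsilon)$ tend to $0$; dividing by $\Im m$ (which is positive for $\varepsilon>0$), one gets $\lim \tfrac{\Im M_{ij}}{\Im m} = -\tfrac{\Im c_{ij}}{\Im m}\cdot\tfrac{\Re m}{|m|^2} + \tfrac{\Re c_{ij}}{|m|^2}$ evaluated in the limit. Using $\lim\tfrac{\Im m_k}{\Im m} =: t_k \ge 0$ with $\sum_k t_k = 1$ (these limits exist along a subsequence; to get them along the full sequence one invokes that $D_\mu\mu_k(x)$ plays the role of $t_k$, but here $x$ is $\mu$-null so some care is needed — this is precisely where the argument is most delicate), and since $\Re m(x) = m(x) = 0$, the first term drops out and $\omega_{ij}(x) = \tfrac1{\pi(1+x^2)}\cdot\tfrac{\Re c_{ij}(x)}{|m(x+i\varepsilon)|^{-2}\text{-limit}}$ — more cleanly, $\omega_{ij}(x) = \kappa\cdot\Re c_{ij}(x)$ for a common positive constant $\kappa$, where $c_{nn} = 1$, $c_{in} = m_i(x)$, $c_{ij} = m_i(x)m_j(x)$ (the $\delta_{ij}m_i$ part contributes nothing to $\Im(c_{ij}/m)$ in the limit since $m_i$ has a real boundary value and is divided by $m$ whose imaginary part we are factoring out — one checks $\Im(m_i) / \Im m \to t_i$ but multiplied against $\Re(1/m)$-type terms which vanish, so this term's contribution to $\omega$ is $0$... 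I would verify this carefully). The upshot is
\[
	\omega(x) = \kappa\,v(x)\,v(x)^*, \qquad v(x) := \bigl(m_1(x),\dots,m_{n-1}(x),1\bigr)^\top,
\]
which is a nonzero rank-one matrix (the last coordinate is $1$). Hence $\rank\omega(x) = 1$.

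The main obstacle is the passage from subsequential limits of the ratios $\Im m_k(x+i\varepsilon)/\Im m(x+i\varepsilon)$ to genuine limits, and more fundamentally the justification that each term not proportional to $v(x)v(x)^*$ in $\Im M_{ij}$ genuinely vanishes in the normal limit — this hinges on the interplay $\Re m(x)=0$ together with $\Im m(x+i\varepsilon)\to 0$, and on controlling cross terms of the form $\Im m_i(x+i\varepsilon)\cdot\Re m_j(x+i\varepsilon)$ divided by $\Im m(x+i\varepsilon)|m(x+i\varepsilon)|^2$; since $\Re m_j(x+i\varepsilon)\to m_j(x)$ is bounded and $|m(x+i\varepsilon)|$ is bounded away from $0$, this reduces to showing $\Im m_i(x+i\varepsilon)/\Im m(x+i\varepsilon)$ multiplied by the vanishing quantity $\Im m(x+i\varepsilon)$ (coming from $\Re(1/m) \sim \Re m/|m|^2$... no — $\Re(1/m) \to 1/m(x) = \infty$). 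I would re-examine this: when $m(x)=0$, $1/m$ has a pole, so $\Re(1/m)\to\pm\infty$; the correct bookkeeping is to write $M_{ij} = -c_{ij}/m$ and use that $c_{ij}/m - c_{ij}(x)/m$ has controlled imaginary part while $c_{ij}(x)/m = c_{ij}(x)\cdot(-M_{nn})$ has imaginary part $c_{ij}(x)\Im(-M_{nn}) \to c_{ij}(x)\cdot\pi(1+x^2)\omega_{nn}(x)$, provided $c_{ij}(x)\in\bb R$, which holds. So in fact $\omega_{ij}(x) = c_{ij}(x)\,\omega_{nn}(x)$ directly, and the whole computation collapses to this one clean identity — verifying that the ``error term'' $(c_{ij} - c_{ij}(x))/m$ has vanishing imaginary part at $x$ (using that its numerator $\to 0$ at least as fast as needed, divided by $m$ whose modulus is bounded below) is then the remaining technical point and the true crux of the lemma.
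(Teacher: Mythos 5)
Your overall strategy --- reading off the boundary behaviour of the entries $M_{ij}(x+i\varepsilon)$ from the explicit formula \eqref{M} and deducing that $\omega(x)$ is a positive multiple of $v(x)v(x)^*$ with $v(x)=(m_1(x),\dots,m_{n-1}(x),1)^\top$ --- is the same as the paper's, and your final rank-one matrix is the correct one. But the execution has a fundamental misreading at the start. The quantity $\omega_{nn}(x)$ in hypothesis $(iii)$ is the symmetric derivative of $\Omega_{nn}$ with respect to the \emph{trace measure} $\rho$, not with respect to Lebesgue measure. Combined with $(ii)$, the chain rule gives $\frac{d\Omega_{nn}}{d\lambda}(x)=\omega_{nn}(x)\cdot\frac{d\rho}{d\lambda}(x)=\infty$, so Theorem~\ref{prop Herglotz} yields $\Im M_{nn}(x+i\varepsilon)\to\infty$, not the finite positive limit $\pi(1+x^2)\omega_{nn}(x)$ you assert. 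Your subsequent deductions --- that $\Im m/|m|^2$ has a finite positive limit and that $|m(x+i\varepsilon)|$ stays bounded away from $0$ --- are therefore wrong, and internally inconsistent with the conclusion $m(x)=0$ you reach two sentences later (if $m(x)=0$, then $|m(x+i\varepsilon)|\to 0$). The correct route runs the other way: since $M_{nn}=-\frac1m$ and $|\Im\frac1m|\le\frac1{|m|}$, the blow-up of $\Im M_{nn}$ forces $m(x+i\varepsilon)\to 0$, and then each $\Im m_l(x+i\varepsilon)\to 0$ because $0\le\Im m_l\le\Im m$.

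The more serious gap is your claim that the existence of the real limits $m_l(x)$ follows from ``standard Herglotz boundary theory'' once $\Im m_l(x+i\varepsilon)\to 0$. No such implication exists: Theorem~\ref{prop Herglotz}~$(ii)$ only gives existence of $\frac{d\mu_l}{d\lambda}(x)$ and says nothing about $\Re m_l$, and a Herglotz function can have $\Im m(x+i\varepsilon)\to 0$ while $\Re m(x+i\varepsilon)$ oscillates or diverges. Establishing convergence of the real parts is the crux of the lemma. The paper does it via the identity
\[
	\frac{\Im M_{ln}(z)}{\Im M_{nn}(z)}=\Re m_l(z)+\frac{\Im m_l(z)\,\Re\frac1{m(z)}}{\Im\frac1{m(z)}},
\]
the bound $\bigl|\Im m_l(z)\,\Re\frac1{m(z)}\bigr|\le\Im m_l(z)\bigl|\frac1{m(z)}\bigr|\le\frac{\Im m_l(z)}{\Im m(z)}\le 1$, and the fact that the left-hand side converges to $\omega_{ln}(x)/\omega_{nn}(x)$ by Theorem~\ref{Kac lemma} (applicable precisely because $\omega_{nn}(x)>0$ and $\frac{d\Omega_{nn}}{d\lambda}(x)=\infty$); since $|\Im\frac1m|\to\infty$, the error term vanishes and $\Re m_l(x+i\varepsilon)\to\omega_{ln}(x)/\omega_{nn}(x)$. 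Your closing paragraph gropes toward exactly this comparison (matching $M_{ij}$ against $c_{ij}(x)\cdot(-M_{nn})$), but you explicitly leave it unverified and even record your own confusion about $\Re\frac1m$; as written, the proposal does not prove the lemma. Once the real limits are in hand, the rank computation goes through by dividing $\Im M_{ij}$ by $\Im\frac1m$ and estimating the cross terms as in the paper --- the passage through subsequential limits of $\Im m_k/\Im m$ that worries you is not needed at all.
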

\begin{proof}
	\hspace*{0pt}\\[-0mm]\textit{Step 1. Existence of limits.}
	The present hypotheses imply that
	\begin{align}
		\frac{d\Omega_{nn}}{d\lambda}(x) & =\frac{d\Omega_{nn}}{d\rho}(x)\cdot\frac{d\rho}{d\lambda}(x)=\infty
			\,,\label{K13}
			\\
		\frac{d\Omega_{ij}}{d\Omega_{nn}}(x) & =\frac{d\Omega_{ij}}{d\rho}(x)\cdot\frac{d\rho}{d\Omega_{nn}}(x)=
			\frac{\omega_{ij}(x)}{\omega_{nn}(x)}
			\,.\label{K14}
	\end{align}
	Applying Theorem~\ref{prop Herglotz} with $\Omega_{nn}$ gives
	$\lim_{\varepsilon\downarrow 0}\Im M_{nn}(x+i\varepsilon)=\infty$.
	However, $M_{nn}=-\frac 1m$, and we thus have
	\[
		\lim_{\varepsilon\downarrow 0}\Big|\Im\frac 1{m(x+i\varepsilon)}\Big|=\infty
		\,.
	\]
	In particular, $\lim_{\varepsilon\downarrow 0}m(x+i\varepsilon)=0$.
	Since $\Im m_l$ is nonnegative throughout the upper half-plane, this implies that also
	$\lim_{\varepsilon\downarrow 0}\Im m_l(x+i\varepsilon)=0$, $l=1,\dots,n$.
	
	In order to capture behavior of real parts, we apply Theorem~\ref{Kac lemma} with the measures $\Omega_{ij}$ and $\Omega_{nn}$. This gives
	\[
		\lim_{\varepsilon\downarrow 0}\frac{\Im M_{ij}(x+i\varepsilon)}{\Im M_{nn}(x+i\varepsilon)}=\frac{\omega_{ij}(x)}{\omega_{nn}(x)}
		\,.
	\]
	Let $l\in\{1,\dots,n-1\}$. For each $z\in\bb C^+$, we have
	\begin{equation}\label{M-ln/M-nn}
			\frac{\Im M_{ln}(z)}{\Im M_{nn}(z)}=\frac{\Im\frac{m_l(z)}{m(z)}}{\Im\frac{1}{m(z)}}=
			\Re m_l(z)+\frac{\Im m_l(z)\Re\frac1{m(z)}}{\Im\frac1{m(z)}}
			\,,
	\end{equation}
	and
	\[
		\left|\Im m_l(z)\Re\frac1{m(z)}\right| \leq \Im m_l(z)\Big|\frac 1{m(z)}\Big| \leq \frac{\Im m_l(z)}{\Im m(z)}\leq 1
		\,.
	\]
	Hence, the limit of $\Re m_l$ exists, in fact,
	\[
		\lim_{\varepsilon\downarrow 0}\Re m_l(x+i\varepsilon)=\frac{\omega_{ln}(x)}{\omega_{nn}(x)}
		\,.
	\]
	Since we already know that imaginary parts tend to zero, thus
	\[
		\lim_{\varepsilon\downarrow 0}m_l(x+i\varepsilon)=\frac{\omega_{ln}(x)}{\omega_{nn}(x)},\quad l=1,\dots,n-1.
	\]
	Since $m$ tends to zero, it follows that
	$\lim_{\varepsilon\downarrow 0}m_n(x+i\varepsilon)=-\sum_{l=1}^{n-1}\frac{\omega_{ln}(x)}{\omega_{nn}(x)}$.
	
	\hspace*{0pt}\\[-2mm]\textit{Step 2. Computing rank.}
	Let $l\in\{1,\dots,n-1\}$. For each $z\in\bb C^+$, we have
	\begin{equation}\label{Im M-ll}
	\begin{aligned}
		\Im M_{ll}(z)= & \Im\frac{m_l(z)[m(z)-m_l(z)]}{m(z)}=
			\\
		= & \Im\frac1{m(z)}\Re\big(m_l(z)[m(z)-m_l(z)]\big)+
			\\
		& +\Re\frac1{m(z)}\Im\big(m_l(z)[m(z)-m_l(z)]\big)
			\,,
	\end{aligned}
	\end{equation}
	and
	\begin{align*}
		\bigg|\Re\frac1{m(z)}\Im\big(m_l(z)[m(z)- & m_l(z)]\big)\bigg|\leq
			\\
		= & \left|\Re\frac1{m(z)}\cdot\Im m_l(z)\cdot\Re\big(m(z)-m_l(z)\big)\right|+
			\\
		& +\left|\Re\frac1{m(z)}\cdot\Re m_l(z)\cdot\Im\big(m(z)-m_l(z)\big)\right|\leq
			\\
		\leq & \left|\Re\frac1{m(z)}\right|\cdot\Im m(z)\cdot\sum_{j=1}^{n}|\Re m_j(z)|\leq
			\\
		\leq & \frac{\Im m(z)}{|m(z)|}\cdot\sum_{j=1}^{n}|\Re m_j(z)| \leq \sum_{j=1}^{n}|\Re m_j(z)|
			\,.
	\end{align*}
	Since $m_j(x+i\varepsilon)$ approaches a real limit when $\varepsilon\downarrow 0$, and $m(x+i\varepsilon)$ tends to zero, and
	$|\Im\frac 1{m(x+i\varepsilon)}|$ tends to infinity, we obtain
	(the ``$o(1)$'' understands for $\varepsilon\downarrow 0$)
	\[
		\Im M_{ll}(x+i\varepsilon)=-\Im\frac1{m(x+i\varepsilon)}\cdot\big(m_l^2(x)+o(1)\big),\quad l=1,\dots,n-1
		\,.
	\]
	Arguing analogously, we obtain
	\begin{align*}
		\Im M_{lk}(x+i\varepsilon) & = -\Im\frac1{m(x+i\varepsilon)}\cdot\big(m_l(x)m_k(x)+o(1)\big),
			\\
		& \hspace*{45mm}l,k=1,\dots,n-1,\ l\neq k\,,
			\\
		\Im M_{ln}(x+i\varepsilon) & = -\Im\frac1{m(x+i\varepsilon)}\cdot\big(m_l(x)+o(1)\big),\quad l=1,\dots,n-1
			\,.
	\end{align*}
	Therefore
	\[
		\Im\tr M(x+i\varepsilon)=-\Im\frac1{m(x+i\varepsilon)}\Big(1+\sum_{l=1}^{n-1}m_l^2(x)+o(1)\Big)
		\,,\hspace*{16.5mm}
	\]
	and hence, referring again to Theorem~\ref{Kac lemma},
	\begin{multline*}
		\omega(x)=\frac1{1+\sum\limits_{l=1}^{n-1}m_l^2(x)}
			\\[2mm]
		\times
			\left(
			\begin{array}{ccccc}
				m_1^2(x) & m_2(x)m_1(x) & \cdots & m_{n-1}(x)m_1(x) & m_1(x)\\
				m_1(x)m_2(x) & m_2^2(x) & \cdots & m_{n-1}(x)m_2(x) & m_2(x)\\
				\vdots & \vdots & \ddots & \vdots & \vdots\\
				m_1(x)m_{n-1}(x) & m_2(x)m_{n-1}(x) & \cdots & m_{n-1}^2(x) & m_{n-1}(x)\\
				m_1(x) & m_2(x) & \cdots & m_{n-1}(x) & 1
			\end{array}
			\right)
	\end{multline*}
	Obviously, the rank of this matrix is $1$.
\end{proof}

\noindent
Second, the case that the right lower entry of $M$ does not dominate. In this case, a more refined argument is necessary.
First, two technical observations.

\begin{lemma}\label{lemma tech 1}
	Let $D\subseteq\bb C$ be a connected set with $x\in\overline{D}$. Moreover, let $f,g:D\to\bb C^+$ be continuous functions.
	If $\lim_{t\to x}\frac{f(t)}{g(t)}=-1$, then
	\begin{equation}\label{K15}
		\lim_{t\to x}\frac{\Im f(t)}{\Re f(t)}=\lim_{t\to x}\frac{\Im g(t)}{\Re g(t)}=0
		\,.
	\end{equation}
\end{lemma}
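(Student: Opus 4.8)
The plan is to exploit the rigid interplay between the hypotheses $f,g\in\bb C^+$ and $f/g\to-1$. Intuitively, $f/g\to -1$ forces $\arg f-\arg g\to\pm\pi$, but since both $\arg f$ and $\arg g$ are confined to $(0,\pi)$ this can happen only if one of them tends to $0$ and the other to $\pi$; and near either endpoint of $(0,\pi)$ the ratio of imaginary to real part of a point in $\bb C^+$ tends to $0$. So the whole lemma reduces to making this argument-counting precise.

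First I would note that the quotients $\Im f/\Re f$ and $\Im g/\Re g$ are well defined for $t$ close to $x$: if $\Re g(t)=0$ then $g(t)\in i(0,\infty)$, so $f(t)/g(t)$ has the same argument as $-if(t)$, which lies in $(-\pi/2,\pi/2)$ and therefore cannot be close to $-1$; hence $\Re g(t)\neq0$ near $x$, and symmetrically $\Re f(t)\neq 0$. Writing $\arg w\in(0,\pi)$ for the principal argument of $w\in\bb C^+$ (continuous on $\bb C^+$, so $\arg f$ and $\arg g$ are continuous on $D$), the relation $f(t)/g(t)\to-1$ gives $\cos\big(\arg f(t)-\arg g(t)\big)\to -1$, so for every $\varepsilon\in(0,\pi/2)$ we have $\arg f(t)-\arg g(t)\in(\pi-\varepsilon,\pi)\cup(-\pi,-\pi+\varepsilon)$ once $|t-x|$ is small enough. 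In the first case $\arg f(t)\ge\arg f(t)-\arg g(t)>\pi-\varepsilon$ and $\arg g(t)=\arg f(t)-(\arg f(t)-\arg g(t))<\varepsilon$; in the second case the roles of $0$ and $\pi$ are interchanged. Either way $\arg f(t),\arg g(t)\in(0,\varepsilon)\cup(\pi-\varepsilon,\pi)$, whence $\bigl|\Im f(t)/\Re f(t)\bigr|=|\tan\arg f(t)|\le\tan\varepsilon$ and likewise for $g$. Letting $\varepsilon\downarrow0$ yields \eqref{K15}.

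I do not expect a serious obstacle here; the only point requiring a little care is the bookkeeping of the two sign cases for $\arg f-\arg g$, which is handled uniformly above by the observation that $\tan$ vanishes at both $0$ and $\pi$, so it is irrelevant which case occurs (or whether it alternates as $t\to x$). Connectedness of $D$ and continuity of $f,g$ are in fact not needed for this argument; if one prefers to avoid arguments altogether, the same conclusion follows from an elementary estimate: expanding $\Re(f/g)\to-1$ and $\Im(f/g)\to 0$ in terms of $a=\Re f$, $b=\Im f$, $c=\Re g$, $d=\Im g$ and using $b,d>0$ shows $b|c|+|a|d=o(c^2+d^2)$ and $|a||c|-bd=(1+o(1))(c^2+d^2)$, from which $bd=o(c^2+d^2)$, hence $|a||c|\asymp c^2+d^2$, and finally $b/|a|=b|c|/(|a||c|)\to0$ and $d/|c|=|a|d/(|a||c|)\to0$.
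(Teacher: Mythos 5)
Your proof is correct and follows essentially the same route as the paper's: both reduce the claim to the observation that $\arg f-\arg g\to\pi\pmod{2\pi}$, with both arguments confined to $(0,\pi)$, forces $\arg f(t)$ and $\arg g(t)$ into $(0,\varepsilon)\cup(\pi-\varepsilon,\pi)$, where $|\tan|$ is small. Your version is in fact marginally cleaner, since the paper fixes the dichotomy into one of two alternatives (which is where connectedness and continuity implicitly enter), whereas you handle both sign cases uniformly and correctly note that those hypotheses are dispensable.
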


\begin{proof}
	For $z\in\bb C^+$, let $\arg z$ denote the branch of the argument of $z$ in $[0,\pi]$. Then $\arg f$ and $\arg g$ are continuous functions.
	We have
	\[
		\lim_{t\to x}\big[\arg f(t)-\arg g(t)\big]=\pi\text{ mod }2\pi
		\,,
	\]
	and hence either
	\[
		\lim_{t\to x}\arg f(t)=\pi\quad\text{and}\quad\lim_{t\to x}\arg g(t)=0
		\,,
	\]
	or
	\[
		\lim_{t\to x}\arg f(t)=0\quad\text{and}\quad\lim_{t\to x}\arg g(t)=\pi
		\,.
	\]
	In both	cases, \eqref{K15} follows.
\end{proof}

\begin{lemma}\label{lemma tech 2}
	Let $\alpha\in\mathbb R$, and let $\{f_j\}_{j\in\bb N}$ and $\{g_j\}_{j\in\bb N}$ be sequences of complex numbers. Assume that
	\begin{enumerate}[$(i)$]
		\item For each $j\in\bb N$ we have $\Im g_j\neq 0$.
		\item $\lim\limits_{j\to\infty}\frac{f_j}{g_j}=\alpha$.
		\item $\frac{\Re g_j}{\Im g_j}=O(1)$ as $j\to\infty$.
	\end{enumerate}
	Then $\lim\limits_{j\to\infty}\frac{\Im f_j}{\Im g_j}=\alpha$.
\end{lemma}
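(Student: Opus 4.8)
The plan is to reduce the statement to a direct computation by writing $f_j$ in terms of $g_j$. Hypothesis $(ii)$ says that $f_j/g_j \to \alpha$, so set $\varepsilon_j := f_j/g_j - \alpha$; then $\varepsilon_j \to 0$ and $f_j = (\alpha+\varepsilon_j)g_j$. Taking imaginary parts (and using $\alpha \in \mathbb R$) gives
\[
	\Im f_j = \alpha\,\Im g_j + (\Re\varepsilon_j)(\Im g_j) + (\Im\varepsilon_j)(\Re g_j)
	\,.
\]
Since $\Im g_j \neq 0$ by $(i)$, we may divide and obtain
\[
	\frac{\Im f_j}{\Im g_j} = \alpha + \Re\varepsilon_j + (\Im\varepsilon_j)\cdot\frac{\Re g_j}{\Im g_j}
	\,.
\]

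It then remains to show that the last two summands tend to $0$. The term $\Re\varepsilon_j$ tends to $0$ because $\varepsilon_j \to 0$. For the final term, $\Im\varepsilon_j \to 0$ while $\frac{\Re g_j}{\Im g_j} = O(1)$ by hypothesis $(iii)$, so the product is a null sequence times a bounded sequence and hence tends to $0$. Therefore $\frac{\Im f_j}{\Im g_j} \to \alpha$, which is the assertion.

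There is essentially no obstacle here; the only thing to be careful about is that $\alpha$ is real (so that it passes through the imaginary part cleanly) and that $(i)$ is exactly what is needed to make the quotient $\frac{\Im f_j}{\Im g_j}$ well-defined for every $j$. Hypothesis $(iii)$ is used only at the very end to absorb the cross term $\Im\varepsilon_j \cdot \Re g_j$ after dividing by $\Im g_j$; without it the conclusion would fail (e.g.\ $g_j$ nearly real with $\Re g_j/\Im g_j$ blowing up).
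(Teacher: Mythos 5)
Your proof is correct and is essentially the paper's argument in a slightly more explicit form: the paper bounds $|\Im f_j-\alpha\Im g_j|\leq|f_j-\alpha g_j|<\varepsilon|g_j|$ and then uses $(iii)$ to control $|g_j|/|\Im g_j|$, while you expand $f_j=(\alpha+\varepsilon_j)g_j$ componentwise and use $(iii)$ to kill the cross term $\Im\varepsilon_j\cdot\Re g_j/\Im g_j$. Both rest on the same two facts ($\alpha$ real, $|g_j|$ comparable to $|\Im g_j|$), so there is nothing to add.
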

\begin{proof}
	Let $\varepsilon>0$. Then, for sufficiently large indices $j$, we have
	\[
		\left|\frac{f_j}{g_j}-\alpha\right|<\varepsilon
		\,.
	\]
	This implies that $\big|\Im f_j-\alpha\Im g_j\big|\leq|f_j-\alpha g_j|<\varepsilon|g_j|$, and hence
	\[
		\left|\frac{\Im f_j}{\Im g_j}-\alpha\right|<\varepsilon\frac{|g_j|}{|\Im g_j|}
		\,.
	\]
	Due to our assumption $(iii)$, the right-hand side of this estimate can be made arbitrarily small for large indices $j$.
\end{proof}

\noindent
Now we are ready to settle the case that $M_{nn}$ does not dominate.

\begin{lemma}\label{lemma mu-zero-tech}
	Let $x\in\mathbb R$, and assume that
	\begin{enumerate}[$(i)$]
		\item The symmetric derivative $\omega(x)$ exists.
		\item $\frac{d\rho}{d\lambda}(x)=\infty$.
		\item $\omega_{nn}(x)=0$.
		\item $\frac{d\mu}{d\rho}(x)=0$.
		\item There exists no $k\in\{1,\dots,n\}$ with $\lim\limits_{\varepsilon\downarrow 0}
			\frac{|\Re M_{kk}(x+i\varepsilon)|}{\Im\tr M(x+i\varepsilon)}=\infty$.
	\end{enumerate}
	Then the rank of the matrix $\omega(x)$ is equal to one.
\end{lemma}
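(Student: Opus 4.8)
The plan is to reduce the computation of $\rank\omega(x)$ to an analysis of the boundary behaviour of the scalar Weyl functions $m_l$, exploiting throughout the explicit shape of $M$ from \eqref{M}. Recall that for $i,j\in\{1,\dots,n-1\}$ one has $M_{ij}=\delta_{ij}m_i-\frac{m_im_j}{m}$, equivalently $M'(z):=(M_{ij}(z))_{i,j=1}^{n-1}=\mathrm{diag}\bigl(m_1(z),\dots,m_{n-1}(z)\bigr)-\frac1{m(z)}v(z)v(z)^\top$ with $v(z):=(m_1(z),\dots,m_{n-1}(z))^\top$, that $M_{in}=-\frac{m_i}{m}$ and $M_{nn}=-\frac1m$, and that $\tr M=(m-m_n)-\frac1m\bigl(1+\sum_{k=1}^{n-1}m_k^2\bigr)$. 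First I would use that $\omega(x)$ is positive semidefinite together with hypothesis $(iii)$: from $|\omega_{in}(x)|^2\le\omega_{ii}(x)\omega_{nn}(x)=0$ the whole last row and column of $\omega(x)$ vanish, so $\rank\omega(x)=\rank\omega'(x)$ where $\omega'(x):=(\omega_{ij}(x))_{i,j=1}^{n-1}$. Since $(ii)$ forces $x\in\supp\rho$ we have $\tr\omega(x)=1$, hence $\tr\omega'(x)=1$ and $\omega'(x)\neq0$; it therefore remains to prove $\rank\omega'(x)\le1$.

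Next I would pass to the boundary. For every fixed vector $u$ the map $z\mapsto\langle M(z)u,u\rangle$ is a scalar Herglotz function whose measure is $\langle\Omega(\cdot)u,u\rangle\ll\rho$; by $(i)$ its symmetric derivative with respect to $\rho$ exists (and equals $\langle\omega(x)u,u\rangle$), and by $(ii)$ $\frac{d\rho}{d\lambda}(x)=\infty$, so Theorem~\ref{Kac lemma} (in the general form of Corollary~\ref{K50}, with denominator measure $\rho$) gives $\frac{\langle\Im M(x+i\varepsilon)u,u\rangle}{\Im\tr M(x+i\varepsilon)}\to\langle\omega(x)u,u\rangle$. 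Since $\Im M(z)$ is positive semidefinite with trace $\Im\tr M(z)$, the matrices $\frac{\Im M(x+i\varepsilon)}{\Im\tr M(x+i\varepsilon)}$ have norm $\le1$, and polarisation yields the matrix convergence $\frac{\Im M(x+i\varepsilon)}{\Im\tr M(x+i\varepsilon)}\to\omega(x)$. On the other hand, $(iv)$ and Theorem~\ref{Kac lemma} give $\Im m_l(x+i\varepsilon)=o\bigl(\Im\tr M(x+i\varepsilon)\bigr)$ for each $l$, while $(iii)$ gives $\Im M_{nn}(x+i\varepsilon)=\frac{\Im m(x+i\varepsilon)}{|m(x+i\varepsilon)|^2}=o\bigl(\Im\tr M(x+i\varepsilon)\bigr)$; inserting these into the trace identity yields $\Im\sum_{k=1}^{n-1}\frac{m_k(x+i\varepsilon)^2}{m(x+i\varepsilon)}=-\Im\tr M(x+i\varepsilon)\,(1+o(1))$, and (by $(ii)$ and Theorem~\ref{prop Herglotz}) $\Im\tr M(x+i\varepsilon)\to\infty$. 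Because the diagonal part $\mathrm{diag}(\Im m_1,\dots,\Im m_{n-1})$ is now negligible relative to $\Im\tr M$, the matrix relation above reduces to: $\frac1{\Im\tr M(x+i\varepsilon)}\,\Im\bigl(\tfrac1m v v^\top\bigr)(x+i\varepsilon)$ converges, to $-\omega'(x)$; so the task is to show this limiting matrix has rank one.

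The heart of the proof is precisely this last point, and I expect it to be the main obstacle. The complex matrix $\frac1m vv^\top$ has rank one, but its imaginary part is a priori of rank two; it becomes rank one in the limit exactly when the entries $\frac{m_im_j}{m}(x+i\varepsilon)$ acquire a common limiting argument \emph{at a common rate}. The plan is: pass to a subsequence $\varepsilon_\ell\downarrow0$ along which $\arg m_k(x+i\varepsilon_\ell)\to\phi_k\in[0,\pi]$, $\arg m(x+i\varepsilon_\ell)\to\Phi$, and $\frac{|m_k(x+i\varepsilon_\ell)|^2}{\sum_{l\le n-1}|m_l(x+i\varepsilon_\ell)|^2}\to p_k$ for each $k$ (possible by compactness, there being finitely many indices). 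The estimate $\bigl|\Im\sum_{k\le n-1}\frac{m_k^2}{m}\bigr|\le\sum_{k\le n-1}\frac{|m_k|^2}{|m|}\le(n-1)|m|$ together with the displayed asymptotics forces $|m(x+i\varepsilon_\ell)|\to\infty$ and $\Im m(x+i\varepsilon_\ell)=o\bigl(|m(x+i\varepsilon_\ell)|\bigr)$, hence $\Phi\in\{0,\pi\}$; the same circle of estimates, applied to an index $k$ with $\omega_{kk}(x)>0$ (indices with $\omega_{kk}(x)=0$ contribute vanishing rows and columns of $\omega'(x)$ and may be ignored), gives $|m_k(x+i\varepsilon_\ell)|\gtrsim|m(x+i\varepsilon_\ell)|^{1/2}(\Im\tr M)^{1/2}\gtrsim\Im\tr M$, whence $\sin\phi_k=0$ and $\phi_k\in\{0,\pi\}$. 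Thus all relevant arguments have a common limit, but — and this is the crux — since that common limit lies in $\{0,\pi\}$ the quantities $\Im\frac{m_k^2}{m}$ and $\Im\frac{m_im_j}{m}$ all vanish to leading order, so one must control the \emph{next} order, i.e.\ the rates at which $2\arg m_k-\arg m$ and $\arg m_i+\arg m_j-\arg m$ approach their limits. This is where hypothesis $(v)$ (which prevents any $|\Re M_{kk}|$ from outgrowing $\Im\tr M$, ruling out the bad configurations in which the two rank-one pieces of $\Im(\tfrac1m vv^\top)$ would survive independently) enters, together with the elementary Lemmas~\ref{lemma tech 1} and \ref{lemma tech 2}, which convert control of full ratios of $\mathbb C^+$-valued functions into control of the corresponding ratios of imaginary parts and conversely. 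Carrying this out shows that, along the subsequence, the deviations of $2\arg m_k-\arg m$ from their common limit are comparable uniformly in $k$, and hence so are the $\Im\frac{m_im_j}{m}$.

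Granting this, one concludes as follows. Along the subsequence $\frac{m_im_j}{m}(x+i\varepsilon_\ell)$ has modulus $\frac{|m_i||m_j|}{|m|}$, and the rate control makes $\frac{-\Im\frac{m_im_j}{m}(x+i\varepsilon_\ell)}{\Im\tr M(x+i\varepsilon_\ell)}=\frac{-\Im\frac{m_im_j}{m}}{-\sum_{l\le n-1}\Im\frac{m_l^2}{m}}\,(1+o(1))$ converge to $\frac{|m_i||m_j|}{\sum_{l\le n-1}|m_l|^2}\to\sqrt{p_ip_j}$. Combined with the matrix convergence $\frac{\Im M'(x+i\varepsilon)}{\Im\tr M(x+i\varepsilon)}\to\omega'(x)$ and the negligibility of the diagonal part, this gives $\omega_{ij}(x)=\sqrt{p_ip_j}$ for all $i,j\le n-1$ (with $\omega_{ii}(x)=p_i$, which also follows directly from Theorem~\ref{Kac lemma}). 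Hence $\omega'(x)=\bigl(\sqrt{p_i}\,\sqrt{p_j}\bigr)_{i,j=1}^{n-1}$ is of rank one, and therefore $\rank\omega(x)=\rank\omega'(x)=1$, as asserted.
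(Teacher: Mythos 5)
Your reduction is sound and matches the paper's opening moves: positive semidefiniteness of $\omega(x)$ plus $(iii)$ kills the last row and column, Theorem~\ref{Kac lemma} gives the entrywise convergence $\Im M_{ij}/\Im\tr M\to\omega_{ij}(x)$, and $(iv)$ makes the diagonal part $\mathrm{diag}(\Im m_1,\dots,\Im m_{n-1})$ negligible, so everything hinges on showing that the normalized matrix $-\Im\bigl(\tfrac1m vv^\top\bigr)/\Im\tr M$ has a rank-one limit. But that is precisely the step you do not carry out: the paragraph beginning ``The heart of the proof'' ends with ``Carrying this out shows that \dots the deviations of $2\arg m_k-\arg m$ from their common limit are comparable uniformly in $k$'', which asserts the needed conclusion rather than proving it. Two concrete defects in the sketch. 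First, the chain $\sum_{k}|m_k|^2/|m|\le(n-1)|m|$ presupposes $|m_k|\le|m|$, which fails in general (the real parts of the $m_l$ can cancel in $m=\sum_l m_l$), so your deductions that $|m|\to\infty$ and $|m_k|\gtrsim\Im\tr M$ are unsupported; in fact one should not expect to control $|m|$ itself, only the normalization-free quantity $m/m_k$. Second, hypothesis $(v)$ guarantees, for each index $k$ \emph{separately}, some sequence $\varepsilon_j\downarrow0$ along which $|\Re M_{kk}|/\Im\tr M$ stays bounded; these sequences may differ with $k$, so $(v)$ cannot deliver the simultaneous, uniform-in-$k$ rate comparability your argument needs.

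The paper resolves the crux differently and more economically. Fix one index $k$ with $\omega_{kk}(x)>0$. Step 1 uses $(iv)$ and the identity $\Im M_{kk}=\bigl|\tfrac{m-m_k}{m}\bigr|^2\Im m_k+\bigl|\tfrac{m_k}{m}\bigr|^2\Im(m-m_k)$ to prove $m/m_k\to0$. Step 2 --- the real work --- shows each ratio $m_l/m_k$ converges to a \emph{real} limit $a_l(x)$ with $a_l^2=\omega_{ll}/\omega_{kk}$, by analyzing $\Im M_{ll}/\Im M_{kk}$ and carefully bounding the error terms. This yields full complex limits $M_{lp}/M_{kk}\to a_la_p$ for all $l,p\le n-1$, with no subsequence extraction. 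Hypothesis $(v)$ is then invoked only for that single $k$ to produce one sequence along which $\Re M_{kk}/\Im M_{kk}=O(1)$, and Lemma~\ref{lemma tech 2} converts the complex limits into $\Im M_{lp}/\Im M_{kk}\to a_la_p$ along it; since $\Im M_{lp}/\Im\tr M\to\omega_{lp}$ exists as a full limit, this identifies $\omega_{lp}=\omega_{kk}\,a_la_p$, manifestly of rank one. Note finally that your closing formula $\omega_{ij}=\sqrt{p_ip_j}$ cannot be right as stated: the $a_l$ may have either sign, so the off-diagonal entries are $\pm\sqrt{\omega_{ii}\omega_{jj}}$. This does not affect the rank (a positive semidefinite matrix with $|\omega_{ij}|^2=\omega_{ii}\omega_{jj}$ for all $i,j$ has rank at most one), but it shows that an argument through moduli alone discards information you would then need to recover from positive semidefiniteness, and it is a further symptom that the phase bookkeeping in your sketch has not been pushed through.
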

\begin{proof}
	Since $w_{nn}(x)=0$, there exists an index $k\in\{1,\ldots ,n-1\}$ such that $\omega_{kk}(x)>0$. From this, the condition $(ii)$, and
	Theorem~\ref{Kac lemma}, it follows that
	\begin{equation}\label{A1}
		\lim_{\varepsilon\downarrow 0}\frac{\Im M_{kk}(x+i\varepsilon)}{\Im\tr M(x+i\varepsilon)}=\omega_{kk}(x)>0.
	\end{equation}
	Throughout the proof we fix an index $k$ with this property.
	
	\hspace*{0pt}\\[-2mm]\textit{Step 1:}
	In this step we deduce that
	\begin{equation}\label{K27}
		\lim_{\varepsilon\downarrow 0}\frac{m(x+i\varepsilon)}{m_k(x+i\varepsilon)}=0
		\,.
	\end{equation}
	By Theorem~\ref{Kac lemma}, the present hypotheses $(iv)$ and $(ii)$ imply that
	\[
		\lim_{\varepsilon\downarrow 0}\frac{\Im m(x+i\varepsilon)}{\Im\tr M(x+i\varepsilon)}=0
		\,.
	\]
	Therefore, using \eqref{A1}, one has $\lim_{\varepsilon\downarrow 0}\frac{\Im m(x+i\varepsilon)}{\Im M_{kk}(x+i\varepsilon)}=0$.
	We compute
	\begin{align}
		\Im M_{kk}= & \Im\frac{m_k(m-m_k)}m=\frac{\Im[m_k(m-m_k)\overline m]}{|m|^2}=
		\nonumber \\
		= & \frac{\Im[m_k(m-m_k)(\overline m-\overline{m_k}+\overline{m_k})]}{|m|^2}=
		\nonumber \\
		= & \Big|\frac{m-m_k}m\Big|^2\Im m_k+\Big|\frac{m_k}m\Big|^2\Im(m-m_k)
		\,.\label{K44}
	\end{align}
	From this we have
	\[
		\frac{\Im m}{\Im M_{kk}}=
		\frac1{\left|\frac{m_k}m\right|^2\frac{\Im(m-m_k)}{\Im m}+\left|\frac{m-m_k}m\right|^2\frac{\Im m_k}{\Im m}}.
	\]
	Using the estimate
	\begin{multline*}
		\left|\frac{m_k}m\right|^2\frac{\Im(m-m_k)}{\Im m}+\left|\frac{m-m_k}m\right|^2\frac{\Im m_k}{\Im m}\leq
		\left|\frac{m_k}m\right|^2+\left|\frac{m-m_k}m\right|^2\leq
		\\
		\leq 2\left|\frac{m_k}m\right|^2+2\left|\frac{m_k}m\right|+1
	\end{multline*}
	we see that the limit relation
	$\lim_{\varepsilon\downarrow 0}\frac{\Im m(x+i\varepsilon)}{\Im M_{kk}(x+i\varepsilon)}=0$ implies \eqref{K27}.
	
	In addition, further rewriting \eqref{K27} as
	$\lim_{\varepsilon\downarrow 0}\frac{m(x+i\varepsilon)-m_k(x+i\varepsilon)}{m_k(x+i\varepsilon)}=-1$, we get from
	Lemma~\ref{lemma tech 1} that
	\[
		\lim_{\varepsilon\downarrow 0}\frac{\Im m_k(x+i\varepsilon)}{\Re m_k(x+i\varepsilon)}=0,\quad
		\lim_{\varepsilon\downarrow 0}\frac{\Im[m(x+i\varepsilon)-m_k(x+i\varepsilon)]}{\Re[m(x+i\varepsilon)-m_k(x+i\varepsilon)]}=0
		\,.
	\]

	\hspace*{0pt}\\[-2mm]\textit{Step 2:}
	In this step we show that for each $l\in\{1,\dots,n-1\}$
	\[
		a_l(x):=\lim_{\varepsilon\downarrow 0}\frac{m_l(x+i\varepsilon)}{m_k(x+i\varepsilon)}\quad\text{exists in }\bb R
		\,.
	\]
	Let $l\in\{1,\ldots,n-1\}$ be given. By $(i)$, $(ii)$, and Theorem~\ref{Kac lemma}, we have
	$\lim_{\varepsilon\downarrow 0}\frac{\Im M_{ll}(x+i\varepsilon)}{\Im\tr M(x+i\varepsilon)}=\omega_{ll}(x)$. Using \eqref{A1}, thus
	$\lim_{\varepsilon\downarrow 0}\frac{\Im M_{ll}(x+i\varepsilon)}{\Im M_{kk}(x+i\varepsilon)}=\frac{\omega_{ll}(x)}{\omega_{kk}(x)}$.
	From the computation \eqref{K44}, it follows that
	\[
		\frac{\Im M_{ll}(z)}{\Im M_{kk}(z)}
		=
		\frac{\big|\frac{m_l(z)}{m_k(z)}\big|^2\frac{\Im(m(z)-m_l(z))}{\Im m(z)}+\big|\frac{m_l(z)}{m_k(z)}-\frac{m(z)}{m_k(z)}\big|^2
		\frac{\Im m_l(z)}{\Im m(z)}}
		{\frac{\Im(m(z)-m_k(z))}{\Im m(z)}+\big|1-\frac{m(z)}{m_k(z)}\big|^2\frac{\Im m_k(z)}{\Im m(z)}}
		\,.
	\]
	Due to \eqref{K27}, the denominator tends to $1$ when $z=x+i\varepsilon$ and $\varepsilon\downarrow 0$. Therefore, the numerator has the limit
	$\frac{\omega_{ll}(x)}{\omega_{kk}(x)}$, i.e.\
	\begin{multline}\label{star}
		\lim_{\varepsilon\downarrow 0}\bigg[
		 \Big|\frac{m_l(x\!+\!i\varepsilon)}{m_k(x\!+\!i\varepsilon)}\Big|^2+\bigg(\Big|\frac{m_l(x\!+\!i\varepsilon)}{m_k(x\!+\!i\varepsilon)}-
		\frac{m(x\!+\!i\varepsilon)}{m_k(x\!+\!i\varepsilon)}\Big|^2-
		\\
		 -\Big|\frac{m_l(x\!+\!i\varepsilon)}{m_k(x\!+\!i\varepsilon)}\Big|^2\bigg)
		\frac{\Im m_l(x\!+\!i\varepsilon)}{\Im m(x\!+\!i\varepsilon)}\bigg]
		=\frac{\omega_{ll}(x)}{\omega_{kk}(x)}
		\,.
	\end{multline}
	Let us show that $\lim_{\varepsilon\downarrow 0}\big|\frac{m_l(x+i\varepsilon)}{m_k(x+i\varepsilon)}\big|^2=
	\frac{\omega_{ll}(x)}{\omega_{kk}(x)}$. First, if $\frac{m_l(x+i\varepsilon)}{m_k(x+i\varepsilon)}$ were not bounded as
	$\varepsilon\downarrow 0$, then there would exists a sequence $\{\varepsilon_j\}_{j\in\bb N}$ with $\varepsilon_j\downarrow0$, such that $\lim_{j\to\infty}\big|\frac{m_l(x+i\varepsilon_j)}{m_k(x+i\varepsilon_j)}\big|=\infty$.
	We have
	\[
    \Big|\frac{m_l}{m_k}-\frac{m}{m_k}\Big|^2-\Big|\frac{m_l}{m_k}\Big|^2=
		\Big|\frac{m_l}{m_k}\Big|^2\cdot\bigg(\bigg|1-\frac{\frac m{m_k}}{\frac{m_l}{m_k}}\bigg|-1\bigg)
		\,,
	\]
	and it follows that
	\begin{multline}\label{estimate} \Big|\frac{m_l(x\!+\!i\varepsilon_j)}{m_k(x\!+\!i\varepsilon_j)}-\frac{m(x\!+\!i\varepsilon_j)}{m_k(x\!+\!i\varepsilon_j)}\Big|^2-
		 \Big|\frac{m_l(x\!+\!i\varepsilon_j)}{m_k(x\!+\!i\varepsilon_j)}\Big|^2=
		\\
=o\bigg(\Big|\frac{m_l(x\!+\!i\varepsilon_j)}{m_k(x\!+\!i\varepsilon_j)}\Big|^2\bigg)
		\quad\text{as }j\to\infty
		\,.
	\end{multline}
	Since $0<\frac{\Im m_l(x+i\varepsilon)}{\Im m(x+i\varepsilon)}\leq 1$, it follows that the expression on the left side of \eqref{star}
	would also be unbounded, a contradiction.
	This means that $\frac{m_l(x+i\varepsilon)}{m_k(x+i\varepsilon)}$ remains bounded when $\varepsilon\downarrow 0$.
	We have
	\begin{multline*}
\left|\Big|\frac{m_l}{m_k}-\frac{m}{m_k}\Big|^2-\Big|\frac{m_l}{m_k}\Big|^2\right|=
\left|\Big|\frac{m_l}{m_k}-\frac{m}{m_k}\Big|-\Big|\frac{m_l}{m_k}\Big|\right|
\bigg(\Big|\frac{m_l}{m_k}-\frac{m}{m_k}\Big|+\Big|\frac{m_l}{m_k}\Big|\bigg)\leq
		\\
		\leq \Big|\frac{m}{m_k}\Big|\bigg(\Big|\frac{m_l}{m_k}-\frac{m}{m_k}\Big|+\Big|\frac{m_l}{m_k}\Big|\bigg)
		\,,
	\end{multline*}
	and it follows that
	\[
		 \Big|\frac{m_l(x\!+\!i\varepsilon)}{m_k(x\!+\!i\varepsilon)}-\frac{m(x\!+\!i\varepsilon)}{m_k(x\!+\!i\varepsilon)}\Big|^2-
		 \Big|\frac{m_l(x\!+\!i\varepsilon)}{m_k(x\!+\!i\varepsilon)}\Big|^2=o(1)
		\quad\text{as }\varepsilon\downarrow 0
		\,.
	\]
	Now we get from \eqref{star} that
	\[
		\lim_{\varepsilon\downarrow 0}\left|\frac{m_l(x+i\varepsilon)}{m_k(x+i\varepsilon)}\right|^2=\frac{\omega_{ll}(x)}{\omega_{kk}(x)}
		\,.
	\]
	Next, rewrite
	\[
		\Im\Big(\frac{m_l}{m_k}\Big)=\frac{\Im m_l}{|m_k|}\cdot\frac{\Re m_k}{|m_k|}-\frac{\Re m_l}{|m_k|}\cdot\frac{\Im m_k}{|m_k|}.
	\]
	Since
	\[
		\frac{|\Re m_k|}{|m_k|}\le1,\quad \frac{|\Re m_l|}{|m_k|}\leq\Big|\frac{m_l}{m_k}\Big|=O(1)
		\,,
	\]
	\[
		\frac{\Im m_l}{|m_k|},\frac{\Im m_k}{|m_k|}\leq\frac{\Im m}{|m_k|}\leq\Big|\frac{m}{m_k}\Big|=o(1)
		\,,
	\]
	we have $\lim_{\varepsilon\downarrow 0}\Im\big(\frac{m_l(x+i\varepsilon)}{m_k(x+i\varepsilon)}\big)=0$. Therefore
	\[
		\text{either}\quad\lim_{\varepsilon\downarrow 0}\frac{m_l(x+i\varepsilon)}{m_k(x+i\varepsilon)}=
		\sqrt{\frac{\omega_{ll}(x)}{\omega_{kk}(x)}}
		\quad\text{or}\quad\lim_{\varepsilon\downarrow 0}\frac{m_l(x+i\varepsilon)}{m_k(x+i\varepsilon)}=
		-\sqrt{\frac{\omega_{ll}(x)}{\omega_{kk}(x)}}
		\,.
	\]
	In both cases the limit $\lim_{\varepsilon\downarrow 0}\frac{m_l(x+i\varepsilon)}{m_k(x+i\varepsilon)}$ exists and is real.
	
	\hspace*{0pt}\\[-2mm]\textit{Step 3; Computing rank:}
	Let $l,p\in\{1,\ldots ,n-1\}$. If $l\neq p$, then
	\begin{equation}\label{M-lp}
		\lim_{\varepsilon\downarrow 0}\frac{M_{lp}(x+i\varepsilon)}{M_{kk}(x+i\varepsilon)}=
		\lim_{\varepsilon\downarrow 0}\frac{\frac{m_l(x+i\varepsilon)}{m_k(x+i\varepsilon)}\cdot
		 \frac{m_p(x+i\varepsilon)}{m_k(x+i\varepsilon)}}{1-\frac{m(x+i\varepsilon)}{m_k(x+i\varepsilon)}}=a_l(x)a_p(x)
		\,.
	\end{equation}
	If $l=p$, then
	\begin{equation}\label{M-ll}
		\lim_{\varepsilon\downarrow 0}\frac{M_{ll}(x+i\varepsilon)}{M_{kk}(x+i\varepsilon)}=
		\lim_{\varepsilon\downarrow 0}\frac{\frac{m_l(x+i\varepsilon)}{m_k(x+i\varepsilon)}\cdot
		 \big(\frac{m_l(x+i\varepsilon)}{m_k(x+i\varepsilon)}-\frac{m(x+i\varepsilon)}{m_k(x+i\varepsilon)}\big)}{
		1-\frac{m(x+i\varepsilon)}{m_k(x+i\varepsilon)}}=a_l^2(x)
		\,.
	\end{equation}
	From $(v)$ it follows that there exists a sequence $\{\varepsilon_j\}_{j\in\bb N}$ with $\varepsilon_j\downarrow 0$, such that
	$\lim_{j\to\infty}\frac{\Re M_{kk}(x+i\varepsilon_j)}{\Im M_{kk}(x+i\varepsilon_j)}=O(1)$ as $j\to\infty$.
	Applying Lemma~\ref{lemma tech 2}, we get from \eqref{M-lp} and \eqref{M-ll} that
	\[
		\frac{\Im M_{lp}(x+i\varepsilon_j)}{\Im M_{kk}(x+i\varepsilon_j)}=a_l(x)a_p(x),\quad l,p=1,\ldots,n-1
		\,.
	\]
	Since $\omega(x)$ is positive semidefinite, $(iii)$ implies that
	\[
		\omega_{ij}(x)=0,\quad i=n\text{ or }j=n
		\,.
	\]
	Altogether,
	\[
		\omega(x)=\frac1{\sum\limits_{l=1}^{n-1}a_l^2(x)}
		\begin{pmatrix}
			a_1^2(x)&a_2(x)a_1(x)&\cdots&a_{n-1}(x)a_1(x)&0\\
			a_1(x)a_2(x)&a_2^2(x)&\cdots&a_{n-1}(x)a_2(x)&0\\
			\vdots&\vdots&\ddots&\vdots&\vdots\\
			a_1(x)a_{n-1}(x)&a_2(x)a_{n-1}(x)&\cdots&a_{n-1}^2(x)&0\\
			0&0&\cdots&0&0
		\end{pmatrix}
		\ .
	\]
	The rank of this matrix obviously cannot exceed $1$. However, $a_k(x)=1$, and hence it is nonzero.
\end{proof}

\noindent
Having available Lemma~\ref{lemma d-nn>0} and Lemma~\ref{lemma mu-zero-tech}, it is not difficult to prove Proposition~\ref{lemma mu-zero}.

\begin{proof}[Proof of Proposition~\ref{lemma mu-zero}]
	Let a $\mu$-zero set $X\subseteq\bb R$ be given.
	It is enough to show that the conditions $(i)$, $(ii)$ appearing in Lemma~\ref{lemma d-nn>0} and Lemma~\ref{lemma mu-zero-tech},
	and the conditions $(iv)$, $(v)$ appearing in Lemma~\ref{lemma mu-zero-tech} are satisfied $\rho$-a.e.\ on $X$.

	The fact that the symmetric derivative $\omega(x)$ exists $\rho$-a.e., has been noted in Remark~\ref{K22}.
	Denote by $\mu_{ac}$ and $\rho_{ac}$ the absolutely continuous parts of $\mu$ and $\rho$ with respect to the Lebesgue measure.
	By Theorem~\ref{K2}, $\rho_{ac}\sim\mu_{ac}\ll\mu$. Thus the set $X$ is $\rho_{ac}$-zero.
	Corollary~\ref{cor Vallee-Poussin measures}, $(iv)$, says that for $\rho_s$-a.a. points $x\in\bb R$ one
	has $\frac{d\rho}{d\lambda}(x)=\infty$. Since $X$ is $\rho_{ac}$-zero, one has $\frac{d\rho}{d\lambda}(x)=\infty$ not only for $\rho_s$-a.a.,
	but even for $\rho$-a.a. $x\in X$.

	Corollary~\ref{cor Vallee-Poussin sets}, ($ii$), shows that for $\rho$-a.a. points $x\in X$ we have $\frac{d\mu}{d\rho}(x)=0$.
	Corollary~\ref{lemma Poltoratski} applied with the measure $\rho$ and the measures that correspond to the Herglotz functions
	$M_{ll}$ gives
	\[
	    \frac{|\Re M_{ll}(x+i\varepsilon)|}{\Im \tr  M(x+i\varepsilon)}\nrightarrow\infty\text{ as }\varepsilon\downarrow 0,\quad
	    l=1,\dots,n-1
	    \,,
	\]
	for $\rho$-a.a.\ $x\in\mathbb R$.
\end{proof}

%
\begin{flushleft}
	\textbf{Stage 2: $\mu$-absolutely continuous part.}
\end{flushleft}
\vspace*{-2mm}
Consider the sets
\begin{align*}
    X_{reg}:=\bigg\{&x\in\bb R:\
        \frac{d\mu}{d\lambda}(x)=\infty,\ \frac{d\rho}{d\mu}(x)\in[0,\infty),
        \\&
		\frac{|\Re m(x+i\varepsilon)|}{\Im m(x+i\varepsilon)}\nrightarrow\infty\text{ as }\varepsilon\downarrow 0,
        \\&
		\forall\,l=1,\ldots,n:\ \frac{d\mu_l}{d\mu}(x)\text{ exists, and }
		 \lim_{\varepsilon\downarrow0}\frac{m_l(x+i\varepsilon)}{m(x+i\varepsilon)}=\frac{d\mu_l}{d\mu}(x)
		\bigg\}.
\end{align*}
\begin{align*}
	& X_{reg}^{1}:=\Big\{x\in X_{reg}:\quad \exists\,l\in\{1,\dots,n\}:\,\frac{d\mu_l}{d\mu}(x)=1\Big\}
\,,\\
	& X_{reg}^{>1}:=\Big\{x\in X_{reg}:\quad \nexists\,l\in\{1,\dots,n\}:\,\frac{d\mu_l}{d\mu}(x)=1\Big\}
		\,.
\end{align*}
Note that, since $\sum_{l=1}^n\frac{d\mu_l}{d\mu}(x)=1$ and $\frac{d\mu_l}{d\mu}(x)\in[0,1]$, one of the following
alternatives takes place:
\begin{itemize}
	\item[$(1)$] There exists one index $k_0$ with $\frac{d\mu_{k_0}}{d\mu}(x)=1$, and for all other indices $k\neq k_0$ we have
		$\frac{d\mu_k}{d\mu}(x)=0$.
	\item[$(${\footnotesize\raisebox{0.8pt}{$>$}}$1)$] For all indices $k$ we have $\frac{d\mu_k}{d\mu}(x)<1$,
		and there exist at least two indices $k$ with $\frac{d\mu_k}{d\mu}(x)>0$.
\end{itemize}

\noindent
In this part we show the following statement.

\begin{proposition}\label{prop s-reg}
	The following hold:
	\begin{enumerate}[$(i)$]
		\item The set $X_{reg}^1$ is $\rho$-zero.
		\item For $\rho$-a.a.\ points $x\in X_{\reg}$ the symmetric derivative $\omega(x)$ exists and
			\[
				\rank\omega(x)=r(x)-1
				\,.
			\]
	\end{enumerate}
\end{proposition}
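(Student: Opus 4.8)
The plan is to reduce everything to one computation: identifying the symmetric derivative $\frac{d\rho}{d\mu}(x)$ on $X_{reg}$. Throughout, abbreviate $c_l(x):=\frac{d\mu_l}{d\mu}(x)$ for $x\in X_{reg}$, $l=1,\dots,n$; on $X_{reg}$ these exist, lie in $[0,1]$ (as $\mu_l\le\mu$), and satisfy $\sum_{l=1}^nc_l(x)=1$, so $r(x)=\#\{l:c_l(x)>0\}$ there. Since $\Omega$ is positive semidefinite and, by \eqref{M}, real and symmetric, each $\Omega_{ij}$ is a real measure with $|\Omega_{ij}|\le\tfrac12\rho$, whence $\Omega_{ij}\ll\rho$ and (Remark~\ref{K22}) the symmetric derivative $\omega(x)$ exists for $\rho$-a.a.\ $x$. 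By Corollary~\ref{cor Vallee-Poussin sets}$(i)$ the set $\{x\in X_{reg}:\frac{d\rho}{d\mu}(x)=0\}$ is $\rho$-zero; thus in proving $(ii)$ I may work on the $\rho$-full subset of $X_{reg}$ where $\omega(x)$ exists and $\frac{d\rho}{d\mu}(x)\in(0,\infty)$, while in proving $(i)$ I argue at every point of $X_{reg}^1$.

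The crux is the identity $\frac{d\rho}{d\mu}(x)=q_0(x):=\sum_{l=1}^{n-1}c_l(x)\bigl(1-c_l(x)\bigr)$, valid for every $x\in X_{reg}$. Here $\mu$ is (up to an affine term) the measure of the Herglotz function $m=\sum_l m_l$ and $\rho$ that of the Herglotz function $\tr M$; since $\frac{d\rho}{d\mu}(x)$ exists in $\bb R$ and $\frac{d\mu}{d\lambda}(x)=\infty\neq0$ on $X_{reg}$, Theorem~\ref{Kac lemma} gives $\frac{d\rho}{d\mu}(x)=\lim_{\varepsilon\downarrow0}\frac{\Im\tr M(x+i\varepsilon)}{\Im m(x+i\varepsilon)}$. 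To evaluate the limit I exploit the defining properties of $X_{reg}$: $\Im m(x+i\varepsilon)\to\infty$ (Theorem~\ref{prop Herglotz}, since $\frac{d\mu}{d\lambda}(x)=\infty$), hence $|m(x+i\varepsilon)|\to\infty$; $\frac{m_l(x+i\varepsilon)}{m(x+i\varepsilon)}\to c_l(x)$; and there is a sequence $\varepsilon_j\downarrow0$ along which $\frac{|\Re m(x+i\varepsilon_j)|}{\Im m(x+i\varepsilon_j)}=O(1)$. With $M_{ll}=m\cdot\frac{m_l}{m}\bigl(1-\frac{m_l}{m}\bigr)$ for $l\le n-1$ and $\Im M_{nn}=\Im m/|m|^2$ (so $\frac{\Im M_{nn}}{\Im m}=|m|^{-2}\to0$), I split $\Im M_{ll}=\Im m\cdot\Re\!\bigl[\tfrac{m_l}{m}(1-\tfrac{m_l}{m})\bigr]+\Re m\cdot\Im\!\bigl[\tfrac{m_l}{m}(1-\tfrac{m_l}{m})\bigr]$; along $\varepsilon_j$ the first bracket tends to the real number $c_l(x)(1-c_l(x))$, the second to $0$, and $\frac{\Re m}{\Im m}$ stays bounded, so $\frac{\Im\tr M(x+i\varepsilon_j)}{\Im m(x+i\varepsilon_j)}\to q_0(x)$; since the full limit exists, it equals $q_0(x)$. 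This proves $(i)$: for $x\in X_{reg}^1$ some $c_{k_0}(x)=1$ and all other $c_l(x)=0$, so every term $c_l(x)(1-c_l(x))$ vanishes, $q_0(x)=0$, hence $\frac{d\rho}{d\mu}(x)=0$ for every $x\in X_{reg}^1$, and Corollary~\ref{cor Vallee-Poussin sets}$(i)$ shows $X_{reg}^1$ is $\rho$-zero.

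For $(ii)$, by $(i)$ it suffices to treat $\rho$-a.a.\ $x\in X_{reg}^{>1}$; there all $c_l(x)<1$, at least two are positive, and at least one positive index lies in $\{1,\dots,n-1\}$, so $q_0(x)>0$. Factoring difference quotients gives $\frac{d\rho}{d\lambda}(x)=q_0(x)\cdot\frac{d\mu}{d\lambda}(x)=\infty\neq0$, so Theorem~\ref{Kac lemma} applies to each entry $\Omega_{ij}$ relative to $\rho$ and yields $\omega_{ij}(x)=\lim\frac{\Im M_{ij}(x+i\varepsilon)}{\Im\tr M(x+i\varepsilon)}$. Dividing numerator and denominator by $\Im m(x+i\varepsilon)$ and using $\lim\frac{\Im\tr M}{\Im m}=q_0(x)>0$, the limit $\lim\frac{\Im M_{ij}}{\Im m}$ exists and is computed along $\varepsilon_j$ exactly as above, now using $M_{ij}=-m\cdot\frac{m_i}{m}\frac{m_j}{m}$ for $i\neq j\le n-1$, $M_{in}=-\frac{m_i}{m}\to-c_i(x)$ (bounded), and $M_{nn}\to0$. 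The outcome is that $\omega(x)$ vanishes outside the upper-left $(n-1)\times(n-1)$ block, where it equals $\frac1{q_0(x)}B(x)$ with $B(x):=\operatorname{diag}\bigl(c_1(x),\dots,c_{n-1}(x)\bigr)-\mathbf c(x)\mathbf c(x)^{T}$ and $\mathbf c(x)=(c_1(x),\dots,c_{n-1}(x))^{T}$. Hence $\rank\omega(x)=\rank B(x)$, and with $S=\{l\le n-1:c_l(x)>0\}$, $s=|S|$, deleting the zero rows and columns reduces this to $\rank\bigl(I_s-\mathbf u\mathbf u^{T}\bigr)$ where $\mathbf u=\bigl(\sqrt{c_l(x)}\bigr)_{l\in S}$ and $\|\mathbf u\|^2=\sum_{l=1}^{n-1}c_l(x)=1-c_n(x)$; this rank equals $s-1$ if $c_n(x)=0$ and $s$ if $c_n(x)>0$. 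Since $r(x)=s$ in the former case and $r(x)=s+1$ in the latter, in both cases $\rank\omega(x)=r(x)-1$.

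The step I expect to be most delicate is the limit evaluation along $\varepsilon_j$: one must keep careful track of which blocks of $M$ scale like $m$ (so their imaginary parts are of order $\Im m$ and survive division by $\Im\tr M$, which is of the same order $q_0(x)\Im m\to\infty$) and which stay bounded (so contribute $0$), and confirm that the subsequential value along $\varepsilon_j$ coincides with the genuine $\varepsilon\downarrow0$ limit supplied by Theorem~\ref{Kac lemma}; the remaining linear-algebra computation of $\rank B(x)$ is then routine.
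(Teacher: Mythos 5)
Your argument is correct and essentially coincides with the paper's proof: the identity $\frac{d\rho}{d\mu}(x)=\sum_{l=1}^{n-1}c_l(x)\bigl(1-c_l(x)\bigr)$ on $X_{reg}$ (the paper's Lemma~\ref{K45}, obtained exactly as you do via Theorem~\ref{Kac lemma} and the bounded-$\frac{\Re m}{\Im m}$ subsequence) yields item $(i)$ through Corollary~\ref{cor Vallee-Poussin sets}, $(i)$, and item $(ii)$ follows from the entrywise limits of $\Im M/\Im\tr M$ plus a rank count, your only (harmless) deviations being the pointwise chain rule $\frac{d\rho}{d\lambda}=\frac{d\rho}{d\mu}\cdot\frac{d\mu}{d\lambda}=\infty$ on $X_{reg}^{>1}$ in place of the paper's exclusion of a $\rho$-zero bad set via Corollary~\ref{cor Vallee-Poussin measures}, $(iv)$, and the factorization $\operatorname{diag}(c_l)-\mathbf c\mathbf c^{T}=D^{1/2}(I-\mathbf u\mathbf u^{T})D^{1/2}$ in place of the row reduction of Lemma~\ref{lemma rank}. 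The one point you gloss over is that $r$ is defined through the Radon--Nikodym derivative $D_\mu\mu_l$ rather than the symmetric derivative $\frac{d\mu_l}{d\mu}$; the two agree for $\rho$-a.a.\ $x\in X_{reg}$ because $\frac{d\rho}{d\mu}<\infty$ there, so Corollary~\ref{cor Vallee-Poussin sets}, $(iii)$, makes every $\mu$-zero subset of $X_{reg}$ also $\rho$-zero, as the paper records at the end of its proof.
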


\noindent
First, an elementary fact which we use to compute rank.

\begin{lemma}\label{lemma rank}
	Let $n\in\bb N$, let $b_1,b_2,\dots,b_n,d\in\bb R\setminus\{0\}$, and consider the matrix
	\[
		M_d:=\left(
		\begin{array}{cccc}
			b_1(d-b_1) & -b_2b_1 & \cdots & -b_nb_1\\
			-b_1b_2 & b_2(d-b_2) & \cdots & -b_nb_2\\
			\vdots & \vdots & \ddots & \vdots \\
			-b_1b_n & -b_2b_n & \cdots & b_n(d-b_n)\\
		\end{array}
		\right).
	\]
	If $d=\sum\limits_{l=1}^nb_l$, then $\rank M_d=n-1$. Otherwise $\rank M_d=n$.
\end{lemma}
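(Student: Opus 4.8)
The plan is to exhibit $M_d$ as a rank-one perturbation of an invertible diagonal matrix. Writing $D:=\operatorname{diag}(b_1,\dots,b_n)$ and $b:=(b_1,\dots,b_n)^\top$, one checks entrywise that
\[
	M_d=dD-bb^\top
	\,;
\]
indeed the $(i,j)$-entry with $i\neq j$ is $-b_ib_j$, and the $(l,l)$-entry is $db_l-b_l^2=b_l(d-b_l)$. Since $d\neq0$ and all $b_l\neq0$, the matrix $dD$ is invertible, and $D^{-1}b$ equals the all-ones vector $\mathbf 1:=(1,\dots,1)^\top$. Hence
\[
	M_d=dD\Bigl(I-\tfrac1d\,\mathbf 1\,b^\top\Bigr)
	\,,
\]
so that $\rank M_d=\rank\bigl(I-\tfrac1d\mathbf 1 b^\top\bigr)$.

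I would then invoke the standard fact about the rank of a matrix of the form $I-uv^\top$ with $u,v\in\bb C^n\setminus\{0\}$. Since $uv^\top$ has rank one, $I-uv^\top$ differs from the identity by a rank-one matrix, whence $\rank(I-uv^\top)\geq n-1$; on the other hand $\det(I-uv^\top)=1-v^\top u$ (a direct computation, or the matrix determinant lemma), so the rank is $n$ precisely when $v^\top u\neq1$, and $n-1$ when $v^\top u=1$ (in the latter case $u$ itself lies in the kernel). Applying this with $u=\mathbf 1$ and $v=\tfrac1d b$, both nonzero, gives $v^\top u=\tfrac1d\sum_{l=1}^nb_l$, and therefore $\rank M_d=n-1$ if $d=\sum_{l=1}^nb_l$ and $\rank M_d=n$ otherwise, which is the claim.

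I do not expect a genuine obstacle here: the only points requiring (routine) care are the entrywise verification of the factorization $M_d=dD-bb^\top$, in particular on the diagonal, and recalling the determinant/rank formula for $I-uv^\top$. Everything else is immediate from the invertibility of $dD$.
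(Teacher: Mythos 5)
Your proof is correct, and it takes a genuinely different route from the paper's. The paper proceeds by explicit elementary operations: it factors $b_j$ out of the $j$-th column, subtracts the last column from the others, rescales, and adds the first $n-1$ rows to the last one, ending with a matrix whose only possibly nonzero entry in the last row is $d-\sum_{l=1}^n b_l$; the rank is then read off directly. You instead identify the structural decomposition $M_d=dD-bb^\top$ with $D=\operatorname{diag}(b_1,\dots,b_n)$, factor out the invertible matrix $dD$, and reduce everything to the rank of $I-\tfrac1d\mathbf 1\,b^\top$ via the matrix determinant lemma. Both arguments are routine linear algebra of comparable length. The paper's computation is self-contained and needs no auxiliary fact; your decomposition makes the mechanism more transparent (it exhibits $M_d$ as a rank-one perturbation of an invertible matrix, immediately explains why the rank can drop by at most one, identifies the kernel vector $\mathbf 1$ in the degenerate case, and as a byproduct yields the determinant $\det M_d=d^{\,n-1}\bigl(\prod_{l=1}^n b_l\bigr)\bigl(d-\sum_{l=1}^n b_l\bigr)$). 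All steps you flag as routine — the entrywise verification of $M_d=dD-bb^\top$ and the formula $\det(I-uv^\top)=1-v^\top u$ — do check out, so there is no gap.
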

\begin{proof}
	We have:
	\begin{align*}
	\rank M_d & =\rank
		\left(
		\arraycolsep1mm
		\begin{array}{cccc}
			d-b_1 & -b_1 & \cdots & -b_1\\
			-b_2 & d-b_2 & \cdots & -b_2\\
			\vdots & \vdots & \ddots & \vdots\\
			-b_n & -b_n & \cdots & d-b_n\\
		\end{array}
		\right)=
	\\[2mm]
	& =\rank
		\left(
		\arraycolsep1mm
		\begin{array}{cccc}
			d & 0 & \cdots & -b_1\\
			0 & d & \cdots & -b_2\\
			\vdots & \vdots & \ddots & \vdots\\
			-d & -d & \cdots & d-b_n\\
		\end{array}
		\right)
		=\rank
		\left(
		\arraycolsep1mm
		\begin{array}{cccc}
			1 & 0 & \cdots & -b_1\\
			0 & 1 & \cdots & -b_2\\
			\vdots & \vdots & \ddots & \vdots\\
			-1 & -1 & \cdots & d-b_n\\
		\end{array}
		\right)=
		\\[2mm]
	&=\rank
		\left(
		\arraycolsep2mm
		\begin{array}{cccc}
			1 & 0 & \cdots & -b_1\\
			0 & 1 & \cdots & -b_2\\
			\vdots & \vdots & \ddots & \vdots\\
			0 & 0 & \cdots & d-\sum\limits_{l=1}^nb_l\\
		\end{array}
		\right)
		=
		\begin{cases}
			n-1 &\hspace*{-3mm},\ d-\sum\limits_{l=1}^nb_l=0,\\[4mm]
			n &\hspace*{-3mm},\ d-\sum\limits_{l=1}^nb_l\neq0\,.
		\end{cases}
	\end{align*}
\end{proof}

\noindent
The next two lemmata contain the essential arguments.

\begin{lemma}\label{K45}
	For each $x\in X_{reg}$ we have
	\[
		 \frac{d\rho}{d\mu}(x)=\sum_{l=1}^{n-1}\frac{d\mu_k}{d\mu}(x)\Big(1-\frac{d\mu_k}{d\mu}(x)\Big)
		\,.
	\]
\end{lemma}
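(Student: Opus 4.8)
The plan is to reduce the identity to a single limit computation: with $z=x+i\varepsilon$, I will show that
\[
\frac{d\rho}{d\mu}(x)=\lim_{\varepsilon\downarrow 0}\frac{\Im\tr M(z)}{\Im m(z)}
\]
and then evaluate the right-hand side using the explicit form \eqref{M} of $M$ together with the three defining properties of the set $X_{reg}$.

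First I would record the setup. Both $\mu=\sum_{l}\mu_l$ and $\rho=\tr\Omega$ are finite positive measures, since the measure $\Omega$ in \eqref{K51} is finite by Theorem~\ref{K52}. The function $m=\sum_l m_l$ is a Herglotz function having $\mu$ in its Herglotz-integral representation, and taking traces in \eqref{K51} shows that $\tr M$ is a Herglotz function having $\rho$ in its representation (with data $\tr a$, $\tr b$, $\rho$). For $x\in X_{reg}$ one has $\frac{d\rho}{d\mu}(x)\in[0,\infty)$ and $\frac{d\mu}{d\lambda}(x)=\infty\neq 0$, so Theorem~\ref{Kac lemma} in the general form of Corollary~\ref{K50}, applied with $\nu=\rho$ and $\sigma=\mu$, yields the displayed limit relation.

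The core step is the evaluation of that limit. From \eqref{M} one has $M_{ll}=m_l(m-m_l)/m$ for $l\le n-1$ and $M_{nn}=-1/m$; taking imaginary parts as in \eqref{K44}, using $\Im m=\sum_{j=1}^n\Im m_j$ with nonnegative summands, and writing $t_l(z):=\Im m_l(z)/\Im m(z)\in[0,1]$, I get
\[
\frac{\Im\tr M(z)}{\Im m(z)}=\sum_{l=1}^{n-1}\Big[\Big|\tfrac{m(z)-m_l(z)}{m(z)}\Big|^2 t_l(z)+\Big|\tfrac{m_l(z)}{m(z)}\Big|^2\big(1-t_l(z)\big)\Big]+\frac{1}{|m(z)|^2}.
\]
On $X_{reg}$ we have $m_l(z)/m(z)\to\frac{d\mu_l}{d\mu}(x)=:d_l$, hence $|m_l/m|^2\to d_l^2$ and $|(m-m_l)/m|^2\to(1-d_l)^2$; moreover $\frac{d\mu}{d\lambda}(x)=\infty$ forces $\Im m(z)\to\infty$ by Theorem~\ref{prop Herglotz}, so $|m(z)|\to\infty$ and $1/|m(z)|^2\to 0$. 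The only terms not manifestly convergent are the $t_l(z)$; here I use that $\frac{|\Re m(z)|}{\Im m(z)}\not\to\infty$ to pick a sequence $\varepsilon_j\downarrow 0$ along which $\frac{\Re m}{\Im m}=O(1)$, and then Lemma~\ref{lemma tech 2} (with $f_j=m_l(x+i\varepsilon_j)$, $g_j=m(x+i\varepsilon_j)$, $\alpha=d_l$) gives $t_l(x+i\varepsilon_j)\to d_l$. Inserting all limits, the $l$-th bracket tends to $(1-d_l)^2 d_l+d_l^2(1-d_l)=d_l(1-d_l)$, so $\lim_j\frac{\Im\tr M(x+i\varepsilon_j)}{\Im m(x+i\varepsilon_j)}=\sum_{l=1}^{n-1}d_l(1-d_l)$. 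Since the full limit as $\varepsilon\downarrow 0$ already exists by the previous paragraph, it coincides with this, which is exactly the assertion.

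The main obstacle is controlling the ratios $t_l=\Im m_l/\Im m$: these need not converge in general, and they are tamed only along a subsequence where $\Re m/\Im m$ stays bounded — precisely where the third condition in the definition of $X_{reg}$ and Lemma~\ref{lemma tech 2} enter. Everything else is bookkeeping with \eqref{M} and \eqref{K44}. I would also check the two easy points that $\tr M$ genuinely carries $\rho$ in its Herglotz representation and that $\Im m(z)>0$ throughout $\mathbb{C}^+$ (so the $t_l$ are well defined), the latter because at least one $\Gamma_l$ is a boundary function, hence its symmetry has defect $(1,1)$ and its Weyl function is nonconstant, so $\Im m_l>0$ everywhere.
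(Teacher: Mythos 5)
Your proof is correct and follows essentially the same route as the paper's: both reduce the claim via Kac's lemma (in the form of Corollary~\ref{K50}) to evaluating $\lim\Im\tr M/\Im m$, both use $m_l/m\to\frac{d\mu_l}{d\mu}(x)$ and $\Im m\to\infty$ from the definition of $X_{reg}$, and both invoke Lemma~\ref{lemma tech 2} along a subsequence where $\Re m/\Im m$ stays bounded. The only cosmetic difference is that you apply Lemma~\ref{lemma tech 2} to $m_l/m$ to control the ratios $\Im m_l/\Im m$ and then reassemble $\Im M_{ll}/\Im m$ via \eqref{K44}, whereas the paper applies it directly to $M_{ll}/m$; the two computations are equivalent.
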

\begin{proof}
	Since $\frac{d\mu}{d\lambda}(x)=\infty$, Theorem~\ref{prop Herglotz} gives $\lim_{\varepsilon\downarrow 0}\Im m(x+i\varepsilon)=\infty$.
	This implies that
	\[
		\lim_{\varepsilon\downarrow 0}\frac{\Im M_{nn}(x+i\varepsilon)}{\Im m(x+i\varepsilon)}=
		\lim_{\varepsilon\downarrow 0}\frac 1{|m(x+i\varepsilon)|^2}=0
		\,.
	\]
	Let $l\in\{1,\dots,n-1\}$. Then we have
	\[
		\lim_{\varepsilon\downarrow 0}\frac{M_{ll}(x+i\varepsilon)}{m(x+i\varepsilon)}=
		\frac{d\mu_l}{d\mu}(x)\Big(1-\frac{d\mu_l}{d\mu}(x)\Big)
		\,.
	\]
	Choose a sequence $\{\varepsilon_j\}_{j\in\bb N}$, $\varepsilon_j\downarrow 0$, such that
	$\frac{|\Re m(x+i\varepsilon_j)|}{\Im m(x+i\varepsilon_j)}$ remains bounded when $j\to\infty$.
	Lemma~\ref{lemma tech 2} yields that also
	\begin{equation}\label{K46}
		\lim_{j\to\infty}\frac{\Im M_{ll}(x+i\varepsilon_j)}{\Im m(x+i\varepsilon_j)}=
		\frac{d\mu_l}{d\mu}(x)\Big(1-\frac{d\mu_l}{d\mu}(x)\Big),\quad l=1,\dots,n
		\,.
	\end{equation}
	Now we use the sequence $\{\varepsilon_j\}_{j\in\bb N}$ to evaluate $\frac{d\rho}{d\mu}(x)$ by means of Theorem~\ref{Kac lemma}.
	This gives
	\begin{equation}\label{K30}
		\frac{d\rho}{d\mu}(x)=\lim_{j\to\infty}\frac{\Im\tr M(x+i\varepsilon_j)}{\Im m(x+i\varepsilon_j)}=
		 \sum_{l=1}^{n-1}\frac{d\mu_l}{d\mu}(x)\Big(1-\frac{d\mu_l}{d\mu}(x)\Big)
		\,.
	\end{equation}
\end{proof}

\begin{corollary}\label{K47}
	Let $x\in X_{reg}$. Then
	\[
		\frac{d\rho}{d\mu}(x)\
		\begin{cases}
			\ =0 &\hspace*{-3mm},\quad x\in X_{reg}^1,\\[2mm]
			\ >0 &\hspace*{-3mm},\quad x\in X_{reg}^{>1}.\\
		\end{cases}
	\]
\end{corollary}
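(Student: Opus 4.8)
The plan is to read off Corollary~\ref{K47} directly from Lemma~\ref{K45}, so the argument will be short. First I would abbreviate $t_l:=\frac{d\mu_l}{d\mu}(x)$ for $x\in X_{reg}$ and $l=1,\dots,n$. By the remarks preceding Proposition~\ref{prop s-reg} we have $t_l\in[0,1]$ and $\sum_{l=1}^n t_l=1$, and Lemma~\ref{K45} rewrites the quantity of interest as
\[
	\frac{d\rho}{d\mu}(x)=\sum_{l=1}^{n-1}t_l(1-t_l)
	\,,
\]
a sum of nonnegative terms. The two cases of the corollary then correspond precisely to the two alternatives $(1)$ and $(>1)$ listed there.

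In the case $x\in X_{reg}^1$ (alternative $(1)$) there is a single index $k_0$ with $t_{k_0}=1$ and $t_k=0$ for all $k\neq k_0$. Then every summand $t_l(1-t_l)$ with $1\le l\le n-1$ vanishes: it is $0$ when $t_l=0$, and it is $1\cdot(1-1)=0$ in the single case $l=k_0$ (should that index lie in $\{1,\dots,n-1\}$ at all). Hence $\frac{d\rho}{d\mu}(x)=0$.

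In the case $x\in X_{reg}^{>1}$ (alternative $(>1)$) we have $t_k<1$ for every $k$, and at least two indices $k$ satisfy $t_k>0$. Since at most one of those indices can equal $n$, at least one of them --- call it $k_1$ --- lies in $\{1,\dots,n-1\}$. For that index $0<t_{k_1}<1$, so $t_{k_1}(1-t_{k_1})>0$, and therefore $\frac{d\rho}{d\mu}(x)\ge t_{k_1}(1-t_{k_1})>0$.

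I do not expect any real obstacle here: once Lemma~\ref{K45} is available this is pure bookkeeping. The only point requiring a moment's attention is the mismatch between the summation range $l=1,\dots,n-1$ in Lemma~\ref{K45} and the full index set $\{1,\dots,n\}$ over which the $t_l$ live; this is exactly why, in the $X_{reg}^{>1}$ case, one must check that the omitted index $n$ cannot carry all of the strictly positive derivatives.
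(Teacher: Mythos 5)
Your proof is correct and takes essentially the same route as the paper: both read the claim directly off Lemma~\ref{K45} via the case distinction between the alternatives $(1)$ and $(>1)$. The paper's version is a one-liner that leaves implicit the small point you rightly spell out, namely that in the second case at least one index with $\frac{d\mu_k}{d\mu}(x)>0$ must lie in $\{1,\dots,n-1\}$ because the sum in Lemma~\ref{K45} omits the index $n$.
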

\begin{proof}
	Let $x\in X_{reg}$. Then we have $x\in X_{reg}^1$ if the above alternative $(1)$ takes place, and $x\in X_{reg}^{>1}$ if
	$(${\footnotesize\raisebox{0.8pt}{$>$}}$1)$ takes place. Hence, for $x\in X_{reg}^1$ we have
	 $\sum_{l=1}^{n-1}\frac{d\mu_k}{d\mu}(x)\big(1-\frac{d\mu_k}{d\mu}(x)\big)=0$, and for $x\in X_{reg}^{>1}$ this sum is positive.
\end{proof}

\begin{lemma}\label{lemma s-reg}
	Let $x\in X_{reg}^{>1}$, and assume that
	\begin{enumerate}[$(i)$]
	\item The symmetric derivative $\omega(x)$ exists.
	\item $\frac{d\rho}{d\lambda}(x)=\infty$.
	\end{enumerate}
	Then
	\begin{equation}\label{K31}
		 \rank\omega(x)=\#\Big\{l\in\{1,\dots,n\}:\,\frac{d\mu_l}{d\mu}(x)>0\Big\}-1
		\,.
	\end{equation}
\end{lemma}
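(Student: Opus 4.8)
The plan is to compute the matrix $\omega(x)$ explicitly as the (genuine) limit $\lim_{\varepsilon\downarrow0}\frac{\Im M_{ij}(x+i\varepsilon)}{\Im\tr M(x+i\varepsilon)}$, to recognize its only nonzero block as a matrix of the type treated in Lemma~\ref{lemma rank}, and to read off the rank. Throughout, write $c_l:=\frac{d\mu_l}{d\mu}(x)$ for $l=1,\dots,n$. Recall from the discussion preceding Proposition~\ref{prop s-reg} that for $x\in X_{reg}^{>1}$ one has $c_l\in[0,1)$, $\sum_{l=1}^nc_l=1$, at least two of the numbers $c_l$ are positive, and $\lim_{\varepsilon\downarrow0}\frac{m_l(x+i\varepsilon)}{m(x+i\varepsilon)}=c_l$; moreover $\frac{d\mu}{d\lambda}(x)=\infty$, so Theorem~\ref{prop Herglotz} gives $\Im m(x+i\varepsilon)\to\infty$, and in particular $|m(x+i\varepsilon)|\to\infty$.

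First I would pin down the entries of $\omega(x)$. Since $M_{ij}(z)=a_{ij}+b_{ij}z+m_{\Omega_{ij}}(z)$ and $\tr M(z)=\tr a+(\tr b)z+m_\rho(z)$, hypotheses $(i)$, $(ii)$ together with Theorem~\ref{Kac lemma} (used via Corollary~\ref{K50}, applied to each $\Omega_{ij}$ and to $\rho=\tr\Omega$) give
\[
	\omega_{ij}(x)=\lim_{\varepsilon\downarrow0}\frac{\Im M_{ij}(x+i\varepsilon)}{\Im\tr M(x+i\varepsilon)},\qquad i,j=1,\dots,n.
\]
Next, using the explicit form \eqref{M}, I would compute the quotients $M_{ij}/m$: for $l\le n-1$, $M_{ll}/m=\frac{m_l}{m}\bigl(1-\frac{m_l}{m}\bigr)\to c_l(1-c_l)$; for $l\ne p$ with $l,p\le n-1$, $M_{lp}/m=-\frac{m_l}{m}\cdot\frac{m_p}{m}\to-c_lc_p$; and $M_{ln}/m=-\frac{m_l}{m}\cdot\frac1m\to0$, $M_{nn}/m=-\frac1{m^2}\to0$, because $1/m\to0$. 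Since $x\in X_{reg}$, the quotient $|\Re m(x+i\varepsilon)|/\Im m(x+i\varepsilon)$ does not tend to $\infty$, so there is a sequence $\varepsilon_j\downarrow0$ along which it is $O(1)$; Lemma~\ref{lemma tech 2}, applied with $g_j=m(x+i\varepsilon_j)$ (whose imaginary part tends to $\infty$), transfers the above limits to $\lim_j\frac{\Im M_{ij}(x+i\varepsilon_j)}{\Im m(x+i\varepsilon_j)}$, and summing the diagonal yields $\lim_j\frac{\Im\tr M(x+i\varepsilon_j)}{\Im m(x+i\varepsilon_j)}=d$ with $d:=\sum_{l=1}^{n-1}c_l(1-c_l)$. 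This $d$ is strictly positive: otherwise every $c_l$ with $l\le n-1$ lies in $\{0,1\}$, and $c_l<1$ forces $c_1=\dots=c_{n-1}=0$, hence $c_n=1$, contradicting $c_n<1$ (alternatively one may quote Lemma~\ref{K45} and Corollary~\ref{K47}). Dividing along $\varepsilon_j$ and using that the limit displayed above exists, I obtain
\[
	\omega(x)=\frac1d\begin{pmatrix}\tilde W&0\\0&0\end{pmatrix},\qquad \tilde W:=\bigl(c_l\delta_{lp}-c_lc_p\bigr)_{l,p=1}^{n-1}\in\bb R^{(n-1)\times(n-1)}.
\]

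It then remains a linear-algebra computation. Deleting from $\tilde W$ the identically zero rows and columns (those indexed by $l$ with $c_l=0$), what remains is the principal submatrix indexed by $S:=\{l\in\{1,\dots,n-1\}:c_l>0\}$, which has diagonal entries $c_l(1-c_l)$ and off-diagonal entries $-c_lc_p$; this is precisely the matrix $M_1$ of size $|S|$ in the notation of Lemma~\ref{lemma rank} (with the positive numbers $c_l$, $l\in S$, in the role of $b_1,\dots$). Note $S\ne\emptyset$, since $c_n<1$. As $\sum_{l\in S}c_l=\sum_{l=1}^{n-1}c_l=1-c_n$, Lemma~\ref{lemma rank} gives $\rank\tilde W=|S|-1$ if $c_n=0$ and $\rank\tilde W=|S|$ if $c_n>0$; in either case this equals $\#\{l\in\{1,\dots,n\}:c_l>0\}-1$. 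Since $d\ne0$, $\rank\omega(x)=\rank\tilde W$, and by definition of $c_l$ this is $\#\{l\in\{1,\dots,n\}:\frac{d\mu_l}{d\mu}(x)>0\}-1$, i.e.\ \eqref{K31}.

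The argument contains no deep obstacle; it is essentially a rank computation. The two points calling for care are: (a) control of $\Re m/\Im m$ is available only along a subsequence, so one must combine the subsequential evaluation with the a~priori existence of the full limit $\omega(x)$ (supplied by Kac's lemma) in order to legitimately identify $\omega(x)$ from that subsequence; and (b) the last row and column of the limiting matrix vanish — one must resist reading $M_{ln}/m$ as tending to $-c_l$, whereas in fact it tends to $0$ — and it is precisely this vanishing that confines the rank to the $(n-1)\times(n-1)$ block and thereby produces the drop by one relative to $r(x)$.
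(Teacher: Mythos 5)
Your proof is correct and follows essentially the same route as the paper: identify $\omega(x)$ via Kac's lemma, evaluate the entries along a subsequence on which $\Re m/\Im m$ stays bounded using Lemma~\ref{lemma tech 2}, and read off the rank of the resulting $(n-1)\times(n-1)$ block from Lemma~\ref{lemma rank}. The only (immaterial) difference is that you kill the last row and column by computing $M_{ln}/m\to 0$ directly, whereas the paper deduces it from $\omega_{nn}(x)=0$ and positive semidefiniteness of $\omega(x)$.
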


\begin{proof}
	By Theorem~\ref{Kac lemma}, the present assumptions $(i)$ and $(ii)$ ensure that
	\begin{equation}\label{K48}
		\omega(x)=\lim_{\varepsilon\downarrow 0}\frac{\Im M(x+i\varepsilon)}{\Im\tr M(x+i\varepsilon)}
		\,.
	\end{equation}
	It is easy to show that the last row and column of the matrix $\omega(x)$ vanishes:
	By Theorem~\ref{prop Herglotz}, our assumption $(ii)$ gives $\lim_{\varepsilon\downarrow 0}\Im\tr M(x+i\varepsilon)=\infty$.
	We already saw in the proof of the last lemma that $\lim_{\varepsilon\downarrow 0}\Im m(x+i\varepsilon)=\infty$, and it follows that $\lim_{\varepsilon\downarrow0}\Im M_{nn}(x+i\varepsilon)=0$ and $\omega_{nn}(x)=0$. Since $\omega(x)$ is positive semidefinite, all entries $\omega_{ij}(x)$ with $i=n$ or $j=n$ must vanish.

	To shorten notation, set $d_l(x):=\frac{d\mu_l}{d\mu}(x)$. Let $\{\varepsilon_j\}_{j\in\bb N}$ be the same sequence as in the
	proof of the previous lemma. Then not only \eqref{K46} holds, but also
	\[
		\lim_{j\to\infty}\frac{\Im M_{lk}(x+i\varepsilon_j)}{\Im m(x+i\varepsilon_j)}=
		-d_l(x)d_k(x),\quad l,k=1,\dots,n-1,l\neq k
		\,.
	\]
	Referring to \eqref{K48} and \eqref{K30}, we obtain
	\begin{multline*}
		\omega(x)=\frac1{\sum\limits_{l=1}^{n-1}d_l(x)(1-d_l(x))}\times
		\\
		\times
		\begin{pmatrix}
			d_1(x)(1-d_1(x))&-d_2(x)d_1(x)&\cdots&-d_{n-1}(x)d_1(x)&0\\
			-d_1(x)d_2(x)&d_2(x)(1-d_2(x))&\cdots&-d_{n-1}(x)d_2(x)&0\\
			\vdots&\vdots&\ddots&\vdots&\vdots\\
			 -d_1(x)d_{n-1}(x)&-d_2(x)d_{n-1}(x)&\cdots&d_{n-1}(x)(1-d_{n-1}(x))&0\\
			0&0&\cdots&0&0
		\end{pmatrix}
        .
	\end{multline*}
	Applying Lemma~\ref{lemma rank} with the matrix obtained from $\omega(x)$ by deleting all rows and columns which contain only
	zeros, gives
	\[
		\rank\omega(x)=
		\begin{cases}
			\#\big\{l:\,1\leq l\leq n\!-\!1,\,d_l(x)>0\big\} &\hspace*{-3mm},\quad \sum\limits_{l=1}^{n-1}d_l(x)\neq 1,\\[3mm]
			\#\big\{l:\,1\leq l\leq n\!-\!1,\,d_l(x)>0\big\}-1 &\hspace*{-3mm},\quad \sum\limits_{l=1}^{n-1}d_l(x)=1.
		\end{cases}
	\]
	The condition $\sum_{l=1}^{n-1}d_l(x)\neq 1$ is equivalent to $d_n(x)\neq 0$, and the formula \eqref{K31} follows.
\end{proof}

\begin{proof}[Proof of Proposition~\ref{prop s-reg}]
	Corollary~\ref{K47} and Corollary~\ref{cor Vallee-Poussin sets}, $(i)$, show that $X_{reg}^1$ is $\rho$-zero. Denote
	\[
		X_{reg}^+:=\Big\{x\in X_{reg}^{>1}:\ \omega(x)\text{ exists},\,\frac{d\rho}{d\lambda}(x)=\infty\Big\}
		\,.
	\]
	Then Lemma~\ref{lemma s-reg} says that
	\begin{equation}\label{K32}
	 \rank\omega(x)=\#\Big\{l\in\{1,\dots,n\}:\,\frac{d\mu_l}{d\mu}(x)>0\Big\}-1,\quad x\in X_{reg}^+
		\,.
	\end{equation}
	We have
	\begin{multline*}
		X_{reg}\setminus X_{reg}^+\subseteq\big\{x\in\bb R:\,\omega(x)\text{ does not exist}\big\}\cup
		\big\{x\in\bb R:\,\frac{d\rho}{d\lambda}(x)\in[0,\infty)\big\}\cup
		\\
		\cup\big\{x\in\bb R:\,\frac{d\rho}{d\lambda}(x)\text{ does not exist}\big\}\cup
		X_{reg}^1
		\,.
	\end{multline*}
	The first set in this union is $\rho$-zero by Remark~\ref{K22}. By Corollary~\ref{cor Vallee-Poussin measures}, $(iv)$, the second set is $\rho_s$-zero. The third set is $\rho$-zero by Theorem~\ref{K5}, and the last by the already proved item $(i)$.
	Together, and due to the fact that the set $X_{reg}$ itself is Lebesgue-zero, we see that $X_{reg}\setminus X_{reg}^+$ is $\rho$-zero.
	
	We have
	\[
		X_{reg}\subseteq\big\{x\in\bb R\setminus\mc E_{\rho,\mu}:\,\frac{d\rho}{d\mu}(x)\in[0,\infty)\big\}\cup\mc E_{\rho,\mu}
		\,.
	\]
	Using Corollary~\ref{cor Vallee-Poussin sets}, $(iii)$, we obtain that the intersection of every $\mu$-zero set with $X_{reg}$ is $\rho$-zero.
	Hence, $D_{\mu}\mu_l(x)=\frac{d\mu_l}{d\mu}(x)$ for $\rho$-a.a. $x\in X_{\reg}$, and \eqref{K32} implies that
	\[
		\rank\omega(x)=r(x)-1,\quad \text{for }\rho\text{-a.a.\ }x\in X_{reg}
		\,.
	\]
\end{proof}

\begin{flushleft}
	\textbf{Stage 3: Finishing the proof of the main theorem.}
\end{flushleft}
\vspace*{-2mm}
Having available Propositions~\ref{lemma mu-zero} and \ref{prop s-reg}, it is not anymore difficult to complete the proof of Theorem~\ref{K38}.

\begin{proof}[Proof of Theorem~\ref{K38}]
Let $\mu=\mu_{ac}+\mu_s$ and $\rho=\rho_{ac}+\rho_s$ be the decompositions of $\mu$ and $\rho$ with respect to the Lebesgue measure.

The set $X_{reg}$ is $\mu_{ac}$-zero, because (as we know from Corollary~\ref{cor Vallee-Poussin measures}, $(iii)$) the set $\big\{x\in\bb R:\frac{d\mu}{d\lambda}(x)=\infty\big\}$ is $\mu_{ac}$-zero. The following properties hold:
\begin{enumerate}[$(1)$]
\item $\frac{d\mu}{d\lambda}(x)=\infty$ $\mu_s$-a.e.\ (due to Corollary~\ref{cor Vallee-Poussin measures}, $(iv)$).
\item $\frac{d\rho}{d\mu}(x)\in[0,\infty)$ and the limit $\frac{d\mu_l}{d\mu}(x)$ exists $\mu$-a.e. (due to Corollary~\ref{cor Vallee-Poussin measures}, $(i)$, and Theorem~\ref{K5}).
\item $\frac{|\Re m(x+i\varepsilon)|}{\Im m(x+i\varepsilon)}\nrightarrow\infty$ as $\varepsilon\downarrow 0$ $\mu$-a.e. (due to Theorem~\ref{theorem Poltoratski-1}).
\item $\lim_{\varepsilon\downarrow 0}\frac{m_l(x+i\varepsilon)}{m(x+i\varepsilon)}=\frac{d\mu_l}{d\mu}(x)$ $\mu_s$-a.e. (due to Theorem~\ref{theorem Poltoratski-2}, (ii)).
	\end{enumerate}
Hence, the set $X_{reg}$ is $\mu_s$-full. Thus we can choose a Borel set $X\subseteq X_{reg}$ with $\mu_s(\bb R\setminus X)=0$ and $\mu_{ac}(X)=0$ (since $X_{reg}$ is Lebesgue-zero).

Let $\mc E:=\bigcup_{l=1}^n\mc E_{\mu_l,\mu}$ where $\mc E_{\mu_l,\mu}$ are exceptional sets as in Theorem~\ref{K5}. By passing	from $X$ to $X\setminus\mc E$, we may assume that the functions $\frac{d\mu_l}{d\mu}\big|_X$ are Borel measurable. Set
	\[
		X_{s,ac}:=X\cap X_{reg}^{>1}
		\,.
	\]
It is a Borel set. The set $X_{>1}=r^{-1}(\{2,\dots,n\})$ in the formulation of the theorem is determined only up to a $\mu$-zero (and hence $\rho_{s,ac}$-zero) set. Since the symmetric derivative $\frac{d\mu_l}{d\mu}$ coincides $\mu$-a.e.\ with the	 Radon-Nikodym derivative $D_{\mu}\mu_l$ (i.e., is a representative of the class of equivalent functions), we may use
	\[
		X_{>1}:=\Big\{x\in\bb R\setminus\mc E:\ \text{ for at least two indices }\frac{d\mu_l}{d\mu}(x)>0\Big\}
		\,.
	\]
Then $X_{>1}\cap X=X_{s,ac}$.

Now we can determine the Lebesgue decomposition of $\rho_s$ with respect to $\mu$. Choose a Borel set $Y$ with $\rho_s(Y^c)=0$ and $\lambda(Y)=0$, so that $\rho_s=\mathds1_Y\cdot\rho$. Our candidates for the Lebesgue decomposition are:
	\[
	\rho_{s,ac}:=\mathds{1}_{X_{s,ac}}\cdot\rho_s,\quad
    \rho_{s,s}:=\mathds{1}_{X^c}\cdot\rho_s=\mathds1_{X^c\cap Y}\cdot\rho
		\,.
	\]
Since $X\setminus X_{s,ac}\subseteq X _{reg}^1$, by Proposition \ref{prop s-reg}, (i), we have $\rho(X\setminus X_{s,ac})=0$, so $\rho_s=\rho_{s,ac}+\rho_{s,s}$. Furthermore, for every $x\in X_{s,ac}$ we have $\frac{d\rho}{d\mu}(x)\in(0,\infty)$, and hence by Corollary \ref{cor Vallee-Poussin sets}, (iii),
\begin{equation}\label{K49}
    \rho_{s,ac}\sim\mathds1_{X_s,ac}\cdot\mu=\mathds1_{X_{>1}}\cdot\mathds1_X\cdot\mu=\mathds1_{X_{>1}}\cdot\mu_s.
\end{equation}
Finally, from $\mu_s(X^c)=0$ it follows that $\rho_{s,s}\perp\mu_s$ and therefore $\rho_{s,s}\perp\mu$. Thus, indeed, $\rho_{s,ac}$ and $\rho_{s,s}$ are the absolutely continuous and singular part of $\rho_s$ with respect to $\mu$.

From the fact that $E\sim\rho$ and \eqref{K49} it follows that Item {\rm(I)} of Theorem~\ref{K38} holds. Since $X\subseteq X_{reg}$ and $\rho_{s,ac}(X^c)=0$,	 Proposition~\ref{prop s-reg} implies that
    \[
		\rank\omega(x)=r(x)-1\text{ for $\rho_{s,ac}$-a.a.\ points $x\in\bb R$}
		\,,
	\]
which gives item {\rm(II)}. Due to the fact that $\mu(X^c\cap Y)=0$, Proposition~\ref{lemma mu-zero} implies
	\[
		\rank\omega(x)=1\text{ for $\rho_{s,s}$-a.a.\ points $x\in\bb R$}
		\,,
	\]
and this is item {\rm(III)}.
\end{proof}

\appendix
\makeatletter
\renewcommand{\@seccntformat}[1]{\@nameuse{the#1}.\quad}
\makeatother
\appendixsection{Some examples}

In this appendix we provide four examples in order to
show that all possibilities for the spectrum which are admitted by Theorem~\ref{K38} indeed may occur.
We realize these examples on the level of Schr\"odinger operators.
Due to the general inverse theorem stated as the second part of Theorem~\ref{K62}, it would be somewhat simpler to realize them on the
level of boundary relations. However, in order to remain in a more intuitive setting, we decided to stick to the Schr\"odinger case.
Also we should say it very clearly that our emphasize in this appendix is on examples and methods rather than on maximal generality.

Let us first recall in some detail how a half-line Schr\"odinger operator can be considered as a boundary relation. This is of course
a (if not ``the'') standard example for boundary relations, see \cite{Gorbachuk-Gorbachuk-1984} and the references therein.
Formulated in our present language it reads as follows.

\begin{remark}\label{K33}
	Let $q$ be a real and locally integrable potential defined on $(0,\infty)$, and assume that $0$ is a regular endpoint and that
	Weyl's limit point case prevails at the endpoint $\infty$. Denote by $T_{max}$ the maximal differential operator generated in $L^2(0,\infty)$
	by the differential expression $-\frac{d^2}{dx^2}+q$. For $\alpha\in\bb R$, denote by $\Gamma_{(\alpha)}$ the relation
	\begin{multline*}
		\Gamma_{(\alpha)}:=\Big\{\Big( (u;T_{max}u);\big(u(0)\cos\alpha+u'(0)\sin\alpha;-u(0)\sin\alpha+u'(0)\cos\alpha\big)\Big):
			\\
		u\in\dom T_{max}\Big\}\subseteq L^2(0,\infty)^2\times\bb C^2
	\end{multline*}
	Then it is easy to see that $\Gamma_{(\alpha)}$ is a boundary relation (in fact, ``boundary function'') for the
	operator $T_{max}$: For $\alpha=0$ see
	\cite[Example~1.3]{Derkach-Hassi-Malamud-deSnoo-2006}. For other values of $\alpha$ note that $\Gamma_{(\alpha)}$ and $\Gamma_{(0)}$
	are related by
	\[
		\Gamma_{(\alpha)}=w_\alpha\circ\Gamma_{(0)}
	\]
	with the $J_{\mathbb C}$-unitary matrix
	\[
		w_\alpha:=
		\begin{pmatrix}
			\cos\alpha & \sin\alpha\\
			-\sin\alpha & \cos\alpha
		\end{pmatrix}
		\,,
	\]
	and apply Theorem~\ref{prop frac-lin}.

	Apparently, the selfadjoint operator
	\[
		A_\alpha:=\ker\big[\pi_1\circ\Gamma_{(\alpha)}\big]
	\]
	is nothing but the selfadjoint restriction of $T_{max}$ given by the boundary condition
	\[
		u(0)\cdot\cos\alpha+u'(0)\cdot\sin\alpha=0
		\,.
	\]
\end{remark}

\noindent
First we consider $E_{s,ac}$ and $E_{ac}$ and give an example that an arbitrary number of overlaps (embedded into absolutely continuous
spectrum or not) can be produced. This is very simple; we elaborate it only for the sake of illustration and completeness.

\begin{example}\label{K74}
	Let $\lambda_1,\lambda_2$ be measures such that ($\lambda$ is the Lebesgue measure)
	\[
		\lambda_1\perp\lambda_2,\qquad \lambda_j\perp\lambda,\ \supp\lambda_j=[0,1],\ \lambda_j(\{0\})=\lambda_j(\{1\})=0,\quad j=1,2
		\,.
	\]
	For $n,m\in\bb Z$, $n<m$, set
	\[
		 \lambda_j^{(n,m)}(\Delta):=\sum_{l=n}^{m-1}\lambda_j\big(\Delta-l\big),\quad \Delta\text{ Borel set}
		\,.
	\]
	Moreover, let $f$ be the function
	\[
		f(x):=
		\begin{cases}
			\frac 2{3\pi}x^{\frac 32} &\hspace*{-3mm},\quad x\geq\frac 92,\\
			0 &\hspace*{-3mm},\quad \text{otherwise.}\\
		\end{cases}
	\]
	We consider the measures $\mu_1,\dots,\mu_4$ defined as
	\[
	\begin{array}{llll}
\mu_1:=&\lambda_1^{(2,3)}+\lambda_1^{(4,5)}&+\lambda_2^{(3,4)}+\lambda_2^{(6,7)}&+f\cdot\lambda,
        \\[1mm]
		\mu_2:=&\lambda_1^{(2,6)}&+\lambda_2^{(6,7)}&+f\cdot\lambda, 
        \\[1mm]
		\mu_3:=&&+\lambda_2^{(0,7)}&+f\cdot\lambda, 
        \\[1mm] \mu_4:=&\lambda_1^{(0,1)}+\lambda_1^{(7,8)}&+\lambda_2^{(3,8)}&+f\cdot\lambda.
	\end{array}
    \]
	Now we appeal to the version of the Gelfand-Levitan theorem which applies to the Dirichlet boundary condition, cf.\
	\cite[\S2.9]{Levitan-1987}. The hypotheses of this result are obviously fulfilled, and we obtain a potentials $q_1,\dots,q_4$
	on the half-line, such that $\mu_j$ is the measure in the integral representation
	of the Titchmarsh-Weyl coefficient constructed from the potential $q_j$ with Dirichlet boundary conditions.

	Let $A_1,\dots,A_4$ be the corresponding non-interacting operators, and let $A$ be their pasting with standard interface conditions.
	Then support sets of the spectral measures of $A_1,\dots,A_4$ can be pictured as follows:
	\begin{center}
\setlength{\unitlength}{0.00065in}
\begingroup\makeatletter\ifx\SetFigFont\undefined%
\gdef\SetFigFont#1#2#3#4#5{%
  \reset@font\fontsize{#1}{#2pt}%
  \fontfamily{#3}\fontseries{#4}\fontshape{#5}%
  \selectfont}%
\fi\endgroup%
{\renewcommand{\dashlinestretch}{30}
\begin{picture}(5877,3500)(0,-200)

%
%
\texture{8101010 10000000 444444 44000000 11101 11000000 444444 44000000
	101010 10000000 444444 44000000 10101 1000000 444444 44000000
	101010 10000000 444444 44000000 11101 11000000 444444 44000000
	101010 10000000 444444 44000000 10101 1000000 444444 44000000 }
\shade\path(915,1707)(4065,1707)(4065,1617)
	(915,1617)(915,1707)
\path(915,1707)(4065,1707)(4065,1617)
	(915,1617)(915,1707)
\shade\path(2265,2607)(2715,2607)(2715,2517)
	(2265,2517)(2265,2607)
\path(2265,2607)(2715,2607)(2715,2517)
	(2265,2517)(2265,2607)
\shade\path(3615,2607)(4065,2607)(4065,2517)
	(3615,2517)(3615,2607)
\path(3615,2607)(4065,2607)(4065,2517)
	(3615,2517)(3615,2607)
\shade\path(3615,2157)(4065,2157)(4065,2067)
	(3615,2067)(3615,2157)
\path(3615,2157)(4065,2157)(4065,2067)
	(3615,2067)(3615,2157)
\shade\path(2265,1257)(4515,1257)(4515,1167)
	(2265,1167)(2265,1257)
\path(2265,1257)(4515,1257)(4515,1167)
	(2265,1167)(2265,1257)

\texture{aa555555 55bbbbbb bb555555 55fefefe fe555555 55bbbbbb bb555555 55eeefee
	ef555555 55bbbbbb bb555555 55fefefe fe555555 55bbbbbb bb555555 55efefef
	ef555555 55bbbbbb bb555555 55fefefe fe555555 55bbbbbb bb555555 55eeefee
	ef555555 55bbbbbb bb555555 55fefefe fe555555 55bbbbbb bb555555 55efefef }
\shade\path(1815,2427)(2265,2427)(2265,2517)
	(1815,2517)(1815,2427)
\path(1815,2427)(2265,2427)(2265,2517)
	(1815,2517)(1815,2427)
\shade\path(2715,2427)(3165,2427)(3165,2517)
	(2715,2517)(2715,2427)
\path(2715,2427)(3165,2427)(3165,2517)
	(2715,2517)(2715,2427)
\shade\path(1815,1977)(3615,1977)(3615,2067)
	(1815,2067)(1815,1977)
\path(1815,1977)(3615,1977)(3615,2067)
	(1815,2067)(1815,1977)
\shade\path(915,1077)(1365,1077)(1365,1167)
	(915,1167)(915,1077)
\path(915,1077)(1365,1077)(1365,1167)
	(915,1167)(915,1077)
\shade\path(4065,1077)(4515,1077)(4515,1167)
	(4065,1167)(4065,1077)
\path(4065,1077)(4515,1077)(4515,1167)
	(4065,1167)(4065,1077)

%
%
\path(465,2517)(5865,2517)
\whiten\path(5745.000,2487.000)(5865.000,2517.000)(5745.000,2547.000)(5745.000,2487.000)
\path(465,2067)(5865,2067)
\whiten\path(5745.000,2037.000)(5865.000,2067.000)(5745.000,2097.000)(5745.000,2037.000)
\path(465,1617)(5865,1617)
\whiten\path(5745.000,1587.000)(5865.000,1617.000)(5745.000,1647.000)(5745.000,1587.000)
\path(465,1167)(5865,1167)
\whiten\path(5745.000,1137.000)(5865.000,1167.000)(5745.000,1197.000)(5745.000,1137.000)

%
%
\thicklines
\path(2940,2530)(5640,2530)
\path(2940,2080)(5640,2080)
\path(2940,1630)(5640,1630)
\path(2940,1180)(5640,1180)
\thinlines

%
%
\thicklines
\path(600,42)(825,42)
\thinlines
\texture{aa555555 55bbbbbb bb555555 55fefefe fe555555 55bbbbbb bb555555 55eeefee
	ef555555 55bbbbbb bb555555 55fefefe fe555555 55bbbbbb bb555555 55efefef
	ef555555 55bbbbbb bb555555 55fefefe fe555555 55bbbbbb bb555555 55eeefee
	ef555555 55bbbbbb bb555555 55fefefe fe555555 55bbbbbb bb555555 55efefef }
\shade\path(600,267)(825,267)(825,357)
	(600,357)(600,267)
\path(600,267)(825,267)(825,357)
	(600,357)(600,267)
\texture{8101010 10000000 444444 44000000 11101 11000000 444444 44000000
	101010 10000000 444444 44000000 10101 1000000 444444 44000000
	101010 10000000 444444 44000000 11101 11000000 444444 44000000
	101010 10000000 444444 44000000 10101 1000000 444444 44000000 }
\shade\path(3255,357)(3480,357)(3480,267)
	(3255,267)(3255,357)
\path(3255,357)(3480,357)(3480,267)
	(3255,267)(3255,357)

%
%
\put(915,2967){\makebox(0,0)[b]{\smash{{\SetFigFont{6}{7.2}{\rmdefault}{\mddefault}{\updefault}$0$}}}}
\put(1365,2967){\makebox(0,0)[b]{\smash{{\SetFigFont{6}{7.2}{\rmdefault}{\mddefault}{\updefault}$1$}}}}
\put(1815,2967){\makebox(0,0)[b]{\smash{{\SetFigFont{6}{7.2}{\rmdefault}{\mddefault}{\updefault}$2$}}}}
\put(2265,2967){\makebox(0,0)[b]{\smash{{\SetFigFont{6}{7.2}{\rmdefault}{\mddefault}{\updefault}$3$}}}}
\put(2715,2967){\makebox(0,0)[b]{\smash{{\SetFigFont{6}{7.2}{\rmdefault}{\mddefault}{\updefault}$4$}}}}
\put(3165,2967){\makebox(0,0)[b]{\smash{{\SetFigFont{6}{7.2}{\rmdefault}{\mddefault}{\updefault}$5$}}}}
\put(3615,2967){\makebox(0,0)[b]{\smash{{\SetFigFont{6}{7.2}{\rmdefault}{\mddefault}{\updefault}$6$}}}}
\put(4065,2967){\makebox(0,0)[b]{\smash{{\SetFigFont{6}{7.2}{\rmdefault}{\mddefault}{\updefault}$7$}}}}
\put(4515,2967){\makebox(0,0)[b]{\smash{{\SetFigFont{6}{7.2}{\rmdefault}{\mddefault}{\updefault}$8$}}}}
\put(4965,2967){\makebox(0,0)[b]{\smash{{\SetFigFont{6}{7.2}{\rmdefault}{\mddefault}{\updefault}$\cdots$}}}}
\put(465,2967){\makebox(0,0)[b]{\smash{{\SetFigFont{6}{7.2}{\rmdefault}{\mddefault}{\updefault}$-1$}}}}
\put(15,2967){\makebox(0,0)[b]{\smash{{\SetFigFont{6}{7.2}{\rmdefault}{\mddefault}{\updefault}$x:$}}}}
\put(15,2517){\makebox(0,0)[b]{\smash{{\SetFigFont{6}{7.2}{\rmdefault}{\mddefault}{\updefault}$A_1:$}}}}
\put(15,2067){\makebox(0,0)[b]{\smash{{\SetFigFont{6}{7.2}{\rmdefault}{\mddefault}{\updefault}$A_2:$}}}}
\put(15,1617){\makebox(0,0)[b]{\smash{{\SetFigFont{6}{7.2}{\rmdefault}{\mddefault}{\updefault}$A_3:$}}}}
\put(15,1167){\makebox(0,0)[b]{\smash{{\SetFigFont{6}{7.2}{\rmdefault}{\mddefault}{\updefault}$A_4:$}}}}
\put(960,267){\makebox(0,0)[lb]{\smash{{\SetFigFont{6}{7.2}{\rmdefault}{\mddefault}{\updefault}$\ldots$ contribution of $\lambda_1$-shifts}}}}
\put(3615,267){\makebox(0,0)[lb]{\smash{{\SetFigFont{6}{7.2}{\rmdefault}{\mddefault}{\updefault}$\ldots$ contribution of $\lambda_2$-shifts}}}}
\put(960,42){\makebox(0,0)[lb]{\smash{{\SetFigFont{6}{7.2}{\rmdefault}{\mddefault}{\updefault}$\ldots$ contribution of $f\cdot\lambda$}}}}

\end{picture}
}
	\end{center}
	Support sets of $E_{s,ac}$ and $E_{ac}$ including multiplicities are:
	\\
	\begin{center}
\setlength{\unitlength}{0.00065in}
\begingroup\makeatletter\ifx\SetFigFont\undefined%
\gdef\SetFigFont#1#2#3#4#5{%
  \reset@font\fontsize{#1}{#2pt}%
  \fontfamily{#3}\fontseries{#4}\fontshape{#5}%
  \selectfont}%
\fi\endgroup%
{\renewcommand{\dashlinestretch}{30}
\begin{picture}(5877,5064)(0,-200)
\path(465,2037)(5865,2037)
\whiten\path(5745.000,2007.000)(5865.000,2037.000)(5745.000,2067.000)(5745.000,2007.000)
\path(465,2037)(465,3837)
\whiten\path(495.000,3717.000)(465.000,3837.000)(435.000,3717.000)(495.000,3717.000)
\thicklines
\texture{aa555555 55bbbbbb bb555555 55fefefe fe555555 55bbbbbb bb555555 55eeefee
	ef555555 55bbbbbb bb555555 55fefefe fe555555 55bbbbbb bb555555 55efefef
	ef555555 55bbbbbb bb555555 55fefefe fe555555 55bbbbbb bb555555 55eeefee
	ef555555 55bbbbbb bb555555 55fefefe fe555555 55bbbbbb bb555555 55efefef }
\path(1815,2487)(2265,2487)
\path(1815,2487)(2265,2487)
\path(2265,2937)(2715,2937)
\path(2265,2937)(2715,2937)
\path(2715,2487)(3615,2487)
\path(2715,2487)(3615,2487)
\path(3615,3387)(4065,3387)
\path(3615,3387)(4065,3387)
\thinlines
\texture{8101010 10000000 444444 44000000 11101 11000000 444444 44000000
	101010 10000000 444444 44000000 10101 1000000 444444 44000000
	101010 10000000 444444 44000000 11101 11000000 444444 44000000
	101010 10000000 444444 44000000 10101 1000000 444444 44000000 }
\shade\path(2265,4737)(4065,4737)(4065,4647)
	(2265,4647)(2265,4737)
\path(2265,4737)(4065,4737)(4065,4647)
	(2265,4647)(2265,4737)
\texture{aa555555 55bbbbbb bb555555 55fefefe fe555555 55bbbbbb bb555555 55eeefee
	ef555555 55bbbbbb bb555555 55fefefe fe555555 55bbbbbb bb555555 55efefef
	ef555555 55bbbbbb bb555555 55fefefe fe555555 55bbbbbb bb555555 55eeefee
	ef555555 55bbbbbb bb555555 55fefefe fe555555 55bbbbbb bb555555 55efefef }
\shade\path(1815,4557)(2265,4557)(2265,4647)
	(1815,4647)(1815,4557)
\path(1815,4557)(2265,4557)(2265,4647)
	(1815,4647)(1815,4557)
\shade\path(2715,4557)(3165,4557)(3165,4647)
	(2715,4647)(2715,4557)
\path(2715,4557)(3165,4557)(3165,4647)
	(2715,4647)(2715,4557)
\path(465,4647)(5865,4647)
\whiten\path(5745.000,4617.000)(5865.000,4647.000)(5745.000,4677.000)(5745.000,4617.000)
\path(465,687)(5865,687)
\whiten\path(5745.000,657.000)(5865.000,687.000)(5745.000,717.000)(5745.000,657.000)
\thicklines
\path(2490,687)(5640,687)
\path(2490,687)(5640,687)
\thinlines
\dottedline{45}(2490,912)(2490,12)
\put(15,4062){\makebox(0,0)[lb]{\smash{{\SetFigFont{6}{7.2}{\rmdefault}{\mddefault}{\updefault}$N_A(x)$: values $E_{s,ac}$-a.e.}}}}
\put(15,1047){\makebox(0,0)[lb]{\smash{{\SetFigFont{6}{7.2}{\rmdefault}{\mddefault}{\updefault}Support set of $E_{ac}$:}}}}
\put(15,237){\makebox(0,0)[lb]{\smash{{\SetFigFont{6}{7.2}{\rmdefault}{\mddefault}{\updefault}Values $E_{ac}$-a.e.:}}}}
\put(15,4962){\makebox(0,0)[lb]{\smash{{\SetFigFont{6}{7.2}{\rmdefault}{\mddefault}{\updefault}Support set of $E_{s,ac}$:}}}}
\put(4065,237){\makebox(0,0)[b]{\smash{{\SetFigFont{6}{7.2}{\rmdefault}{\mddefault}{\updefault}$N_A(x)=4$}}}}
\put(1815,237){\makebox(0,0)[b]{\smash{{\SetFigFont{6}{7.2}{\rmdefault}{\mddefault}{\updefault}$N_A(x)=0$}}}}
\put(915,1767){\makebox(0,0)[b]{\smash{{\SetFigFont{6}{7.2}{\rmdefault}{\mddefault}{\updefault}$0$}}}}
\put(1365,1767){\makebox(0,0)[b]{\smash{{\SetFigFont{6}{7.2}{\rmdefault}{\mddefault}{\updefault}$1$}}}}
\put(1815,1767){\makebox(0,0)[b]{\smash{{\SetFigFont{6}{7.2}{\rmdefault}{\mddefault}{\updefault}$2$}}}}
\put(2265,1767){\makebox(0,0)[b]{\smash{{\SetFigFont{6}{7.2}{\rmdefault}{\mddefault}{\updefault}$3$}}}}
\put(2715,1767){\makebox(0,0)[b]{\smash{{\SetFigFont{6}{7.2}{\rmdefault}{\mddefault}{\updefault}$4$}}}}
\put(3165,1767){\makebox(0,0)[b]{\smash{{\SetFigFont{6}{7.2}{\rmdefault}{\mddefault}{\updefault}$5$}}}}
\put(3615,1767){\makebox(0,0)[b]{\smash{{\SetFigFont{6}{7.2}{\rmdefault}{\mddefault}{\updefault}$6$}}}}
\put(4065,1767){\makebox(0,0)[b]{\smash{{\SetFigFont{6}{7.2}{\rmdefault}{\mddefault}{\updefault}$7$}}}}
\put(4515,1767){\makebox(0,0)[b]{\smash{{\SetFigFont{6}{7.2}{\rmdefault}{\mddefault}{\updefault}$8$}}}}
\put(4965,1767){\makebox(0,0)[b]{\smash{{\SetFigFont{6}{7.2}{\rmdefault}{\mddefault}{\updefault}$\cdots$}}}}
\put(465,1767){\makebox(0,0)[b]{\smash{{\SetFigFont{6}{7.2}{\rmdefault}{\mddefault}{\updefault}$-1$}}}}
\put(240,2037){\makebox(0,0)[b]{\smash{{\SetFigFont{6}{7.2}{\rmdefault}{\mddefault}{\updefault}$0$}}}}
\put(240,2487){\makebox(0,0)[b]{\smash{{\SetFigFont{6}{7.2}{\rmdefault}{\mddefault}{\updefault}$1$}}}}
\put(240,2937){\makebox(0,0)[b]{\smash{{\SetFigFont{6}{7.2}{\rmdefault}{\mddefault}{\updefault}$2$}}}}
\put(240,3387){\makebox(0,0)[b]{\smash{{\SetFigFont{6}{7.2}{\rmdefault}{\mddefault}{\updefault}$3$}}}}
\end{picture}
}
	\end{center}
	This example also demonstrates that from the singular spectra $\sigma_{s}(A_j)$ (which are the closures of the
	above pictured support sets), one cannot draw any conclusions about the singular spectrum $\sigma_s(A)$ or the
	spectral multiplicity function $N_A$.
\end{example}

\noindent
Next, we investigate with $E_{s,s}$, i.e., the possible appearance of new singular spectrum. This is not so straightforward.
In order to make explicit computations, we consider the situation which resembles a single half-line operator.

\begin{remark}\label{K79}
	Let $q$ be real and
	locally integrable potential defined on $(0,\infty)$, and assume that $0$ is a regular endpoint and that Weyl's limit point
	case prevails at the endpoint $\infty$. Let $n\geq 2$, and extend the potential $q$ to the star-graph with $n$ edges by symmetry
	(i.e., consider the same potential on all edges). Matching the notation of Theorem~\ref{K38}, we thus have
	\[
		\Gamma_l:=\Gamma_{(0)},\quad l=1,\cdots,n
		\,,
	\]
	where $\Gamma_{(0)}$ is defined for $q$ as in Remark~\ref{K33}. Then the Weyl functions $m_l$ are all equal, namely equal to the classical Weyl function $m_{(0)}$ constructed from the potential $q$. Thus, $m=n\cdot m_{(0)}$.

	From Proposition~\ref{K37} we see that the $n\!\times\!n$-matrix valued Weyl function $M$ corresponding to the boundary relation $\Gamma$ constructed by pasting $\Gamma_1,\cdots,\Gamma_n$ with standard interface conditions is given as
	\[
		M=\frac1{n}\!
		\begin{pmatrix}
			(n-1)m_{(0)} & -m_{(0)} & \cdots & -m_{(0)} & -1\\
			-m_{(0)} & (n-1)m_{(0)} & \cdots & -m_{(0)} & -1\\
			\vdots & \vdots & \ddots & \vdots & \vdots\\
			-m_{(0)} & -m_{(0)} & \cdots & (n-1)m_{(0)} & -1\\
			-1 & -1 & \cdots & -1 & -\frac 1{m_{(0)}}
		\end{pmatrix}
		\ .
	\]
	In particular,
	\[
		\tr M=\frac 1n\Big[(n-1)^2m_{(0)}-\frac 1{m_{(0)}}\Big]
		\,.
	\]
	Denote by $\mu_0$ and $\sigma_0$ the measures in the integral representations of $m_{(0)}$ and $-\frac 1{m_{(0)}}$,
	respectively. Then, for the trace measure $\rho$ constructed from $M$ we have
	\[
		\rho\sim\mu_0+\sigma_0
		\,.
	\]
	Using the notation of Theorem~\ref{K38}, we thus have
	\[
		\mu=n\cdot\mu_{(0)},\qquad r(x)=n\text{ for $\mu$-a.a.\ points }x\in\bb R
		\,,
	\]
	and hence (denoting by $\mu_{(0),s}$ and $\sigma_{0,s}$ the singular parts of $\mu_{(0)}$ and $\sigma_0$
	with respect to the Lebesgue measure)
	\begin{enumerate}[{\rm(I)}]
		\item $E_{s,ac}\sim\mu_{(0),s}$,
		\item $N_A(x)=n-1$ for $E_{s,ac}$-a.a.\ points $x\in\bb R$,
	\end{enumerate}
	\begin{equation}\label{K72}
		E_{s,s}\sim\sigma_{0,s}
		\,.
	\end{equation}
\end{remark}

\noindent
Using appropriately chosen potentials $q$, we can now provide examples that new singular spectrum (embedded or not) does appear,
or that no new singular spectrum appears.

\begin{example}[Appearance of new spectrum, partially embedded]\label{K70}
	In \cite[Theorem~3.5]{Remling-1999} a class of potentials is given, such that for every boundary condition the corresponding
	Schr\"odinger operator $A_\alpha$ satisfies ($F$ denotes a certain Smith-Volterra-Cantor-type set with positive Lebesgue measure)
	\[
		\sigma_{sc}(A_\alpha)=[0,\infty),\quad \sigma_{ac}(A_\alpha)=F^2,\quad \sigma_p(A_\alpha)\cap(0,\infty)=\emptyset
		\,.
	\]
	Using measure theoretic terms, we may thus say that there exist minimal supports of the corresponding singular continuous parts
	$\mu_{(\alpha),sc}$ of the spectral measures $\mu_{(\alpha)}$ which are Lebesgue-zero sets, mutually disjoint, and dense in $[0,\infty)$.
	
	For such potentials we see from \eqref{K72} that $E_{s,s}\neq 0$, i.e., new singular spectrum appears (precisely on a minimal support
	of $\mu_{(\frac\pi2),s}$). The part of this new spectrum located on the positive half-line is singular continuous. Moreover,
	since $\emptyset\neq\supp E_{ac}=F^2\subseteq[0,\infty)$, some part of it is embedded into the absolutely continuous spectrum.
	The spectrum originating from overlaps (which happens precisely on a minimal support of $\mu_{(0)}$) shares these properties.
\end{example}

\begin{example}[Appearance of new spectrum, not embedded]\label{K80}
	Consider the potential
	\[
		q(x):=
		\begin{cases}
			k &\hspace*{-3mm},\quad \big|x-\exp(2k^{\frac 32})\big|<\frac 12,\\
			0 &\hspace*{-3mm},\quad \text{otherwise}.\\
		\end{cases}
	\]
	This potential was studied in \cite{Simon-Stolz-1996}, and it turned out that for every boundary condition the corresponding
	selfadjoint operator $A_\alpha$ satisfies
	\[
		\sigma_{ac}(A_\alpha)=\emptyset,\quad \sigma_{sc}(A_\alpha)=[0,\infty)
		\,.
	\]
	Moreover, depending on the boundary condition either $\sigma_p(A_\alpha)$ is empty or consists of one negative eigenvalue. Expressed in measure theoretic terms, this means that there exist minimal supports of the corresponding spectral measures
	$\mu_{(\alpha)}$ which are all Lebesgue-zero sets, are mutually disjoint, whose intersection with $[0,\infty)$ is dense in $[0,\infty)$,
	which contain at most one point on the negative half-line, and that $\mu_{(\alpha)}$ has no point masses in $[0,\infty)$.

	For this potential we see from \eqref{K72} that $E_{s,s}\neq 0$, i.e., new singular spectrum appears. This new part of the
	spectrum is singular continuous and embedded into the spectrum originating from overlaps (the closures of minimal supports of
	$E_{s,ac}$ and $E_{s,s}$ are both equal to $[0,\infty)$).
\end{example}

\begin{example}[Non-appearance of new spectrum]\label{K81}
	We follow the idea given in \cite{Donoghue-1965} to construct examples, and use the type of measures discussed in
	\cite[Example~1]{Donoghue-1965} and the Gelfand-Levitan theorem. However, we need to refer to the version of the Gelfand-Levitan theorem
	which applies to the Dirichlet boundary condition, cf.\ \cite[\S2.9]{Levitan-1987}.

	Let us first recall the argument made in \cite[Example~1]{Donoghue-1965}\footnote{Again, we do not aim for maximal generality.}.
	Let $f$ be a continuous, bounded, and positive function on $\bb R$, let $\lambda_0$ be a positive measure with
	$\int_{\bb R}\frac{d\lambda_0(t)}{1+t^2}<\infty$, and consider the measure
	\[
		\nu:=\lambda_0+f\cdot\lambda
		\,,
	\]
	where $\lambda$ is the Lebesgue measure. Denote by $m$ the corresponding Herglotz-function
	\[
		m(z):=\int_{\bb R}\Big(\frac 1{t-z}-\frac t{1+t^2}\Big)\,d\nu(t),\quad z\in\bb C\setminus\bb R
		\,.
	\]
	Then, since $\nu\geq f\cdot\lambda$, we have
	\[
		\liminf_{\varepsilon\downarrow 0}\Im m(x+i\varepsilon)\geq\pi f(x),\quad x\in\bb R
		\,.
	\]
	In particular, the function $m(z)$ never approaches a real boundary value when $z$ tends to a real point (along a perpendicular ray).

	Consider the Herglotz functions
	\[
		m_\tau(z):=\frac{\tau m(z)-1}{m(z)+\tau},\quad \tau\in\bb R
		\,,
	\]
	and let $\nu_\tau$ be the measure in the integral representation of $m_\tau$.
	Then, by Aronszajn-Donoghue (cf.\ \cite[Theorem~3.2,(3.17)]{Gesztesy-Tsekanovskii-2000}), the measures $\nu_\tau$
	are all absolutely continuous with respect to the Lebesgue measure.

	Next, we make an appropriate choice of $f$ and $\lambda_0$, so to allow an application of the Gelfand-Levitan theorem.
	Set, for example,
	\[
		f(x):=\frac 2{3\pi}\cdot
		\begin{cases}
			e^{x-1} &\hspace*{-3mm},\quad x<1,\\
			x^{\frac 32} &\hspace*{-3mm},\quad x\geq 1,\\
		\end{cases}
	\]
	and let $\lambda_0$ be compactly supported. Then the hypotheses of the Gelfand-Levitan theorem are obviously fulfilled, and
	we obtain a potential $q$ on the half-line, such that the measure $\nu$ is the measure in the integral representation
	of the Weyl function constructed from the potential $q$ with Dirichlet boundary conditions.

	Symmetrically extending the potential $q$ constructed in the above paragraph, yields examples with
	\begin{enumerate}[$(i)$]
		\item $E_s=0$ (choose $\lambda_0=0$),
		\item $E_{s,ac}\neq 0$ but $E_{s,s}=0$ (choose $\lambda_0$ to be singular).
	\end{enumerate}
\end{example}

\appendixsection{Other boundary/interface conditions}

In the first statement of this section we show how Theorem~\ref{K38} can be used to deduce the classical result of Aronszajn-Donoghue that
the singular parts of the spectral measures corresponding to different boundary conditions in a half-line problem are mutually
singular. This approach is of course more complicated than the original one, and hence should not be viewed as a ``new proof of an old result''.
The reasons why we still find it worth to be elaborated are: (1) reobtaining previously known results gives a hint that the new result is not
unnecessarily weak, and (2) it demonstrates the usage of boundary relations with a nontrivial multivalued part.

\begin{corollary}[Aronszajn-Donoghue]\label{K71}
 	Let $q$ be real and locally integrable potential defined on $(0,\infty)$, assume that $0$ is a regular endpoint and that Weyl's limit point case prevails at the endpoint $\infty$. Let $\alpha_1,\alpha_2\in[0,\pi)$, $\alpha_1\neq\alpha_2$, be given,
	and let $A_{\alpha_j}$ denote the selfadjoint operators given by the corresponding boundary condition. Then the singular parts of the corresponding spectral measures are mutually singular.
\end{corollary}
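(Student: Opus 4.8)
The idea is the one announced in the introduction: realize the passage from the boundary condition $\alpha_1$ to the boundary condition $\alpha_2$ as a pasting of the half-line boundary relation $\Gamma_{(\alpha_1)}$ with an ``artificial second edge'' whose boundary relation is purely multivalued (hence has a constant Weyl function and trivial Herglotz measure), and then read off the conclusion from item (I) of Theorem~\ref{K38}. Concretely, I would set $\Gamma_1:=\Gamma_{(\alpha_1)}\subseteq L^2(0,\infty)^2\times\bb C^2$ as in Remark~\ref{K33}; this is a boundary function for $T_{max}$, its Weyl function is the classical Weyl coefficient $m_{(\alpha_1)}$, and I write $\mu_1$ for the measure in its Herglotz integral representation. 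For the second edge I take $H_2:=\{0\}$, $S_2:=\{0\}^2$, and, with the real constant $c:=\cot(\alpha_2-\alpha_1)$ (well defined since $\alpha_1\neq\alpha_2$ in $[0,\pi)$), the relation $\Gamma_2:=\{0\}^2\times\{(w;cw):\,w\in\bb C\}$. By Remark~\ref{K56} this is a boundary relation for $S_2^*$; its multivalued part meets $\{0\}\times\bb C$ only in $0$, so $\Gamma_2$ is of function type, $S_2$ is trivially simple, and the Weyl function of $\Gamma_2$ is the real constant $c$, so the measure $\mu_2$ in its Herglotz representation is the zero measure. Thus $\{\Gamma_1,\Gamma_2\}$ satisfies all hypotheses of Theorem~\ref{K38}, with $\Gamma_1$ playing the role of the boundary function required there.

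Next I would identify the pasting. Let $A:=\ker[\pi_1\circ\Gamma]$ be obtained from $\{\Gamma_1,\Gamma_2\}$ by pasting with standard interface conditions as in Definition~\ref{K42}. Writing out \eqref{K43} for $n=2$, membership in $A$ forces the first boundary components of $\Gamma_1$ and $\Gamma_2$ to coincide (call the common value $w$) and forces $\beta_1+\beta_2=0$. With $w=u(0)\cos\alpha_1+u'(0)\sin\alpha_1$, $\beta_1=-u(0)\sin\alpha_1+u'(0)\cos\alpha_1$ and $\beta_2=cw$, the second condition reads
\[
	(c\cos\alpha_1-\sin\alpha_1)\,u(0)+(c\sin\alpha_1+\cos\alpha_1)\,u'(0)=0 ,
\]
and the choice $c=\cot(\alpha_2-\alpha_1)$ makes the coefficient vector proportional to $(\cos\alpha_2,\sin\alpha_2)$; that is, $A$ is exactly the operator $A_{\alpha_2}$. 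By classical Weyl theory (equivalently, by Proposition~\ref{K65} applied with a scalar measure) the spectral measure $E$ of $A=A_{\alpha_2}$ satisfies $E\sim\mu_{(\alpha_2)}$, hence $E_s\sim\mu_{(\alpha_2),s}$.

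It then remains to apply Theorem~\ref{K38}. Since $\mu_2=0$, we have $\mu=\mu_1+\mu_2=\mu_{(\alpha_1)}$ and $D_\mu\mu_1=1$, $D_\mu\mu_2=0$ $\mu$-a.e., so $r(x)=1$ for $\mu$-a.a.\ $x$ and the set $X_{>1}=r^{-1}(\{2\})$ is $\mu$-zero. Item (I) of Theorem~\ref{K38} gives $E_{s,ac}\sim\mathds{1}_{X_{>1}}\cdot\mu_s=0$, i.e.\ $E_s=E_{s,s}$ is singular with respect to $\mu=\mu_{(\alpha_1)}$. Combining this with $E_s\sim\mu_{(\alpha_2),s}$ yields $\mu_{(\alpha_2),s}\perp\mu_{(\alpha_1)}$, and since $\mu_{(\alpha_1),s}\ll\mu_{(\alpha_1)}$ this gives $\mu_{(\alpha_1),s}\perp\mu_{(\alpha_2),s}$, which is the assertion.

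\textbf{Main obstacle.} The only nontrivial point is the bookkeeping in the second step: translating the abstract interface condition in \eqref{K43} into the concrete boundary condition at $0$ and pinning down $c$ so that the pasted operator is precisely $A_{\alpha_2}$ rather than $A_{\alpha_1}$ (which is what one gets in the degenerate limit $c\to\infty$) or some unrelated extension. The verification that the trivial second edge meets the hypotheses of Theorem~\ref{K38}, and the final measure-theoretic deduction from item (I), are routine.
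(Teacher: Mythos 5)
Your proposal is correct and follows essentially the same route as the paper's own proof: pasting $\Gamma_{(\alpha_1)}$ with a purely multivalued one-dimensional boundary relation whose constant Weyl function $\cot(\alpha_2-\alpha_1)$ is tuned so that the pasted operator is exactly $A_{\alpha_2}$, and then reading off $E_{s,ac}=0$ from item (I) of Theorem~\ref{K38} since $\mu=\mu_{(\alpha_1)}$ and $r=1$ $\mu$-a.e. The paper writes the artificial relation as $\{((0;0);(-w\sin\beta;w\cos\beta))\}$ with $\beta=\alpha_1-\alpha_2$, which is the same relation as yours, and the remaining measure-theoretic deduction is identical.
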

\begin{proof}
	Let $T_{max}$ be the maximal operator associated with differential expression $-\frac{d^2}{dx^2}+q$, and
	let $\Gamma_{\alpha_j}$ be the boundary relations constructed in Remark~\ref{K33}. Moreover,
	denote by $\mu_{(\alpha_j)}$, $j=1,2$, the measure in the integral representation of the Weyl function of $\Gamma_{\alpha_j}$.

	Set $\beta:=\alpha_1-\alpha_2$, and denote by $\mr\Gamma$ the boundary relation
	\begin{equation}\label{K76}
		\mr\Gamma:=\big\{\big((0;0);(-w\sin\beta;w\cos\beta)\big):\, w\in\bb C\big\}\subseteq
		\{0\}^2\times\bb C^2
		\,.
	\end{equation}
	Then the pasting $\Gamma$ of $\mr\Gamma$ and $\Gamma_{(\alpha_1)}$ with standard interface conditions is given as
	\begin{multline*}
		\Gamma=\bigg\{\bigg(\Big(\binom 0{u};\binom 0{T_{max}u}\Big);
		\\[2mm]
		 \Big(\binom{w\sin\beta\!+\![u(0)\cos\alpha_1\!+\!u'(0)\sin\alpha_1]}{w\cos\beta\!+\![-u(0)\sin\alpha_1\!+\!u'(0)\cos\alpha_1]};
		 \binom{-w\cos\beta}{\!-\![u(0)\cos\alpha_1\!+\!u'(0)\sin\alpha_1]}\Big)\bigg):
		\\[2mm]
		w\in\bb C,\ u\in\dom T_{max}\bigg\}\subseteq\big(\{0\}\!\times\! L^2(0,\infty)\big)^2\times\bb C^2
		\,.
	\end{multline*}
	A short computation shows that (we identify $\{0\}\times L^2(0,\infty)$ with $L^2(0,\infty)$) $ker\big[\pi_1\circ\Gamma\big]=A_{\alpha_2}$.
	The Weyl function $\mr m$ of $\mr\Gamma$ is equal to the real constant $-\cot\beta$; note here that $\beta\in(-\pi,\pi)\setminus\{0\}$.
	The measure $\mr\mu$ in its integral representation is thus equal to $0$. Using the notation of Theorem~\ref{K38}, we have
	\[
		\mu=\mu_{(\alpha_1)},\qquad r(x)=1\text{ for $\mu$-a.a.\ points }x\in\bb R
		\,,
	\]
	and hence $E_{s,ac}=0$. However, $E\sim\mu_{(\alpha_2)}$, and we see that $\mu_{(\alpha_2),s}=\mu_{(\alpha_2),s,s}\perp\mu_{(\alpha_1)}$. Therefore $\mu_{(\alpha_1),s}\perp\mu_{(\alpha_2),s}$.
\end{proof}

\noindent
Finally, we provide some knowledge on other interface conditions than the standard ones.
In the context of this example, it is however important to add two remarks:
\\[1mm]
(1) The formula for spectral multiplicity will not be given in terms of the spectral measures of the non-interacting operators,
but in terms of the corresponding Weyl functions. Hence, the below result cannot be viewed as a strict analogue of Theorem~\ref{K38} for
other interface conditions.
\\[1mm]
(2) The applied method provides knowledge only about a particular (small) class of interface conditions. It does not lead to a treatment
of arbitrary interface conditions on a star-graph, and even less to a formula for multiplicity on graphs with a more complicated geometry.

\begin{proposition}\label{K73}
	Let $n\geq 2$ and real valued locally integrable potentials $q_l$, $l=1,\dots,n$, on the half-line be given, such that $q_l$ is regular at $0$ and in Weyl's limit point case at $\infty$. Let $a_1,\dots,a_n\in[0,2\pi)$ and $b\in(0,\pi)$ be given,	and consider the selfadjoint matrix Schr\"odinger operator $A$ defined on $L^2(0,\infty)^n$ by the differential expression	 $-\frac{d^2}{dx^2}+V$ with the diagonal matrix potential
	\[
		V(x):=
		\begin{pmatrix}
			q_1(x)\!\!\! & & \\[-2mm]
			& \!\!\ddots\!\! & \\[-2mm]
			& & \!\!q_n(x)
		\end{pmatrix}
	\]
	and the interface conditions
	\begin{equation}\label{K75}
	\begin{gathered}
		u_1(0)\cos a_1+u_1'(0)\sin a_1=\dots=u_n(0)\cos a_n+u_n'(0)\sin a_n\,,
		\\
		\sum_{l=1}^n\big[u_l(0)\cos(a_l-b)+u_l'(0)\sin(a_l-b)\big]=0
		\,.
	\end{gathered}
	\end{equation}
	Denote the Titchmarsh-Weyl coefficient constructed from the potential $q_l$ (with Dirichlet boundary conditions) as $m_l$, and set
	\[
		S_l:=
		\begin{cases}
			\big\{x\in\bb R:\,\lim_{\varepsilon\downarrow 0}m_l(x+i\varepsilon)=-\cot a_l\big\} &\hspace*{-3mm},\quad
				a_l\not\in\{0,\pi\},\\[2mm]
			\big\{x\in\bb R:\,\lim_{\varepsilon\downarrow 0}\Im m_l(x+i\varepsilon)=\infty\big\} &\hspace*{-3mm},\quad
				a_l\in\{0,\pi\}.\\
		\end{cases}
	\]
	Then
	\[
		N_A(x)=
		\begin{cases}
			\#\big\{l\in\{1,\dots,n\}:\,x\in S_l\big\}-1 &\hspace*{-3mm},\quad E_{s,ac}\text{-a.e.},\\
			1 &\hspace*{-3mm},\quad E_{s,s}\text{-a.e.}\\
		\end{cases}
	\]
\end{proposition}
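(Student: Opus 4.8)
The plan is to exhibit $A$ as a pasting with standard interface conditions of $n+1$ boundary relations and then invoke Theorem~\ref{K38}. For $l=1,\dots,n$ let $\Gamma_l^{(0)}\subseteq L^2(0,\infty)^2\times\bb C^2$ be the boundary relation attached to $q_l$ with Dirichlet boundary condition as in Remark~\ref{K33} (so its Weyl function is $m_l$), and set $\widehat\Gamma_l:=w_{a_l}\circ\Gamma_l^{(0)}$, where $w_{a_l}$ is the $J_{\bb C}$-unitary rotation matrix of Remark~\ref{K33}. By Theorem~\ref{prop frac-lin}, $\widehat\Gamma_l$ is a boundary relation (in fact a boundary function), and its Weyl function is
\[
	\widetilde m_l=\frac{-\sin a_l+\cos a_l\,m_l}{\cos a_l+\sin a_l\,m_l}\,,
\]
which equals $m_l$ when $a_l\in\{0,\pi\}$. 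In addition put
\[
	\widehat\Gamma_{n+1}:=\{0\}^2\times\big\{(w;-n\cot b\cdot w):\,w\in\bb C\big\}\subseteq\{0\}^2\times\bb C^2\,,
\]
a boundary relation of function type (but not a boundary function), acting in the zero Hilbert space, whose Weyl function is the real constant $-n\cot b$; here we use $b\in(0,\pi)$, so that $\cot b$ is a finite real number and $\sin b\neq 0$. The collection $\{\widehat\Gamma_l:\,l=1,\dots,n+1\}$ satisfies the hypotheses of Theorem~\ref{K38}: each symmetric part is simple, each relation is of function type, and $\widehat\Gamma_1,\dots,\widehat\Gamma_n$ are boundary functions (cf.\ also Remark~\ref{K69}).

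Next I would verify that $\ker[\pi_1\circ\widehat\Gamma]=A$, where $\widehat\Gamma$ is the pasting of $\{\widehat\Gamma_l\}$ with standard interface conditions. Writing $\alpha_l:=u_l(0)\cos a_l+u_l'(0)\sin a_l$ and $\beta_l:=-u_l(0)\sin a_l+u_l'(0)\cos a_l$, the addition formulas for sine and cosine give the identity
\[
	u_l(0)\cos(a_l-b)+u_l'(0)\sin(a_l-b)=\cos b\cdot\alpha_l-\sin b\cdot\beta_l\,,
\]
so \eqref{K75} is equivalent to $\alpha_1=\dots=\alpha_n$ together with $\sum_{l=1}^n\beta_l=n\cot b\cdot\alpha_1$. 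On the other hand, by \eqref{K43} the standard interface conditions for the $(n+1)$ relations read $\alpha_1=\dots=\alpha_n=\alpha_{n+1}$ and $\sum_{l=1}^{n+1}\beta_l=0$; since $(\alpha_{n+1};\beta_{n+1})\in\mul\widehat\Gamma_{n+1}$ forces $\beta_{n+1}=-n\cot b\cdot\alpha_{n+1}$, the latter becomes $\sum_{l=1}^n\beta_l=n\cot b\cdot\alpha_1$. After identifying $L^2(0,\infty)^n\times\{0\}$ with $L^2(0,\infty)^n$, the two domains --- and hence the operators --- coincide.

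Now I would apply Theorem~\ref{K38} to $\{\widehat\Gamma_l\}$. Let $\widehat\mu_l$ be the measure of $\widetilde m_l$ for $l\le n$ and $\widehat\mu_{n+1}:=0$; put $\widehat\mu:=\sum_{l=1}^n\widehat\mu_l$ and $\widehat r(x):=\#\{l\in\{1,\dots,n\}:\,D_{\widehat\mu}\widehat\mu_l(x)>0\}$. Theorem~\ref{K38} gives $N_A(x)=\widehat r(x)-1$ for $E_{s,ac}$-a.a.\ $x$ and $N_A(x)=1$ for $E_{s,s}$-a.a.\ $x$; the second assertion is already the claimed formula, so it remains to show $\widehat r(x)=\#\{l:\,x\in S_l\}$ for $E_{s,ac}$-a.a.\ $x$. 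Since $E_{s,ac}\ll\widehat\mu_s$, it suffices to check, for $\widehat\mu_s$-a.a.\ $x$, that $D_{\widehat\mu}\widehat\mu_l(x)>0$ if and only if $x\in S_l$. The plan here: on a minimal support of $\widehat\mu_s$ one has $\frac{d\widehat\mu}{d\lambda}(x)=\infty$, hence $\Im m_{\widehat\mu}(x+i\varepsilon)\to\infty$ as $\varepsilon\downarrow0$ (Theorem~\ref{prop Herglotz}); by Kac' lemma (Theorem~\ref{Kac lemma}), $D_{\widehat\mu}\widehat\mu_l(x)>0$ then forces $\Im\widetilde m_l(x+i\varepsilon)\to\infty$, and the converse implication is recovered from the de~la~Vall\'ee-Poussin description \eqref{K21} of $\widehat\mu_{l,s}$ together with Corollary~\ref{cor Vallee-Poussin measures}, $(ii)$, the excluded non-tangential directions being controlled $\widehat\mu_l$-a.e.\ --- hence on the $\widehat\mu_l$-full set $\{D_{\widehat\mu}\widehat\mu_l>0\}$ (Corollary~\ref{cor Vallee-Poussin sets}, $(i)$) --- via Theorem~\ref{theorem Poltoratski-1}. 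Finally, from the explicit formula for $\widetilde m_l$ one reads off that $\Im\widetilde m_l(x+i\varepsilon)\to\infty$ is equivalent to $m_l(x+i\varepsilon)\to-\cot a_l$ when $a_l\notin\{0,\pi\}$ and to $\Im m_l(x+i\varepsilon)\to\infty$ when $a_l\in\{0,\pi\}$, i.e.\ exactly to $x\in S_l$.

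The main obstacle is this last step --- the passage between the symmetric derivatives $D_{\widehat\mu}\widehat\mu_l$, the Lebesgue densities $\frac{d\widehat\mu_l}{d\lambda}$, and the non-tangential boundary behaviour of $m_l$ and $\widetilde m_l$. Two points require care. First, boundary limits of Herglotz functions can in general be taken only along suitable $\varepsilon$-sequences (as in Corollary~\ref{lemma Poltoratski}), so one should work with ``good'' sets of points in the spirit of the sets $X_{\reg}$ of Stage~2, where all the relevant limits genuinely exist. Second, the exceptional null sets produced by de~la~Vall\'ee-Poussin (Theorem~\ref{K5}) for $\frac{d\widehat\mu_l}{d\lambda}$ are a priori only $\widehat\mu_l$-zero and not $\widehat\mu_s$-zero; in particular the set $\{x\in S_l:\,D_{\widehat\mu}\widehat\mu_l(x)=0\}$, which is $\widehat\mu_l$-zero by Corollary~\ref{cor Vallee-Poussin sets}, $(i)$, must still be shown to be $E_{s,ac}$-zero. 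I would handle this by establishing the two inclusions of $\{l:\,D_{\widehat\mu}\widehat\mu_l(x)>0\}$ and $\{l:\,x\in S_l\}$ separately --- the first $\widehat\mu_s$-a.e., the second using the equivalence $E_{s,ac}\sim\mathds1_{X_{>1}}\cdot\widehat\mu_s$ from Theorem~\ref{K38}~(I) --- and then patching. Everything else --- the construction of the pasting and the appeal to Theorem~\ref{K38} --- is routine once Remark~\ref{K33} is available.
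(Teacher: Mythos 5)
Your realization of $A$ as a pasting is exactly the paper's route, and you carry it out correctly; in fact your normalization of the auxiliary relation is the more careful one. The paper takes $\mr\Gamma$ from \eqref{K76} with $\beta=b$ (Weyl function $-\cot b$), and eliminating $w$ then produces the condition $\sum_{l=1}^n\beta_l=\cot b\cdot\alpha_1$, whereas \eqref{K75}, rewritten via the addition formulas exactly as you do, reads $\sum_{l=1}^n\beta_l=n\cot b\cdot\alpha_1$; your constant $-n\cot b$ reproduces \eqref{K75} on the nose. Since $b$ does not enter the conclusion and the extra relation contributes no measure, the discrepancy is harmless, but your bookkeeping is the accurate one. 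The verification of {\rm(Hyp1)}--{\rm(Hyp3)} and the appeal to Theorem~\ref{K38} are fine; up to this point your argument coincides with the paper's proof, which consists of precisely this construction followed by ``eliminating $w$ yields the assertion''.

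The gap is in the step you yourself flag as the main obstacle, and the proposed ``patching'' cannot close it. Theorem~\ref{K38} gives $N_A=\widehat r-1$ with $\widehat r(x)=\#\{l:D_{\widehat\mu}\widehat\mu_l(x)>0\}$, and your forward inclusion $\{l:D_{\widehat\mu}\widehat\mu_l(x)>0\}\subseteq\{l:x\in S_l\}$ for $\widehat\mu_s$-a.a.\ $x$ does follow from Theorem~\ref{Kac lemma} and Theorem~\ref{prop Herglotz} as you describe. The reverse inclusion, however, can fail on a set of positive $E_{s,ac}$-measure: the exceptional set $S_l\cap\{x:D_{\widehat\mu}\widehat\mu_l(x)=0\}$ is $\widehat\mu_l$-zero by Corollary~\ref{cor Vallee-Poussin sets}, $(i)$, but $E_{s,ac}$ is not absolutely continuous with respect to $\widehat\mu_l$ alone, so this gives no control. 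Concretely, take $n=3$, let $x_0$ lie in a minimal support of the singular continuous part of $\widehat\mu_1$ (so $\Im\widetilde m_1(x_0+i\varepsilon)\to\infty$, hence $m_1(x_0+i\varepsilon)\to-\cot a_1$ and $x_0\in S_1$, while $\widehat\mu_1(\{x_0\})=0$), and use inverse spectral theory to make $x_0$ an eigenvalue of both $A_2$ and $A_3$. Then $\widehat\mu(\{x_0\})>0$, so $D_{\widehat\mu}\widehat\mu_1(x_0)=\widehat\mu_1(\{x_0\})/\widehat\mu(\{x_0\})=0$ and $\widehat r(x_0)=2$; by Theorem~\ref{K38} (or Theorem~\ref{K35}) the point $x_0$ is an atom of $E_{s,ac}$ with $N_A(x_0)=1$, whereas $\#\{l:x_0\in S_l\}-1=2$. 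So the literal pointwise identity is not provable; what your argument actually establishes is the statement with $x\in S_l$ replaced by $D_{\widehat\mu}\widehat\mu_l(x)>0$, plus the one-sided bound $N_A(x)\le\#\{l:x\in S_l\}-1$, $E_{s,ac}$-a.e. The paper's own proof is silent on this translation (its preamble warns that the result is phrased ``in terms of the Weyl functions'' and is not a strict analogue of Theorem~\ref{K38}); the sets $S_l$ have to be read as minimal supports of the $\widehat\mu_{l,s}$, determined only up to the relevant null sets, rather than as literal point sets.
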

\begin{proof}
	First of all, let us explicitly state how the operator $A$ acts:
	\[
		A\begin{pmatrix}
			u_1 \\ \vdots \\ u_n
		\end{pmatrix}
		:=
		-\frac{d^2}{dx^2}
		\begin{pmatrix}
			u_1 \\ \vdots \\ u_n
		\end{pmatrix}
		+
		\begin{pmatrix}
			q_1u_1 \\ \vdots \\ q_nu_n
		\end{pmatrix}
		\,,
	\]
	on the domain
	\begin{align*}
		\dom A := \bigg\{&(u_1, \dots,u_n)\in\prod_{l=1}^n L_2(0,\infty):
			\\
		& u_l,u_l'\text{ are absolutely continuous}, -u_l''+q_lu_l\in L_2(0,\infty),
			\\
		& u_1,\dots,u_n\text{ satisfy the interface conditions \eqref{K75}}\bigg\}
			\ .
	\end{align*}
	For $l=1,\dots,n$ let $\Gamma_l$ be the boundary relation which is defined from the potential $q_l$ as $\Gamma_{(a_l)}$, cf.\ Remark~\ref{K33}. Moreover, let $\mr\Gamma$ be the boundary relation defined in \eqref{K76}. We consider the pasting of $\Gamma_1,\dots,\Gamma_n,\mr\Gamma$ with standard interface conditions. Then an element
	$(u_1,\dots,u_n)$ belongs to the domain of the operator\footnote{Similar to the previous example: we identify $L^2(0,\infty)^n\!\times\!\{0\}$ with $L^2(0,\infty)^n$.} $\ker[\pi_1\circ\Gamma]$ if and only if
	\[
		\exists\ w\in\bb C:\quad
		\left\{
		\begin{array}{ll}
			u_l(0)\cos a_l+u_l'(0)\sin a_l=-w\sin b,\quad l=1,\dots,n\,,
			\\[2mm]
			\sum\limits_{l=1}^n\big[-u_l(0)\sin a_l+u_l(0)\cos a_l\big]=-w\cos b\,.
		\end{array}
		\right.
	\]
	Eliminating $w$ from these equations, yields the assertion.
\end{proof}

\begin{remark}\label{K82}
	Some observations are in order:
	\begin{enumerate}[$(i)$]
		\item It is interesting to notice that the support set and the multiplicity function corresponding to $E_{s,ac}$
			does not depend on the choice of the parameter $b$.
		\item The fact that the value ``$b=0$'' is excluded is natural. For this value the conditions
			\eqref{K75} reduce to
			\[
				u_1(0)\cos a_1+u_1'(0)\sin a_1=\dots=u_n(0)\cos a_n+u_n'(0)\sin a_n=0
				\,,
			\]
			i.e., the operator $A$ is equal to the direct sum of the non-interacting operators $A_l$ defined by the potentials
			$q_l$ using the boundary condition $u_l(0)\cos a_l+u_l'(0)\sin a_l=0$.
		\item The case that ``$b=\frac\pi 2$'' could be treated somewhat simpler. For this value the operator $A$ coincides with
			$\ker[\pi_1\circ\Gamma]$ where $\Gamma$ is the pasting of $\Gamma_1,\dots,\Gamma_n$ with standard interface conditions.
	\end{enumerate}
\end{remark}



\subsection*{Acknowledgements}
We thank Anton Baranov for pointing our attention to the work of Alexei Poltoratski, and Sergey Naboko for valuable comments.

The first author was supported by the Chebyshev Laboratory (Department of Mathematics and
Mechanics, Saint-Petersburg State University) under the grant 11.G34.31.0026 of
the Government of the Russian Federation, by grants RFBR-09-01-00515-a and
11-01-90402-Ukr\_f\_a and by the Erasmus Mundus Action 2 Programme of the
European Union.

{\footnotesize
\begin{flushleft}
	S.\,Simonov\\
	Chebyshev Laboratory, Department of Mathematics and Mechanics\\
	Saint-Petersburg State University\\
	14th Line, 29b\\
	199178 Saint-Petersburg\\
	RUSSIA\\
	email: sergey.a.simonov@gmail.com\\[5mm]
\end{flushleft}
}
{\footnotesize
\begin{flushleft}
	H.\,Woracek\\
	Institut f\"ur Analysis und Scientific Computing\\
	Technische Universit\"at Wien\\
	Wiedner Hauptstra{\ss}e.\ 8--10/101\\
	1040 Wien\\
	AUSTRIA\\
	email: harald.woracek@tuwien.ac.at\\[5mm]
\end{flushleft}
}

\end{document}